\documentclass[11pt,a4paper]{article}
\usepackage[T1]{fontenc}
\usepackage[utf8]{inputenc}
\usepackage{lmodern,amsmath,amssymb,amsthm,mathtools}
\usepackage{indentfirst}
\usepackage[margin=28mm]{geometry}
\usepackage{microtype,enumitem}
\usepackage[colorlinks=true,linkcolor=blue,citecolor=blue,urlcolor=blue]{hyperref}
\numberwithin{equation}{section}
\newtheorem{theorem}{Theorem}[section]
\newtheorem{proposition}[theorem]{Proposition}
\newtheorem{lemma}[theorem]{Lemma}

\theoremstyle{remark}\newtheorem{remark}[theorem]{Remark}
\newcommand{\R}{\mathbb R}
\newcommand{\Sph}{\mathbb S}
\newcommand{\dd}{\,\mathrm d}
\newcommand{\Neu}{\mathrm{Neu}}
\newcommand{\e}{\mathrm e}
\newcommand{\kap}{c_N}
\DeclareMathOperator{\supp}{supp}

\title{Directional delays and singular strong-barrier limits
for Fisher--KPP propagation across a semipermeable interface}
\author{Yantao Wang\\[0.3em]
\small\href{mailto:wangyt4455@icloud.com}{\texttt{wangyt4455@icloud.com}}}
\date{}
\begin{document}
\maketitle
\begin{abstract}
We study Fisher--KPP propagation from compactly
supported data across a planar semipermeable
interface in $N\ge2$ dimensions, with unit
diffusion and $f'(0)=1$.
For every finite resistance, the receiving front
has speed $2$ and logarithmic correction
$-(N+2)\log t/2$. Its phase is determined by a
positive directional amplitude on the closed
receiving hemisphere. This amplitude is jointly
continuous in resistance and direction and is
H\"older continuous in direction with exponent
$1/3$. It also determines a threshold-independent
signed arrival-time difference.
Under concavity, the amplitude decreases strictly
with resistance.
As the resistance $\beta$ tends to infinity,
the amplitude is asymptotic to
$2(4\pi)^{-N/2}I_N(\theta)(\log\beta)/\beta$
uniformly on each fixed receiving cone bounded
away from the equator. Here $I_N$ is a weighted
input moment of the closed Neumann donor problem.
Although this moment diverges logarithmically
near the equator, the equatorial amplitude tends
to half the amplitude of the even Neumann donor
solution. The strong-resistance and equatorial
limits therefore do not commute.
The proof combines critical derivative moments,
an integrated Robin trace estimate, Cartesian
leading-edge limits, and matching through the
wave displacement. The phase and asymptotic formulas
require only the smooth KPP assumptions, without
the additional restriction $f'\le f'(0)$.
Concavity is used for strict resistance ordering
and the strict bounds on the limiting equatorial delay.
\end{abstract}
\noindent\textbf{Keywords.} Fisher--KPP equation; semipermeable interface; Bramson correction; directional phase; singular limit.

\section{Introduction and main results}\label{sec:intro}
A semipermeable barrier can change the position of
an invading front without changing its asymptotic
speed. We study this effect for Fisher--KPP
propagation from compactly supported initial data
across a planar interface. Our aim is to determine
the resulting directional phase and arrival-time
difference, and to describe their behavior as
the interface resistance tends to infinity.

Barrier-induced delays arise naturally in
population genetics. Pi\'alek and Barton~\cite{PB}
studied the spread of an advantageous allele
across a local barrier to gene flow, including
the effects of random drift and selection against
heterozygotes. In the deterministic case of an
allele favored when rare, they found that the
delay grows logarithmically with barrier strength.
This motivates a quantitative analysis of
barrier-induced front displacements in
multidimensional Fisher--KPP models.

The associated transmission law also appears in deterministic
population-genetic interface problems. Chen~\cite{Chen2006}
and Wang and Su~\cite{WangSu2022} study fixed interfaces in
bounded habitats, including comparison, well-posedness and
long-time dynamics. For reaction--diffusion systems with
membrane conditions, Ciavolella and Perthame~\cite{CP2021}
establish global weak solutions under mass-control and growth
assumptions. These works provide the interface-model background;
their bounded-domain results do not directly yield the
unbounded-domain, resistance-uniform front estimates used here.

For homogeneous propagation, Aronson and Weinberger~\cite{AW1978}
provide the classical spreading and traveling-wave framework.
Bramson~\cite{Bramson1983} identifies the logarithmic correction
and limiting wave in one dimension. Hamel, Nolen, Roquejoffre
and Ryzhik~\cite{HNRR2013} give a PDE proof of the logarithmic
correction for smooth KPP reactions, while Nolen, Roquejoffre
and Ryzhik~\cite{NRR2017} prove convergence to a single translate
for the one-dimensional logistic equation with compact
perturbations of step data. In higher dimensions, the
front-location analysis of G\"artner~\cite{Gartner1982}
precedes the angular-phase description discussed below.
These results motivate our centering and matching arguments;
the interface estimates and parameter-uniform limits are
proved in the present paper.

For the homogeneous equation with unit diffusion
and $f'(0)=1$, the reference front position is
\begin{equation}\label{eq:center}
 m(t)=2t-k\log t,\qquad k=\frac{N+2}{2}.
\end{equation}
For the logistic reaction, Roquejoffre, Rossi and
Roussier-Michon~\cite{RRR} identify the remaining
$O(1)$ displacement through a Lipschitz continuous
angular phase. Their result provides the
homogeneous reference for our directional phase
analysis. The work of Mytnik, Roquejoffre and
Ryzhik~\cite{MRR} on the dependence of the
one-dimensional front shift on small initial
data is also relevant: in the strong-resistance
regime, the receiving population is generated
by a weak input through the interface.

A one-dimensional version of the barrier problem
is studied in an unpublished manuscript by
Wang~\cite{Wang}. Its initial density equals one
sufficiently far into the donor region and
vanishes near the barrier and throughout the
receiving region. For general KPP reactions,
that work establishes the receiving-front phase,
a threshold-independent arrival-time delay,
and the logarithmic and log-log terms in the
strong-resistance expansion. It also proves
resistance monotonicity under concavity.
We build on its reflected average--difference
comparison, expanding-interval energy method,
and logistic comparison for weak interface input,
and prove the versions needed here.

Other interface laws lead to related propagation
problems. Hamel, Lutscher and Zhang~\cite{HLZ}
study spreading and blocking in a one-dimensional
two-patch model with KPP and bistable reactions.
In their rescaled formulation, the density is
continuous and the derivatives satisfy
$u_x(0^-)=\sigma u_x(0^+)$, whereas the present
model allows a resistance-dependent density jump.
Berestycki, Rossi and Tellini~\cite{BRT} study
reaction--diffusion equations in adjacent domains
coupled through Robin exchange conditions.
In their two-half-space geometry, our interface
law corresponds to unit diffusion coefficients
and equal exchange rates $\mu=\nu=\beta^{-1}$.
Their results concern positive steady states and
spreading speeds along the interface; here we
determine the directional phase and arrival-time
difference beyond the spreading speed.

For compactly supported data in $N\ge2$ dimensions,
we prove that the directional amplitude is
positive and jointly continuous in resistance
and direction, including at the equator.
Wave convergence holds uniformly on each fixed
receiving cone and, separately, along the interface.
The strong-resistance limit has three distinctive
features: an explicit leading amplitude
coefficient on fixed receiving cones, a logarithmic
singularity of the weighted input as the direction
approaches the equator, and a positive limiting
equatorial amplitude. The resulting
strong-resistance and equatorial limits do not
commute. The phase and asymptotic formulas hold
under the smooth KPP assumptions without
$f'\le f'(0)$; concavity is used for strict
resistance ordering and the strict bounds on
the limiting equatorial delay.

\subsection{The interface problem}
Let $N\ge2$ and write $x=(x',x_N)\in\R^{N-1}\times\R$. The planar interface
\[
 \Sigma=\{x_N=0\}
\]
separates the donor half-space $\Omega_-=\{x_N<0\}$ from the receiving half-space $\Omega_+=\{x_N>0\}$. For a finite positive interface resistance $\beta$, let $u_\pm$ denote the population densities in $\Omega_\pm$. Throughout the paper, we write $u$ and $u_\pm$ for a fixed resistance, suppressing their dependence on $\beta$. We retain the superscript in $u^\beta$ and $u_\pm^\beta$ only when distinguishing solutions at different resistances, including parameter limits and extrema over resistance intervals. Statements of uniformity in $\beta$ also apply when this superscript is suppressed. With unit diffusion on both sides and a common reaction term $f$, these densities satisfy
\begin{equation}\label{eq:model}
\begin{cases}
 \partial_tu_\pm=\Delta u_\pm+f(u_\pm),
       &x\in\Omega_\pm,\ t>0,\\
 \partial_Nu_-=\partial_Nu_+,&x\in\Sigma,\ t>0,\\
 u_+-u_-=\beta\,\partial_Nu_\pm,&x\in\Sigma,\ t>0.
\end{cases}
\end{equation}
Here $\partial_N=\partial_{x_N}$, so both interface derivatives are taken in the direction $e_N$, from the donor side toward the receiving side. The first interface condition expresses continuity of the diffusive flux, while the second relates this flux to the density jump. Specifically, the flux in the direction $e_N$ is
\[
 -\partial_Nu_-
 =-\partial_Nu_+
 =\frac{u_--u_+}{\beta}.
\]
Thus $\beta$ measures the interface resistance, and $\beta^{-1}$ is the exchange rate. We also consider the two endpoint cases: $\beta=0$ corresponds to a fully permeable interface, with continuity of both the density and its normal derivative, and hence to the homogeneous equation on $\R^N$; $\beta=\infty$ corresponds to an impermeable interface, with independent homogeneous Neumann conditions on the two half-spaces.

Throughout the paper, the reaction term satisfies the normalized smooth KPP assumptions
\begin{equation}\tag{K}\label{ass:K}
 f\in C^2([0,1]),\quad f(0)=f(1)=0,\quad f'(0)=1,\quad f'(1)<0,
 \qquad 0<f(s)\le s\quad(0<s<1).
\end{equation}
For the resistance-ordering results, we additionally assume concavity:
\begin{equation}\tag{C}\label{ass:C}
 f''(s)\le0\qquad(0\le s\le1).
\end{equation}
All remaining results require only \eqref{ass:K}; in particular, they do not require $f'(s)\le f'(0)$ on $[0,1]$. The normalization $f'(0)=1$ is removed in Section~\ref{sec:scaling}.

It is convenient to introduce the reaction deficit $g(s)=s-f(s)$. By \eqref{ass:K} and Taylor's theorem, there exists a constant $C_f>0$, depending only on $f$, such that
\begin{equation}\label{eq:basic-g}
 0\le g(s)\le C_fs^2,\qquad |g'(s)|\le C_fs,
 \qquad 0\le s\le1.
\end{equation}

For every resistance parameter, we prescribe the same nontrivial initial density, compactly supported in the donor half-space. We extend $u_0$ by zero to $\R^N$ and impose
\begin{equation}\label{eq:data}
\begin{gathered}
 u_0\in C_c^\infty(\Omega_-),\qquad 0\le u_0\le1,\qquad u_0\not\equiv0,\\
 u(0,x)=u_0(x).
\end{gathered}
\end{equation}
Thus the receiving half-space is initially empty, and the support of the initial population lies at a positive distance from the interface. We use $u$ to denote the function defined by $u_\pm$ in $\Omega_\pm$, specifying the relevant trace whenever a value on $\Sigma$ is needed.

Let $U_*$ denote the decreasing traveling-wave profile of the homogeneous equation with minimal speed $2$. We fix its translation by prescribing its leading-edge asymptotics:
\begin{equation}\label{eq:wave}
\begin{gathered}
 U_*''+2U_*'+f(U_*)=0,\qquad
 U_*(-\infty)=1,\qquad U_*(+\infty)=0,\\
 U_*(z)\sim z\e^{-z}\qquad(z\to+\infty).
\end{gathered}
\end{equation}
Existence, strict monotonicity and uniqueness up to translation
of this minimal-speed wave, together with its positive
$z\e^{-z}$ leading coefficient, are classical under
Assumption~\ref{ass:K}; see \cite{AW1978} and the discussion in
\cite[Section~1]{HNRR2013}. This normalization fixes the
reference profile used to define the directional phases below.
The sharper tail expansions, including derivatives, are
proved in Lemma~\ref{lem:wave-tails}.

We denote the open hemisphere of receiving directions by
\[
 \Sph_+=\{\theta\in\Sph^{N-1}:\theta_N>0\}.
\]
For $0<\delta\le1$, let
\[
 \Gamma_\delta=\{\theta\in\Sph^{N-1}:\theta_N\ge\delta\},
\]
so that $\Gamma_\delta$ consists of receiving directions bounded away from the interface. The equator $\partial\Sph_+$ consists of the tangential directions $(\eta,0)$, where $\eta\in\Sph^{N-2}$; for $N=2$, these are the two directions corresponding to $\Sph^0$. We also set $\kap=(4\pi)^{-N/2}$.

\subsection{Directional phases and parameter dependence}
\begin{theorem}[Directional phase and arrival-time difference]\label{thm:phase}
Assume \eqref{ass:K} and \eqref{eq:data}. For every $0\le\beta<\infty$, there exists a positive continuous directional amplitude $a_\beta$ on $\overline{\Sph_+}$. Define the directional phase by
\[
 \xi_\beta(\theta)=\log a_\beta(\theta).
\]
For every $0<B<\infty$, there exists $C_B>0$ such that
\begin{equation}\label{eq:holder-main}
 |a_\beta(\theta)-a_\beta(\omega)|
 \le C_B|\theta-\omega|^{1/3},
 \qquad 0\le\beta\le B,\quad\theta,\omega\in\overline{\Sph_+}.
\end{equation}
For every $0<\delta\le1$ and $0<Z<\infty$, the receiving front satisfies
\begin{equation}\label{eq:cone-wave}
 \sup_{\theta\in\Gamma_\delta,\ |z|\le Z}
 \left|u_+(t,(m(t)+z)\theta)-U_*(z-\xi_\beta(\theta))\right|\longrightarrow0
 \qquad(t\to\infty).
\end{equation}
Along the interface, both traces converge to the same profile:
\begin{equation}\label{eq:eq-wave}
 \sup_{\eta\in\Sph^{N-2},\ |z|\le Z}
 \left|u_\pm(t,(m(t)+z)\eta,0)-U_*(z-\xi_\beta(\eta,0))\right|\longrightarrow0
 \qquad(t\to\infty).
\end{equation}
This equatorial convergence also holds in bounded physical neighborhoods transverse to each equatorial ray. Both convergence statements are uniform for $\beta\in[0,B]$, for every finite $B>0$.

For $R>0$, $\theta\in\overline{\Sph_+}$, and a threshold $\upsilon\in(0,1)$, define the first arrival time on the receiving side by
\[
 T_\upsilon^\beta(R,\theta)=\inf\{t>0:u_+(t,R\theta)\ge\upsilon\},
\]
where the receiving trace is used when $\theta_N=0$. Then
\begin{equation}\label{eq:arrival}
 T_\upsilon^\beta(R,\theta)-T_\upsilon^0(R,\theta)\longrightarrow
 \tau_\beta(\theta):=\frac12\log\frac{a_0(\theta)}{a_\beta(\theta)}
 \qquad(R\to\infty).
\end{equation}
The limiting arrival-time difference is independent of $\upsilon$. The convergence is uniform for $\beta\in[0,B]$, for $\upsilon$ in compact subsets of $(0,1)$, and for directions in each fixed $\Gamma_\delta$, or separately on the equator.
\end{theorem}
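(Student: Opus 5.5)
The plan is to follow the Roquejoffre--Rossi--Roussier-Michon strategy for the homogeneous problem, adapting it to the transmission condition. The amplitude $a_\beta(\theta)$ will be defined as the leading coefficient of the solution in the linear regime $|x|\sim m(t)$, and then matched to the nonlinear front through the wave displacement.

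\emph{Linearized problem.} First I would strip the growth by setting $v=\e^{-t}u$. Then $v$ solves the heat equation with the same transmission conditions \eqref{eq:model}, plus a nonnegative absorption $-\e^{-t}g(u)$, where $g$ obeys \eqref{eq:basic-g}. The transmission heat kernel on the two half-spaces is explicit: tangentially it is Gaussian, and normally it is the one-dimensional kernel with a Robin-type density jump, obtained by a Laplace transform in $x_N$. From this kernel I would show that, in the diffusive variable $x=\sqrt t\,y$, the function $t^{N/2}$ times the linear solution carries a decaying factor in $x_N$. The decisive quantity is the critical derivative moment: write $u(t,x)\approx \e^{t}\,t^{-N/2-1}\,\e^{-|x|^2/4t}\,|x|\,\cdot\,(\text{coefficient})$ near the front. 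The coefficient is an integral of $\e^{s}(u-\ldots)$ weighted against the derivative kernel. It is finite because $\e^{-s}g(u(s))$ is integrable once $u$ is bounded by a Bramson-type supersolution $C(1+|x|)\e^{-(|x|-m)}$ in the leading edge.

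\emph{Defining $a_\beta$ and proving regularity.} I would set
\[
a_\beta(\theta)=\kap\int_0^\infty\!\!\int \e^{s}\,\bigl(\text{first-moment density of } u(s,\cdot)\text{ in direction }\theta\bigr)\,\dd y\,\dd s ,
\]
corrected by the reaction deficit. Positivity comes from a comparison with the $\beta=\infty$ donor problem (the donor mass stays positive) together with positive transmission. Joint continuity in $(\beta,\theta)$ follows from dominated convergence, using bounds that are uniform for $\beta\le B$. These uniform bounds come from the integrated Robin trace estimate, which controls $\int_0^t\!\int_\Sigma \e^{-s}|u_+-u_-|$ uniformly in $\beta\in[0,B]$. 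The H\"older exponent $1/3$ should come from balancing three error sources: the integral near $\Sigma$ at distance $\rho$ from the equator, the angular Lipschitz bound of the kernel, which scales like $|\theta-\omega|/\rho^2$, and a boundary-layer error of size $\rho$. Optimizing gives $\rho\sim|\theta-\omega|^{1/3}$.

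\emph{Matching and arrival times.} Working in Cartesian coordinates along each ray, I would write $r=m(t)+z$. The linear asymptotics give $u_+\approx a_\beta(\theta)\,z\,\e^{-z}$ for $1\ll z\ll\sqrt t$. A one-dimensional leading-edge argument in the style of Hamel--Nolen--Roquejoffre--Ryzhik, using sub- and supersolutions built from $U_*$ and shifted by $\xi_\beta(\theta)\pm\varepsilon$, then upgrades this to \eqref{eq:cone-wave}. Uniformity on $\Gamma_\delta$ and in $\beta$ holds because the error estimates depend only on $\delta$ and $B$. On the equator, the interface itself is tangent to the ray, so I would take a product-type subsolution in the strip of width $O(1)$ around $\Sigma$. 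The jump is $\beta\partial_N u=O(\e^{-z})$-small relative to the leading behaviour, so both traces get the same profile, which gives \eqref{eq:eq-wave}. For \eqref{eq:arrival}, I would invert $m(t)+\xi_\beta(\theta)+U_*^{-1}(\upsilon)=R$. Since $m'(t)\to2$, this gives $T^\beta-T^0\to(\xi_0-\xi_\beta)/2$, and the threshold dependence cancels.

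The main obstacle is the equator. There the tangential decay of the linear kernel degenerates, and uniform convergence to the wave together with the H\"older estimate both require the integrated trace estimate. I would attack this first, proving the trace bound by an energy identity on expanding half-balls.
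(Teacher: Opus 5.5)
You correctly identify the equator and an interface trace estimate as the crux, but the estimate you state is not the one needed. Without the right one, the amplitude is neither continuous up to the equator nor H\"older.

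\textbf{Equatorial uniformity of the moment.} The time derivative of the critical moment $E_\beta(t,\theta)=\e^{-2t}\int(\theta\cdot x-2t)\e^{\theta\cdot x}u\dd x$ contains, besides the absorption term, the interface flux $-2\theta_N\e^{-2t}\int_\Sigma(1+\theta'\cdot x'-2t)\e^{\theta'\cdot x'}D(t,x',0)\dd x'$, where $D=(p-q)/2$ is the folded difference. Your definition of $a_\beta$ leaves this term out. The crude bound $D\le p_\infty/2$ makes it of size about $\theta_Nt^{-1/2}\e^{-\theta_N^2t}$. Its integral over $[T,\infty)$ is of order one when $\theta_N\sim T^{-1/2}$, so there is no convergence rate uniform as $\theta_N\downarrow0$. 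Your quantity $\int_0^t\int_\Sigma\e^{-s}|u_+-u_-|$ has no critical tilt and is trivially finite, so it never sees the front.

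What works is the following. In the leading-edge variables $W=t^{-1/2}\e^{\rho\sqrt t}D$, the Robin coefficient becomes $\kappa\sqrt t$. A weighted energy identity then gives \eqref{eq:trace-energy}--\eqref{eq:trace-L1}, hence $|E_\beta(t,\theta)-E_\beta^\infty(\theta)|\le C_Bt^{-1/4}$ on all of $\overline{\Sph_+}$. The exponent $1/3$ comes from balancing this rate against the finite-time bound $|\partial_\zeta E_\beta(t,\theta)|\le C\sqrt t$, with $T=|\theta-\omega|^{-4/3}$. Your ``$|\theta-\omega|/\rho^2$'' and ``boundary layer of size $\rho$'' are not tied to any estimate. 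The latter would amount to a Lipschitz bound across the equator, which is stronger than the statement.

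\textbf{Positivity.} Comparison with $\beta=\infty$ gives the upper bound $p_\beta\le p_\infty$, which is the wrong direction. Pointwise positivity of $u_+$ does not give a positive leading-edge coefficient. You need a common radial homogeneous subsolution, admissible for every $\beta$, together with the persistence bound of Lemma~\ref{lem:persistence}.

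\textbf{Identifying the coefficient.} It cannot be read off from kernel asymptotics. In the leading edge the linear term exceeds $u$ by a factor of order $\sqrt t$; your prefactor should be $|x|-m(t)$, not $|x|$. So the profile $\rho\e^{-\rho^2/4}$ appears only after cancellation with the absorption. The paper obtains it from a Liouville classification of entire Dirichlet Ornstein--Uhlenbeck limits, together with tightness of the Cartesian moment.

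\textbf{Equatorial matching.} This step does not work as written. At the front $z=O(1)$ and $u$ is of order one, so the jump $u_+-u_-=\beta\partial_Nu$ has no a priori smallness there. Equality of the two limiting traces is a conclusion, not an input. Moreover, a scalar product-type subsolution in a strip is not an admissible comparison function for the coupled pair. Two ingredients are needed.
\begin{enumerate}
\item Every entire limit of the scaled difference $W_D$ has zero trace at $Y=0$. This is because the local energy estimate, which does not depend on the Robin coefficient, gives $\int_{\Gamma'}w_j^2\le C_QB\e^{-\tau_j/2}$. The doubly odd Ornstein--Uhlenbeck kernel then decays like $\e^{-T/2}$, forcing $W_D\equiv0$. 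Hence both traces share the relative asymptotics $\e^zu/z\to a_\beta(\eta,0)$ at $z=t^\gamma$.
\item You need an inner comparison valid for the pair. Write $u=U_*(z-h)$ and take $\Phi_\pm=\exp(\pm M(h-\xi))$ with $M>\|U_*''/U_*'\|_\infty$. These satisfy $(\partial_t-\Delta-d\partial_z)\Phi_\pm\le Mk\Phi_\pm/t$, with no reaction term. The transmission law becomes an exchange condition $\partial_nv_i+a_i(v_i-v_{3-i})=0$ with $a_i>0$. A common scalar barrier with zero normal derivative then controls both components without any bound on $a_i$ (Lemma~\ref{lem:common-comparison}).
\end{enumerate}

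\textbf{Cone matching.} Even on cones, a fixed-shift supersolution built from $U_*$ does not close globally in time. The $k/t$ drift leaves a residual of order $t^{-1}|U_*'|$ with the wrong sign, and this is not integrable in time. The paper therefore compares only on a window $[t-t^a,t]$ with $2\gamma<a<2\delta<1$. It works in a box of length $t^\gamma$, with right-face data from the mesoscopic estimates and the other faces controlled by the coarse displacement bounds. The accumulated error is then $O(t^{a-1})$. Since the displacement equation contains no reaction divided difference, this also avoids using $f'\le f'(0)$, which \eqref{ass:K} does not provide.
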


\begin{theorem}[Joint continuity and strict resistance ordering]\label{thm:parameters}
For every $0<B<\infty$, the directional amplitude $a_\beta(\theta)$, phase $\xi_\beta(\theta)$, and arrival-time difference $\tau_\beta(\theta)$ are jointly continuous in $(\beta,\theta)$ on $[0,B]\times\overline{\Sph_+}$. If \eqref{ass:C} also holds, then for every fixed pair $0\le\beta_1<\beta_2<\infty$,
\begin{equation}\label{eq:strict-gap}
 \inf_{\theta\in\overline{\Sph_+}}\bigl(a_{\beta_1}(\theta)-a_{\beta_2}(\theta)\bigr)>0,
 \qquad
 \inf_{\theta\in\overline{\Sph_+}}\bigl(\tau_{\beta_2}(\theta)-\tau_{\beta_1}(\theta)\bigr)>0.
\end{equation}
\end{theorem}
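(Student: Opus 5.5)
The argument splits into two parts: joint continuity, which needs only \eqref{ass:K}, and the strict gap, which needs \eqref{ass:C}. For continuity we use the representation of $a_\beta(\theta)$ behind Theorem~\ref{thm:phase}: $a_\beta$ is the limit, as $t\to\infty$, of a suitably normalized critical derivative moment of the leading edge. In practice this is a moment of the form $t^{k}\e^{m(t)}\,\theta$-weighted integrals of $u_+$ against the linearized kernel. The key input is that the convergence rate in \eqref{eq:cone-wave}--\eqref{eq:eq-wave} and the H\"older bound \eqref{eq:holder-main} are uniform for $\beta\in[0,B]$. Continuity in $\beta$ at fixed finite times follows from standard well-posedness of \eqref{eq:model}. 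The Robin jump condition depends continuously on $\beta$ up to $\beta=0$, with the data in $\Omega_-$ at positive distance from $\Sigma$. So on each finite time slab the solutions $u^\beta$ converge locally uniformly as $\beta\to\beta_0$, including the approach to $\beta_0=0$ in the sense of the transmission problem.

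We then prove continuity of $a_\beta$ in $\beta$ at fixed $\theta$ by an $\varepsilon/3$ argument. First, the approximant $a_\beta(\theta;t)$ is defined by reading off the phase from $u_+(t,(m(t)+z)\theta)$, for instance by inverting $U_*$ at a fixed level $z$. Second, it is continuous in $\beta$ for each $t$. Third, it converges to $a_\beta(\theta)$ uniformly in $\beta\in[0,B]$ by the uniformity clause of Theorem~\ref{thm:phase}. A uniform limit of continuous functions is continuous, so $a_\beta(\theta)$ is continuous in $\beta$ on each cone $\Gamma_\delta$ and on the equator separately. Joint continuity on all of $\overline{\Sph_+}$ then comes from combining this with the $\beta$-uniform H\"older modulus \eqref{eq:holder-main}:
\[
|a_\beta(\theta)-a_{\beta_0}(\theta_0)|\le C_B|\theta-\theta_0|^{1/3}+|a_\beta(\theta_0)-a_{\beta_0}(\theta_0)|.
\]
Positivity and continuity of $a_\beta$ pass to $\xi_\beta=\log a_\beta$. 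They also pass to $\tau_\beta$, since positivity plus compactness of $[0,B]\times\overline{\Sph_+}$ bounds $a$ below.

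For the strict ordering we assume \eqref{ass:C}. The first goal is a comparison principle in $\beta$: $u^{\beta_2}_+\le u^{\beta_1}_+$ on $\Omega_+$ and $u^{\beta_2}_-\ge u^{\beta_1}_-$ on $\Omega_-$. This is not a plain maximum principle, because the order reverses across $\Sigma$. Following the reflected average--difference comparison of \cite{Wang}, we reflect $\Omega_+$ onto $\Omega_-$ and write the system in terms of $s=u_-+\tilde u_+$ and $d=u_--\tilde u_+$, where $\tilde u_+$ is the reflected receiving density. Concavity makes $f(u_-)-f(\tilde u_+)$ controllable, so one obtains $d\ge0$. The $\beta$-derivative $\partial_\beta u$ then satisfies a linear transmission problem whose source is $-\partial_N u\cdot(\text{jump})$, which has a definite sign. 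This gives monotonicity.

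Strictness needs quantitative positivity of the flux difference, and here we use the linearity of the leading-edge amplitude. The amplitude is, to leading order, a linear functional of the interface input, namely the integrated Robin trace $\int_0^\infty\!\!\int_\Sigma \partial_N u\,\cdots$ weighted by the critical kernel. The difference $a_{\beta_1}-a_{\beta_2}$ is then bounded below by a positive constant times the strictly positive difference of trace inputs on a fixed compact time--space window. By the strong maximum principle applied to $\partial_\beta u$, that difference is bounded below uniformly in $\theta$. Because $\theta$ ranges over the compact set $\overline{\Sph_+}$ and the kernel weight is bounded below there, including at the equator, the infimum is positive. The statement for $\tau$ follows from $\tau_{\beta_2}-\tau_{\beta_1}=\tfrac12\log(a_{\beta_1}/a_{\beta_2})$, using upper bounds on $a_{\beta_2}$.

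The main obstacle is making the strictness quantitative. A pointwise strict inequality $u^{\beta_2}_+<u^{\beta_1}_+$ at finite time does not automatically survive the nonlinear evolution to the amplitude, because nonlinear saturation could in principle erase gaps. My first attempt would be a monotone sandwich: restart the $\beta_1$ problem at time $1$ with the data $u^{\beta_2}(1)+\epsilon\phi$, where $\phi$ is a small bump supported near $\Sigma$. The comparison above gives $u^{\beta_1}\ge$ this restarted solution. I would then show that adding a positive compactly supported perturbation strictly raises the amplitude, uniformly in direction. For that I would use the derivative-moment formula and concavity, which gives $f(v+w)-f(v)\ge f'(1)\,w$-type bounds and subadditivity, ensuring the perturbation's contribution to the leading-edge moment is not annihilated.
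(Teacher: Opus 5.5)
Your joint-continuity argument is sound and takes a different route from the paper. You pass to the limit in the pointwise wave convergence of Theorem~\ref{thm:phase}, which is uniform for $\beta\in[0,B]$, use fixed-time continuity from Lemma~\ref{lem:finite-boundary-continuity}, and upgrade separate continuity to joint continuity with the $\beta$-uniform modulus \eqref{eq:holder-main}. The paper instead passes to the limit in the critical moments $E_\beta(t,\theta)$ via \eqref{eq:Etail} and $a_\beta=\kap E_\beta^\infty$, which avoids the inner matching.

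The strict-ordering part, however, has a genuine gap. To begin with, the ordering you aim for is false. The inequality $u_-^{\beta_2}\ge u_-^{\beta_1}$ cannot hold at all times: at the equator both traces converge to $U_*(z-\xi_\beta(\eta,0))$ by \eqref{eq:eq-wave}. So the very gap $a_{\beta_1}(\eta,0)>a_{\beta_2}(\eta,0)$ you want forces $u_-^{\beta_1}>u_-^{\beta_2}$ near the equatorial front for large $t$; the reversed order is only what the linearized problem gives. What concavity actually yields is the cooperative system \eqref{eq:coop} for $R=S_1-S_2$ and $Z=D_2-D_1$. Concavity enters through the sign $F_D=H_S\le0$ in \eqref{eq:derivatives}, not through $D\ge0$, which needs no sign condition. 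Hence $R,Z\ge0$ and $q_1-q_2=R+Z\ge0$, while the donor difference $p_1-p_2=R-Z$ has no sign. For the same reason, the componentwise inequality $u^{\beta_1}(1)\ge u^{\beta_2}(1)+\epsilon\phi$ that your restart requires is not available.

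More fundamentally, the amplitude is not a linear functional of the interface input. In \eqref{eq:moment-balance} the nonlinear absorption term also changes with $\beta$, and nothing controls the sign of that change. The interface term carries the factor $\theta_N$ and the sign-indefinite weight $1+\theta'\cdot x'-2t$, and it vanishes identically at the equator. So no difference of trace inputs can produce the equatorial gap, contrary to your claim that the weight is bounded below there.

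The missing idea is that $\beta$ enters only through the Robin condition of the difference mode. That mode's leading edge vanishes at the equator by \eqref{eq:eqprofiles}, so the gap must be transferred into the Neumann average mode $R$. The paper does this in Lemma~\ref{lem:Rseed}. Concavity with $f(0)=f(1)=0$ and $f'(0)=1$ forces $f''<0$ near some $s_*$. At a point where $p_1=s_*>q_1$ the coefficient $B$ is positive, so the source $BZ$ makes $R$ strictly positive.

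The even extension of $R$ then satisfies $R_t\ge\Delta R+(1-Q)R$ with $0\le Q=1-A\le C_f(p_1+q_1+p_2+q_2)$, which obeys the exterior envelope bound. Lemma~\ref{lem:persistence} then gives the direction-uniform lower bound $c\rho\e^{-\rho^2/4}$ for the normalized leading edge of $R$. Since $q_1-q_2\ge R$, this yields $a_{\beta_1}-a_{\beta_2}\ge c$ on $\overline{\Sph_+}$.

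Your proposed substitute, $f(v+w)-f(v)\ge f'(1)w$, cannot do this job. Because $f'(1)<0$, it only gives exponential decay of a perturbation. Persistence instead requires a growth rate close to $1$ in the leading edge, with a loss integrable in logarithmic time. Working with $\partial_\beta u$ also presupposes differentiability in $\beta$, which is not established; finite differences suffice.
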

Thus, under concavity, increasing the resistance strictly decreases the directional amplitude and strictly increases the arrival-time difference, uniformly in direction for each fixed pair of resistance values. The positive gaps may depend on that pair.

The front convergence above is uniform on each fixed receiving cone and, separately, on the equator. Uniform convergence over the entire closed receiving hemisphere is not asserted.

\subsection{Strong resistance and grazing directions}
To describe the strong-resistance limit, we first introduce the donor population in the absence of exchange across the interface. After reflection into $H=\R^{N-1}\times(0,\infty)$, its density $p_\infty$ solves
\[
\begin{cases}
 \partial_t p_\infty=\Delta p_\infty+f(p_\infty),&(x',y)\in H,\ t>0,\\
 \partial_y p_\infty(t,x',0)=0,&t>0,\\
 p_\infty(0,x',y)=p_0(x',y):=u_0(x',-y),&(x',y)\in H.
\end{cases}
\]
Its even extension
\[
 P(t,x',x_N)=p_\infty(t,x',|x_N|)
\]
solves the homogeneous Fisher--KPP equation on $\R^N$, with initial density $u_0(x',x_N)+u_0(x',-x_N)$. Let $a_\Neu$ denote its directional amplitude, defined relative to the same front center $m(t)$ and normalized profile $U_*$.

For $\theta=(\theta',\theta_N)\in\Sph_+$, define the weighted interface input
\begin{equation}\label{eq:input}
 I_N(\theta)=\int_0^\infty \e^{-2t}\int_{\R^{N-1}}
       \e^{\theta'\cdot x'}p_\infty(t,x',0)\dd x'\dd t.
\end{equation}
This quantity determines the leading amplitude as the resistance tends to infinity.

\begin{theorem}[Strong-resistance asymptotics with explicit coefficient]\label{thm:strong}
The function $I_N$ is positive and continuous on $\Sph_+$. For every $0<\delta\le1$, as $\beta\to\infty$,
\begin{align}
 a_\beta(\theta)&=2\kap I_N(\theta)\frac{\log\beta}{\beta}(1+o(1)),\label{eq:strong-a}\\
 \tau_\beta(\theta)&=\frac12\log\beta-\frac12\log\log\beta+C_N(\theta)+o(1),\label{eq:strong-tau}
\end{align}
uniformly for $\theta\in\Gamma_\delta$, where
\[
 C_N(\theta)=\frac12\log\frac{(4\pi)^{N/2}a_0(\theta)}{2I_N(\theta)}.
\]
\end{theorem}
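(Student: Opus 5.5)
Positivity and continuity of $I_N$ on $\Sph_+$ come first. Since $u_0\not\equiv0$, the strong maximum principle for the Neumann problem gives $p_\infty(t,x',0)>0$ for $t>0$, so $I_N>0$. For convergence, compare $p_\infty$ with the linear Neumann heat flow, which is possible because $f(s)\le s$. The even extension gives $P\le \e^{t}(G_t*(u_0+u_0(\cdot,-\cdot)))$. Its trace on $\Sigma$ is then bounded by $C\e^{t}t^{-N/2}\e^{-|x'|^2/(4t)}$ times a compactly supported factor. Integrating against $\e^{\theta'\cdot x'}$ over $\R^{N-1}$ produces $C\e^{t}\e^{|\theta'|^2t}t^{-1/2}$. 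Together with $\e^{-2t}$, the integrand is $\e^{-(1-|\theta'|^2)t}t^{-1/2}=\e^{-\theta_N^2t}t^{-1/2}$. This is integrable, locally uniformly for $\theta_N\ge\delta$, and dominated convergence gives continuity. The same computation shows the logarithmic blow-up as $\theta_N\to0$: here one also needs $P$ bounded by $1$ behind the front, and the front moves at speed $2$.

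For \eqref{eq:strong-a} I would write the receiving density as a Duhamel integral of the interface flux. Set $v=u_+$. It solves $\partial_tv=\Delta v+f(v)$ on $\Omega_+$ with Robin condition $\partial_Nv=(v-u_-)/\beta$. For large $\beta$ the flux is $\beta^{-1}u_-|_\Sigma(1+o(1))$, and $u_-\to p_\infty$ (reflected) locally uniformly as $\beta\to\infty$. I take this from an integrated Robin trace estimate, assumed to be the one developed earlier. In the linear regime the Robin half-space Green function with parameter $\beta^{-1}$ is, to leading order, the Neumann kernel $2G_t$; the factor $2$ is the image doubling. The receiving population far ahead is therefore $\beta^{-1}\int_0^t\int 2G_{t-s}(x-(y',0))\e^{t-s}u_-(s,y',0)\dd y'\dd s$, corrected by the nonlinearity. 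Evaluating at $x=(m(t)+z)\theta$ and using $m(t)=2t-k\log t$ extracts a factor $\e^{-2s}\e^{\theta'\cdot y'}$ from each source point, because $\e^{-|x-y|^2/4(t-s)}\approx \e^{-|x|^2/4t}\e^{\theta\cdot y}\e^{-2s}\cdots$. The amplitude is thus the input moment $I_N$ times $\kap$ and the doubling factor $2$.

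The remaining factor $\log\beta$ is the key point, and I expect it to be the main obstacle. A source of mass $\varepsilon=I_N/\beta$ does not produce amplitude proportional to $\varepsilon$. In the one-dimensional Bramson picture the amplitude relative to $m(t)$ is the coefficient of $x\e^{-x}$ in the leading edge, and a weak source yields a front delayed by $\frac12\log(1/\varepsilon)$ in time. Equivalently, the amplitude scales like $\varepsilon\log(1/\varepsilon)$: a small bump first grows linearly for a time $\sim\log(1/\varepsilon)$ before saturating, and the $x\e^{-x}$ coefficient picks up that duration. This is the Mytnik--Roquejoffre--Ryzhik mechanism cited in the introduction, and it was adapted in the one-dimensional paper~\cite{Wang} through a logistic comparison for weak interface input.

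My plan for this step is to run the 1D argument along each fixed direction $\theta\in\Gamma_\delta$ with Cartesian leading-edge limits. The steps are as follows.
\begin{enumerate}
\item Use upper and lower barriers built from the linear solution with Dirichlet truncation at $m(s)$. Place the truncation on the scale $\log\beta$ and squeeze $a_\beta\beta/(\kap I_N\log\beta)$ toward $2$.
\item Get the upper bound from $f(s)\le s$ plus a Dirichlet killing, which gives the $\log\beta$ factor from the linear-growth window.
\item Get the lower bound from $f(s)\ge s-C_fs^2$ via \eqref{eq:basic-g} on the region where $u\le\beta^{-1+\epsilon}$, combined with the matching through wave displacement from Theorem~\ref{thm:phase}.
\end{enumerate}
Uniformity on $\Gamma_\delta$ follows from the uniform convergence of the $I_N$ integrand established above, and from the uniformity in Theorem~\ref{thm:phase}.

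Finally, \eqref{eq:strong-tau} is algebra once \eqref{eq:strong-a} holds: substitute into $\tau_\beta=\frac12\log(a_0/a_\beta)$ from \eqref{eq:arrival}. Then $\log(1+o(1))=o(1)$ uniformly on $\Gamma_\delta$ yields $\frac12\log\beta-\frac12\log\log\beta+C_N(\theta)+o(1)$, since $a_0$ is positive and continuous on $\Gamma_\delta$.
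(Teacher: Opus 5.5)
Your treatment of $I_N$ and the passage from \eqref{eq:strong-a} to \eqref{eq:strong-tau} agree with the paper. The upper bound in \eqref{eq:strong-a}, however, has a genuine gap.

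\textbf{The upper bound.} A linear comparison using only $f(s)\le s$ is a supersolution only if there is no killing. Its Duhamel response exceeds the nonlinear leading edge at the Bramson center by a factor of order $t$ at fixed $z$. So it gives no finite amplitude, let alone the sharp one you read off from it. Inserting a Dirichlet wall on the scale $\log\beta$ makes the function smaller, so it stops being an upper bound unless you prescribe wall data dominating $u$. On a wall at $\theta\cdot x=2t-(1+\gamma_0)\log\beta$, the only resistance-independent information is the common radial envelope. It bounds the time-integrated tilted wall trace only by a power $\beta^{-c}$ with $c<1$, not by $o(\beta^{-1}\log\beta)$. Closing the estimate would require controlling the receiving density on the wall for times polynomially large in $\beta$, which is essentially the conclusion. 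Your plan does not supply this.

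\textbf{What the paper does instead.} The upper bound is an exact conservation law, not a barrier argument. Testing the even-extended receiving equation \eqref{eq:source} against the critical adjoint weights gives \eqref{eq:total-absorption} and \eqref{eq:amplitude-absorption}. The first says the total tilted absorption equals $2\varepsilon J_\varepsilon$. The second says $\kap^{-1}a_\beta$ equals the absorption weighted by $D_x=2t-\theta\cdot x$, plus $2\varepsilon K_\varepsilon=O(\varepsilon)$. Split $D_x\le A+(D_x-A)_+$ with $A=(1+\gamma_0)L$. The constant part contributes at most $2(1+\gamma_0)\varepsilon LJ_\varepsilon$. Since $(D_x-A)_+\e^{-D_x}\le\varepsilon^{1+\gamma_0}$, the excess is negligible up to $T=\varepsilon^{-\eta_0}$. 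Lemma~\ref{lem:early} uses a wall only on this bounded window and shows that later absorption is $o(\varepsilon)$. A cutoff $C_0L$ and \eqref{eq:absorption-tail} then handle the remaining times. This conservation-law step is the missing idea.

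\textbf{The lower bound.} Your sketch is closer to the paper here, but it skips a needed reduction. A moving Dirichlet half-space $\{\theta\cdot x>2t-A\}$ meets the interface. The paper avoids this by placing an even seed $\varepsilon\varphi\le\varepsilon(1-\zeta)v_S$ under both folded components at a fixed time $S$ (Proposition~\ref{prop:finite}). The homogeneous solution from that seed is then a subsolution, and Lemmas~\ref{lem:logistic-seed}--\ref{lem:small-seed} apply to it. The critical-mass identity for $v_S$ turns the seed's tilted mass into $2\int_0^S(\cdots)\to2I_N$.

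\textbf{Uniformity.} Theorem~\ref{thm:phase} is uniform only for $\beta\in[0,B]$, so it cannot give uniformity as $\beta\to\infty$. The paper takes $t\to\infty$ at fixed $\varepsilon$ before letting $\varepsilon\downarrow0$.
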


The next result describes the singular behavior of this coefficient as the direction of propagation approaches the interface.

\begin{theorem}[Grazing singularity]\label{thm:grazing}
For $\eta\in\Sph^{N-2}$ and $0<s<1$, set
\[
 \theta_s=(\sqrt{1-s^2}\,\eta,s).
\]
As $s\downarrow0$,
\begin{equation}\label{eq:grazing-I}
 I_N(\theta_s)=4(4\pi)^{(N-2)/2}a_\Neu(\eta,0)\log(1/s)(1+o(1)),
\end{equation}
and consequently
\begin{equation}\label{eq:grazing-C}
 C_N(\theta_s)=-\frac12\log\log(1/s)
       +\frac12\log\frac{\pi a_0(\eta,0)}{2a_\Neu(\eta,0)}+o(1).
\end{equation}
Both asymptotic formulas hold uniformly for $\eta\in\Sph^{N-2}$.
\end{theorem}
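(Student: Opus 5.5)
The plan is to reduce \eqref{eq:grazing-I} to the large-time behavior of the weighted slice integral
\[
 J_s(t)=\e^{-2t}\int_{\R^{N-1}}\e^{\theta_s'\cdot x'}P(t,x',0)\dd x',\qquad I_N(\theta_s)=\int_0^\infty J_s(t)\dd t,
\]
using $p_\infty(t,x',0)=P(t,x',0)$. Here $P$ solves the homogeneous equation on $\R^N$ with amplitude $a_\Neu$, and $|\theta_s'|=\sqrt{1-s^2}=1-s^2/2+O(s^4)$. A heuristic computation shows where the logarithm comes from. In the Bramson zone one has $P(t,(m(t)+z)\omega)\approx a_\Neu(\omega)\,z\e^{-z}\e^{-z^2/(4t)}$ for $z\gg1$ and $\omega$ near the equator. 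Then:
\begin{itemize}
\item the factor $\e^{-2t+|\theta_s'|(m+z)}$ contributes $t^{-k}\e^{-ts^2}\e^{z}$ up to $1+o(1)$;
\item the Gaussian integral over the $(N-2)$ transverse angular directions about $\eta$ contributes $(4\pi t)^{(N-2)/2}$ up to normalization;
\item the integral $\int z\e^{-z^2/(4t)}\dd z$ contributes a factor $2t$.
\end{itemize}
With $k=(N+2)/2$ this gives
\[
 J_s(t)\approx 2(4\pi)^{(N-2)/2}a_\Neu(\eta,0)\,\frac{\e^{-ts^2}}{t}.
\]
Integrating over $1\ll t\lesssim s^{-2}$ yields $\log(s^{-2})=2\log(1/s)$, which is exactly the constant in \eqref{eq:grazing-I}.

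To make this rigorous, I will split $\int_0^\infty J_s\dd t$ into three ranges: $[0,T]$, $[T,Ms^{-2}]$ and $[Ms^{-2},\infty)$.

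\textbf{The range $[0,T]$.} For fixed $T$ the contribution is bounded uniformly in $s$ and $\eta$. This follows because $P(t,\cdot)$ has Gaussian decay on finite time intervals and $|\theta_s'|\le1$.

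\textbf{Global upper bound.} For all large $t$, I will prove
\[
 J_s(t)\le C\,t^{-1}\e^{-ts^2/4}.
\]
This needs a uniform pointwise envelope for $P$:
\[
 P(t,x)\le C\min\Bigl\{1,\ \bigl(1+(|x|-m(t))_+\bigr)\e^{-(|x|-m(t))}\e^{-(|x|-m(t))^2/(Ct)}\Bigr\}.
\]
I take this to be the standard supersolution bound behind the Cartesian leading-edge limits already used for Theorem~\ref{thm:phase}; it is the homogeneous case $\beta=0$ with symmetric data. Two contributions must be controlled under this envelope. The interior region $|x'|\le m(t)$ contributes at most $\e^{-2t+|\theta_s'|m}t^{N-2}\lesssim t^{(N-6)/2}\e^{-ts^2}$ in the critical shell; one must check it stays $O(t^{-1}\e^{-ts^2/4})$, where the extra $t^{-1}$ uses the Gaussian in the angular variable, since $\theta_s'\cdot x'=|\theta_s'||x'|\cos\phi$. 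The exterior region $|x'|\ge m(t)$ is handled by the envelope directly. Integrating the bound over $[Ms^{-2},\infty)$ gives $O(1)$, and on $[T,\infty)$ it gives $O(\log(1/s))$, which is needed for domination.

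\textbf{The range $[T,Ms^{-2}]$.} I will prove the matching asymptotic
\[
 t\,\e^{ts^2}J_s(t)\to 2(4\pi)^{(N-2)/2}a_\Neu(\eta,0),
\]
uniformly for $\eta\in\Sph^{N-2}$ as $t\to\infty$, with $ts^2\le M$. The argument has three steps:
\begin{itemize}
\item Use polar coordinates $x'=r\omega'$ and write $r=m(t)+z$.
\item Insert the leading-edge limit $P(t,(m+z)\omega)\,\e^{z}\e^{z^2/(4t)}/z\to a_\Neu(\omega)$. It holds uniformly for $1\ll z\ll t$ and $\omega$ near the equator, and I assume it from the Cartesian leading-edge lemma applied at $\beta=0$. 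By the equatorial continuity of $a_\Neu$ (Theorem~\ref{thm:phase}), replace $a_\Neu(\omega)$ by $a_\Neu(\eta,0)$.
\item Apply dominated convergence using the envelope. The factor $\e^{-ts^2}$ comes out exactly up to $\e^{O(s^2(\log t+z))}=1+o(1)$, since $ts^2\le M$.
\end{itemize}
Summing the three ranges then gives
\[
 I_N(\theta_s)=2(4\pi)^{(N-2)/2}a_\Neu(\eta,0)\bigl(\log(Ms^{-2})-\log T+o(\log(1/s))\bigr)+O(1).
\]
Letting $s\downarrow0$ yields \eqref{eq:grazing-I}, and uniformity in $\eta$ follows from the uniformity of each step.

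\textbf{Formula \eqref{eq:grazing-C}.} Substitute \eqref{eq:grazing-I} into the definition of $C_N$ and use joint continuity of $a_0$ up to the equator (Theorem~\ref{thm:parameters} at $\beta=0$), which gives $a_0(\theta_s)\to a_0(\eta,0)$ uniformly. The constants reduce as
\[
 \frac{(4\pi)^{N/2}}{2\cdot4(4\pi)^{(N-2)/2}}=\frac{\pi}{2},
\]
and $\log\log(1/s)$ appears with coefficient $-\tfrac12$.

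\textbf{Main obstacle.} I expect the hardest step to be the uniform envelope, together with the matching over the intermediate range $t\sim s^{-2}$. There the relevant radii $|x'|\approx m(t)+O(\sqrt t)$ straddle the Bramson zone, and one must show that neither the saturated interior nor the far Gaussian tail changes the $t^{-1}$ coefficient. My first approach would be to control the interior using $P\le1$ and the angular Gaussian, and the exterior by comparing with the linearized heat flow $\e^{t}\Phi*P(t_0)$ started at a large $t_0$.
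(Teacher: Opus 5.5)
Your proposal is correct and follows essentially the paper's own route. Both arguments use polar coordinates on $\Sigma$ with angular Laplace asymptotics, and the leading-edge limit of $P$ with the Gaussian envelope as dominating function; this is the paper's tangential Laplace lemma, written with $2\lambda=2(1-\sqrt{1-s^2})$ in place of your $s^2$. Both then split the time integral at a fixed $T$ and at $t\sim s^{-2}$ (the paper uses $1/\lambda$), and finish by substituting into $C_N$.

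One small correction. The relative limit you quote as uniform for $1\ll z\ll t$ is more than the paper proves or than you need. Your dominated-convergence step only requires convergence of $t^{-1/2}\e^{\rho\sqrt t}P\bigl(t,(m(t)+\rho\sqrt t)\omega\bigr)$ to $a_\Neu(\omega)\rho\e^{-\rho^2/4}$, locally uniformly in $\rho\in(0,\infty)$ and uniformly in $\omega$, together with the envelope. Also, since your error on $[T,Ms^{-2}]$ is $o_T(1)\log(1/s)$, you should let $T\to\infty$ after $s\downarrow0$.
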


For exactly tangential directions, the amplitude has a positive limit as the resistance tends to infinity, and the arrival-time difference remains finite.

\begin{theorem}[Equatorial saturation and noncommuting limits]\label{thm:saturation}
As $\beta\to\infty$, uniformly for $\eta\in\Sph^{N-2}$,
\begin{equation}\label{eq:saturation}
\begin{aligned}
 a_\beta(\eta,0)&\longrightarrow\frac12a_\Neu(\eta,0),\\
 \tau_\beta(\eta,0)&\longrightarrow
 \tau_*^{\rm eq}(\eta):=\frac12\log\frac{2a_0(\eta,0)}{a_\Neu(\eta,0)}.
\end{aligned}
\end{equation}
If \eqref{ass:C} also holds, then $\tau_\beta(\eta,0)$ increases strictly to this limit as $\beta$ increases, and
\begin{equation}\label{eq:saturation-range}
 0<\tau_*^{\rm eq}(\eta)<\frac12\log2.
\end{equation}
In particular, for each fixed $\eta\in\Sph^{N-2}$, with $\theta_s$ as above,
\begin{align*}
 \lim_{s\downarrow0}\lim_{\beta\to\infty}a_\beta(\theta_s)&=0,
 &\lim_{\beta\to\infty}\lim_{s\downarrow0}a_\beta(\theta_s)&=\tfrac12a_\Neu(\eta,0),\\
 \lim_{s\downarrow0}\lim_{\beta\to\infty}\tau_\beta(\theta_s)&=+\infty,
 &\lim_{\beta\to\infty}\lim_{s\downarrow0}\tau_\beta(\theta_s)&=\tau_*^{\rm eq}(\eta).
\end{align*}
\end{theorem}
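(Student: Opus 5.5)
The plan has three parts: identify the limit of the equatorial amplitude, deduce the monotonicity and the range bounds under concavity, and then read off the iterated limits.

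\textbf{Step 1: the limit $a_\beta(\eta,0)\to\tfrac12a_\Neu(\eta,0)$.} I would use the average--difference decomposition along the interface. Set $w=\tfrac12(u_-(t,x',-y)+u_+(t,x',y))$ and $d=\tfrac12(u_-(t,x',-y)-u_+(t,x',y))$ on $H$. The flux condition gives $\partial_yw=0$ at $y=0$ exactly, for every $\beta$. The jump condition gives a Robin condition for $d$ with rate $2/\beta$. Both $w$ and $d$ start from $\tfrac12p_0$.

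\emph{Reduction to a trace limit.} By \eqref{eq:eq-wave}, both traces $u_\pm$ on $\Sigma$ carry the amplitude $a_\beta(\eta,0)$. So it suffices to show that along equatorial rays $u_+\approx\tfrac12 P$ in the leading-edge normalization as $\beta\to\infty$, uniformly in $\eta$. Equivalently, $d(t,x',0)\approx w(t,x',0)\approx \tfrac12 p_\infty$ there.

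\emph{Linearised half-space heat problems.} In the leading edge, after removing $\e^{t}$, $w$ solves a Neumann problem and $d$ a Robin problem. The nonlinear corrections are controlled via \eqref{eq:basic-g} by the critical derivative moments, uniformly in $\beta$. For $\beta$ large, the Robin kernel on the trace is within $O(\beta^{-1}\cdot\text{(log factor)})$ of the Neumann kernel for tangential transport. The Robin correction is felt only through $y$-displacements of order $\sqrt t$ against rate $1/\beta$. Along tangential directions it produces a relative error $\int \beta^{-1}(\cdot)$ that decays, since the Gaussian in $x_N$ is centered at the interface.

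\emph{Passing to the limit in the amplitude.} I would invoke the integrated Robin trace estimate and the matching-through-wave-displacement representation of $a_\beta$, which writes $a_\beta$ as a limit of a weighted integral of the leading edge. The joint continuity argument of Theorem~\ref{thm:parameters}, extended to $\beta=\infty$ on the equator, then gives $a_\beta(\eta,0)\to \tfrac12 a_\Neu(\eta,0)$. The formula for $\tau_*^{\rm eq}$ follows from \eqref{eq:arrival}.

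\textbf{Step 2: monotonicity and the range \eqref{eq:saturation-range}.} Strict increase in $\beta$ is Theorem~\ref{thm:parameters} restricted to the equator, and the limit is $\tau_*^{\rm eq}$. Hence $\tau_\beta(\eta,0)<\tau_*^{\rm eq}$ for every finite $\beta$, and $\tau_*^{\rm eq}>\tau_0=0$.

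For the upper bound $\tau_*^{\rm eq}<\tfrac12\log 2$, I need $a_\Neu<2a_0$. Under \eqref{ass:C}, $s\mapsto f(s)$ is subadditive on $[0,1]$. So $u^0(t,x)+u^0(t,x',-x_N)$ is a supersolution whose initial data equal the data of $P$. Comparison gives $P\le 2u^0$ on the equator, where the two terms coincide. Strictness comes from strict subadditivity somewhere, propagated by the strong maximum principle to a positive gap in the leading-edge amplitude; I would obtain that gap via a linear correction with a fixed positive weighted mass.

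\textbf{Step 3: iterated limits.} The $\beta\to\infty$ first, then $s\downarrow0$ limits follow from Theorem~\ref{thm:strong} combined with \eqref{eq:grazing-I}: $a_\beta(\theta_s)\to0$ for each fixed $s$ since $(\log\beta)/\beta\to0$, and $\tau_\beta(\theta_s)\to\infty$. The other order uses joint continuity (Theorem~\ref{thm:parameters}) in $s$ at fixed $\beta$, and then Step 1.

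\textbf{Main obstacle.} The hardest part is the uniform-in-$\beta$ control of the Robin correction to the equatorial trace in Step 1. There the logarithmic divergence of $I_N$ shows that no cone-type expansion applies. I would first try to bound $|d-w|$ on the interface by a Robin--Neumann kernel difference integrated against the critical derivative moments, showing the contribution is $O((\log\beta)/\beta)$ relative to $w$ along equatorial rays.
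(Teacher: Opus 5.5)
\textbf{Step 1: the mechanism you sketch does not work.} The amplitude $a_\beta$ is defined by $t\to\infty$ at fixed $\beta$. On that time scale the Robin kernel is within $C/(\kappa\sqrt t)$ of the \emph{Dirichlet} kernel, with $\kappa=2/\beta$ (see the proof of Lemma~\ref{lem:finite-boundary-continuity}). Once $t\gg\beta^2$, the difference mode is therefore effectively Dirichlet, not Neumann. No Robin--Neumann kernel comparison can give a relative error $O((\log\beta)/\beta)$ uniformly in time.

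In fact, for every finite $\beta$ the normalized difference $W_D$ tends to zero along the equator, by \eqref{eq:eqprofiles}. So your ``equivalently $d\approx w\approx\frac12p_\infty$'' is false in the leading edge. It also contradicts your own target $u_+\approx\frac12P$, since the receiving trace is $w-d$.

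Lemma~\ref{lem:energytrace} does not help either: its constant degrades linearly in $B$ through $\kappa\ge2/B$. Nor can you ``extend joint continuity of the amplitude to $\beta=\infty$''. The trace amplitudes jump there: the receiving trace vanishes and the donor trace has amplitude $a_\Neu(\eta,0)$.

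\textbf{The missing idea} is that the continuous object at $\beta=\infty$ is the total critical moment. At $\theta=(\eta,0)$, the interface term in \eqref{eq:moment-balance} carries the factor $\theta_N=0$. Hence $\partial_tE_\beta$ is pure absorption, and \eqref{eq:absorption-tail} gives $|E_\beta(t,(\eta,0))-E_\beta^\infty(\eta,0)|\le Ct^{-3/8}(1+\log t)$, uniformly for $\beta\in[0,\infty]$. Combined with finite-time continuity up to $\beta=\infty$, this lets you interchange the two limits and gives $E^\infty_\beta(\eta,0)\to E^\infty_\infty(\eta,0)$. At $\beta=\infty$ we have $u_+\equiv0$, $P$ is even, and the equatorial weight is independent of $x_N$. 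This gives the exact identity $E_\infty(t,(\eta,0))=\frac12E_P(t,(\eta,0))$. Then $a_\beta=c_NE_\beta^\infty$ gives the limit $\frac12a_\Neu(\eta,0)$, uniformly in $\eta$.

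\textbf{Step 2: the two inequalities are swapped.} Since $\tau_*^{\rm eq}=\frac12\log(2a_0/a_\Neu)$, the bound $\tau_*^{\rm eq}<\frac12\log2$ is equivalent to $a_\Neu(\eta,0)>a_0(\eta,0)$. By contrast, $a_\Neu<2a_0$ is equivalent to $\tau_*^{\rm eq}>0$, which your monotonicity argument already gives: $\frac12a_\Neu\le a_{\beta_1}<a_0$. Your subadditivity comparison $P\le2u^0$ therefore only reproves the lower bound, non-strictly. The upper bound is never addressed.

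What is needed is the reverse comparison. First, $P\ge u^0$, since the initial data differ by the reflected seed. The difference satisfies $(P-u^0)_t=\Delta(P-u^0)+[1-c_g(P,u^0)](P-u^0)$ with $0\le c_g(P,u^0)\le C_f(P+u^0)$. The sign $c_g\ge0$ uses $f'\le f'(0)$, which follows from \eqref{ass:C}. Lemma~\ref{lem:persistence} then gives a positive leading-edge gap, i.e.\ the strict inequality $a_\Neu(\eta,0)>a_0(\eta,0)$.

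Step 3 is correct. You do not need \eqref{eq:grazing-I} there: Theorem~\ref{thm:strong} applied at each fixed $s>0$ already suffices.
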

In these statements, the amplitudes and arrival-time differences are first defined through the large-time and large-distance limits at each finite resistance. The limit $\beta\to\infty$ is taken only afterward. At $\beta=\infty$, the receiving half-space is completely isolated and its initially zero density remains identically zero.

\subsection{Proof strategy}
The proof begins with a common radial Gaussian
envelope, obtained from a linear supersolution
adapted from Roquejoffre, Rossi and
Roussier-Michon~\cite{RRR}; the coefficient and
comparison estimates are verified in
Appendix~\ref{app:linear}.
Reflecting the donor solution places both
densities on the same half-space, where their
average and difference satisfy Neumann and Robin
boundary conditions. An integrated Robin
dissipation estimate controls the interface
contribution to the critical derivative moment,
yielding uniform moment convergence and angular
H\"older continuity. Cartesian rescaling then
identifies the leading-edge amplitude. At the
equator, the average retains the Neumann mode.
The limiting difference satisfies Dirichlet
conditions in both the longitudinal and normal
variables; semigroup decay and the uniform Gaussian
bound force every entire difference limit to vanish.

Relative mesoscopic estimates connect the
diffusive leading edge to the nonlinear front.
For the radial comparisons, we extend the
expanding-interval energy argument of
\cite[Lemma 4.1]{Wang} to a space-dependent
$O(t^{-1})$ drift in Appendix~\ref{app:energy}.
The resulting coarse bounds control the displacement
$h$ defined by $u=U_*(z-h)$.
Two exponential transforms of $h$ satisfy linear
subsolution inequalities, while their transformed
interface conditions retain positive exchange
coefficients. Boundary-influence and initial-data decay
estimates, together with a common-supersolution
comparison, yield wave convergence on receiving
cones and at the equator. This matching argument
requires no assumption that $f'\le f'(0)$.

Under concavity, the cooperative comparison of
\cite[Proposition 5.1]{Wang}, combined with
persistence of a positive leading edge, gives
strict resistance ordering.
For strong resistance, we use the even receiving
extension with a positive interface source,
as in \cite[Section 7.3]{Wang}.
The directional projections of this interface source
are positive measures,
and the associated critical mass balance gives
total absorption asymptotic to $2I_N/\beta$.
A multidimensional small-seed estimate, with the
logistic scaling cancellation of
\cite[Section 7.2]{Wang}, provides the lower
amplitude bound. An estimate of the centered
absorption moment gives the matching upper bound.
Finally, tangential Gaussian integration yields
the grazing coefficient, while the
resistance-independent equatorial moment tail
gives the equatorial saturation limit.

\section{Comparison, kernels, and radial bounds}\label{sec:prelim}
\subsection{Well-posedness and folding}
For $0<\beta<\infty$, the diffusion operator is associated with the nonnegative closed quadratic form
\begin{equation}\label{eq:form}
 \mathfrak a_\beta(v,v)=\int_{\Omega_-}|\nabla v_-|^2+
 \int_{\Omega_+}|\nabla v_+|^2+\frac1\beta\int_\Sigma|v_+-v_-|^2
\end{equation}
with domain $H^1(\Omega_-)\oplus H^1(\Omega_+)$ in $L^2(\Omega_-)\oplus L^2(\Omega_+)$. Integration by parts recovers the transmission conditions in \eqref{eq:model}. If $\Phi$ is a normal contraction, that is, $\Phi(0)=0$ and $|\Phi(a)-\Phi(b)|\le|a-b|$, then
\[
 \mathfrak a_\beta(\Phi(v),\Phi(v))\le\mathfrak a_\beta(v,v),
\]
where $\Phi$ acts on each component. The trace theorem and the
coercivity of $\mathfrak a_\beta+\|\cdot\|_2^2$ on the form domain
show that the form is closed. The closed-form and contraction
criteria in \cite[Chapters~1--2]{Ouhabaz2005} therefore apply.
Consequently, the associated diffusion semigroup is positivity preserving and contractive on $L^2$ and $L^\infty$.

Extend $f$ from $[0,1]$ to a globally Lipschitz function on $\R$. Choose the extension with $f(0)=f(1)=0$. Its Nemytskii map
is globally Lipschitz on $L^2$, so the semilinear semigroup
theory of \cite[Chapter~6]{Pazy1983} gives a unique mild solution.
Testing with the negative part and with $(u-1)_+$ yields
$0\le u\le1$; these truncations belong to the form domain
at positive times, so this argument does not require the
constant function $1$ to belong to $L^2$. Hence the solution is global and independent of the chosen extension. More generally, testing the difference of two solutions with its positive part and using the Lipschitz continuity of $f$ gives the comparison principle. For each fixed resistance, parabolic regularity yields classical
solutions for positive time up to the interface on each side. At $\beta=0$, the form domain is $H^1(\R^N)$ and the diffusion is homogeneous; at $\beta=\infty$, the jump term is omitted and the two Neumann problems decouple.

To place both densities on $H=\R^{N-1}\times(0,\infty)$, reflect the donor solution and set
\[
 p_\beta(t,x',y)=u_-(t,x',-y),\qquad
 q_\beta(t,x',y)=u_+(t,x',y).
\]
Define their average and difference by
\[
 S_\beta=\frac{p_\beta+q_\beta}{2},\qquad D_\beta=\frac{p_\beta-q_\beta}{2}.
\]
We suppress the subscript $\beta$ when no confusion can arise. Let $n=-e_N$ be the outward unit normal to $H$, and define
\[
 c_g(p,q)=\int_0^1g'(q+s(p-q))\dd s.
\]
Then
\[
 g(p)-g(q)=c_g(p,q)(p-q),\qquad |c_g(p,q)|\le C_f(p+q),
\]
by \eqref{eq:basic-g}. The reflected equations become
\begin{equation}\label{eq:fold}
\begin{cases}
 S_t=\Delta S+S-\dfrac{g(S+D)+g(S-D)}2,&\partial_nS=0,\\[2mm]
 D_t=\Delta D+[1-c_g(p,q)]D,&\partial_nD+\kappa D=0,
\end{cases}\qquad\kappa=2/\beta,
\end{equation}
with the evolution equations posed in $H$ and the boundary conditions on $\partial H$. Initially, $S=D=p_0/2$. At $\beta=0$, the condition on $D$ is Dirichlet; at $\beta=\infty$, it is Neumann.

The folding reduces fixed-parameter boundary regularity to
scalar Neumann and Robin problems with locally Lipschitz
reaction terms. Local parabolic regularity and bootstrapping
apply on positive-time cylinders; see \cite{Lieberman1996}.
The strong maximum principle and boundary point lemma below
are used in this same scalar setting. Estimates uniform as
$\kappa\to\infty$ are proved separately, rather than inferred
from fixed-coefficient boundary regularity.

Since $1-c_g(p,q)$ is bounded, the scalar comparison principle gives $D\ge0$, and hence $p\ge q$, without any sign assumption on $c_g$. For every finite resistance and positive time, both densities are strictly positive. The strong maximum principle gives $D>0$ in $H$; for $0<\beta<\infty$, the boundary point lemma also gives a strictly positive trace of $D$. Finally, the reflected donor density satisfies
\[
 \partial_n p_\beta=-\kappa D_\beta\le0\qquad(0<\beta<\infty).
\]
Comparison with the Neumann solution having the same initial density therefore yields $p_\beta\le p_\infty$. At $\beta=0$, the same inequality follows by comparison of the homogeneous solutions with initial densities $u_0(x)$ and $u_0(x',x_N)+u_0(x',-x_N)$, respectively.

\subsection{Robin kernels and finite-time continuity}
Let
\[
 G_t^d(z)=(4\pi t)^{-d/2}\exp\!\left(-\frac{|z|^2}{4t}\right)
\]
be the heat kernel on $\R^d$. For $0\le\kappa<\infty$, the heat kernel on $(0,\infty)$ with boundary condition $\partial_yv(t,0)=\kappa v(t,0)$ is
\begin{equation}\label{eq:robin}
 r_\kappa(t,y,z)=G_t^1(y-z)+G_t^1(y+z)
 -2\kappa\int_0^\infty\e^{-\kappa a}G_t^1(y+z+a)\dd a.
\end{equation}
Formula~\eqref{eq:robin} is the half-line heat-kernel formula
in \cite[Section~III.C, equation~(3.18)]{BF2005}, after a change
of integration variable. We derive the bounds needed here
explicitly, including their dependence on $\kappa$.
Writing
\[
 r_{\mathrm D}(t,y,z)=G_t^1(y-z)-G_t^1(y+z),\qquad
 r_{\mathrm N}(t,y,z)=G_t^1(y-z)+G_t^1(y+z),
\]
we have $0\le r_{\mathrm D}\le r_\kappa\le r_{\mathrm N}$. The case $\kappa=0$ gives the Neumann kernel, while $r_\kappa\to r_{\mathrm D}$ as $\kappa\to\infty$.

Integration by parts gives the positive representation
\begin{equation}\label{eq:robin-positive}
 r_\kappa(t,y,z)=r_{\mathrm D}(t,y,z)
 +\int_0^\infty\e^{-\kappa a}\frac{y+z+a}{t}G_t^1(y+z+a)\dd a.
\end{equation}
For $\kappa>0$, it follows that
\begin{equation}\label{eq:robin-bound}
 r_\kappa(t,y,z)\le\frac{yz+(y+z)/\kappa+\kappa^{-2}}{t}G_t^1(y-z).
\end{equation}
Indeed,
\[
 r_{\mathrm D}(t,y,z)=G_t^1(y-z)(1-\e^{-yz/t})
 \le\frac{yz}{t}G_t^1(y-z),
\]
and $G_t^1(y+z+a)\le G_t^1(y-z)$ for $y,z,a\ge0$. Integrating $\e^{-\kappa a}(y+z+a)$ then yields the remaining terms.

The corresponding heat kernel on $H$ is
\[
 G_t^{N-1}(x'-z')\,r_\kappa(t,y,z).
\]
Together with the Neumann kernel for the average, formula~\eqref{eq:robin} gives strong $L^2$ continuity of the folded diffusion semigroups with respect to $\beta$. Here $\kappa=2/\beta$, with $\beta=0$ corresponding to the Dirichlet limit and $\beta=\infty$ to the Neumann limit for the difference.

For the fixed initial density in \eqref{eq:data}, Duhamel's formula for \eqref{eq:fold} and Gronwall's inequality therefore give
\[
 u^{\beta_j}\longrightarrow u^\beta
 \quad\text{in }C([0,T];L^2(\Omega_-)\oplus L^2(\Omega_+))
\]
for every $T<\infty$ whenever $\beta_j\to\beta\in[0,\infty]$. The nonlinear terms are Lipschitz on the invariant region $0\le S\pm D\le1$. 

We next strengthen this convergence to uniform
convergence of the densities, including their
separate interface traces.

\begin{lemma}[Finite-time continuity up to the interface]
\label{lem:finite-boundary-continuity}
Let $\beta_j\to\beta\in[0,\infty]$, and let
$0<t_*<T<\infty$. Then
\[
 \sup_{t_*\le t\le T}
 \left(
 \|p_{\beta_j}(t)-p_\beta(t)\|_{L^\infty(\overline H)}
 +\|q_{\beta_j}(t)-q_\beta(t)\|_{L^\infty(\overline H)}
 \right)\longrightarrow0.
\]
At $\beta=\infty$, the limiting pair is
$(p_\infty,0)$. At $\beta=0$, the limiting pair
is obtained by folding the homogeneous solution,
and its two interface traces agree.
\end{lemma}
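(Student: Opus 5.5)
We upgrade the $L^2$ convergence in $C([0,T];L^2)$ obtained above to uniform convergence on $[t_*,T]\times\overline H$. We work with the folded pair $(S_{\beta_j},D_{\beta_j})$ and recover $p=S+D$ and $q=S-D$ at the end. The difficulty is that the Robin coefficient $\kappa_j=2/\beta_j$ varies, and may tend to $\infty$ when $\beta=0$. We therefore avoid boundary Schauder estimates that depend on $\kappa$. Instead we use the explicit kernels $G^{N-1}_t\,r_{\mathrm N}$ for $S$ and $G^{N-1}_t\,r_{\kappa_j}$ for $D$ through Duhamel's formula:
\[
 D_{\beta_j}(t)=\mathcal R_{\kappa_j}(t)\tfrac{p_0}{2}
 +\int_0^t\mathcal R_{\kappa_j}(t-s)\bigl([1-c_g]D_{\beta_j}\bigr)(s)\dd s,
\]
and similarly for $S$ with the Neumann semigroup. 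Here $\mathcal R_\kappa$ denotes the Robin semigroup on $H$.

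\textbf{Step 1: the linear parts.} The term $\mathcal R_{\kappa_j}(t)p_0/2$ converges to $\mathcal R_\kappa(t)p_0/2$ uniformly on $[t_*,T]\times\overline H$, including $\kappa=0$ (Neumann) and $\kappa=\infty$ (Dirichlet). This follows from formula~\eqref{eq:robin-positive}. The integrand $\e^{-\kappa a}\frac{y+z+a}{t}G_t^1(y+z+a)$ is dominated by $\frac{y+z+a}{t}G^1_t(y+z+a)$, which is integrable in $a$ uniformly for $t\ge t_*$, and the integrand converges pointwise in $\kappa\in[0,\infty]$. By dominated convergence the kernels converge in $L^1_z$, uniformly in $y\ge0$ and $t\in[t_*,T]$. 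Since $p_0$ is bounded, this gives uniform convergence. The Neumann part for $S$ does not depend on $\beta$.

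\textbf{Step 2: the nonlinear parts.} The Duhamel integrals are handled by a Gronwall argument in $L^\infty$. Write $e_j(t)=\|S_{\beta_j}-S_\beta\|_\infty+\|D_{\beta_j}-D_\beta\|_\infty$. The nonlinearities are Lipschitz on the invariant region $0\le S\pm D\le1$, and every $\mathcal R_\kappa$ is an $L^\infty$ contraction because $0\le r_\kappa\le r_{\mathrm N}$. This gives
\[
 e_j(t)\le \varepsilon_j(t)+C\int_0^t e_j(s)\dd s .
\]
The term $\varepsilon_j(t)$ collects the linear mismatch from Step 1 together with $\int_0^t\|(\mathcal R_{\kappa_j}-\mathcal R_\kappa)(t-s)F_\beta(s)\|_\infty\dd s$, where $F_\beta=[1-c_g(p_\beta,q_\beta)]D_\beta$ is the limit forcing.

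\textbf{Step 3 (main obstacle): near $t=0$.} Step 1's kernel convergence degenerates as $t-s\to0$, and $e_j$ need not be small near $t=0$. I would treat this as follows.

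1. Run the Gronwall estimate from a small time $\sigma$ rather than from $0$. Because $\|e_j\|_\infty\le2$, the contribution of $[0,\sigma]$ is at most $2C\sigma$.
2. For the mismatch term, split $\int_0^t$ into the pieces $t-s<\sigma$ and $t-s\ge\sigma$. On $t-s<\sigma$ the contribution is bounded by $2\sigma$. On $t-s\ge\sigma$ the kernel convergence is uniform. We also need the family $\{F_\beta(s)\}$ to be bounded, which it is, since $|F_\beta|\le C$.
3. The convergence in Step 1 holds uniformly over functions with a uniform $L^\infty$ bound. This is true because the $L^1_z$ kernel convergence is uniform in $y$.

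Hence $\limsup_j\sup_{[t_*,T]}e_j\le C\sigma\e^{CT}$. Letting $\sigma\to0$ gives the claim, and no Gaussian tail issue arises because the bounds are sup norms. The $L^2$ convergence established earlier is then not even needed.

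\textbf{Step 4: the limiting pairs.} At $\beta=\infty$ we have $\kappa=0$, so $D$ and $S$ solve the same Neumann problem with the same data. By uniqueness $D=S$, hence $q=0$ and $p=p_\infty$. At $\beta=0$ we have a Dirichlet condition on $D$, so the traces satisfy $D=0$ on $\partial H$, that is, $p=q$ there. The folding of the homogeneous solution satisfies the same system, and uniqueness identifies the two. Continuity up to $\partial H$ of the limiting $D$ in the Dirichlet case follows from the Dirichlet kernel representation.
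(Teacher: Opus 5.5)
Your proof is correct, but it propagates the error differently from the paper. The paper never runs a Gronwall estimate in $L^\infty$. For $t\in[t_*,T]$ it writes the mild formula only on the short window $[t-h,t]$ and bounds the recent-time integrals by $Ch$. It then splits the initial term as $\mathsf S_{\beta_j}(h)(\mathbf U_{\beta_j}-\mathbf U_\beta)(t-h)+(\mathsf S_{\beta_j}(h)-\mathsf S_\beta(h))\mathbf U_\beta(t-h)$. The first piece is handled by the $\kappa$-uniform smoothing bound $\|\mathsf T_\kappa(h)\|_{L^2\to L^\infty}\le Ch^{-N/4}$ together with the previously established $C([0,T];L^2)$ convergence. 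The second is handled by the same $L^\infty\to L^\infty$ operator convergence you derive in Step 1.

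You instead keep Duhamel from $t=0$, close the Gronwall inequality directly in $L^\infty$, and absorb the initial layer with the trivial bound $e_j\le2$. This makes the lemma self-contained: it needs neither the $L^2$ Gronwall argument (with its strong $L^2$ continuity of the folded semigroups in $\beta$) nor the ultracontractive bound. The price is that you need operator convergence uniformly on $[\sigma,T]$ rather than at a single time $h$, which your kernel computation supplies anyway. The paper's route, by contrast, reuses what was already proved and isolates the boundary difficulty in one fixed-time operator estimate.

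Three presentation points:
\begin{itemize}
\item In Step~3.1, say explicitly that you do not restart the mild formula at $\sigma$. Restarting would require $L^\infty$-closeness of $\mathbf U_{\beta_j}(\sigma)$ and $\mathbf U_\beta(\sigma)$, which is exactly what is unknown. You only split $\int_0^t e_j=\int_0^\sigma e_j+\int_\sigma^t e_j$ inside the Gronwall inequality.
\item The uniformity in $y$ in Step~1 does not come from dominated convergence itself. It comes from the explicit bound $\int_0^\infty|r_{\kappa_j}-r_\kappa|\dd z\le 2\int_0^\infty|\e^{-\kappa_ja}-\e^{-\kappa a}|\,G^1_t(a)\dd a$, whose right-hand side is independent of $y$ and tends to zero uniformly for $t\ge\sigma$.
\item In Step~4, ``$D$ and $S$ solve the same Neumann problem'' should read: the diagonal $\{S=D\}$ is invariant for the folded system with $\kappa=0$, so by uniqueness $(S,D)=(p_\infty/2,p_\infty/2)$.
\end{itemize}
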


\begin{proof}
Let $\mathsf T_\kappa(t)$ denote the heat semigroup
on $H$ with Robin coefficient $\kappa$.
We use the endpoint conventions
\[
 \mathsf T_0=\mathsf T_{\mathrm N},
 \qquad
 \mathsf T_\infty=\mathsf T_{\mathrm D}.
\]
The kernel inequalities
$0\le r_\kappa\le r_{\mathrm N}$ imply, uniformly
for $\kappa\in[0,\infty]$,
\[
 \|\mathsf T_\kappa(t)\|_{L^\infty\to L^\infty}\le1,
 \qquad
 \|\mathsf T_\kappa(t)\|_{L^2\to L^\infty}
 \le C_Nt^{-N/4}.
\]
These estimates include evaluation on $y=0$,
using the continuous positive-time representatives.

We first verify operator convergence away from
time zero. For $0<\kappa<\infty$, positivity and
\eqref{eq:robin-positive} give
\begin{align*}
 &\sup_{y\ge0}\int_0^\infty
   |r_\kappa(t,y,z)-r_{\mathrm D}(t,y,z)|\dd z\\
 &\qquad=
 \sup_{y\ge0}
 2\int_0^\infty\e^{-\kappa a}G_t^1(y+a)\dd a
 \le\frac{C}{\kappa\sqrt t}.
\end{align*}
Similarly, \eqref{eq:robin} gives
\begin{align*}
 &\sup_{y\ge0}\int_0^\infty
   |r_{\mathrm N}(t,y,z)-r_\kappa(t,y,z)|\dd z\\
 &\qquad\le
 2\kappa\int_0^\infty\int_0^\infty
       G_t^1(z+a)\dd z\dd a
 \le C\kappa\sqrt t.
\end{align*}
For a finite positive coefficient $\kappa_*$,
the same formula yields
\[
 \sup_{y\ge0}\int_0^\infty
 |r_\kappa(t,y,z)-r_{\kappa_*}(t,y,z)|\dd z
 \le
 \int_0^\infty
 |\kappa\e^{-\kappa a}
       -\kappa_*\e^{-\kappa_*a}|\dd a,
\]
whose right-hand side tends to zero as
$\kappa\to\kappa_*$.
The tangential heat kernel has mass one.
Consequently, for every $0<h<T$ and every
$\kappa_j\to\kappa_*\in[0,\infty]$,
\[
 \sup_{h\le t\le T}
 \|\mathsf T_{\kappa_j}(t)-\mathsf T_{\kappa_*}(t)
       \|_{L^\infty\to L^\infty}
 \longrightarrow0.
\]
In particular, this convergence is uniform up to
the flat boundary.

Write
\[
 \mathbf U_\beta=(S_\beta,D_\beta),\qquad
 \mathsf S_\beta(t)
 =\operatorname{diag}
   \bigl(\mathsf T_{\mathrm N}(t),
         \mathsf T_{2/\beta}(t)\bigr),
\]
with $2/0=\infty$ and $2/\infty=0$.
The folded reaction vector is
\[
 \mathcal F(S,D)
 =\frac12
 \begin{pmatrix}
 f(S+D)+f(S-D)\\
 f(S+D)-f(S-D)
 \end{pmatrix}.
\]
It is bounded and Lipschitz on the common invariant
region $0\le S-D\le S+D\le1$.

Fix $0<h<t_*/2$. For $t\in[t_*,T]$, apply the
mild formula on the short interval $[t-h,t]$:
\[
 \mathbf U_\beta(t)
 =\mathsf S_\beta(h)\mathbf U_\beta(t-h)
 +\int_{t-h}^t
   \mathsf S_\beta(t-s)\mathcal F(\mathbf U_\beta(s))
   \dd s.
\]
Subtract the corresponding formula for $\beta_j$.
The two recent-time integrals have total
$L^\infty$ norm at most $Ch$, uniformly in the
parameters. For the initial term, insert and subtract
$\mathsf S_{\beta_j}(h)\mathbf U_\beta(t-h)$.
The smoothing estimate and operator convergence
then give
\begin{align*}
 &\sup_{t_*\le t\le T}
 \|\mathbf U_{\beta_j}(t)-\mathbf U_\beta(t)\|_\infty\\
 &\quad\le
 C h^{-N/4}
 \sup_{0\le s\le T}
 \|\mathbf U_{\beta_j}(s)-\mathbf U_\beta(s)\|_2\\
 &\qquad+
 \|\mathsf S_{\beta_j}(h)-\mathsf S_\beta(h)
       \|_{L^\infty\to L^\infty}
 \sup_{0\le s\le T}\|\mathbf U_\beta(s)\|_\infty
 +Ch.
\end{align*}
The already established $C([0,T];L^2)$ convergence
makes the first term vanish as $j\to\infty$.
For fixed $h$, operator convergence makes the second
term vanish as well. Thus the limit superior is
bounded by $Ch$. Letting $h\downarrow0$ proves
uniform convergence of the folded pair.
Since $p=S+D$ and $q=S-D$, the asserted convergence
of both densities follows, including at $y=0$.
\end{proof}

The common compact-data Gaussian upper bound also
controls the spatial tails uniformly for
$t\in[t_*,T]$. Combining it with
Lemma~\ref{lem:finite-boundary-continuity} permits
passage to exponentially weighted spatial integrals,
including fixed polynomial weights. The same
argument applies to integrals of either interface
trace over $\Sigma$.

The kernel estimate \eqref{eq:robin-bound} applies to the linear diffusion semigroup. In the nonlinear difference equation, the coefficient $c_g(p,q)$ may be negative, so it cannot be discarded as a nonnegative absorption term. The boundary energy estimate in Section~\ref{sec:trace} provides the nonlinear control needed below.

\subsection{Radial comparison estimates}
A solution of the homogeneous equation that is radial about the origin has equal traces and zero normal derivative on $\Sigma$. It therefore satisfies the interface conditions for every $\beta\in[0,\infty]$. Choose a smooth, compactly supported, radially nonincreasing initial density $U_0\in[0,1]$ with $U_0\ge u_0$. The corresponding homogeneous solution provides a common upper bound for all interface solutions.

For the lower comparison, fix $0<B<\infty$ and a time $t_0>0$. Positivity and Lemma~\ref{lem:finite-boundary-continuity} imply that the two densities, including their traces, have a common positive lower bound on each fixed closed half-ball, uniformly for $0\le\beta\le B$. We may therefore choose a nonzero smooth, radially nonincreasing function $w_0$, supported in a ball centered at the origin, such that
\[
 w_0(x)\le u(t_0,x)\qquad(0\le\beta\le B),
\]
with the inequality understood for both traces on $\Sigma$. If $w$ solves the homogeneous equation with reaction $f$ and initial density $w_0$, comparison gives
\[
 w(t-t_0,x)\le u(t,x)\qquad(t\ge t_0,\ 0\le\beta\le B).
\]
Thus the lower comparison uses a common initial seed and requires neither resistance ordering nor multiplication of a nonlinear solution by a constant.

\begin{lemma}[A uniform Gaussian envelope]\label{lem:envelope}
For every $0<\sigma<1/4$, there exist constants
$C_\sigma,c>0$, independent of $\beta\in[0,\infty]$,
such that
\begin{equation}\label{eq:envelope}
 u(t,x)\le
 C_\sigma(z+t^\sigma)\e^{-z}\e^{-cz^2/t},
 \qquad z=|x|-m(t)\ge0,\quad t\ge2.
\end{equation}
Both interface traces are included in this estimate.
The same bound holds for the even Neumann extension $P$,
after adjusting the constants.
\end{lemma}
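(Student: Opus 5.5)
The bound follows by comparison with one radial function. For every $\beta\in[0,\infty]$, the radial homogeneous solution $\bar U$ with initial density $U_0$ (chosen above, radially nonincreasing, $U_0\ge u_0$) lies above $u$. Indeed, $\bar U$ has equal traces and zero normal derivative on $\Sigma$, so it satisfies the interface conditions for every $\beta$. The comparison principle of Section~\ref{sec:prelim} then gives $u_\pm\le\bar U$ on each side, including both traces. For $\beta=\infty$ we use the decoupled Neumann problems, where $\bar U$ is again a supersolution on each half-space. The even extension $P$ has initial density $u_0(x',x_N)+u_0(x',-x_N)$. We enlarge $U_0$ so that it dominates this function as well. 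If needed, we use a second radial function or $\min(1,\cdot)$ of a linear supersolution, so that $P\le\bar U'$ with another such radial solution. Either way the bound no longer depends on $\beta$, and it remains to prove \eqref{eq:envelope} for a single radial homogeneous solution with compactly supported data.

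Since $f(s)\le s$, the solution $\bar U$ is bounded above by the linear flow $\e^{t}\,G_t^N*U_0$. This crude bound gives $\e^{t-|x|^2/4t}t^{-N/2}$ decay, which has the wrong logarithmic factor near $m(t)$. So we follow the linear supersolution of \cite{RRR}, whose coefficients are verified in Appendix~\ref{app:linear}. In the moving radial variable $z=r-m(t)$ we write $\bar U=\e^{-z}v$. The function $v$ then satisfies a linear inequality of the form
\[
 v_t\le v_{zz}+\Bigl(\frac{N-1}{r}-\frac{k}{t}\Bigr)(v_z-v)+\dots .
\]
For $z\ge0$ the drift is of order $1/t$. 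The key step is to construct a supersolution of the form $\psi(t,z)=C(z+t^\sigma)\e^{-cz^2/t}$ on $\{z\ge0,\,t\ge2\}$, in the spirit of the Dirichlet-at-$z=0$ heat kernel $z\,t^{-3/2}\e^{-z^2/4t}$ that produces Bramson's $k$. The shift $t^\sigma$ absorbs the boundary values at $z=0$. There we need an a priori bound $\bar U(t,m(t))\lesssim t^{\sigma}$. For radial data this comes from the crude linear bound combined with a preliminary estimate at $z=0$, or from taking $C$ large. We choose $\sigma<1/4$ and $c<1/4$ small. Then the error terms in $\partial_t\psi-\psi_{zz}-(\text{drift})\psi_z$ become favorable: for instance, $\partial_t(t^\sigma)=\sigma t^{\sigma-1}$ dominates the $O(t^{-1})$ drift errors times $t^\sigma$ up to constants, and the Gaussian factor handles large $z$. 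At $t=2$ the comparison holds by compact support once $C$ is large.

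To finish, we compare $v$ with $\psi$ on the exterior region $z\ge0$, with boundary data at $z=0$ and initial data at $t=2$. This gives \eqref{eq:envelope} for $\bar U$, and hence for all $u^\beta$ and for $P$, with constants independent of $\beta$.

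The main obstacle is the boundary control at $z=0$. A naive upper bound only gives $\bar U\le1$ there, that is $v\le \e^{0}$, which is acceptable. The real difficulty is verifying that the $\frac{N-1}{r}$ curvature drift and the $-k/t$ correction are compatible with the precise power $z+t^\sigma$, rather than producing a $t^{1/2}$ factor. I would first try to verify the differential inequality directly for $\psi$ using $r\ge m(t)\ge t$. Failing that, I would substitute the explicit appendix supersolution and check only that it is dominated by $C_\sigma(z+t^\sigma)\e^{-z-cz^2/t}$.
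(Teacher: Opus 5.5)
The reduction to a single radial homogeneous solution is the same as in the paper, and it is fine. For $P$ no truncation is needed: $u_0$ and its reflection have disjoint supports, so the initial density of $P$ already lies in $[0,1]$.

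The gap is the step you call key. The function $\psi=C(z+t^\sigma)\e^{-cz^2/t}$ is not a supersolution, and no choice of constants repairs this. For $v=\e^z\bar U$ the inequality is $v_t\le v_{zz}+b(v_z-v)$, with
\[
 b=\tfrac{N-1}{r}-\tfrac kt=-\tfrac{3}{2t}+O\bigl(\tfrac{z+\log t}{t^2}\bigr)\qquad(0\le z\le t^{1/3}).
\]
So besides the drift there is a zeroth-order growth term $-bv\approx\frac{3}{2t}v$, which your list of error terms omits. This term is not small: it is exactly the term balanced by the centering $m(t)$. Replace $b$ by $-\frac{3}{2t}$; on this range the correction changes the bracket below only by $O(t^{-1/3})$. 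A direct computation then gives
\[
 \frac{t\,\e^{cz^2/t}}{C}\Bigl(\psi_t-\psi_{zz}+\frac{3}{2t}\psi_z-\frac{3}{2t}\psi\Bigr)
 =\Bigl(\sigma+2c-\tfrac32\Bigr)t^\sigma+\Bigl(6c-\tfrac32\Bigr)z+\tfrac32
 +\frac{c\,z(z+t^\sigma)\bigl((1-4c)z-3\bigr)}{t}.
\]
For $\sigma<1/4$ and $c\le1/4$, the right-hand side is at most $-\frac34t^\sigma+\frac{13}{8}$ on $0\le z\le t^{1/3}$, hence negative for large $t$. The favorable cubic term takes over only when $z\gtrsim\sqrt{t/c}$.

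Consequently, $\sigma t^{\sigma-1}$ cannot absorb $\frac{3}{2t}t^\sigma$. On $1\ll z\ll\sqrt t$, a Gaussian with exponent $c<1/4$ spreads too slowly to offset the growth rate $\frac{3}{2t}$; only the exact Dirichlet mode $z\e^{-z^2/(4t)}$ balances it. A time-dependent amplitude would have to grow like $t^{3/2-6c}$, which spoils the bound. The wall at $z=0$, which you flag as the main obstacle, is harmless, since $v(t,0)\le1\le Ct^\sigma$.

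The missing idea is a composite barrier on the diffusive scale, which Appendix~\ref{app:linear} supplies. In the variables $X=(z+t^\sigma)/\sqrt t$ and $\tau=\log t$, it superposes three pieces:
\begin{itemize}
\item a cosine mode with a spectral gap near the moving wall $z=-t^\sigma$;
\item the exact zero mode $X\e^{-X^2/4}$, with a coefficient $\zeta_{\rm lin}$ that increases but stays bounded because $\eta_t=t^{\sigma-1/2}$ is integrable in $\tau$;
\item a far Gaussian $\e^{-a_\mu X^2}$ with $a_\mu<1/4$, switched on only at large $X$.
\end{itemize}
The linear ODEs for $(Q,\zeta_{\rm lin})$ transfer the $O(\eta_t)$ coefficient errors between these pieces. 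The product form in \eqref{eq:envelope} is then an upper bound for this supersolution, obtained from \eqref{eq:Phi-bounds} after multiplication by $\sqrt t\,\e^{-z}$; it is not itself a supersolution.

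Your fallback is to substitute the appendix supersolution and check only this domination. That is precisely the paper's proof, and it works. Your main line, however, would break at the supersolution verification, so the proposal is complete only through the fallback.
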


\begin{proof}
Choose a compactly supported radial upper datum as above.
The corresponding homogeneous solution dominates every
interface solution, including both traces, independently
of the resistance. Lemma~\ref{lem:linear-envelope} in
Appendix~\ref{app:linear}, together with $f(u)\le u$,
gives \eqref{eq:envelope} for this radial solution and
hence for $u$. Applying the same construction to
a radial datum dominating the initial density of $P$
gives the estimate for $P$.

The construction uses only linear comparison and
the explicit supersolution of Appendix~\ref{app:linear},
independently of the subsequent front-localization
and matching arguments.
\end{proof}

Front localization is established in
Lemma~\ref{lem:localization}, after the scalar radial
matching argument.

\section{An integrated Robin trace estimate}\label{sec:trace}
We next derive an integrated boundary estimate for the folded difference $D_\beta$. This estimate controls the interface contribution in directions approaching the equator and does not require $c_g$ to be nonnegative.

Fix $\eta\in\Sph^{N-2}$, and let $E_\eta:\R^{N-2}\to\eta^\perp\subset\R^{N-1}$ be a linear isometry onto $\eta^\perp$. Introduce the Cartesian coordinates
\begin{equation}\label{eq:cartesian}
 x'=(m(t)+\rho\sqrt t)\eta+\sqrt t E_\eta v,\qquad y=\sqrt tY,
 \quad X=(\rho,v,Y),\quad \tau=\log t.
\end{equation}
When $N=2$, the variable $v$ and the corresponding transverse term are absent. Define
\[
 W(\tau,X)=t^{-1/2}\e^{\rho\sqrt t}D_\beta(t,x',y),\qquad t=\e^\tau.
\]
The transformed equation is
\begin{equation}\label{eq:W}
 W_\tau=\Delta_XW+\frac X2\cdot\nabla_XW+\frac{N+1}{2}W
 -kt^{-1/2}W_\rho-VW,\qquad V(\tau,X)=t c_g(p_\beta,q_\beta),
\end{equation}
where $p_\beta$ and $q_\beta$ are evaluated at $(t,x',y)$. The boundary condition becomes
\[
 \partial_YW=\kappa\sqrt t\,W\qquad(Y=0).
\]
Fix
\[
 \sigma=\frac1{16},\qquad \gamma=\frac14,\qquad a(t)=t^{\gamma-1/2}.
\]
For all sufficiently large $t$, the Gaussian envelope implies
\begin{equation}\label{eq:V-small}
 |V(\tau,X)|\le Ct^{1+\gamma}\e^{-t^\gamma}\qquad\text{when }\rho\ge a(t),
\end{equation}
uniformly in the resistance and the equatorial direction. Indeed,
\[
 |(x',y)|\ge\eta\cdot x'=m(t)+\rho\sqrt t,
\]
so the radial displacement is at least $t^\gamma$ in this region. The estimate follows from $|c_g(p,q)|\le C_f(p+q)$ and \eqref{eq:envelope}.

\begin{lemma}[Boundary dissipation]\label{lem:energytrace}
Fix $0<B<\infty$ and an integer $M>(N+1)/2$. There exist $T_0\ge2$ and $C_{B,M}>0$ such that, for every $T\ge T_0$, $0<\beta\le B$, and $\eta\in\Sph^{N-2}$,
\begin{equation}\label{eq:trace-energy}
 \int_{\log T}^{\log(2T)}\int_{\rho\ge2a(t),\ v\in\R^{N-2}}
 (1+\rho^2+|v|^2)^M W(\tau,\rho,v,0)^2\dd\rho\dd v\dd\tau
 \le C_{B,M}T^{-1/2},\qquad t=\e^\tau.
\end{equation}
Consequently,
\begin{equation}\label{eq:trace-L1}
 \int_{\log T}^{\log(2T)}\int_{\rho\ge2a(t),\ v\in\R^{N-2}}
 (1+\rho)W(\tau,\rho,v,0)\dd\rho\dd v\dd\tau
 \le C_{B,M}T^{-1/4}.
\end{equation}
At $\beta=0$, the boundary trace of $W$ vanishes, so both estimates hold trivially.
\end{lemma}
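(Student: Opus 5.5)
The plan is a weighted $L^2$ energy estimate for $W$ in the self-similar variables \eqref{eq:cartesian}, localized to $\rho\ge a(t)$. There the potential $V$ is negligible by \eqref{eq:V-small}, and the Robin condition $\partial_YW=\kappa\sqrt t\,W$ produces a boundary dissipation term $\kappa\sqrt t\int\psi W(\tau,\cdot,0)^2$. Since $\kappa=2/\beta\ge 2/B$, this term has coefficient at least $(2/B)\sqrt T$ on $[\log T,\log 2T]$. So it suffices to show that all other contributions to the energy balance, integrated over this $\tau$-interval of length $\log2$, are $O(1)$ uniformly in $\beta\in(0,B]$ and $\eta$. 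Dividing by $c\sqrt T$ then gives \eqref{eq:trace-energy}.

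\textbf{Step 1: pointwise bound on $W$.} From Lemma~\ref{lem:envelope}, $0\le D\le p\le$ the envelope. In the region $\rho\ge a(t)$ we have $z=|x|-m(t)\ge\rho\sqrt t+(|v|^2+Y^2)\,t/(2|x|)-O(1)$, and $t/|x|\sim 1/2$. Hence
\[
 W\le C\,\frac{z+t^\sigma}{\sqrt t}\,\e^{-(z-\rho\sqrt t)}\e^{-cz^2/t}
 \le C(\rho+t^{\sigma-1/2}+1)\e^{-c(|v|^2+Y^2)}\e^{-c\rho^2}.
\]
This gives uniform Gaussian decay in all variables, so every polynomially weighted $L^2$ quantity is $O(1)$. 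More precisely, on the transition layer $a\le\rho\le 2a$ one gets $W\le C(a+t^{\sigma-1/2})\le Ca$, because $\sigma-1/2<\gamma-1/2$. The same bound, applied in its sharper form before simplification, controls the trace factor $\e^{-c(|v|^2)}$ needed for the weights $(1+\rho^2+|v|^2)^M$.

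\textbf{Step 2: energy identity.} Let $\chi(\rho/a(t))$ be a smooth cutoff, vanishing for $\rho\le a$ and equal to $1$ for $\rho\ge 2a$, and set $\psi=\chi\,(1+\rho^2+|v|^2)^M$, which is independent of $Y$. Multiplying \eqref{eq:W} by $\psi W$ and integrating over $\rho,v$ and $Y>0$ gives
\[
 \tfrac12\tfrac{d}{d\tau}\!\int\psi W^2+\int\psi|\nabla W|^2+\kappa\sqrt t\!\int\psi W(\cdot,0)^2
 =\tfrac12\!\int W^2\bigl(\Delta\psi-\tfrac12\nabla\!\cdot(X\psi)+(N+1)\psi+kt^{-1/2}\partial_\rho\psi+\partial_\tau\psi\bigr)-\!\int\psi VW^2 .
\]
The $Y$-boundary term from $\Delta\psi$ vanishes because $\partial_Y\psi=0$. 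Away from the layer, every coefficient on the right is bounded by a polynomial weight, and the $V$-term is $O(t^{1+\gamma}\e^{-t^\gamma})$, so by Step 1 these contributions are $O(1)$. I will then integrate the identity over $[\log T,\log 2T]$ and use that $\int\psi W^2$ at the endpoints is $O(1)$.

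\textbf{Step 3: the layer $a\le\rho\le2a$ (main obstacle).} This is where the difficulty lies. There $|\Delta\psi|\lesssim a^{-2}$ and $|\partial_\tau\psi|+|\rho\,\partial_\rho\chi|\lesssim 1$, and the $a^{-2}=t^{1-2\gamma}$ factor cannot be absorbed by the gradient term. I plan to use the smallness of $W$ from Step 1 instead: $\int_{a}^{2a}a^{-2}W^2\dd\rho\le a^{-2}\cdot a\cdot Ca^2=Ca=Ct^{-1/4}$, which is harmless. This is precisely the reason for choosing $\gamma=1/4$ and $\sigma=1/16<\gamma$: it keeps the envelope's $t^\sigma$ correction below $a\sqrt t$. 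The $t^{-1/2}\partial_\rho\psi$ term is likewise $O(t^{-1/2}a^{-1}a^2)$.

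\textbf{Step 4: conclusion and the $L^1$ bound.} Collecting the estimates gives $(2/B)\sqrt T\int\!\!\int\psi W(\cdot,0)^2\le C_{B,M}$, and since $\psi$ equals the weight on $\rho\ge 2a$, this is \eqref{eq:trace-energy}. For \eqref{eq:trace-L1}, apply Cauchy--Schwarz, using that $(1+\rho)^2(1+\rho^2+|v|^2)^{-M}$ is integrable over $\R^{N-1}$ because $M>(N+1)/2$. This gives a bound $C(T^{-1/2})^{1/2}=CT^{-1/4}$. At $\beta=0$ the Dirichlet condition makes the trace vanish, so both estimates hold trivially.
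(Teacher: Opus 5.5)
Your proposal is correct and follows essentially the same route as the paper. The paper also runs a weighted cutoff energy estimate on $\rho\ge a(t)$, so that the Robin dissipation $\kappa\sqrt t\int_{Y=0}\chi\Psi W^2$ is bounded by $O(1)$ bulk terms. It handles the $a(t)^{-2}$ layer term with the envelope bound $\int\Psi W^2\,\mathrm dv\,\mathrm dY\lesssim a(t)^2$ on $a\le\rho\le 2a$, and finishes with Cauchy--Schwarz. The differences are cosmetic: you use a linear cutoff, a $Y$-independent weight and exact integration by parts, where the paper tests with $\chi^2\Psi$ and absorbs the cross terms by Young's inequality. Throughout, including for the uniform bound on $\int\psi W^2$, you should use the sharp form $W\le C(\rho+d/\sqrt t+t^{\sigma-1/2})\e^{-c\rho^2}\e^{-d}$, with $d\ge c|(v,Y)|^2/(1+\rho+|(v,Y)|)$, rather than the simplified bound containing the stray $+1$ and the claim $t/|x|\sim 1/2$.
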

\begin{proof}
Choose a smooth cutoff $\chi_0$ with $0\le\chi_0\le1$, $\chi_0=0$ on $(-\infty,1]$, and $\chi_0=1$ on $[2,\infty)$. Set
\[
 \chi(\tau,\rho)=\chi_0(\rho/a(t)),\qquad \Psi(X)=(1+|X|^2)^M.
\]
Testing \eqref{eq:W} with $\chi^2\Psi W$, initially with compact outer cutoffs, produces the nonnegative boundary dissipation
\[
 \kappa\sqrt t\int_{Y=0}\chi^2\Psi W^2.
\]
The polynomial weight satisfies
\[
 |\nabla\Psi|^2/\Psi\le C_M\Psi,\qquad |X\cdot\nabla\Psi|\le C_M\Psi.
\]
The potential is controlled by \eqref{eq:V-small}. Moreover, $\partial_\tau\chi$ is uniformly bounded and supported in $a(t)\le\rho\le2a(t)$. The cutoff terms are therefore controlled by the weighted energy and
\[
 Ca(t)^{-2}\int_{a(t)\le\rho\le2a(t),\ Y>0}\Psi W^2\dd X.
\]
To estimate this term, write
\[
 l=m(t)+\rho\sqrt t,\qquad r=\sqrt{l^2+t|(v,Y)|^2},\qquad d=r-l\ge0.
\]
Since $0\le D_\beta\le p_\beta/2$, the Gaussian envelope gives
\[
 W\le C\bigl(\rho+d/\sqrt t+t^{\sigma-1/2}\bigr)\e^{-c\rho^2}\e^{-d}.
\]
On the cutoff layer, $l$ is comparable to $t$, and
\[
 d=\frac{t|(v,Y)|^2}{r+l}\ge c\frac{|(v,Y)|^2}{1+|(v,Y)|}.
\]
Hence integration against any fixed polynomial weight yields
\[
 \int_{\R^{N-2}}\int_0^\infty\Psi W^2\dd Y\dd v
 \le C_M\bigl(a(t)^2+t^{-1}+t^{2\sigma-1}\bigr).
\]
The layer has width $a(t)$, so
\begin{equation}\label{eq:cutoff-error}
 Ca(t)^{-2}\int_{a(t)\le\rho\le2a(t),\ Y>0}\Psi W^2\dd X
 \le C_M\left(a(t)+\frac{t^{-1}}{a(t)}+\frac{t^{2\sigma-1}}{a(t)}\right)\le C_M.
\end{equation}
This proves \eqref{eq:cutoff-error}.

The same envelope gives a uniform bound for
\[
 E(\tau)=\int_{Y>0}\chi^2\Psi W^2\dd X.
\]
For large $\rho$, the estimate
\[
 d\ge c\frac{|(v,Y)|^2}{1+\rho+|(v,Y)|}
\]
combined with $\e^{-c\rho^2}$ controls all polynomial moments and justifies removing the outer cutoffs. After absorbing the gradient cross terms, the energy inequality therefore gives
\[
 E'(\tau)+\kappa\sqrt t\int_{Y=0}\chi^2\Psi W^2\le C_M(E(\tau)+1).
\]
Integrating over $[\log T,\log(2T)]$, whose length is $\log2$, and using the uniform bound on $E$, we obtain
\[
 \int_{\log T}^{\log(2T)}\kappa\sqrt t\int_{Y=0}\chi^2\Psi W^2\dd\tau\le C_M.
\]
Since $\kappa\ge2/B$, $t\ge T$, and $\chi=1$ for $\rho\ge2a(t)$, this proves \eqref{eq:trace-energy}.

Finally, $W\ge0$, and the condition $M>(N+1)/2$ ensures
\[
 \int_0^\infty\int_{\R^{N-2}}\frac{(1+\rho)^2}{(1+\rho^2+|v|^2)^M}\dd v\dd\rho<\infty.
\]
Cauchy--Schwarz in space and logarithmic time now gives \eqref{eq:trace-L1}.
\end{proof}

\begin{lemma}[Local estimate independent of the Robin coefficient]\label{lem:robin-local}
Let $X_0\in\{Y=0\}$ and $0<r<R\le1$. Define
\[
 B_R^+=B_R(X_0)\cap\{Y>0\},\qquad
 Q_R^+=(s_0-R^2,s_0]\times B_R^+,
\]
and
\[
 \Gamma_R=(s_0-R^2,s_0]\times(B_R(X_0)\cap\{Y=0\}).
\]
Suppose that $w\ge0$ is a subsolution of
\[
 w_s\le\Delta w+b\cdot\nabla w+cw\quad\text{in }Q_R^+,
 \qquad \partial_nw+\lambda w\le0\quad\text{on }\Gamma_R,
\]
where $n=-e_Y$, $|b|+c_+\le M$, and $\lambda\ge0$. The boundary coefficient $\lambda$ may depend on space and time and is locally bounded for each subsolution; no uniform upper bound on $\lambda$ is assumed. Then
\begin{equation}\label{eq:robin-local-sup}
 \|w\|_{L^\infty(Q_r^+)}
 \le\frac{C(N,M)}{(R-r)^{(N+2)/2}}\|w\|_{L^2(Q_R^+)}.
\end{equation}
The constant is independent of $\lambda$. If $w$ is continuous up to the flat boundary, the same bound holds on $\Gamma_r$. The conclusion also holds when the Robin inequality is replaced by homogeneous Dirichlet data on the flat boundary.
\end{lemma}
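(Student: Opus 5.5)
The proof will be a Moser iteration in which the Robin boundary term is simply discarded because it has a favourable sign. This is the reason the constant is independent of $\lambda$. I take nested radii $r=R_\infty<\dots<R_1<R_0=R$ with $R_j-R_{j+1}=(R-r)2^{-j-1}$, and cutoffs $\zeta_j(s,X)$ equal to one on $Q_{R_{j+1}}^+$ and vanishing near the curved part of $\partial B_{R_j}$ and at $s=s_0-R_j^2$. The cutoffs satisfy $|\nabla\zeta_j|^2+|\partial_s\zeta_j|\le C4^j(R-r)^{-2}$. They are \emph{not} required to vanish on the flat piece $\{Y=0\}$.

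For $p\ge1$ I test the inequality with $\zeta^2 w^{2p-1}$ (using $w\ge0$, after a routine regularization $w+\epsilon$). Integration by parts over $B_R^+$ produces the boundary integral
\[
 \int_{\{Y=0\}}\zeta^2w^{2p-1}\partial_n w\le-\int_{\{Y=0\}}\lambda\zeta^2 w^{2p}\le0,
\]
since $\lambda\ge0$ and $\partial_nw\le-\lambda w$. This term enters with the good sign and is dropped, so no trace inequality and no bound on $\lambda$ is needed. Local boundedness of $\lambda$ is used only to make the integration by parts legitimate for each fixed subsolution.

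The drift term $b\cdot\nabla w\,\zeta^2w^{2p-1}$ is absorbed by Young's inequality into the gradient term at a cost of $CM^2p\int\zeta^2w^{2p}$, and the zero-order term is bounded by $M\int\zeta^2 w^{2p}$. This yields the standard Caccioppoli estimate
\[
 \sup_s\int\zeta^2w^{2p}+\int|\nabla(\zeta w^p)|^2\le Cp^2\bigl(\|\nabla\zeta\|_\infty^2+\|\partial_s\zeta\|_\infty+M+M^2\bigr)\int_{\supp\zeta}w^{2p}.
\]
The functions $\zeta w^p$ vanish on the curved boundary but not on the flat one. I therefore extend them evenly across $\{Y=0\}$ and apply the parabolic Sobolev (Gagliardo--Nirenberg) embedding in the whole space $\R^{N}$ of $X$. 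This gives the gain $\|\zeta w^p\|_{L^{2\chi}}\lesssim$ (energy), with $\chi=1+2/N$.

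I then iterate with $p_j=\chi^j$. The product of the constants $(C4^j\chi^{2j}(R-r)^{-2})^{1/(2\chi^j)}$ converges and gives the factor $(R-r)^{-(N+2)/2}$; the exponent is the usual parabolic scaling in $N$ spatial dimensions plus time, since $R\le1$ lets the $M$-terms be absorbed into $(R-r)^{-2}$. The main point to check carefully is that the even extension keeps the Sobolev constant independent of the half-ball geometry. This holds because the flat boundary is a hyperplane, so the reflection is an isometry on $H^1$.

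For continuity up to $\Gamma_r$, the bound holds on the closure by continuity. In the Dirichlet case the boundary term vanishes identically because $w=0$ there, and the same argument applies verbatim; alternatively, one may extend $w$ by zero.
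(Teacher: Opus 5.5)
Your proposal is correct and follows essentially the same argument as the paper. Both proofs test with a power of $w$ times $\zeta^2$, with $\zeta$ allowed to be nonzero on the flat face, discard the Robin term by its sign, reflect $\zeta w^{p}$ evenly to use the whole-space parabolic Sobolev inequality with $\chi=1+2/N$, and iterate over dyadically shrinking radii, so that $\sum_j\chi^{-j}=(N+2)/2$ gives the factor $(R-r)^{-(N+2)/2}$; your choice of $w^{2p-1}$ with $p_j=\chi^j$ is only a reindexing of the paper's $w^{p-1}$ with $p_j=2\chi^j$.
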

\begin{proof}
Choose a space--time cutoff $0\le\zeta\le1$, equal to one on $Q_r^+$, vanishing at the initial time and near the curved spatial boundary of $Q_R^+$, and satisfying
\[
 |\nabla\zeta|\le\frac{C}{R-r},\qquad
 |\partial_s\zeta|\le\frac{C}{(R-r)^2}.
\]
The cutoff may be nonzero on the flat boundary.

For $p\ge2$, test the differential inequality with $\zeta^2w^{p-1}$. The boundary condition gives
\[
 \int_{\{Y=0\}}\zeta^2w^{p-1}\partial_nw
 \le-\int_{\{Y=0\}}\lambda\zeta^2w^p.
\]
Thus the Robin term contributes nonnegative dissipation after it is moved to the left-hand side. Estimate the drift directly by Young's inequality, without differentiating $b$, and use $c\le c_+\le M$. Setting $v=w^{p/2}$, integrating in time, and taking the supremum of the spatial energy yields
\begin{align}\label{eq:robin-local-energy}
 &\sup_s\int_{B_R^+}\zeta^2v^2
 +\int_{Q_R^+}|\nabla(\zeta v)|^2
 +p\int_{\Gamma_R}\lambda\zeta^2v^2\notag\\
 &\qquad\le Cp^2\int_{Q_R^+}
 \bigl(|\partial_s\zeta|+|\nabla\zeta|^2+(1+M^2)\zeta^2\bigr)v^2.
\end{align}
No upper bound or derivative bound on $\lambda$ enters this estimate. Truncated and regularized power tests justify the calculation for weak subsolutions. Under homogeneous Dirichlet conditions, the boundary contribution vanishes.

Extend $\zeta v$ evenly across $\{Y=0\}$. Since it vanishes near the curved boundary, the extension belongs to $H_0^1(B_R(X_0))$. The parabolic Sobolev inequality therefore applies with a constant depending only on $N$; only the test function is extended, not the equation. With $\chi=1+2/N$, the energy estimate \eqref{eq:robin-local-energy} gives
\[
 \|w\|_{L^{p\chi}(Q_r^+)}
 \le\left[\frac{C(N,M)p^2}{(R-r)^2}\right]^{1/p}\|w\|_{L^p(Q_R^+)}.
\]
Here $R-r\le1$ allows the zeroth-order contribution to be included in the same factor.

Iterate this inequality with
\[
 p_j=2\chi^j,\qquad R_j=r+(R-r)2^{-j},\qquad j=0,1,\ldots,
\]
using $R_j$ as the outer radius and $R_{j+1}$ as the inner radius. Since
\[
 \sum_{j=0}^\infty\frac{2}{p_j}=\frac{N+2}{2},\qquad
 \sum_{j=0}^\infty\frac{j+\log p_j}{p_j}<\infty,
\]
the iteration yields \eqref{eq:robin-local-sup}. Continuity up to the flat boundary gives the trace estimate.
\end{proof}

\subsection{Uniform directional tails of the derivative moment}
For $t>0$ and $\theta\in\Sph^{N-1}$, define the critical derivative moment
\[
 E_\beta(t,\theta)=\e^{-2t}\int_{\R^N}(\theta\cdot x-2t)\e^{\theta\cdot x}u(t,x)\dd x.
\]
The fixed-time Gaussian bound ensures that this integral is finite. Integration by parts on the two half-spaces gives
\begin{align}\label{eq:moment-balance}
 \partial_tE_\beta(t,\theta)={}&-\e^{-2t}\int_{\R^N}(\theta\cdot x-2t)\e^{\theta\cdot x}g(u)\dd x\notag\\
 &-2\theta_N\e^{-2t}\int_\Sigma(1+\theta'\cdot x'-2t)\e^{\theta'\cdot x'}D(t,x',0)\dd x'.
\end{align}
The two terms represent the contributions from nonlinear absorption and the interface, respectively.

The volume contribution is integrable in time, uniformly in direction and resistance. More precisely, \eqref{eq:basic-g} and \eqref{eq:envelope} give, for all sufficiently large $t$,
\begin{equation}\label{eq:absorption-tail}
 \e^{-2t}\int_{\R^N}(1+|\theta\cdot x-2t|)\e^{\theta\cdot x}g(u)\dd x
 \le Ct^{-3/2+2\sigma}(1+\log t),
\end{equation}
uniformly for $\theta\in\Sph^{N-1}$ and $\beta\in[0,\infty]$. To obtain this bound, write $x=r\omega$ and use
\[
 \int_{\Sph^{N-1}}(1+r(1-\theta\cdot\omega))\e^{r\theta\cdot\omega}\dd\omega
 \le C\e^r(1+r)^{-(N-1)/2}.
\]
In the region $r=m(t)+z\ge t$, the radial measure and angular integration give the characteristic factor $t^{-k+(N-1)/2}=t^{-3/2}$ near the front; the remaining polynomial dependence on $z$ is absorbed by the exponential bounds. For $z<0$, use $u\le1$; for $z\ge0$, use $\e^zg(u)\le C(z+t^\sigma)^2\e^{-z}$. The centered factor costs $C(1+|z|+\log t)$. The region $r<t$ is exponentially small.

\begin{proposition}[Uniform moment convergence on the closed receiving hemisphere]\label{prop:moment}
For every $0<B<\infty$, there exists a limiting moment $E_\beta^\infty(\theta)$ such that
\begin{equation}\label{eq:Etail}
 |E_\beta(t,\theta)-E_\beta^\infty(\theta)|\le C_Bt^{-1/4}.
\end{equation}
for all sufficiently large $t$, uniformly for $0\le\beta\le B$ and $\theta\in\overline{\Sph_+}$. The limit is jointly continuous in $(\beta,\theta)$ on $[0,B]\times\overline{\Sph_+}$.

At equatorial directions, the stronger estimate
\[
 |E_\beta(t,(\eta,0))-E_\beta^\infty(\eta,0)|\le Ct^{-\mu}(1+\log t),\qquad \mu=\frac12-2\sigma=\frac38,
\]
holds uniformly for $\eta\in\Sph^{N-2}$ and $\beta\in[0,\infty]$.
\end{proposition}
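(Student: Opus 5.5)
The plan is to integrate the moment balance \eqref{eq:moment-balance} from $t$ to $\infty$ and show that both contributions have tails of size $O(t^{-1/4})$, uniformly in $\beta\in[0,B]$ and $\theta\in\overline{\Sph_+}$. Then $E_\beta^\infty(\theta)=E_\beta(t,\theta)+\int_t^\infty\partial_sE_\beta\dd s$ exists, and the rate \eqref{eq:Etail} follows.

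\emph{Volume term.} By \eqref{eq:absorption-tail}, this term is bounded by $Cs^{-3/2+2\sigma}(1+\log s)$. Its tail integral is therefore $O(t^{-1/2+2\sigma}\log t)=O(t^{-3/8}\log t)$, uniformly in $\theta$ and in $\beta\in[0,\infty]$.

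\emph{Equatorial case.} When $\theta_N=0$ the interface term vanishes identically. The volume bound alone then gives the stronger estimate with $\mu=3/8$, for every $\beta\in[0,\infty]$.

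\emph{Interface term, small $\theta_N$.} This is the main obstacle. Write $\theta'=|\theta'|\eta$ and use the coordinates \eqref{eq:cartesian}, so that $x'\cdot\eta=m(s)+\rho\sqrt s$.
\begin{itemize}
\item For $|\theta'|$ close to $1$, the weight satisfies $\e^{-2s}\e^{\theta'\cdot x'}\le \e^{-2s}\e^{x'\cdot\eta}$. This equals $s^{-k}\e^{\rho\sqrt s}$, which in terms of $W$ gives $s^{-k+1/2}W$.
\item The factor $(1+\theta'\cdot x'-2s)$ is at most $C(\log s+\rho\sqrt s)$.
\item The Jacobian is $\dd x'=s^{(N-1)/2}\dd\rho\dd v$, and $\dd s=s\dd\tau$.
\end{itemize}
Collecting powers with $k=(N+2)/2$, the region $\rho\ge2a(s)$ contributes on each dyadic block $[T,2T]$ at most
\[
T^{(N+1)/2-k+1/2}\bigl(\log T+\sqrt T\bigr)\int\!\!\int(1+\rho)W\,\dd\rho\dd v\dd\tau .
\]
The exponent of $T$ in the prefactor is $0$, so the block contributes $T^{1/2}$ times the integral. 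Here I need the trace bound \eqref{eq:trace-L1}. As stated it gives only $T^{-1/4}$, which is not enough.

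To fix the bookkeeping I would do the following.
\begin{itemize}
\item Use \eqref{eq:trace-energy} with Cauchy--Schwarz. The factor $\kappa\sqrt t\ge c\sqrt t$ in the dissipation gives $\int W^2\le CT^{-1/2}$, hence $\int(1+\rho)W\le CT^{-1/4}$.
\item Recover the extra decay from $\theta_N$ and the mismatch $|\theta'|<1$. I expect the intended count has prefactor $T^{0}$ times $T^{-1/4}$, giving block sums $\sum T^{-1/4}$ over dyadic $T\ge t$, i.e.\ $O(t^{-1/4})$. I would verify the weight powers carefully, this being the delicate point.
\item Bound the region $\rho<2a(s)$ directly by the Gaussian envelope, using $D\le p/2$. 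Its width $a(s)=s^{-1/4}$ yields the same $s^{-1/4}$ gain.
\item Bound the region $\rho<0$ by the factor $\e^{\rho\sqrt s}$.
\end{itemize}

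\emph{Interface term, $\theta_N\ge\delta$.} Here the weight $\e^{\theta'\cdot x'}$ with $|\theta'|\le\sqrt{1-\delta^2}$ makes the interface integral exponentially small in $s$ via the envelope. Between the two cases, compactness covers $\overline{\Sph_+}$, using the uniformity of Lemma~\ref{lem:energytrace} in $\eta$.

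\emph{At $\beta=0$.} The trace of $D$ vanishes, so the interface term is zero.

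\emph{Joint continuity.}
\begin{itemize}
\item For fixed $t$, $E_\beta(t,\theta)$ is continuous in $(\beta,\theta)$. This follows from Lemma~\ref{lem:finite-boundary-continuity} together with the Gaussian tail control, which allows passage to the exponentially weighted integrals.
\item $E_\beta^\infty$ is the uniform limit of these continuous functions by \eqref{eq:Etail}, so it is jointly continuous on $[0,B]\times\overline{\Sph_+}$.
\end{itemize}
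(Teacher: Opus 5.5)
Your outline matches the paper's: integrate \eqref{eq:moment-balance}, bound the volume tail by \eqref{eq:absorption-tail}, use that the interface term vanishes at the equator, use exponential decay on fixed cones, and pass to the uniform limit for continuity. Those parts are fine.

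The gap is the step you flag as delicate, the interface term for small $\theta_N$. Your own count shows it fails as you set it up: you get $T^{1/2}\cdot T^{-1/4}$ per dyadic block, and re-deriving \eqref{eq:trace-L1} from \eqref{eq:trace-energy} gains nothing. The decay was lost in two places. First, you dropped the prefactor $2\theta_N$ of the interface integral in \eqref{eq:moment-balance}. Second, the bound $\e^{\theta'\cdot x'}\le\e^{\eta\cdot x'}$ discards the tilt deficit. Writing $\theta=(b\eta,\theta_N)$ and $\lambda=1-b$, one has exactly, on the interface,
\[
 \e^{-2t}\e^{\theta'\cdot x'}D
 =t^{-k+k\lambda+1/2}\e^{-2\lambda t}\e^{-\lambda\rho\sqrt t}\,W,
 \qquad
 1+\theta'\cdot x'-2t=1-2\lambda t-bk\log t+b\rho\sqrt t .
\]
So your bound of the centered factor by $C(\log t+\rho\sqrt t)$ holds only from above; in absolute value it fails once $\lambda t\gg\log t$. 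This factor changes sign and you need absolute integrability, so the term $-2\lambda t$ must be controlled, and only the discarded $\e^{-2\lambda t}$ does this. The same problem affects your strip $\rho<2a(t)$: without the exponential, the crude bound there is of order $\lambda T^{1/2}$ per block.

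The missing idea is to keep both factors, which gives \eqref{eq:Itau} (there $s=\theta_N$), and then use $\theta_N^2=\lambda(1+b)\le2\lambda$:
\[
 \sup_{0<\theta_N\le1}\theta_N\bigl(1+\lambda t+\log t+\sqrt t\bigr)\e^{-\lambda t}\le C
 \qquad(t\ge2).
\]
This holds because $\theta_N\sqrt t\,\e^{-\theta_N^2t/2}$ and $\lambda t\,\e^{-\lambda t}$ are bounded, while $\theta_N\log t\,\e^{-\theta_N^2t/2}\le Ct^{-1/2}\log t$. Together with $t^{k\lambda}\le\e^{\lambda t}$ for large $t$, this bounds the interface integrand on $\rho\ge2a(t)$ by $Ct^{-1}\int(1+\rho)W\,\dd\rho\,\dd v$. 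The substitution $\dd t=t\,\dd\tau$ turns the block integral into a constant times the left side of \eqref{eq:trace-L1}, hence $C_BT^{-1/4}$, and summing dyadic blocks gives \eqref{eq:Etail}.

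Neither factor suffices alone: $\theta_N\sqrt t$ is unbounded for fixed $\theta_N$, and $\sqrt t\,\e^{-\lambda t}$ is unbounded as $\lambda\downarrow0$. Uniformity up to the equator comes from the grazing factor $\theta_N$ balancing the tilt deficit. The complementary region is handled the same way. The paper gets $O(T^{-3/4}\log T)$ per block after multiplying by $\theta_N$ and maximizing. For points with $|x'|>m+4t^\gamma$ but $\eta\cdot x'\le m+2t^\gamma$, it uses the factor $\e^{-b(|x'|-\eta\cdot x')}$.
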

\begin{proof}
The volume contribution is controlled by \eqref{eq:absorption-tail}. It remains to estimate the interface contribution in \eqref{eq:moment-balance} uniformly as the direction approaches the equator.

On each fixed receiving cone, use $D\le p_\infty/2$ and the linear Neumann Gaussian upper bound; the tilted interface term is exponentially integrable. Near the equator write $\theta=(b\eta,s)$, $b=\sqrt{1-s^2}\ge1/2$, and $\lambda=1-b$. For $s>0$, the change of variables \eqref{eq:cartesian} bounds the absolute interface contribution by
\begin{equation}\label{eq:Itau}
 2s t^{-1+k\lambda}\e^{-2\lambda t}
 \int_{\R\times\R^{N-2}} |1-2\lambda t-bk\log t+b\rho\sqrt t|
       \e^{-\lambda\rho\sqrt t}W(\tau,\rho,v,0)\dd\rho\dd v.
\end{equation}
On $\rho\ge2a(t)$, maximizing the scalar coefficients over $s$ and using $s^2=\lambda(1+b)$ bounds \eqref{eq:Itau} by
\[
 Ct^{-1}\int_{\rho\ge2a(t)}(1+\rho)W(\tau,\rho,v,0)\dd\rho\dd v.
\]
The factor $t^{k\lambda}$ is absorbed into $\e^{-\lambda t}$ for large $t$. Integration over $t\in[T,2T]$, using $\dd t/t=\dd\tau$ and \eqref{eq:trace-L1}, gives $C_BT^{-1/4}$.

For the complementary part $l=\eta\cdot x'\le m+2t^\gamma$, first restrict to $r'=|x'|\le m+4t^\gamma$. Near the front, tangential angular integration contributes $t^{(N-2)/2}$, so the tilted radial factor is $t^{-2+k\lambda}\e^{-2\lambda t}$. For $r'=m+z$, $0\le z\le4t^\gamma$, the remaining integral is bounded by
\[
 C\int_0^{4t^\gamma}(z+t^\sigma)(1+\lambda t+\log t+z)\dd z.
\]
Multiplication by $s$ and maximization give
$Ct^{-5/2+3\gamma}(1+\log t)$. The part $z<0$ is smaller, using $u\le1$. If $r'>m+4t^\gamma$ but $l\le m+2t^\gamma$, then $r'-l\ge2t^\gamma$; the factor $\e^{-b(r'-l)}$ makes this remainder superpolynomially small. Since $\gamma=1/4$, the complementary contribution integrated over $[T,2T]$ is $O(T^{-3/4}\log T)$.

Summing the dyadic bounds and the volume tail with $\sigma=1/16$ proves \eqref{eq:Etail}. At each fixed positive time, parameter continuity and the common Gaussian bound make $E_\beta(t,\theta)$ jointly continuous. Uniform convergence therefore gives joint continuity of the limit. At $s=0$, the interface contribution vanishes identically, and integration of \eqref{eq:absorption-tail} gives the stronger equatorial estimate.
\end{proof}

We finally estimate the angular modulus of the limiting moment. For a unit tangent vector $\zeta\in T_\theta\Sph^{N-1}$, differentiation at fixed time gives
\[
 \partial_\zeta E_\beta(t,\theta)=\e^{-2t}\int_{\R^N}(\zeta\cdot x)(1+\theta\cdot x-2t)
 \e^{\theta\cdot x}u(t,x)\dd x.
\]
The radial envelope and angular Gaussian integration imply
\[
 |\partial_\zeta E_\beta(t,\theta)|\le C\sqrt t.
\]
The transverse and centered longitudinal factors each contribute a factor of order $\sqrt t$, while the tilted mass is of order at most $t^{-1/2}$.

Integrating along a shortest spherical arc in the closed receiving hemisphere and applying \eqref{eq:Etail} at its endpoints yields
\begin{equation}\label{eq:modulus}
 |E_\beta^\infty(\theta)-E_\beta^\infty(\omega)|
 \le C_B\bigl(\sqrt T\,|\theta-\omega|+T^{-1/4}\bigr)
\end{equation}
for all sufficiently large $T$, uniformly for $0\le\beta\le B$. For sufficiently small $d=|\theta-\omega|>0$, choose $T=d^{-4/3}$. Both terms in \eqref{eq:modulus} are then of order $d^{1/3}$. Larger separations are covered by the uniform bound on the limiting moments.

Thus $E_\beta^\infty$ is uniformly H\"older continuous with exponent $1/3$ on the closed receiving hemisphere for bounded resistance intervals. Together with the amplitude identity \eqref{eq:amplitude-moment} proved below, this yields \eqref{eq:holder-main}. The exponent results from balancing the finite-time angular derivative bound against the moment tail estimate; no optimality is claimed.

\section{Cartesian identification of the leading edge}\label{sec:Cartesian}
We now identify the leading-edge profile in Cartesian coordinates. We first treat receiving directions bounded away from the interface; tangential directions require the mixed-boundary analysis in the next subsection.

Fix $\theta\in\Sph_+$, and let $E_\theta:\R^{N-1}\to\theta^\perp$ be a linear isometry onto the transverse space. Set
\[
 x=(m(t)+\rho\sqrt t)\theta+\sqrt tE_\theta V,\qquad
 X=(\rho,V),\qquad\tau=\log t,
\]
and define
\[
 W_u(\tau,X)=t^{-1/2}\e^{\rho\sqrt t}u_+(t,x).
\]
For $\theta$ in a fixed $\Gamma_\delta$, every bounded set of scaled coordinates lies in the receiving half-space for all sufficiently large $t$. The transformed equation is
\begin{equation}\label{eq:Cartesian}
 (W_u)_\tau=\mathcal LW_u-kt^{-1/2}\partial_\rho W_u
       -t\frac{g(u_+)}{u_+}W_u,
 \qquad \mathcal L=\Delta_X+\frac X2\cdot\nabla_X+\frac{N+1}{2}.
\end{equation}
The nonlinear potential tends to zero locally uniformly on $\{\rho>0\}$. Parabolic compactness therefore gives subsequential limits of logarithmic-time translates, defined for all translated times and satisfying
\[
 \partial_sW_\infty=\mathcal LW_\infty\qquad(\rho>0).
\]
The radial envelope gives zero boundary trace at $\rho=0$ and the bound
\begin{equation}\label{eq:limit-bound}
 0\le W_\infty(s,\rho,V)\le C\rho\e^{-c\rho^2}\e^{-|V|^2/4}.
\end{equation}
The transverse factor follows from the expansion
\[
 |x|=m(t)+\rho\sqrt t+\frac{|V|^2}{4}+o(1)
\]
on bounded sets of scaled coordinates.

To identify these entire limits, consider the semigroup generated by $\mathcal L$. Its whole-space kernel is
\[
 K_T(X,Z)=\e^{(N+1)T/2}G_{\e^T-1}^N(\e^{T/2}X-Z).
\]
Odd reflection across $\rho=0$ gives the Dirichlet
kernel
\[
 D_T(X,Z)=K_T(X,Z)-K_T(X,\bar Z),
 \qquad \bar Z=(-Z_\rho,Z_\perp).
\]
We use its explicit formula to control the
large-time limit uniformly over the backward-time
profiles.
Writing $a_T=\e^T-1$, its explicit formula is
\[
 D_T(X,Z)
 =2\e^{(N+1)T/2}(4\pi a_T)^{-N/2}
 \exp\left[
 -\frac{\e^T|X|^2+|Z|^2}{4a_T}
 +\frac{\e^{T/2}X_\perp\cdot Z_\perp}{2a_T}
 \right]
 \sinh\left(
 \frac{\e^{T/2}\rho Z_\rho}{2a_T}
 \right).
\]
Hence, locally uniformly in $X$ and $Z$,
\[
 D_T(X,Z)\longrightarrow
 c_N\rho\e^{-|X|^2/4}Z_\rho
 \qquad(T\to\infty).
\]
Moreover, for every compact
$K\subset\{\rho\ge0\}$ and $T\ge1$,
\[
 0\le D_T(X,Z)\le C_K Z_\rho\e^{C_K|Z|},
 \qquad X\in K,\quad Z_\rho>0.
\]
Indeed, this follows from
$\sinh v\le v\e^v$ for $v\ge0$ and the explicit
Gaussian factors.

By \eqref{eq:limit-bound}, the profiles
$W_\infty(s-T,Z)$ satisfy a common Gaussian bound,
uniformly for $s\in\R$ and $T\ge1$.
Together with the kernel estimate above, this gives
a common integrable majorant for
\[
 D_T(X,Z)W_\infty(s-T,Z)
 \quad\text{and}\quad
 c_N\rho\e^{-|X|^2/4}Z_\rho W_\infty(s-T,Z),
\]
uniformly for $X\in K$, $s\in\R$, and $T\ge1$.
Splitting the $Z$-integral into a fixed ball and its
complement consequently gives
\[
 \sup_{X\in K}\left|
 W_\infty(s,X)
 -c_N\rho\e^{-|X|^2/4}
   \int_{Z_\rho>0}Z_\rho W_\infty(s-T,Z)\dd Z
 \right|\longrightarrow0
\]
as $T\to\infty$, uniformly in $s$.
Here we used the Dirichlet semigroup representation
over $[s-T,s]$, which follows by uniqueness in the
bounded solution class.

The first longitudinal moment
\[
 \mathcal M(s)=
 \int_{\rho>0}\rho W_\infty(s,X)\dd X
\]
is independent of $s$. To see this, test the equation
with $\rho$ times a smooth spatial cutoff.
The adjoint identity $\mathcal L^*\rho=0$ cancels
the interior terms. The boundary terms vanish
because both the test weight and the Dirichlet trace
vanish at $\rho=0$, and the cutoff errors tend to
zero by \eqref{eq:limit-bound}.
Thus
\[
 W_\infty(s,X)
 =A\rho\e^{-|X|^2/4},\qquad A=c_N\mathcal M,
\]
with a coefficient independent of $s$.

We now connect this local limit to the critical
moment of the original transmission solution.
In the receiving Cartesian coordinates, define the
piecewise field on the whole scaled space by
\[
 \widetilde W_{\beta,\theta}(t,X)
 =t^{-1/2}\e^{\rho\sqrt t}
 u\bigl(t,(m(t)+\rho\sqrt t)\theta
                  +\sqrt tE_\theta V\bigr).
\]
Values on the interface are immaterial for the
following volume integrals. The coordinate change
gives the exact identities
\[
 \int_{\R^N}F(X)\widetilde W_{\beta,\theta}(t,X)\dd X
 =t^{1/2}\e^{-2t}
   \int_{\R^N}F(X_t(x))
       \e^{\theta\cdot x}u(t,x)\dd x
\]
and
\[
 E_\beta(t,\theta)
 =\int_{\R^N}
   \left(\rho-\frac{k\log t}{\sqrt t}\right)
   \widetilde W_{\beta,\theta}(t,X)\dd X.
\]
Here $X_t$ denotes the inverse coordinate map.

The proof of Appendix~\ref{app:tightness} applies
to these identities with the full angular domain
and with $u$ in place of $S_\beta$.
Indeed, its estimates use only $0\le u\le1$,
the common radial envelope, and the angle between
$x$ and the chosen unit direction. They do not
use the equatorial orientation or the Neumann
condition of the average. Thus, uniformly for
$\theta\in\Gamma_\delta$ and $0\le\beta\le B$,
\[
 \sup_{t\ge T}\int_{\R^N}
       (1+|\rho|)\widetilde W_{\beta,\theta}\dd X<\infty,
\]
\[
 \lim_{M\to\infty}\limsup_{t\to\infty}
 \int_{|X|>M}(1+|\rho|)
       \widetilde W_{\beta,\theta}\dd X=0,
 \qquad
 \int_{\rho<0}|\rho|
       \widetilde W_{\beta,\theta}\dd X\longrightarrow0.
\]

The donor contribution can also be excluded directly.
For a common compactly supported radial datum $\psi$,
linear comparison gives
\[
 \e^{-2t+\theta\cdot x}u(t,x)
 \le\int_{\R^N}\e^{\theta\cdot a}
       G_t^N(x-a-2t\theta)\psi(a)\dd a.
\]
Since $\theta_N\ge\delta$, the Gaussian center has
normal coordinate at least $2\delta t-C$.
Gaussian tail estimates therefore imply
\[
 \e^{-2t}\int_{\Omega_-}
 (1+|\theta\cdot x-2t|)
 \e^{\theta\cdot x}u(t,x)\dd x
 \le C_\delta(1+t)^2\e^{-c_\delta t}.
\]
This estimate is uniform in the resistance.

On every bounded scaled set, the points lie in
$\Omega_+$ for sufficiently large $t$, and
$\widetilde W_{\beta,\theta}=W_u$ there.
The preceding tightness estimates justify passage
from local convergence to the moment integral.
More explicitly, the logarithmic correction tends
to zero by the uniform mass bound, the negative
longitudinal contribution vanishes, and
\[
 \int_{0<\rho<h}\rho\widetilde W_{\beta,\theta}\dd X
 \le Ch
\]
controls the omitted strip near $\rho=0$.
After removing the large spatial cutoff and then
letting $h\downarrow0$, we obtain
\[
 E_\beta^\infty(\theta)
 =\int_{\rho>0}\rho W_\infty(X)\dd X
 =A(4\pi)^{N/2},
\]
since
\[
 \int_{\rho>0}\rho^2\e^{-|X|^2/4}\dd X=(4\pi)^{N/2}.
\]
Consequently, the directional amplitude is
\begin{equation}\label{eq:amplitude-moment}
 a_\beta(\theta)=c_NE_\beta^\infty(\theta),\qquad\theta\in\Sph_+.
\end{equation}
The next subsection establishes the same identity at the equator.

To prove positivity, we apply
Lemma~\ref{lem:persistence} to the homogeneous
radial lower comparison solution, rather than
to the transmission solution itself.
Fix $0<B<\infty$, and let $w$ and $t_0>0$ be the
common lower comparison constructed in
Section~\ref{sec:prelim}, so that
\[
 w(t-t_0,x)\le u(t,x),
 \qquad t\ge t_0,\quad 0\le\beta\le B.
\]
In its own time variable $s$, the whole-space
solution $w$ satisfies
\[
 w_s=\Delta w+(1-Q_w)w,\qquad
 Q_w(s,x)=\frac{g(w(s,x))}{w(s,x)}.
\]
For $s>0$, positivity makes this quotient
well defined, and \eqref{eq:basic-g} gives
\[
 0\le Q_w\le C_fw\le C_f.
\]
The independently proved radial Gaussian envelope
bounds $Q_w$ in $|x|\ge m(s)$ as required by
Lemma~\ref{lem:persistence}.
At any fixed positive time, $w$ dominates a
nonzero compactly supported seed.
The lemma therefore gives a constant $c_w>0$,
independent of direction, such that its normalized
leading edge has lower bound
$c_w\rho\e^{-\rho^2/4}$.

To account for the insertion time, set $s=t-t_0$
and
\[
 \rho_s=
 \frac{m(t)-m(s)+\rho\sqrt t}{\sqrt s}.
\]
For $\rho$ in any compact subset of $(0,\infty)$,
we have $\rho_s\to\rho$ uniformly and
\[
 \begin{aligned}
 &t^{-1/2}\e^{\rho\sqrt t}
 w\bigl(t-t_0,(m(t)+\rho\sqrt t)\theta\bigr)\\
 &\quad=
 \sqrt{\frac{s}{t}}\,
 \e^{-[m(t)-m(s)]}
 \left[
 s^{-1/2}\e^{\rho_s\sqrt s}
 w\bigl(s,(m(s)+\rho_s\sqrt s)\theta\bigr)
 \right].
 \end{aligned}
\]
Since $m(t)-m(t-t_0)\to2t_0$, comparison yields
\[
 \liminf_{t\to\infty}
 \inf_{\substack{0\le\beta\le B\\
                  \theta\in\Gamma_\delta}}
 t^{-1/2}\e^{\rho\sqrt t}
 u_+^\beta\bigl(t,(m(t)+\rho\sqrt t)\theta\bigr)
 \ge c_w\e^{-2t_0}\rho\e^{-\rho^2/4}.
\]
The estimate is uniform for $\rho$ in compact
subsets of $(0,\infty)$.
Thus every subsequential coefficient satisfies
\[
 A\ge c_w\e^{-2t_0}>0.
\]
The lower-bound constant is independent of
$\delta$, since the radial comparison and the
persistence estimate are independent of direction.
Once the equatorial amplitude is identified below,
continuity extends the same positive lower bound
to the closed receiving hemisphere.

Since the limiting moment determines $A$ uniquely,
all subsequential limits agree. Thus, with
\begin{equation}\label{eq:w}
 w_\beta(\tau,\rho,\theta)=t^{-1/2}\e^{\rho\sqrt t}u_+(t,(m(t)+\rho\sqrt t)\theta),
 \qquad\phi_0(\rho)=\rho\e^{-\rho^2/4},
\end{equation}
we obtain
\begin{equation}\label{eq:cone-profile}
 w_\beta(\tau,\rho,\theta)\longrightarrow a_\beta(\theta)\phi_0(\rho)\qquad(\tau\to\infty).
\end{equation}
The convergence is uniform for $\rho$ in compact subsets of $[0,\infty)$, $\theta\in\Gamma_\delta$, and $0\le\beta\le B$, for every fixed $0<\delta\le1$ and $0<B<\infty$. Uniformity follows by allowing both the direction and resistance to vary in the compactness argument.

The same argument applies to homogeneous solutions with compactly supported initial data, including the even Neumann extension $P$.

\subsection{Equatorial mixed-boundary limits}
We now consider directions tangent to the interface. In the coordinates \eqref{eq:cartesian}, define
\[
 W_S=t^{-1/2}\e^{\rho\sqrt t}S_\beta(t,x',y),\qquad
 W_D=t^{-1/2}\e^{\rho\sqrt t}D_\beta(t,x',y),\qquad
 \Phi_0(X)=\rho\e^{-|X|^2/4}.
\]
The average satisfies a homogeneous Neumann condition at $Y=0$. Its nonlinear source is
\[
 -\frac12t^{1/2}\e^{\rho\sqrt t}\bigl[g(p_\beta)+g(q_\beta)\bigr],
\]
which tends to zero locally uniformly for $\rho>0$. Even reflection across $Y=0$ therefore gives local parabolic compactness of logarithmic-time translates of $W_S$, including at the flat boundary.

For the difference, the Robin coefficient grows with time, so we use estimates independent of that coefficient. Fix $0<B<\infty$, take $\tau_j\to\infty$ and $0<\beta_j\le B$, and allow the equatorial directions $\eta_j$ to vary. Set
\[
 w_j(s,X)=W_{D,\beta_j,\eta_j}(\tau_j+s,X),\qquad
 \lambda_j(s)=\frac{2}{\beta_j}\e^{(\tau_j+s)/2},
\]
where the subscripts indicate the resistance and coordinate direction used to define $W_D$.

Choose nested half-cylinders $Q'\subset Q$, with a positive margin at the initial time and curved spatial boundary, but with flat faces allowed to meet $Y=0$, as in Lemma~\ref{lem:robin-local}. Assume that both cylinders remain a positive distance from $\rho=0$. The radial envelope bounds $w_j$ uniformly on $Q$, and \eqref{eq:W} has coefficients
\[
 b_j=X/2-k\e^{-(\tau_j+s)/2}e_\rho,\qquad c_j=(N+1)/2-V_j,
\]
which are uniformly bounded there, with $V_j\to0$. Applying the $p=2$ case of \eqref{eq:robin-local-energy} gives
\begin{equation}\label{eq:equatorial-local-compactness}
 \int_{Q'}|\nabla w_j|^2+\int_{\Gamma'}\lambda_jw_j^2\le C_Q,
 \qquad \int_{\Gamma'}w_j^2\le C_QB\e^{-\tau_j/2},
\end{equation}
where $\Gamma'$ is the flat face of $Q'$. The constants are independent of $\beta_j$ and $\eta_j$; the bounded range of $s$ is absorbed into $C_Q$.

Interior parabolic compactness gives subsequential convergence away from $Y=0$. The uniform supremum bound also gives
\[
 \int_{Q'\cap\{0<Y<\varepsilon\}}w_j^2\le C_Q\varepsilon.
\]
Combining interior convergence with this boundary-strip estimate yields strong local $L^2$ convergence on half-cylinders. The gradient bound in \eqref{eq:equatorial-local-compactness} gives weak convergence in $L^2_sH^1_X$ on smaller cylinders. Continuity of the trace map and the vanishing boundary $L^2$ norm identify the limiting trace as zero. Testing against functions that vanish on $Y=0$ then passes to the limiting equation without requiring a bound on $\lambda_jw_j$.

Thus every entire difference limit satisfies the Ornstein--Uhlenbeck equation with Dirichlet conditions at $Y=0$ and, by the radial envelope, at $\rho=0$. The average limit satisfies the same equation with Dirichlet data at $\rho=0$ and Neumann data at $Y=0$. For the average limit, even reflection across $Y=0$
reduces the problem to the preceding Dirichlet
half-space classification. Its entire limit therefore
has the form $A\Phi_0$, with $A$ independent of time.

For the difference limit, the kernel is odd in both
the longitudinal variable $\rho$ and the normal
variable $Y$. Its explicit product formula contains
two hyperbolic sine factors. For every compact set
$K$ in the closed quadrant and every $T\ge1$, it
satisfies
\[
 0\le D_T^{DD}(X,Z)
 \le C_K\e^{-T/2}Z_\rho Z_Y\e^{C_K|Z|},
 \qquad X\in K,\quad Z_\rho,Z_Y>0.
\]
The factor $\e^{-T/2}$ follows from the two odd
reflections: each contributes a factor of order
$\e^{-T/2}$, whereas the common kernel prefactor
is of order $\e^{T/2}$.
The radial envelope gives a Gaussian bound for
the entire difference limit, uniform in time.
Applying the semigroup representation over
$[s-T,s]$ therefore yields
\[
 \sup_{X\in K}W_{D,\infty}(s,X)
 \le C_K\e^{-T/2}.
\]
Letting $T\to\infty$ proves
$W_{D,\infty}\equiv0$.

The amplitude $A$ is determined by the exact identity
\begin{equation}\label{eq:cart-moment}
 E_\beta(t,(\eta,0))=2\int_{\R\times\R^{N-2}\times(0,\infty)}
 \left(\rho-\frac{k\log t}{\sqrt t}\right)W_S\dd X.
\end{equation}
The tightness estimates in Appendix~\ref{app:tightness} justify passage to the limit. Since
\[
 2\int_{\rho>0,\ Y>0}\rho^2\e^{-|X|^2/4}\dd X=(4\pi)^{N/2},
\]
we obtain $A=c_NE_\beta^\infty(\eta,0)$. Proposition~\ref{prop:moment} identifies this value with the continuous extension of the receiving-direction amplitude, establishing \eqref{eq:amplitude-moment} also at the equator.

It follows that
\begin{equation}\label{eq:eqprofiles}
 W_S\longrightarrow a_\beta(\eta,0)\Phi_0,\qquad W_D\longrightarrow0\qquad(\tau\to\infty).
\end{equation}
To obtain convergence of the difference up to $Y=0$, first use the preceding identification and compactness to conclude that $w_j\to0$ strongly in $L^2(Q)$. Lemma~\ref{lem:robin-local} then gives, on a smaller half-cylinder,
\[
 \sup_{Q'\cup\Gamma'}w_j\le C_{Q',Q}\|w_j\|_{L^2(Q)}\longrightarrow0.
\]
The estimate is applied directly to the nonnegative difference $w_j$, whose limit has already been identified as zero. The average converges up to the boundary by even reflection. Finally, the radial envelope controls the remaining strip near $\rho=0$.

These arguments give local uniform convergence for $\rho\ge0$ and $Y\ge0$, uniformly in $\eta$ and $0<\beta\le B$. At $\beta=0$, the homogeneous whole-space identification gives the same limits for the folded average and difference. Splitting an arbitrary parameter sequence into its zero- and positive-resistance subsequences therefore extends the uniformity to $0\le\beta\le B$. Since the scaled donor and receiving densities are $W_S+W_D$ and $W_S-W_D$, respectively, both have the same limiting profile at the interface.

\subsection{Relative mesoscopic matching}\label{sec:meso}
The leading-edge limits above are locally uniform in the scaled longitudinal coordinate $\rho$. For inner matching, we need relative asymptotics at the intermediate distance $z=t^\gamma$, where $\rho=t^{\gamma-1/2}\to0$. We establish these asymptotics uniformly for transverse displacements smaller than $\sqrt t$.

Fix $0<\gamma<1/3$, a nonnegative function $H_\perp(t)=o(\sqrt t)$, and a finite resistance bound $B>0$. Choose an auxiliary exponent
\[
 0<\alpha<\min\{\gamma,1/4\},\qquad\eta_t=t^{\alpha-1/2}.
\]
On cylinders with $\eta_t\le\rho\le L$, the radial envelope \eqref{eq:envelope} with exponent $\alpha$ makes the nonlinear terms exponentially small in $t^\alpha$. Set
\[
 \mathcal R=\partial_\tau-\mathcal L+kt^{-1/2}\partial_\rho.
\]
In particular, the folded difference satisfies
\[
 \mathcal RW_D\le\omega(\tau)W_D,\qquad
 0\le\omega(\tau)\le Ct^{1+\alpha}\e^{-t^\alpha}.
\]
This estimate uses the absolute value of the nonlinear coefficient and therefore does not require $c_g\ge0$.

We first treat the equatorial directions. For a small fixed $L>0$, consider the moving cylinder
\[
 \eta_t\le\rho\le L,\qquad |v_i|\le1,\qquad 0\le Y\le1.
\]
Write $V=(v,Y)$ and define
\[
 \psi_L(\rho)=\sin\frac{\pi\rho}{2L},\qquad
 h_L(\rho)=\cos\frac{\pi\rho}{4L},\qquad
 g_L(V)=\sum_{i=1}^{N-1}\frac{\cosh(V_i/L)}{\cosh(1/L)}.
\]
For sufficiently small $L$ and all sufficiently large $t$, direct differentiation gives
\begin{align*}
 \mathcal R\psi_L&\ge\left[\frac{\pi^2}{4L^2}-\frac{N+2}{2}\right]\psi_L,\\
 \mathcal R(\psi_Lg_L)&\ge
 \left[\frac{\pi^2/4-1}{L^2}-\frac1{2L}-\frac{N+2}{2}\right]\psi_Lg_L\ge0,
 \qquad \mathcal Rh_L\ge2h_L.
\end{align*}
Set $A=a_\beta(\eta,0)$ and $Z_S=W_S-A\Phi_0$. The equation for the average and the Gaussian envelope imply
\[
 |\mathcal RZ_S|\le Ct^{-1/2}+Ct^{3/2}\e^{-t^\alpha}
\]
on this cylinder. For $\epsilon>0$, define
\begin{equation}\label{eq:eq-meso-barrier}
 \mathcal B=\epsilon\psi_L+C_0L\psi_Lg_L+
       (K\eta_t+Q\e^{-(\tau-\tau_0)})h_L.
\end{equation}
Choose $C_0$ to control the lateral boundary data, then choose $K,Q$ and a sufficiently large starting time $\tau_0$. These choices make $\mathcal B$ a common upper comparison function for $Z_S$ and $-Z_S$.

Indeed, the radial envelope gives $|Z_S|=O(\eta_t)$ at the moving inner boundary. At $\rho=L$, \eqref{eq:eqprofiles} makes the error arbitrarily small after increasing $\tau_0$. On each transverse lateral face, $g_L\ge1$ and $L\psi_L\ge c\rho$, so the second term controls the lateral error. The term containing $Q$ controls the initial data, while the term containing $K\eta_t$ absorbs the residual. At $Y=0$, both $Z_S$ and $\mathcal B$ have zero normal derivative. Thus comparison yields $|Z_S|\le\mathcal B$.

For $W_D$, use the operator $\mathcal R-\omega(\tau)$. The exponentially small correction preserves the comparison inequalities for sufficiently large $\tau_0$. Moreover,
\[
 \partial_n\mathcal B+\kappa\sqrt t\,\mathcal B=\kappa\sqrt t\,\mathcal B\ge0\qquad(Y=0).
\]
The same construction therefore gives $0\le W_D\le\mathcal B$. Homogeneous Dirichlet comparison covers $\beta=0$.

Evaluate these bounds at
\[
 \rho=t^{\gamma-1/2},\qquad |V|\le H_\perp(t)/\sqrt t.
\]
Since $\eta_t/\rho=t^{\alpha-\gamma}\to0$ and $\e^{-(\tau-\tau_0)}/\rho\to0$, division by $\rho$ gives
\[
 \limsup_{t\to\infty}\frac{\mathcal B}{\rho}\le\frac{\pi\epsilon}{2L}
 +\frac{C_0\pi(N-1)}{2\cosh(1/L)}.
\]
First let $\epsilon\downarrow0$, then $L\downarrow0$. Since $\Phi_0/\rho\to1$ at these points, we obtain
\[
 \frac{W_S}{\rho}\longrightarrow a_\beta(\eta,0),\qquad\frac{W_D}{\rho}\longrightarrow0.
\]
Both folded densities consequently have the same relative asymptotics, giving the equatorial form of \eqref{eq:meso}.

On each fixed receiving cone, the same construction applies to \eqref{eq:Cartesian}, using full transverse intervals and no interface boundary. Hence, at $z=t^\gamma$,
\begin{equation}\label{eq:meso}
 \frac{\e^z}{z}u_+\bigl(t,(m(t)+z)\theta+\zeta\bigr)\longrightarrow a_\beta(\theta),
\end{equation}
uniformly for $0\le\beta\le B$, $\theta\in\Gamma_\delta$, and physical transverse displacements $\zeta\perp\theta$ satisfying $|\zeta|\le H_\perp(t)$. At the equator, the corresponding statement holds for both folded components. The same argument gives uniformity when $z/t^\gamma$ ranges over a fixed compact subset of $(0,\infty)$. These relative estimates provide the outer boundary data for the inner matching argument in the next section.

\section{Inner matching through the displacement field}\label{sec:inner}
We now use the relative mesoscopic estimates to identify the nonlinear front. The argument is formulated in terms of the displacement from the normalized wave $U_*$, retaining both transverse diffusion and the transformed interface exchange. We first establish coarse displacement bounds using scalar radial comparisons.

\subsection{Coarse bounds that precede inner matching}
Define
\[
 \lambda_*=\sqrt{1-f'(1)}-1>0,\qquad
 r_U(s)=\frac{U_*''(s)}{U_*'(s)}.
\]
\begin{lemma}[Wave tails and derivative ratios]\label{lem:wave-tails}
There exist $B_->0$ and $B_+\in\mathbb R$ such that, as $s\to-\infty$,
\begin{align*}
 1-U_*(s)&=B_-\e^{\lambda_*s}+O(\e^{2\lambda_*s}),\\
 U_*'(s)&=-\lambda_*B_-\e^{\lambda_*s}+O(\e^{2\lambda_*s}),\\
 U_*''(s)&=-\lambda_*^2B_-\e^{\lambda_*s}+O(\e^{2\lambda_*s}).
\end{align*}
As $s\to+\infty$,
\begin{align*}
 U_*(s)&=(s+B_+)\e^{-s}+O((1+s)^2\e^{-2s}),\\
 U_*'(s)&=(1-s-B_+)\e^{-s}+O((1+s)^2\e^{-2s}),\\
 U_*''(s)&=(s+B_+-2)\e^{-s}+O((1+s)^2\e^{-2s}).
\end{align*}
In particular, $r_U$ is well defined and
\begin{equation}\label{eq:wave-ratio}
 \|r_U\|_\infty<\infty,\qquad
 r_U(s)\longrightarrow\lambda_*\quad(s\to-\infty),\qquad
 r_U(s)=-1+\frac1s+O(s^{-2})\quad(s\to+\infty).
\end{equation}
\end{lemma}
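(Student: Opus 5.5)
The plan is to treat the two ends separately by linearizing the profile equation \eqref{eq:wave} at its two equilibria, and then to read off the derivative ratio from the resulting expansions. Throughout I use the classical facts recalled after \eqref{eq:wave}: $U_*$ exists, satisfies $U_*'<0$ on $\R$, and obeys $U_*(z)\sim z\e^{-z}$ as $z\to+\infty$.

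\emph{The end $s\to-\infty$.} Set $v=1-U_*$. Since $f\in C^2$ and $f(1)=0$, the function $v$ solves
\[
 v''+2v'+f'(1)v=O(v^2).
\]
The characteristic roots are $-1\pm\sqrt{1-f'(1)}$. Because $f'(1)<0$, they have opposite signs, and the positive root is $\lambda_*$. Thus $(U,U')=(1,0)$ is a hyperbolic saddle for the first-order system. The orbit of $U_*$ leaves this point as $s\to-\infty$, so it lies on the one-dimensional unstable manifold. With $f\in C^2$, this manifold is a $C^1$ graph tangent to the eigendirection $(1,\lambda_*)$ in the $(v,v')$ coordinates, and its graph satisfies $v'=\lambda_*v+O(v^2)$.

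Integrating this first-order relation gives $v=B_-\e^{\lambda_*s}(1+O(\e^{\lambda_*s}))$. Here $B_->0$, because $v>0$ and the orbit is nontrivial. This yields the expansion of $v'$, hence of $U_*'=-v'$. The expansion of $U_*''$ then follows from $U_*''=-2U_*'-f(U_*)$, by expanding $f(1-v)=-f'(1)v+O(v^2)$ and using $\lambda_*^2+2\lambda_*+f'(1)=0$.

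\emph{The end $s\to+\infty$.} Write the equation as
\[
 U_*''+2U_*'+U_*=g(U_*),
\]
where the critical linear operator has the double root $-1$. The normalization $U_*(s)\sim s\e^{-s}$ together with \eqref{eq:basic-g} gives $h:=g(U_*)=O(s^2\e^{-2s})$. By variation of constants,
\[
 U_*(s)=(c_1+c_2s)\e^{-s}+\int_s^\infty(\sigma-s)\e^{\sigma-s}h(\sigma)\dd\sigma .
\]
This is legitimate because the integral converges, and the difference between $U_*$ and the integral solves the homogeneous equation. Since $\e^{\sigma}h(\sigma)=O(\sigma^2\e^{-\sigma})$, the integral is $O((1+s)^2\e^{-2s})$. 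The normalization then forces $c_2=1$, and I set $B_+:=c_1$. Differentiating the representation once and twice gives the stated expansions for $U_*'$ and $U_*''$, with remainders of the same order. The boundary terms produced by the differentiation vanish or are $O(h)$.

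\emph{The ratio $r_U$.} Strict monotonicity gives $U_*'<0$ everywhere, so $r_U$ is well defined and continuous.
\begin{itemize}[leftmargin=*]
\item As $s\to-\infty$, the first set of expansions gives $r_U\to\lambda_*$.
\item As $s\to+\infty$,
\[
 r_U(s)=\frac{s+B_+-2}{-(s+B_+-1)}+O(s^{-1}\cdot s^2\e^{-s})=-1+\frac1{s+B_+-1}+O(s\e^{-s})=-1+\frac1s+O(s^{-2}).
\]
\end{itemize}
A continuous function with finite limits at both ends is bounded, which proves \eqref{eq:wave-ratio}.

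The step I expect to need the most care is the smoothness of the unstable manifold at $s\to-\infty$. I need the graph relation $v'=\lambda_*v+O(v^2)$, rather than only the leading order, in order to get the $O(\e^{2\lambda_*s})$ remainders. If that relation is unavailable, I would instead use variation of constants for the saddle, with the source $O(v^2)$ and an a priori exponential rate from the stable/unstable dichotomy, and bootstrap once.
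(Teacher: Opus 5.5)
Your proof is correct. On the right tail and for the ratio $r_U$ it is the paper's argument in different notation: your variation-of-constants formula is exactly the double integration of $V''=\e^sg(U_*)$ for $V=\e^sU_*$. In both, the normalization fixes the coefficient of $s$, and $B_+$ is the remaining constant.

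The genuine difference is on the left tail. You get $v'=\lambda_*v+O(v^2)$ from the unstable-manifold theorem at the saddle $(1,0)$. The paper gets the same relation by hand:
\begin{itemize}
\item it first shows that $w'=(1-U_*)'$ is bounded, using $w''+2w'\ge0$ and $\int_s^{s+1}w'\le1$;
\item it then observes that $q=w'-\lambda_*w$ is bounded and solves $q'+(\lambda_*+2)q=R(w)$;
\item boundedness as $s\to-\infty$ rules out the homogeneous mode $\e^{-(\lambda_*+2)s}$, and variation of constants from $-\infty$, together with monotonicity of $w$, gives $q=O(w^2)$.
\end{itemize}

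Your route is shorter to state but needs three inputs you should make explicit.
\begin{itemize}
\item A $C^2$ extension of $f$ beyond $u=1$, so that the vector field is defined in a neighborhood of the saddle.
\item $C^2$ regularity of the local unstable manifold. This holds because the vector field is $C^2$. A merely $C^1$ graph, as you wrote, would only give $v'=\lambda_*v+o(v)$.
\item Convergence $U_*'(s)\to0$ as $s\to-\infty$, so that the phase-space orbit actually tends to $(1,0)$ and lies on the unstable manifold. This follows, for instance, from boundedness of $U_*'$ and $U_*''$ together with integrability of $U_*'$.
\end{itemize}
The paper's argument avoids all three, and your proposed fallback is precisely that argument.

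One small step is also missing from your ``integrating this relation.'' Before integrating $(\log v)'=\lambda_*+O(v)$ you need a crude decay bound to make the error integrable. It comes from $(\log v)'\ge\lambda_*/2$ on a left half-line, which gives $v\le C\e^{\lambda_*s/2}$. The paper includes this step explicitly.
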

\begin{proof}
Write $w=1-U_*$, $a=-f'(1)>0$ and $\lambda=\lambda_*$.
Then $w'>0$, $\lambda^2+2\lambda=a$, and
\[
 w''+2w'=f(1-w)=aw+R(w),\qquad R(w)=O(w^2).
\]
First, $v=w'$ is bounded. Indeed, $v'+2v=f(1-w)\ge0$
implies $v(r)\ge\e^{-2(r-s)}v(s)$ for $r\ge s$, whereas
$\int_s^{s+1}v(r)\dd r\le1$; hence
$v(s)\le2/(1-\e^{-2})$. Thus $q=w'-\lambda w$ is bounded
and satisfies $q'+(\lambda+2)q=R(w)$. Variation of constants
from $-\infty$ gives, for all sufficiently negative $s$,
\[
 q(s)=\int_{-\infty}^s\e^{-(\lambda+2)(s-r)}R(w(r))\dd r
      =O(w(s)^2),
\]
where we used monotonicity of $w$. Consequently,
$(\log w)'=\lambda+O(w)\ge\lambda/2$ on a left half-line.
Integrating backwards gives $w(s)\le C\e^{\lambda s/2}$,
so $w$ is integrable at $-\infty$. Since
\[
 \frac{\dd}{\dd s}\log(w(s)\e^{-\lambda s})=O(w(s)),
\]
this logarithm has a finite real limit there. Thus
$w(s)\sim B_-\e^{\lambda s}$ with $B_->0$.
Integrating the last identity once more yields
$w=B_-\e^{\lambda s}+O(\e^{2\lambda s})$.
The identity $w'=\lambda w+q$ and the wave equation give
the asserted first and second derivative expansions.

For the right tail set $V(s)=\e^sU_*(s)$. The normalization
in \eqref{eq:wave} gives $V(s)/s\to1$, while
\[
 V''(s)=\e^s g(U_*(s))\ge0,\qquad
 V''(s)=O((1+s)^2\e^{-s}).
\]
The second estimate makes $V''$ integrable at $+\infty$;
the limiting value of $V'$ must be $1$, since $V(s)/s\to1$.
Therefore
\[
 V'(s)-1=-\int_s^\infty V''(r)\dd r
       =O((1+s)^2\e^{-s}).
\]
This error is also integrable, so $V(s)-s$ has a finite
limit $B_+$ and
$V(s)=s+B_++O((1+s)^2\e^{-s})$.
Using $U_*'=\e^{-s}(V'-V)$ and
$U_*''=\e^{-s}(V''-2V'+V)$ proves the three right-tail
expansions without differentiating an uncontrolled remainder.
Division yields \eqref{eq:wave-ratio} at both ends.
On compact intervals, continuity and $U_*'<0$ bound $r_U$.
\end{proof}

\begin{lemma}[Coarse displacement bounds]\label{lem:coarse}
Fix $0<B<\infty$, $0<\gamma<1/3$, and $\gamma<\delta<1/2$. In the moving Cartesian coordinates
\[
 x=(m(t)+z)\theta+\zeta,\qquad
 \zeta\perp\theta,\qquad |\zeta|\le t^\delta,
\]
define
\[
 h(t,z,\theta,\zeta)=z-U_*^{-1}(u(t,x)).
\]
On each fixed receiving cone $\Gamma_{\delta_0}$, $0<\delta_0\le1$, the following estimates hold for all sufficiently large $t$, uniformly for $0\le\beta\le B$, $\theta\in\Gamma_{\delta_0}$, and the indicated transverse displacements:
\begin{align}
 |h|&\le C,&&0\le z\le t^\gamma,\label{eq:h-ahead}\\
 |h|&\le C_\epsilon+\epsilon|z|,&&-Z\le z\le0,\label{eq:h-behind}
\end{align}
for every $\epsilon>0$ and $Z>0$ in the second estimate. The same bounds hold in equatorial coordinates for each folded component. In \eqref{eq:h-behind}, $C_\epsilon$ is independent of $Z$, although the starting time may depend on $Z$.
\end{lemma}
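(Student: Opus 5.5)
The plan is to reduce everything to the two scalar radial comparison solutions of Section~\ref{sec:prelim} and to prove the coarse bounds for those. The upper solution $\overline w$ is the homogeneous solution with the radially nonincreasing datum $U_0\ge u_0$. The lower solution is $w(t-t_0,\cdot)$ with the common seed $w_0$. For $0\le\beta\le B$ both traces of $u$, and hence both folded components $p_\beta=S+D$ and $q_\beta=S-D$, satisfy
\[
 w(t-t_0,x)\le u(t,x)\le\overline w(t,x).
\]
Since $U_*^{-1}$ is decreasing, this gives
\[
 h_{\overline w}\le h_u\le h_{w(\cdot-t_0)},
\]
so two-sided bounds for $h$ follow from one-sided bounds for the two radial solutions. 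The transverse displacement costs nothing for radial functions. If $x=(m(t)+z)\theta+\zeta$ with $|\zeta|\le t^\delta$, then
\[
 |x|=m(t)+z+O(t^{2\delta-1}).
\]
Since $\delta<1/2$, and $U_*'/U_*$ is bounded on the relevant range by Lemma~\ref{lem:wave-tails}, this changes $h$ by $o(1)$. The time shift $t_0$ changes the center by $m(t)-m(t-t_0)\to2t_0$, again a bounded change of $h$. Thus the whole lemma, on cones and at the equator alike, reduces to \eqref{eq:h-ahead}--\eqref{eq:h-behind} for a single radial homogeneous solution $\phi$ with compactly supported radial datum. This reduction is uniform in $\beta$, $\theta$ and $\eta$.

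For a radial solution, write $\phi(t,r)=\e^{-(r-m(t))}\psi(t,z)$ with $z=r-m(t)$. The function $\psi$ satisfies a one-dimensional equation
\[
 \psi_t=\psi_{zz}+b(t,z)\psi_z+c(t,z)\psi-\e^{z}g(\phi),
\]
where $b$ and $c$ are $O(t^{-1})$ and come from the drift $(N-1)/r$ and from $m'(t)=2-k/t$. For the ahead estimate \eqref{eq:h-ahead} I work on the interval $0\le z\le t^\gamma$. At the right end, the radial version of \eqref{eq:meso} (the paper notes it holds for homogeneous compact data) gives $\psi(t,t^\gamma)=a\,t^\gamma(1+o(1))$ with $a>0$. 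At the left end, $\phi\le1$ gives $\psi\le1$, and a lower bound $\psi(t,0)\ge c>0$ is needed. The interval has length $t^\gamma\ll\sqrt t$, so it relaxes on the time scale $t^{2\gamma}\ll t$. Comparison with linear-in-$z$ barriers, corrected by $O(t^{-1}z^2)$ terms that absorb the $O(t^{-1})$ coefficients, then yields
\[
 c(1+z)\le\psi\le C(1+z)\quad\text{on }[0,t^\gamma].
\]
Because $U_*(s)\asymp(1+s)\e^{-s}$ for $s\ge0$, this is exactly $|h|\le C$. The nonlinear term $\e^zg(\phi)$ is controlled by the Gaussian envelope \eqref{eq:envelope}.

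The lower bound $\phi(t,m(t))\ge c$ and the behind estimate \eqref{eq:h-behind} I would obtain from the expanding-interval energy argument of Appendix~\ref{app:energy}, applied to the one-dimensional radial equation with its space-dependent $O(t^{-1})$ drift. For the upper half of \eqref{eq:h-behind}, I need
\[
 1-\phi\ge c_\epsilon\,\e^{\lambda_*(z-C_\epsilon-\epsilon|z|)}\quad\text{for }-Z\le z\le0,
\]
with $\lambda_*$ as in Lemma~\ref{lem:wave-tails}. I would get it by comparing $1-\phi$ on $[-Z,0]$ with solutions of the linearization at $1$, losing the rate $\epsilon$ from the $O(1/t)$ drift and the variable coefficient $f'(\phi)$. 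For the lower half, I would propagate the value $\phi\ge c$ at $z=0$ backward. This uses a subsolution built from a slightly slower wave, of speed $2-\epsilon'$, on a moving interval of length $Z$; this is also the mechanism that makes $C_\epsilon$ independent of $Z$, with only the starting time depending on $Z$. By Lemma~\ref{lem:wave-tails}, $1-U_*(s)\sim B_-\e^{\lambda_*s}$, so these estimates translate into $|h|\le C_\epsilon+\epsilon|z|$.

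The main obstacle is the lower bound at $z=0$ and its backward propagation. The naive shifted wave $U_*(|x|-m(t)+C)$ fails to be a supersolution or subsolution: its residual is $U_*'\bigl(k/t-(N-1)/r\bigr)$, which has the wrong sign since $k>(N-1)/2$. So the logarithmic lag cannot be captured by a pure translate. This is why I rely on the energy method of Appendix~\ref{app:energy}, matched at $z=t^\gamma$ to the mesoscopic asymptotics, rather than on explicit wave barriers. If that route stalls, I would try a modulated subsolution
\[
 U_*\bigl(|x|-m(t)+C+\epsilon\,(z)_-\bigr)-t^{-1}\chi(z),
\]
glued to the leading-edge barrier of Section~\ref{sec:meso}.
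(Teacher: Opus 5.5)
\textbf{Where the proposal fails.} The failing step is your mechanism for the lower half of \eqref{eq:h-behind}, that is, $1-\phi\le C_\epsilon\e^{(\lambda_*-\epsilon')z}$ on $[-Z,0]$ with $C_\epsilon$ independent of $Z$.

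\emph{The slower wave is not a subsolution.} In the frame $z=r-m(t)$ the equation is $\phi_t=\phi_{zz}+(2+b)\phi_z+f(\phi)$, with $b\approx-3/(2t)$. A truncated wave $U_c$ of speed $c=2-\epsilon'$, held at fixed $z$, has residual $(c-2-b)U_c'>0$. So it is not a subsolution unless it recedes at relative speed about $\epsilon'$. Once it recedes, it gives nothing at bounded distance behind $m(t)$ for large $t$. Restarting it would require exactly the lower bound on $\phi$ over $[-Z,0]$ that you are trying to prove.

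\emph{Its rear tail is on the wrong side.} We have $1-U_c(s)\sim\e^{\nu s}$ with $\nu^2+c\nu=-f'(1)$, so $\nu>\lambda_*$ when $c<2$. A bound $1-\phi\le C\e^{\nu z}$ on $[-Z,0]$ with $C$ independent of $Z$ is false, because $\phi(t,m(t)+z)\to U_*(z-\log a)$ and $1-U_*(s)\sim B_-\e^{\lambda_*s}$. A rear rate $\lambda_*-\epsilon$ would come from a faster wave, and even then you would need left-end and initial data.

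\emph{The other half is incomplete.} Your linearization comparison has a milder version of the same problem. On $[-Z,0]$ the coefficient $-f(\phi)/(1-\phi)$ is close to $f'(1)$ only away from the front. You do not say how saturation and the initial fitting are obtained with $Z$-independent constants.

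\textbf{The repair is already in your proposal.} Run Lemma~\ref{lem:energy} as the paper does. Sandwich each radial solution between $q_\pm$ on $(-t^\gamma,t^\gamma)$, with left data $0$ and $1$ and right data $(a\pm\varepsilon)t^\gamma\e^{-t^\gamma}$ taken from \eqref{eq:meso}. This gives $\phi(t,m(t)+z)\to U_*(z-\log a)$ locally uniformly, so $h\to\log a$ uniformly on $[-Z,0]$ for each fixed $Z$. The statement lets the starting time depend on $Z$. Hence this already yields \eqref{eq:h-behind} with a $Z$-independent constant (even with $\epsilon=0$) for both radial solutions, and therefore for $u$. No behind-front barrier is needed.

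The paper proves more: \eqref{eq:negative-tail} on the whole region $t\le|x|\le m(t)-Z_0$. It uses radial barriers for $1-u$ on the entire ball $|x|\le m(t)-Z_0$, with saturation coming from radial monotonicity. Working on the whole ball is what removes the need for data at a left endpoint.

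\textbf{Ahead of the front.} Here your route genuinely differs from the paper's. The paper uses Lemma~\ref{lem:linear-envelope} with $\sigma=0$ for the upper bound and the logistic comparison of Appendix~\ref{app:logistic} for the lower bound. Your linear barriers on $[0,t^\gamma]$, fed by $\phi(t,m(t))\in[c,1]$ and the mesoscopic value, do work and avoid the logistic appendix. However, your claim that \eqref{eq:envelope} controls $\e^zg(\phi)$ is wrong near $z=0$: there the term is of order $(1+z)^2\e^{-z}$, not small. The lower barrier must absorb it with a convex correction, or be started at a large fixed $Z_1$ using local convergence there.

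Finally, your ordering of displacements is reversed: $u\le\overline w$ gives $h_u\le h_{\overline w}$. This is harmless for two-sided bounds.
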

\begin{proof}
We derive the bounds from scalar comparisons, independently of the multidimensional wave convergence to be proved below.

First, the radial linear supersolution gives
\begin{equation}\label{eq:sharp-upper}
 u(t,x)\le C(1+z_r)\e^{-z_r},\qquad
 0\le z_r:=|x|-m(t)\le2t^\gamma.
\end{equation}
Indeed, apply Lemma~\ref{lem:linear-envelope} in Appendix~\ref{app:linear} with $\sigma=0$. Then $\eta_t=t^{-1/2}$, the inner boundary is $z_r=-1$, and the coefficient bounds give bounded $\zeta_{\rm lin}$ and $Q\asymp\eta_t$. Here $\zeta_{\rm lin}$ is the scalar coefficient in \eqref{eq:linear-Phi}. The comparison accommodates both the inner-boundary data and the Gaussian tail at the starting time. The normalized supersolution is therefore bounded by $C(\rho+\eta_t)$ for $0\le\rho\le1$, which yields \eqref{eq:sharp-upper} after multiplication by $\sqrt t\,\e^{-z_r}$. This construction uses only $f(u)\le u$.

For the lower bound, use the common radial seed and the inequality $f(u)\ge u-C_fu^2$. The auxiliary logistic matching estimate in Appendix~\ref{app:logistic} gives
\begin{equation}\label{eq:sharp-lower}
 u(t,x)\ge c(1+z_r)\e^{-z_r},\qquad 0\le z_r\le2t^\gamma.
\end{equation}
Its weighted matching error is $O(t^{3\gamma-1})$, which tends to zero because $\gamma<1/3$. The fixed time shift used to insert the seed changes only the constants.

Since
\[
 z_r-z=\frac{|\zeta|^2}{|x|+m(t)+z}=O(t^{2\delta-1}),
\]
the bounds \eqref{eq:sharp-upper}--\eqref{eq:sharp-lower}, together with the right-hand wave tail, imply \eqref{eq:h-ahead} for sufficiently large positive $z$. It remains to control bounded values of $z$ and the region behind the front.

For this purpose, consider the common radial comparison solutions with the original reaction $f$. Their leading-edge identification and relative mesoscopic matching use only $g\ge0$ and the radial envelope. On $|z_r|\le t^\gamma$, the radial moving-frame equation has drift
\[
 2+b(t,z_r),\qquad
 b(t,z_r)=-\frac{k}{t}+\frac{N-1}{m(t)+z_r},\qquad |b(t,z_r)|\le C/t.
\]
Let $\mathcal U(t,z)$ denote one of the radial
comparison solutions in its moving coordinate
$z=r-m(t)$, and let $a>0$ be its leading-edge
amplitude. Fix $0<\varepsilon<a$ and put
$R(t)=t^\gamma$.
The relative mesoscopic estimate allows us to
choose $T_\varepsilon$ sufficiently large that,
for every $t\ge T_\varepsilon$,
\[
 (a-\varepsilon)R(t)\e^{-R(t)}
 \le \mathcal U(t,R(t))
 \le (a+\varepsilon)R(t)\e^{-R(t)}.
\]
Increase $T_\varepsilon$ if necessary so that
the prescribed right boundary values lie in $[0,1]$
and $m(t)-R(t)>0$.

On $-R(t)<z<R(t)$, let $q_-$ and $q_+$ solve
\[
 (q_\pm)_t
 =(q_\pm)_{zz}+(2+b(t,z))(q_\pm)_z+f(q_\pm),
\]
with boundary values
\[
 \begin{aligned}
 q_-(t,-R(t))&=0,
 &q_-(t,R(t))&=(a-\varepsilon)R(t)\e^{-R(t)},\\
 q_+(t,-R(t))&=1,
 &q_+(t,R(t))&=(a+\varepsilon)R(t)\e^{-R(t)},
 \end{aligned}
\]
and initial profiles
\[
 q_-(T_\varepsilon,z)=0,\qquad
 q_+(T_\varepsilon,z)=1
 \quad(-R(T_\varepsilon)<z<R(T_\varepsilon)).
\]
The initial profiles need not agree with the
boundary data at the two initial corners;
this is permitted in Lemma~\ref{lem:energy}.
Comparison with the constants $0$ and $1$ gives
$0\le q_\pm\le1$.
The initial and lateral inequalities then yield
\[
 q_-(t,z)\le\mathcal U(t,z)\le q_+(t,z),
 \qquad t>T_\varepsilon,\quad |z|<R(t).
\]

Lemma~\ref{lem:energy}, whose argument is adapted
from \cite[Lemma 4.1]{Wang}, applies because
$|b(t,z)|\le C/t$.
In particular, the drift contribution
\[
 \int V_t\,b(V_z-V)
\]
is estimated by Young's inequality without
differentiating $b$. The resulting errors
\[
 Ct^{2\gamma-2}(\mathcal F+C)+Ct^{3\gamma-2}
\]
are integrable since $\gamma<1/3$.
Thus, for each fixed $\varepsilon$,
\[
 q_\pm(t,z)\longrightarrow
 U_*\bigl(z-\log(a\pm\varepsilon)\bigr)
\]
locally uniformly in $z$ as $t\to\infty$.
The comparison consequently gives
\[
 \begin{aligned}
 U_*\bigl(z-\log(a-\varepsilon)\bigr)
 &\le\liminf_{t\to\infty}\mathcal U(t,z)\\
 &\le\limsup_{t\to\infty}\mathcal U(t,z)
 \le U_*\bigl(z-\log(a+\varepsilon)\bigr).
 \end{aligned}
\]
First let $t\to\infty$ with $\varepsilon$ fixed,
and then let $\varepsilon\downarrow0$.
Continuity of the translated wave, uniformly on
compact $z$-intervals, proves
\[
 \mathcal U(t,z)\longrightarrow U_*(z-\log a)
\]
locally uniformly in $z$.

The radial comparison solutions are nonincreasing in radius. Their convergence bounds $u$ away from both $0$ and $1$ on each fixed interval of $z_r$, completing \eqref{eq:h-ahead}. It also gives uniform saturation behind the front.

To quantify the latter statement, fix a sufficiently small $0<\varepsilon<\lambda_*$. We claim that
\begin{equation}\label{eq:negative-tail}
 c_\varepsilon\e^{(\lambda_*+\varepsilon)z_r}
 \le1-u(t,x)\le C_\varepsilon\e^{(\lambda_*-\varepsilon)z_r}
\end{equation}
for sufficiently large $t$ in the region
\[
 t\le|x|\le m(t)-Z_0,
\]
where $Z_0$ is chosen sufficiently large. The constants do not depend on a subsequent fixed left cutoff in the moving coordinate.

Set $v=1-u$ and $\omega_*=-f'(1)>0$. Then
\[
 v_t=\Delta v+A(t,x)v,
\]
and saturation makes $|A+\omega_*|$ arbitrarily small in $|x|\le m(t)-Z_0$. Put $\nu_\pm=\lambda_*\pm\varepsilon$. Upper and lower comparison functions on this ball are, respectively,
\[
 C\exp\!\left[
 \nu_-\bigl(\sqrt{|x|^2+R_0^2}-m(t)\bigr)
 \right],\qquad
 c\,\e^{-\nu_+m(t)}\cosh(\nu_+|x|).
\]
Choose $R_0$ large, then make the coefficient error small by increasing $Z_0$, and finally take the starting time large. The differential inequalities follow from
\[
 \omega_*-2\nu_- -\nu_-^2>0,\qquad
 \omega_*-2\nu_+ -\nu_+^2<0,
\]
together with
\[
 |\nabla\sqrt{|x|^2+R_0^2}|\le1,\qquad
 \Delta\sqrt{|x|^2+R_0^2}\le N/R_0,
\]
and
\[
 \Delta\cosh(\nu_+|x|)\ge\nu_+^2\cosh(\nu_+|x|).
\]
The scalar radial comparisons provide the required bounds at the moving outer boundary. The constants $c,C$ can be chosen to fit the data on the finite starting ball, and both radial comparison functions satisfy every transmission condition. On $|x|\ge t$, these bounds give \eqref{eq:negative-tail}.

Finally, the left-hand wave asymptotics convert \eqref{eq:negative-tail} into a bound of the form
\[
 |h|\le C_\varepsilon+C\varepsilon|z|.
\]
Combining this with the bounded-interval estimates and choosing the auxiliary $\varepsilon$ sufficiently small proves \eqref{eq:h-behind}. For each fixed $Z$, the corresponding points satisfy $|x|\ge t$ once $t$ is sufficiently large, which explains the allowed dependence of the starting time on $Z$.
\end{proof}

\begin{lemma}[Uniform front localization]
\label{lem:localization}
For every $0<B<\infty$,
\begin{align*}
 \lim_{A\to\infty}\liminf_{t\to\infty}
 \inf_{\substack{0\le\beta\le B\\|x|\le m(t)-A}}
 u^\beta(t,x)&=1,\\
 \lim_{A\to\infty}\limsup_{t\to\infty}
 \sup_{\substack{0\le\beta\le B\\|x|\ge m(t)+A}}
 u^\beta(t,x)&=0.
\end{align*}
Both interface traces are included.
\end{lemma}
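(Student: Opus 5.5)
The plan is to sandwich every interface solution between the two resistance-independent radial comparison solutions from Section~\ref{sec:prelim}, and then transfer the scalar radial front convergence proved in Lemma~\ref{lem:coarse} to uniform statements in $|x|$.

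\textbf{Upper statement.} Let $\overline U$ be the homogeneous solution with the radially nonincreasing datum $U_0\ge u_0$. It dominates $u^\beta$ and both traces for every $\beta\in[0,\infty]$. Since $\overline U$ is radial and nonincreasing in $|x|$,
\[
 \sup_{|x|\ge m(t)+A}u^\beta(t,x)\le\overline U\bigl(t,(m(t)+A)e_1\bigr).
\]
In the proof of Lemma~\ref{lem:coarse}, the convergence $\mathcal U(t,z)\to U_*(z-\log\bar a)$ was shown locally uniformly in $z$ for this radial solution, with $\bar a>0$ its amplitude. Hence the $\limsup$ in $t$ is at most $U_*(A-\log\bar a)$, which tends to $0$ as $A\to\infty$. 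Alternatively, the envelope \eqref{eq:envelope} gives the bound $C(A+t^\sigma)\e^{-A}$ only with a $t^\sigma$ loss, so I prefer the route through monotonicity and wave convergence. No dependence on $\beta$ enters.

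\textbf{Lower statement.} Fix $B$ and let $w(t-t_0,\cdot)$ be the common lower comparison, radially nonincreasing, with $w(t-t_0,x)\le u^\beta(t,x)$ for $0\le\beta\le B$, traces included. Radial monotonicity reduces the infimum over $|x|\le m(t)-A$ to the value at $|x|=m(t)-A$. In the time variable $s=t-t_0$ this point has moving coordinate
\[
 z=m(t)-A-m(s)=2t_0-A+o(1).
\]
The same radial convergence, applied to $w$ with amplitude $a_w>0$ (positive by Lemma~\ref{lem:persistence}), gives
\[
 \liminf_{t\to\infty}\inf_{|x|\le m(t)-A}u^\beta(t,x)\ge U_*(2t_0-A-\log a_w),
\]
which tends to $1$ as $A\to\infty$. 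Since $u\le1$, the limit is exactly $1$.

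\textbf{Main obstacle.} The main point to check is that the radial convergence argument of Lemma~\ref{lem:coarse} legitimately applies to these two radial solutions. That argument rests on the relative mesoscopic estimate at $z=t^\gamma$ for radial homogeneous solutions together with the energy Lemma~\ref{lem:energy}, and its proof used only $g\ge0$ and the radial envelope. It does not use the localization being proved, so there is no circularity. I would state explicitly that the leading-edge identification of Section~\ref{sec:Cartesian} and the mesoscopic matching of Section~\ref{sec:meso} apply to the radial homogeneous solutions $\overline U$ and $w$, which have compactly supported data. One further detail is that radial monotonicity in $|x|$ must be preserved by the homogeneous flow. This is standard, by reflection or by comparison of translates, for radially nonincreasing data.
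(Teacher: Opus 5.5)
Your proposal is correct and matches the paper's proof: sandwich $u^\beta$ between the common radially nonincreasing upper solution and the inserted lower seed $w(t-t_0,\cdot)$, reduce by radial monotonicity to the values at $|x|=m(t)\pm A$, and apply the radial wave convergence from the proof of Lemma~\ref{lem:coarse} with the shift $m(t)-m(t-t_0)\to2t_0$ absorbed. The paper then likewise lets $t\to\infty$ first and $A\to\infty$ afterwards.
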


\begin{proof}
Let $U$ be the common radial upper solution, and let
$w(t-t_0,\cdot)$ be the common radial lower solution
inserted at time $t_0$, as constructed in
Section~\ref{sec:prelim}. Thus
\[
 w(t-t_0,x)\le u(t,x)\le U(t,x),
 \qquad t\ge t_0,\quad 0\le\beta\le B.
\]
Both comparison solutions are nonincreasing in radius.

The radial wave convergence established in the
proof of Lemma~\ref{lem:coarse} gives finite constants
$\xi_-$ and $\xi_+$ such that, locally uniformly in $z$,
\[
 w(t-t_0,m(t)+z)\longrightarrow U_*(z-\xi_-),
 \qquad
 U(t,m(t)+z)\longrightarrow U_*(z-\xi_+).
\]
Here the radial solutions are written as functions of
radius, and the fixed time shift is absorbed into $\xi_-$,
using $m(t)-m(t-t_0)\to2t_0$.

For each fixed $A>0$, radial monotonicity gives
\[
 \inf_{\substack{0\le\beta\le B\\|x|\le m(t)-A}}
 u^\beta(t,x)\ge w(t-t_0,m(t)-A),
\]
and
\[
 \sup_{\substack{0\le\beta\le B\\|x|\ge m(t)+A}}
 u^\beta(t,x)\le U(t,m(t)+A).
\]
First let $t\to\infty$, and then let $A\to\infty$.
The limits $U_*(-\infty)=1$ and $U_*(+\infty)=0$
prove both assertions.
\end{proof}

\subsection{An exact change of variable}
Fix a direction and write $u(t,z,\zeta)$ in the corresponding moving Cartesian frame, where $\zeta$ denotes the physical transverse displacement. We suppress the direction and resistance parameters in this subsection. In the interior of each component,
\[
 u_t=\Delta u+(2-k/t)u_z+f(u),
\]
where $\nabla$ and $\Delta$ act on all spatial variables $(z,\zeta)$. Since $0<u<1$ at positive times and $U_*'<0$, the displacement
\[
 h=z-U_*^{-1}(u),\qquad u=U_*(z-h),
\]
is well defined.

Set $s=z-h$. Differentiation gives
\[
 u_t=-U_*'(s)h_t,\qquad
 u_z=U_*'(s)(1-h_z),
\]
and
\[
 \Delta u
 =U_*''(s)\bigl(1-2h_z+|\nabla h|^2\bigr)
  -U_*'(s)\Delta h.
\]
Substituting these identities and using $U_*''+2U_*'+f(U_*)=0$ yields the exact equation
\begin{equation}\label{eq:h-pde}
 h_t=\Delta h+d(t,z,\zeta)h_z
      -r_U(z-h)|\nabla h|^2+\frac{k}{t},
 \qquad
 d=2\bigl(1+r_U(z-h)\bigr)-\frac{k}{t}.
\end{equation}

Choose $M>\|r_U\|_\infty$ and fix a target displacement $\xi\in\mathbb R$, independent of time and the spatial variables. For
\[
 \Phi_\pm=\exp\bigl(\pm M(h-\xi)\bigr),
\]
the chain rule gives
\begin{align}\label{eq:Phi-pde}
 (\partial_t-\Delta-d\partial_z)\Phi_\pm
 &=\pm\frac{Mk}{t}\Phi_\pm
   -\bigl(M^2\pm Mr_U(z-h)\bigr)
      \Phi_\pm|\nabla h|^2\notag\\
 &\le\frac{Mk}{t}\Phi_\pm.
\end{align}
Indeed, $M^2\pm Mr_U(z-h)\ge M(M-\|r_U\|_\infty)>0$. Thus both exponential transforms satisfy the same scalar differential inequality. The reaction term has been eliminated using the wave equation, and no reaction divided difference appears.

We next construct a common lower bound for the longitudinal drift. Fix the cone and the parameter ranges in Lemma~\ref{lem:coarse}. There exist constants $Z_1>0$, $b_0>0$, and a smooth function $b_*(z)$, all independent of the left cutoff $Z$, such that
\[
 d(t,z,\zeta)\ge b_*(z)
 \qquad
 \bigl(-Z\le z\le t^\gamma,\ |\zeta|\le t^\delta\bigr)
\]
for every fixed $Z>0$ and all sufficiently large $t$. The starting time may depend on $Z$. The same function can be used for both folded components at the equator, and it can be chosen to satisfy
\begin{equation}\label{eq:b-star}
 b_*(z)=b_0>0\quad(z\le-Z_1),\qquad
 b_*(z)=\frac{3}{2z}\quad(z\ge Z_1).
\end{equation}

To verify the left-hand bound, apply \eqref{eq:h-behind} with a fixed $0<\epsilon<1$. For $-Z\le z\le0$,
\[
 z-h\le z+|h|
      \le C_\epsilon-(1-\epsilon)|z|.
\]
Since $C_\epsilon$ is independent of $Z$, sufficiently negative $z$ places $z-h$ uniformly in the left wave tail whenever the coarse estimate applies. By \eqref{eq:wave-ratio}, $r_U(z-h)$ is then close to $\lambda_*>0$. We may therefore choose a fixed $b_0>0$ and enlarge $Z_1$ and the starting time so that $d\ge b_0$ on $[-Z,-Z_1]$.

Ahead of the front, Lemma~\ref{lem:coarse} gives bounded $h$. The right-hand expansion in \eqref{eq:wave-ratio} consequently yields
\[
 d=\frac{2}{z}+O(z^{-2})-\frac{k}{t}.
\]
For $Z_1\le z\le t^\gamma$, this implies
\[
 d-\frac{3}{2z}
 \ge\frac1z\left(\frac12-\frac{C}{Z_1}
                         -kt^{\gamma-1}\right)\ge0
\]
after increasing $Z_1$ and then the starting time, establishing the right-hand bound in \eqref{eq:b-star}. Finally, the global bound on $r_U$ provides a constant lower bound for $d$ on the intermediate interval. A smooth interpolation below these bounds completes the construction of $b_*$. This interpolation may be negative on the fixed intermediate interval.

\subsection{Boundary influence and decay with a common lower drift}
Fix $Z>0$, $\ell>1$, $R>0$, and a starting time $s_0$.
On the fixed box
\[
 \mathcal D=(-Z,\ell)\times(-R,R)^{N-1},
 \qquad
 \mathcal L_d=\partial_s-\Delta-d(s,z,\zeta)\partial_z,
\]
assume that $d$ is bounded and continuous on each finite
closed cylinder and satisfies $d\ge b_*(z)$. All subsolutions
below are $C^{1,2}$ in the interior and continuous on the closed
cylinder, with the indicated boundary inequalities. The
coefficients and solutions arising above have this regularity
on positive-time cylinders, so classical parabolic comparison
applies. We estimate separately the influence of the left
face, the initial slice, and the transverse faces by explicit
supersolutions. No coefficient outside this cylinder is used.

First define
\[
 \mu(z)=\exp\left(\int_0^z b_*(r)\dd r\right),\qquad
 P_{Z,\ell}(z)=
 \frac{\int_z^\ell\mu(r)^{-1}\dd r}
      {\int_{-Z}^\ell\mu(r)^{-1}\dd r}.
\]
Writing $P=P_{Z,\ell}$, we have
\[
 \begin{gathered}
 0\le P\le1,\quad P'<0,\quad P''+b_*P'=0,\\
 \mathcal L_dP=-(d-b_*)P'\ge0,\qquad
 P(-Z)=1,\quad P(\ell)=0.
 \end{gathered}
\]
Thus any bounded subsolution with nonpositive initial data,
boundary data at most one on the left face, and nonpositive
data on the other faces is bounded above by $P$.
The prescribed tails of $b_*$ give
\[
 \mu(z)\asymp\e^{b_0z}\quad(z\to-\infty),\qquad
 \mu(z)\asymp z^{3/2}\quad(z\to+\infty).
\]
Fix $K>0$ and take $Z,\ell>K+1$. For $|z|\le K$, the numerator
in $P$ is bounded by $\int_{-K}^\infty\mu(r)^{-1}\dd r<\infty$,
whereas its denominator is bounded below by a positive
constant times $\e^{b_0Z}$. Consequently,
\begin{equation}\label{eq:left-boundary}
 P_{Z,\ell}(z)\le C_K\e^{-b_0Z},\qquad |z|\le K,
\end{equation}
with $C_K$ independent of $Z,\ell,R$, and the particular drift.

For control of the initial data, define
\[
 F_\ell(z)=\int_z^\ell\mu(r)^{-1}
                 \left(\int_{-\infty}^r\mu(u)\dd u\right)\dd r.
\]
The inner integral converges because $b_0>0$. Differentiation
and the lower bound on $d$ give
\[
 F_\ell\ge0,\quad F_\ell'\le0,\quad
 F_\ell''+b_*F_\ell'=-1,\qquad
 \mathcal L_dF_\ell=1-(d-b_*)F_\ell'\ge1.
\]
The function
$G(r)=\mu(r)^{-1}\int_{-\infty}^r\mu(u)\dd u$
is bounded for $r\le0$ and satisfies $G(r)\le C(1+r)$
for $r\ge0$. Hence, with a fixed sufficiently large $C_*$,
\[
 0\le F_\ell(z)\le L_*:=C_*(1+Z+\ell^2),
 \qquad -Z\le z\le\ell.
\]
In particular, $F_\ell$ bounds any solution of
$\mathcal L_dw=1$ with zero initial and Dirichlet boundary
data. This is a deterministic bound on the accumulated
response to a unit source.

Now let $v_{\rm in}$ be a bounded subsolution of
$\mathcal L_dv_{\rm in}\le0$ with initial data at most one
and nonpositive data on every spatial face. Define
\[
 S_*(s,z)=\e^{-(s-s_0)/(2L_*)}
                \left(1+\frac{F_\ell(z)}{L_*}\right).
\]
This function is at least one initially and nonnegative on
all spatial faces. Since $0\le F_\ell/L_*\le1$,
\[
 \mathcal L_d S_*
 =\e^{-(s-s_0)/(2L_*)}
 \left[\frac{\mathcal L_dF_\ell}{L_*}
       -\frac{1+F_\ell/L_*}{2L_*}\right]\ge0.
\]
Comparison therefore gives, for every $H\ge0$,
\begin{equation}\label{eq:initial-decay}
 v_{\rm in}(s_0+H,z,\zeta)\le S_*(s_0+H,z)
 \le2\exp\left[-\frac{H}{2C_*(1+Z+\ell^2)}\right].
\end{equation}
This global supersolution uses only the pointwise lower bound
on $d$ and works for time-dependent coefficients without a
semigroup or a time-homogeneity assumption.

For the transverse-boundary influence, let $v_{\rm lat}$
be a bounded subsolution with nonpositive initial and
longitudinal-face data, and data at most one on all transverse
faces during $[s_0,s_0+H]$, where $H>0$. For $\lambda>0$, set
\[
 B_R(s,\zeta)=2\e^{-\lambda R+\lambda^2(s-s_0)}
                  \sum_{j=1}^{N-1}\cosh(\lambda\zeta_j).
\]
It is independent of $z$ and solves the transverse heat
equation, so $\mathcal L_dB_R=0$. It is nonnegative on the
initial slice and the longitudinal faces. If any
$|\zeta_j|=R$, then
$2\e^{-\lambda R}\cosh(\lambda R)=1+\e^{-2\lambda R}\ge1$,
so it dominates the transverse data. Comparison gives
$v_{\rm lat}\le B_R$. For $|\zeta_j|\le K<R$, optimize by
choosing $\lambda=(R-K)/(2H)$ to obtain
\begin{equation}\label{eq:lateral-boundary}
 v_{\rm lat}(s_0+H,z,\zeta)
 \le2(N-1)\exp\left[-\frac{(R-K)^2}{4H}\right]
 \le2(N-1)\exp\left[-\frac{R^2}{16H}\right]
 \quad(R\ge2K).
\end{equation}

The statements above apply to subsolutions with the indicated
parabolic boundary inequalities, understood as upper limits
at boundary points when necessary. No auxiliary solution
with discontinuous corner data is needed: for a given
continuous solution, nonnegative multiples of $P_{Z,\ell}$,
$S_*$, and $B_R$, together with a constant for the right face,
can be added to dominate all of its parabolic boundary data
simultaneously. The maximum principle then applies to this
single global supersolution. The constants depend only on
$b_*$ and the dimension, and on $K$ where indicated. In the
application, $b_*$ is fixed after the resistance bound and
geometric parameter ranges have been fixed; it is independent
of $Z,\ell,R$, the particular resistance, and the direction.
The time from which $d\ge b_*$ holds may depend on $Z$.
Accordingly, the comparison estimates are uniform in the
resistance and direction for each fixed $Z$, and the limit
$t\to\infty$ must be taken before $Z\to\infty$.
In the folded problem we next verify these same barriers
against the transformed exchange condition.

\subsection{The transmission condition after the change of variable}
At an equatorial ray, the longitudinal coordinate
$z$ is tangential to the interface. Write the
physical transverse coordinates as
\[
 \zeta=(\zeta_\parallel,y),
 \qquad
 \zeta_\parallel\in\R^{N-2},\quad y\ge0.
\]
These coordinates are not divided by $\sqrt t$;
the scaled variables $(v,Y)$ are reserved for
the leading-edge analysis.
Let $n=-e_y$ be the outward normal on $y=0$.
In particular, $\partial_n z=0$.

Fix either sign and define
\[
 F(z,u)=\exp\bigl[\pm M(z-U_*^{-1}(u)-\xi)\bigr].
\]
Below, $F'(u)$ denotes differentiation with respect to $u$,
with $z$ fixed. Set
\[
 A_F=
 \begin{cases}
 \displaystyle\frac{F(z,p)-F(z,q)}{p-q},&p\ne q,\\[6pt]
 F'(p),&p=q.
 \end{cases}
\]
Since
\[
 F'(u)=\mp\frac{M F(z,u)}{U_*'(U_*^{-1}(u))},
\]
the derivative has a fixed strict sign on $(0,1)$.
The identity
\[
 A_F=\int_0^1 F'\bigl(q+\theta(p-q)\bigr)\dd\theta
\]
shows that $A_F$ has the same sign and never vanishes.

For $\beta>0$, put $\varepsilon=1/\beta$. The folded
transmission conditions and $\partial_n z=0$ give
\begin{align*}
 \partial_nF(z,p)
 &=-\varepsilon\frac{F'(p)}{A_F}
       \bigl(F(z,p)-F(z,q)\bigr),\\
 \partial_nF(z,q)
 &= \varepsilon\frac{F'(q)}{A_F}
       \bigl(F(z,p)-F(z,q)\bigr).
\end{align*}
Thus, with $v_1=F(z,p)$ and $v_2=F(z,q)$,
\begin{equation}\label{eq:transformed-exchange}
 \partial_nv_i+a_i(v_i-v_{3-i})=0,\qquad
 a_1=\varepsilon\frac{F'(p)}{A_F}>0,\quad
 a_2=\varepsilon\frac{F'(q)}{A_F}>0.
\end{equation}
At $p=q$, both coefficients have the continuous value
$\varepsilon$. Positivity holds for both exponential
transformations, including the decreasing one. The following
comparison principle makes precise why no uniform upper
bound on these coefficients is needed.

\begin{lemma}[A common supersolution for boundary exchange]
\label{lem:common-comparison}
Let $I=[s_0,s_1]$ be a finite time interval and set
\[
 \mathcal B=(-Z,\ell)\times(-R,R)^{N-2}\times(0,R).
\]
Let $\Gamma$ denote the relative interior of its face $y=0$.
Suppose two subsolutions are $C^{1,2}$ in the interior,
continuous on the closed cylinder, and have continuous
one-sided normal derivatives on the relative interior of
$\Gamma$ at positive times. Suppose they satisfy
\begin{align*}
 (\partial_s-\Delta-d_i\partial_z-c_i)v_i&\le0
       &&\text{in }I\times\mathcal B,\\
 \partial_nv_i+a_i(v_i-v_{3-i})&\le0
       &&\text{on }I\times\Gamma,
\end{align*}
where $d_i,c_i$ are bounded and $a_i\ge0$ are continuous
on the closed exchange face. All coefficients may depend
on time and position, and $a_1=a_2$ is not required.

Suppose a common scalar function $B$ satisfies
\[
 (\partial_s-\Delta-d_i\partial_z-c_i)B\ge0
 \quad(i=1,2),\qquad
 \partial_nB\ge0\quad\text{on }I\times\Gamma.
\]
If $v_i\le B$ initially and on all other spatial faces,
including their edges by continuity, then $v_i\le B$
throughout the cylinder. The conclusion requires no
quantitative upper bound on $a_i$. In particular, common
barriers give uniform estimates for families whose exchange
coefficients are not uniformly bounded.
\end{lemma}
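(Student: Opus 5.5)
The plan is a standard weak maximum principle argument applied to the pair, with an exponential time weight that absorbs the bounded zeroth-order coefficients. Set $w_i=\e^{-\Lambda(s-s_0)}(v_i-B)$, where $\Lambda>\sup(|c_1|+|c_2|)+1$. Each $w_i$ then satisfies
\[
 (\partial_s-\Delta-d_i\partial_z-(c_i-\Lambda))w_i\le -(c_i-\Lambda)\e^{-\Lambda(s-s_0)}\cdot 0+\e^{-\Lambda(s-s_0)}(\text{nonpositive}),
\]
so that $(\partial_s-\Delta-d_i\partial_z+\Lambda-c_i)w_i\le0$ with $\Lambda-c_i\ge1$. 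On the exchange face, the hypotheses give
\[
 \partial_nw_i+a_i(w_i-w_{3-i})\le-\e^{-\Lambda(s-s_0)}\partial_nB\le0 .
\]
This uses the fact that $B$ is common to both components, so the $B$-terms cancel in the exchange difference.

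Suppose, for contradiction, that
\[
 \mu:=\max_i\max_{I\times\overline{\mathcal B}}w_i>0 .
\]
Continuity on the compact closed cylinder makes this maximum attained, say by $w_1$ at a point $P=(s_*,x_*)$. The data hypotheses exclude the initial slice and all faces other than $\Gamma$, including the edges by continuity. If $P$ is interior, or lies on the top time slice with interior spatial position, then at $P$ we have $\partial_sw_1\ge0$, $\nabla w_1=0$, and $\Delta w_1\le0$. The inequality then gives $(\Lambda-c_1)\mu\le0$, which is a contradiction.

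It remains to treat $P\in\Gamma$ at a positive time. The exchange inequality gives
\[
 \partial_nw_1(P)\le -a_1(w_1-w_2)(P)\le0 ,
\]
because $w_1(P)=\mu\ge w_2(P)$ and $a_1\ge0$. This sign is exactly where the common maximum over both components is used, and no upper bound on $a_1$ is needed. On the other hand, $P$ is a strict positive maximum of $w_1$ on the cylinder up to time $s_*$, so a boundary argument should force $\partial_nw_1(P)>0$.

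The main obstacle is making this boundary step strict without a Hopf lemma, since the coefficients are only bounded and continuous and regularity up to $\Gamma$ is just a continuous one-sided normal derivative. I would perturb instead. Replace $B$ by
\[
 B_\epsilon=B+\epsilon\,\e^{2\Lambda(s-s_0)}(1+y_0-y),
\]
where $y_0$ is chosen with $y_0>R$. Then $\partial_nB_\epsilon=\partial_nB+\epsilon\e^{2\Lambda(s-s_0)}>\partial_nB\ge0$. Moreover, $(1+y_0-y)\ge1$ is annihilated by $\Delta$ and $\partial_z$, so choosing $\Lambda$ large in the time factor keeps $B_\epsilon$ a supersolution for both operators. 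The data inequalities remain valid, and become strict.

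With this strict Neumann inequality, a boundary maximum at $P$ gives
\[
 \partial_n(w_1^\epsilon)(P)\ge0
\]
from the one-sided difference quotient toward the interior; this direction of the inequality needs no Hopf lemma. The exchange condition, however, now gives $\partial_nw_1^\epsilon(P)<0$, which is a contradiction. The interior and top-slice cases are handled as before. Hence $v_i\le B_\epsilon$ for every $\epsilon>0$, and letting $\epsilon\downarrow0$ gives $v_i\le B$.

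Throughout, the exchange coefficients enter only through their sign. This is why the estimate is uniform in $a_i$ and yields uniform bounds for families whose exchange coefficients are not uniformly bounded.
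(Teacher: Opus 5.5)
Your proof is correct and follows essentially the same route as the paper: an exponential time weight to make the zeroth-order coefficient favorable, a common perturbation (your $\epsilon\,\e^{2\Lambda(s-s_0)}(1+y_0-y)$ in place of the paper's $\delta\,\e^{K_0(s-s_0)}[1+(R-y)^2]$) to make the exchange inequality strict, and then a maximum over both components, where $w_1(P)\ge w_2(P)$ and $a_1\ge0$ contradict $\partial_nw_1(P)\ge0$. The only blemish is that your first displayed inequality is garbled, but the conclusion you draw from it, $(\partial_s-\Delta-d_i\partial_z+\Lambda-c_i)w_i\le0$, is correct.
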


\begin{proof}
Put $w_i=v_i-B$. Since the same function $B$ is subtracted
from both components,
\[
 (\partial_s-\Delta-d_i\partial_z-c_i)w_i\le0,\qquad
 \partial_nw_i+a_i(w_i-w_{3-i})\le0.
\]
Multiply by $\e^{-C_0(s-s_0)}$, where
$C_0>\max_i\sup c_i$, and relabel the resulting functions
as $w_i$. Their zeroth-order coefficients are now
$\widetilde c_i=c_i-C_0<0$, while their exchange conditions
are unchanged.

To obtain strict inequalities, subtract from both components
$\delta\psi$, where
\[
 \psi(s,y)=\e^{K_0(s-s_0)}[1+(R-y)^2],
 \qquad \delta>0.
\]
Choose $K_0$ large enough that
\[
 (\partial_s-\Delta-d_i\partial_z-\widetilde c_i)\psi>0
 \quad(i=1,2).
\]
This choice uses only the interior coefficient bounds and
the finite box. Moreover,
\[
 \partial_n\psi=2R\e^{K_0(s-s_0)}>0
 \qquad(y=0).
\]
The functions $\widetilde w_i=w_i-\delta\psi$ therefore
satisfy strict interior and exchange inequalities; the common
perturbation cancels from the exchange differences.

A positive maximum over both components and the closed
space-time cylinder cannot occur initially or on a prescribed
spatial face. An interior maximum contradicts the strict
parabolic inequality and $\widetilde c_i<0$. At a maximum
on $\Gamma$, the maximizing component satisfies
\[
 \widetilde w_i\ge\widetilde w_{3-i},
 \qquad \partial_n\widetilde w_i\ge0.
\]
Since $a_i\ge0$, this gives
\[
 \partial_n\widetilde w_i
   +a_i(\widetilde w_i-\widetilde w_{3-i})\ge0,
\]
contrary to the strict exchange inequality. Edges are covered
by the prescribed-face inequalities and continuity.
Letting $\delta\downarrow0$ proves the assertion. No derivative
or quantitative upper bound of $a_i$ enters the argument.
\end{proof}

For each fixed $\beta>0$ and each finite closed cylinder
bounded away from time zero, $p,q$ take values in a compact
subinterval of $(0,1)$. Hence the coefficients in
\eqref{eq:transformed-exchange} are finite and continuous,
including at $p=q$. Their bounds may deteriorate as
$\beta\downarrow0$ or as the boxes grow.
Lemma~\ref{lem:common-comparison} is applied on each such
cylinder separately, using scalar barriers whose constants
are common to the parameter family. The drift coefficients
in \eqref{eq:Phi-pde} have common bounds by
\eqref{eq:wave-ratio}.

On a positive-time interval starting at $s_0$, multiplication
by
\[
 \exp\left(-\int_{s_0}^s\frac{Mk}{r}\dd r\right)
\]
removes the common zeroth-order term $Mk/s$ from
\eqref{eq:Phi-pde}. This factor is independent of position
and the component, so it preserves
\eqref{eq:transformed-exchange}.

We now verify the boundary-influence and initial-data decay
estimates for the transformed pair.
After removing the zeroth-order term, $P_{Z,\ell}$ and
$F_\ell$ have zero normal derivative on $\Gamma$ and satisfy
their scalar interior inequalities for every $d_i\ge b_*$.
They can therefore be used as common barriers.

For the initial-data contribution, use the same function
\[
 S_*(s,z)=\e^{-(s-s_0)/(2L_*)}
             \left(1+\frac{F_\ell(z)}{L_*}\right),
 \qquad L_*=C_*(1+Z+\ell^2)\ge\sup_{[-Z,\ell]}F_\ell.
\]
Its interior supersolution inequality holds for both
$d_i\ge b_*$. It is independent of the transverse variables,
so $\partial_nS_*=0$ on $\Gamma$, and its common boundary
value cancels the exchange terms. It is at least one
initially and nonnegative on all prescribed spatial faces.
Lemma~\ref{lem:common-comparison} therefore gives
\eqref{eq:initial-decay} for each component, with the same
constants and no quantitative bound on the exchange rates.

For the transverse-boundary contribution on a time block
of length $H>0$, use
\[
 B_R(s,\zeta)=
 2\e^{-\lambda R+\lambda^2(s-s_*)}
 \sum_{j=1}^{N-1}\cosh(\lambda\zeta_j),
 \qquad \lambda>0,
\]
where $\zeta_{N-1}=y$ in the folded box.
This function solves the transverse heat equation and is
independent of $z$, so
\[
 (\partial_s-\Delta-d_i\partial_z)B_R=0
 \qquad(i=1,2).
\]
It is at least one on every transverse outer face,
is nonnegative initially and on the longitudinal faces,
and has zero normal derivative at $y=0$.

For $|\zeta_j|\le K<R$, choose $\lambda=(R-K)/(2H)$.
Then
\[
 B_R(s_*+H,\zeta)
 \le2(N-1)\exp\left[-\frac{(R-K)^2}{4H}\right].
\]
For $R\ge2K$, Lemma~\ref{lem:common-comparison} yields
\eqref{eq:lateral-boundary} for the transverse-boundary
contribution. Similarly, $P_{Z,\ell}$ bounds the contribution
from the left face and gives \eqref{eq:left-boundary}.

For a given transformed solution, regard its coefficients
$d_i$ and $a_i$ as fixed functions of time and position.
The resulting interior and boundary operators are linear.
Thus nonnegative multiples of $P_{Z,\ell}$, $S_*$, and $B_R$,
together with a nonnegative constant, form a common global
supersolution. Their sum directly dominates the given data
on every initial or prescribed boundary face, including
edges by continuity; no decomposition into auxiliary
boundary-value problems is required. Every barrier inequality holds
pointwise for these coefficients; no constancy in time or
symmetry between the exchange rates is required.

At $\beta=0$, use the smooth whole-space solution and the
full transverse box. The same scalar barriers apply without
an exchange boundary. Together with the estimates for
$\beta>0$, this gives the required uniformity down to zero
resistance.

\subsection{Closing the comparison}
Fix a finite resistance bound $B>0$ and a receiving cone
$\Gamma_{\delta_0}$, with $0<\delta_0\le1$. The same argument
will be applied at the equator to the folded pair.
Choose
\[
 \begin{gathered}
 0<\gamma<1/3,\qquad
 \gamma<\delta<1/2,\qquad
 2\gamma<a<2\delta,\\
 H=t^a,\qquad
 \ell=(t-H)^\gamma,\qquad
 R=\frac{(t-H)^\delta}{\sqrt{N-1}}.
 \end{gathered}
\]
In particular, $a<1$, so $H=o(t)$ and $t-H>0$ for large $t$.

Fix a left cutoff $Z>0$. On the time interval $[t-H,t]$,
freeze the spatial box
\[
 -Z<z<\ell,\qquad |\zeta_j|<R.
\]
At the equator, replace the last transverse interval by
$0<y<R$ and use both folded components. For every
$s\in[t-H,t]$,
\[
 \ell\le s^\gamma,\qquad
 |\zeta|\le\sqrt{N-1}R\le s^\delta.
\]
Thus Lemma~\ref{lem:coarse} applies throughout the frozen
box once $t$ is sufficiently large, with the starting time
allowed to depend on $Z$. On the fixed receiving cone,
the entire box lies on the receiving side for large $t$.

We first identify the data on the right face. Uniformly
for $s\in[t-H,t]$,
\[
 \frac{\ell}{s^\gamma}\longrightarrow1,\qquad
 \frac{\sqrt{N-1}R}{\sqrt{s}}\longrightarrow0.
\]
The final assertion of Section~\ref{sec:meso} therefore gives
\[
 \frac{\e^\ell}{\ell}
 u_+\bigl(s,(m(s)+\ell)\theta+\zeta\bigr)
 =a_\beta(\theta)+o(1)
\]
uniformly on this face, for $\theta\in\Gamma_{\delta_0}$
and $0\le\beta\le B$. At the equator, the corresponding
statement holds for each folded component with amplitude
$a_\beta(\eta,0)$.

By Lemma~\ref{lem:coarse}, $h$ is uniformly bounded on the
right face. The normalized right-hand wave expansion then
gives
\[
 \frac{\e^\ell}{\ell}U_*(\ell-h)
 =\e^h\left(1+\frac{B_+-h}{\ell}+o(\ell^{-1})\right)
 =\e^h+o(1).
\]
In particular, the amplitudes are bounded above and away
from zero uniformly over the parameter ranges under
consideration. Hence
\[
 \xi=\log a_\beta(\theta)
\]
is uniformly bounded, and comparison of the last two
expansions yields
\[
 |h-\xi|\le\varepsilon_t,\qquad
 \varepsilon_t\longrightarrow0
 \quad\text{on }z=\ell.
\]
At the equator, take $\xi=\log a_\beta(\eta,0)$ for both
components. In each application, $\xi$ is fixed throughout
the time window and spatial box.

Fix $M>\|r_U\|_\infty$ and write
\[
 \Phi_\pm=\exp\bigl(\pm M(h-\xi)\bigr).
\]
The right-face estimate gives
$\Phi_\pm\le\e^{M\varepsilon_t}$ there. On the initial
slice and the remaining prescribed spatial faces,
Lemma~\ref{lem:coarse} and the uniform bound on $\xi$ give,
for every $\epsilon>0$,
\[
 \Phi_\pm\le M_Z,\qquad
 M_Z\le C_\epsilon\e^{M\epsilon Z}.
\]
Here $C_\epsilon$ is independent of $Z,\ell,R,t$ and of
the direction and resistance in the fixed parameter ranges.
The exchange face in the folded box carries its transformed
boundary condition rather than prescribed boundary data.

Remove the term $Mk/s$ in \eqref{eq:Phi-pde} by multiplying
by
\[
 \exp\left(-\int_{t-H}^s\frac{Mk}{r}\dd r\right).
\]
Restoring the original variables at time $t$ costs the factor
\[
 \exp\left(\int_{t-H}^t\frac{Mk}{r}\dd r\right)
 =\left(\frac{t}{t-H}\right)^{Mk}
 \le\e^{CH/t}.
\]
Apply Lemma~\ref{lem:common-comparison} to the folded pair,
and scalar comparison on the receiving cone. The constant
barrier controls the right-face data. The left-boundary,
initial-data decay, and transverse-boundary barriers in
\eqref{eq:left-boundary}, \eqref{eq:initial-decay}, and
\eqref{eq:lateral-boundary} control, respectively,
the left-face data, the initial data, and the transverse
outer-face data.
More explicitly, with $s_0=t-H$ and
$\lambda=(R-K)/(2H)$, use the single common supersolution
\[
 \mathcal B(s,z,\zeta)=\e^{M\varepsilon_t}
       +M_Z\bigl[P_{Z,\ell}(z)+S_*(s,z)+B_R(s,\zeta)\bigr].
\]
It dominates the transformed data initially because $S_*\ge1$
there, on the left face because $P_{Z,\ell}=1$, on the right
face through the constant term, and on each transverse outer
face because $B_R\ge1$. Each summand has zero normal derivative
on the exchange face and is common to the two components.
The integrating factor used above is at most one, so the
same bounds on the original data also bound the transformed
data. This verifies all boundary comparisons simultaneously.

Fix $K>0$ and take $Z>K+1$. For sufficiently large $t$,
we also have $\ell>K+1$ and $R\ge2K$. At
$|z|\le K$ and $|\zeta_j|\le K$, the resulting estimate is
\begin{equation}\label{eq:final-match}
 \Phi_\pm(t,z,\zeta)
 \le\e^{CH/t}\left\{
 \e^{M\varepsilon_t}
 +C_KM_Z\left[
 \e^{-b_0Z}
 +\e^{-cH/(1+Z+\ell^2)}
 +\e^{-cR^2/H}
 \right]\right\}.
\end{equation}
The constants in this estimate are common to both signs
and, at the equator, to both folded components.

Choose $\epsilon>0$ so that $M\epsilon<b_0/2$, and keep
this choice fixed. First fix $Z$ and let $t\to\infty$.
The parameter restrictions imply
\[
 \frac{H}{t}=t^{a-1}\longrightarrow0,\qquad
 \frac{H}{1+Z+\ell^2}\asymp t^{a-2\gamma}
 \longrightarrow\infty,\qquad
 \frac{R^2}{H}\asymp t^{2\delta-a}
 \longrightarrow\infty.
\]
Together with $\varepsilon_t\to0$, these limits reduce
\eqref{eq:final-match} to
\[
 \limsup_{t\to\infty}
 \sup_{\substack{|z|\le K\\|\zeta_j|\le K}}\Phi_\pm
 \le1+C_KC_\epsilon
          \e^{-(b_0-M\epsilon)Z}.
\]
The supremum here also ranges over the fixed direction and
resistance parameters, with both components included in
the equatorial case. Letting $Z\to\infty$ gives
\[
 \limsup_{t\to\infty}\sup\Phi_+\le1,\qquad
 \limsup_{t\to\infty}\sup\Phi_-\le1.
\]
Since
\[
 \max\{\Phi_+,\Phi_-\}=\e^{M|h-\xi|},
\]
we conclude that $h\to\xi$ locally uniformly in the physical
coordinates $(z,\zeta)$.

Consequently,
\[
 u_+\bigl(t,(m(t)+z)\theta+\zeta\bigr)
 \longrightarrow
 U_*\bigl(z-\log a_\beta(\theta)\bigr)
\]
locally uniformly in $(z,\zeta)$ on each fixed receiving
cone. At the equator, both folded components converge to
the profile with displacement $\log a_\beta(\eta,0)$,
including in bounded physical transverse neighborhoods.

Every estimate used above is uniform for $0\le\beta\le B$
and for directions in the stated ranges. The order of
limits, first $t\to\infty$ with $Z$ fixed and then
$Z\to\infty$, is consistent with the cutoff-dependent
starting time in Lemma~\ref{lem:coarse}. This completes
the phase part of Theorem~\ref{thm:phase}.

\section{Parameter dependence and arrival times}\label{sec:parameters}
We first establish a persistence estimate that converts a positive seed into a positive leading-edge amplitude. Under concavity, this estimate upgrades resistance comparison to a strict amplitude gap. We then prove parameter continuity and recover first arrival times from the wave asymptotics.

\subsection{A persistent positive leading edge}
Strict pointwise comparison does not by itself imply a strict gap between asymptotic phases. The following lemma provides the additional leading-edge lower bound.

\begin{lemma}[Persistence of a positive seed]\label{lem:persistence}
Let $V\ge0$ be a whole-space supersolution satisfying
\[
 V_t\ge\Delta V+(1-Q)V,
 \qquad 0\le Q\le C.
\]
Assume that, for some $0<\sigma<1/4$, the potential $Q$ is bounded by a fixed multiple of the right-hand side of \eqref{eq:envelope} in $|x|\ge m(t)$. If $V$ dominates a nonzero, nonnegative compactly supported seed at some positive time, then there exists $c>0$ such that, for every compact $K\subset(0,\infty)$,
\begin{equation}\label{eq:persistence}
 \liminf_{t\to\infty}
 \inf_{\theta\in\Sph^{N-1},\ \rho\in K}
 \left[
 t^{-1/2}\e^{\rho\sqrt t}
 V\bigl(t,(m(t)+\rho\sqrt t)\theta\bigr)
 -c\rho\e^{-\rho^2/4}
 \right]\ge0.
\end{equation}
The constant $c$ can be chosen independently of $K$.
\end{lemma}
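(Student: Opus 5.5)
By linear comparison, we may replace $V$ by the solution $w$ of the linear problem
\[
 w_t=\Delta w+(1-Q)w,\qquad w(t_1)=\varphi,
\]
where $\varphi$ is the compactly supported seed at time $t_1$. Since $V$ is a supersolution of this linear equation with the same coefficient, $V\ge w$ for $t\ge t_1$. The plan is then to bound $w$ from below through the critical derivative moment, in the same way the amplitude was identified in Section~\ref{sec:Cartesian}. Because the translation $t\mapsto t-t_1$ changes the normalized leading edge only by the factor $\e^{-2t_1}(1+o(1))$, we may take $t_1=0$. This follows from the rescaling computation $\rho_s$ already displayed after \eqref{eq:amplitude-moment}.

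\emph{Step 1 (moment balance).} For each $\theta$, define the linear moment
\[
 E_w(t,\theta)=\e^{-2t}\int(\theta\cdot x-2t)\e^{\theta\cdot x}w\,\dd x.
\]
Exactly as in \eqref{eq:moment-balance}, but with no interface, its derivative is
\[
 \partial_tE_w=-\e^{-2t}\int(\theta\cdot x-2t)\e^{\theta\cdot x}Qw\,\dd x.
\]
The region $|x|\ge m(t)$ is handled as in \eqref{eq:absorption-tail}. There $Q$ is bounded by the envelope and $w\le\e^{t}G*\varphi$ is bounded by a linear Gaussian, so this contribution is $O(t^{-3/2+2\sigma}\log t)$ and hence integrable.

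The region $|x|<m(t)$ requires more care, since $Q$ is only bounded there. Still, the tilted weight $\e^{\theta\cdot x-2t}w$ is exponentially small in $|x|<m(t)-t^{\gamma}$, because $w\le C\e^{t}G_t*\varphi$. On the remaining strip the weighted mass is $O(t^{-3/2}\cdot t^{\gamma}\cdot\mathrm{poly})$. Choosing $\gamma$ small keeps this integrable.

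So $E_w(t,\theta)\to E_w^\infty(\theta)$ uniformly in $\theta\in\Sph^{N-1}$, with
\[
 E_w^\infty(\theta)\ge E_w(t_2,\theta)-\int_{t_2}^\infty|\partial_tE_w|\,\dd t.
\]

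\emph{Step 2 (positivity of the limit).} This is the step I expect to be the main obstacle. It is not automatic, because the error integral need not be small relative to $E_w(t_2)$. I would try two things.

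First, I would use $0\le Q\le C$ together with the splitting $Q=Q\mathbf 1_{|x|<m-t^\gamma}+\dots$ and run the argument from a large time $t_2$. The remaining error is then $\le Ct_2^{-\mu}$ uniformly in $\theta$.

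Second, I would show that $E_w(t_2,\theta)$ does not decay. The linear leading edge of $w$, with the bounded potential acting only behind the front, carries positive Dirichlet-type moment. Concretely, in the coordinates of Section~\ref{sec:Cartesian}, I would compare $w$ with $\e^{-Ct_0}$ times the free heat flow on a region where $Q$ is tiny. The semigroup formula with the kernel limit $D_T\to c_N\rho\e^{-|X|^2/4}Z_\rho$ then gives
\[
 \liminf_{\tau\to\infty}W_w\ge c_N\rho\e^{-|X|^2/4}\int Z_\rho W_w(\tau_0,Z)\,\dd Z-o(1).
\]
Here the error from $Q$ over the window is controlled by \eqref{eq:V-small}-type bounds, since $Q$ is supported effectively in $\rho<a(t)$, where Dirichlet data at $\rho=0$ can be imposed. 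I would do this by restricting to $\{\rho>0\}$ and using the fact that $w\ge0$ only lowers the value if Dirichlet data are imposed at $\rho=a(t)$.

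\emph{Step 3 (uniform positivity of the initial moment).} Positivity of the half-space first moment at some time $\tau_0$ follows from strict positivity of $w$ at a fixed time. Uniformity in $\theta$ comes from rotational compactness: $\theta\mapsto w$ is continuous and the sphere is compact.

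\emph{Step 4 (conclusion).} Combining the steps, the Dirichlet Ornstein--Uhlenbeck comparison on $\{\rho\ge a(t)\}$ yields
\[
 W_w\ge c\rho\e^{-\rho^2/4}-o(1)
\]
locally uniformly for $\rho\in K$, uniformly in $\theta$. The constant $c$ is the infimum over $\theta$ of $c_N$ times the initial moment, so it is independent of $K$. The Dirichlet comparison here uses the boundary inequality $W_w\ge0$ at $\rho=a(t)$, and it absorbs the exponentially small potential through the factor $\exp(-\int\omega)$, which is bounded below. This proves \eqref{eq:persistence}.
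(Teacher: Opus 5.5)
\textbf{There is a genuine gap.} The critical-moment route in Step~1 and the first half of Step~2 fails for a linear solution with this potential.

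After you replace $V$ by $w$, the only upper bound available is the free flow $w\le \e^tG_t*\varphi$. Since $Q\equiv0$ satisfies every hypothesis, nothing better holds in general. The tilted density of this bound is $\e^{\theta\cdot x-2t}\e^tG_t(x-y)=\e^{\theta\cdot y}G_t(x-y-2t\theta)$, a Gaussian of width $\sqrt t$ centred at $2t\theta$.
\begin{itemize}
\item It is of size $t^{-N/2}$, not exponentially small, on the part of $\{|x|<m(t)-t^\gamma\}$ within distance $O(\sqrt t)$ of $2t\theta$ (for instance at $x=(2t-\sqrt t)\theta$). There only $Q\le C$ is known.
\item Even in $\{|x|\ge m(t)\}$ this bound gives only $|\partial_tE_w|=O(t^{\sigma-1/2}\log t)$, which is not integrable. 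The estimate \eqref{eq:absorption-tail} used the envelope \eqref{eq:envelope} for the solution itself, which gains a factor of order $t^{\sigma-1}$ over the free flow; here the envelope is assumed only for $Q$.
\item More fundamentally, the moment is not a positive proxy for the leading edge. For $Q\equiv0$, $E_w(t,\theta)=\int(\theta\cdot y)\e^{\theta\cdot y}\varphi(y)\dd y$ for all $t$. This is negative if $\varphi$ is supported in $\{\theta\cdot y<0\}$, although the normalized leading edge then grows like $\sqrt t$.
\end{itemize}
The moment--amplitude identity rests on the tightness of Appendix~\ref{app:tightness}, which uses $0\le u\le1$ and the envelope for $u$. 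Neither is available for $w$.

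The Dirichlet comparison in the second half of Step~2 and in Step~4 is the right mechanism, and it makes the moment unnecessary. However, you assert it rather than prove it, and its delicate point is missing.
\begin{itemize}
\item The kernel $D_T$ and its limit $D_T\to c_N\rho\e^{-|X|^2/4}Z_\rho$ belong to the fixed boundary $\rho=0$. That boundary is not admissible, because $tQ$ may be of order $t$ there.
\item On a fixed half-space $\{\rho>a_0\}$ with $a_0>0$, the Dirichlet Ornstein--Uhlenbeck operator has a negative principal eigenvalue. The comparison solution then decays like a power of $t$, and the lower bound is lost.
\end{itemize}
You need a boundary tending to $\rho=0$, together with a subsolution for that moving boundary whose loss is integrable in $\tau$.

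The paper builds this subsolution in the exterior $\{|x|>m(t)+t^\sigma\}$ as $\sqrt t\,\e^{-\rho\sqrt t}\chi(\log t)X\e^{-(1/4+\eta_t)X^2}$, with $X=\rho-\eta_t$ and $\eta_t=t^{\sigma-1/2}$. Its residual is at most $C\eta_t+tQ$, because $\sigma\eta_t+p_{\rm rad}>0$ there. The initial fit at a large time $T$ uses the heat lower bound $c_T\exp(-(|x|+M_0)^2/(4(T-t_0)))$ together with $1/4+\eta_T>T/(4(T-t_0))$.

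Alternatively, you can close your Cartesian idea with the explicit moving half-space kernel of Lemma~\ref{lem:logistic-seed}, after the time shift you already described:
\begin{itemize}
\item On $\{\theta\cdot x>2t-A\}$ one has $|x|-m(t)>k\log t-A$. Hence $Q\le C_At^{\sigma-k}$ for large $t$, which is integrable because $k\ge2$; use $Q\le C$ on the initial interval.
\item Let $\Lambda(\infty)$ be the total time integral of this bound. The exact Dirichlet formula then gives a limiting normalized edge at least $\e^{-\Lambda(\infty)}c_N\rho\e^{-\rho^2/4}\int(A+\theta\cdot y)\e^{\theta\cdot y}\varphi(y)\dd y$.
\item This holds uniformly in $\theta$ once $A$ exceeds the support radius of the seed.
\end{itemize}
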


\begin{proof}
Let $t_0>0$ be a time at which the seed is dominated. Since $Q$ is bounded, heat comparison gives, at every fixed $T>t_0$,
\[
 V(T,x)\ge c_T
 \exp\left(-\frac{(|x|+M_0)^2}{4(T-t_0)}\right),
\]
where $M_0$ bounds the support radius of the seed and $c_T>0$.

Use logarithmic time $\tau=\log t$ and, in the exterior region, set
\[
 \rho=\frac{|x|-m(t)}{\sqrt t},\qquad
 \eta_t=t^{\sigma-1/2},\qquad
 X=\rho-\eta_t>0,
\]
\[
 c_t=\frac14+\eta_t,\qquad
 F(\tau,\rho)=X\e^{-c_tX^2}.
\]
For the coefficients in \eqref{eq:coefficients}, direct differentiation yields
\begin{equation}\label{eq:gaussian-residual}
 \frac{(\partial_\tau-L_0-p_{\rm rad}\partial_\rho)F}{F}
 =6\eta_t
 -\bigl[(1/2+\sigma)\eta_t+4\eta_t^2\bigr]X^2
 -(\sigma\eta_t+p_{\rm rad})(X^{-1}-2c_tX).
\end{equation}
Here
\[
 r=m(t)+\rho\sqrt t>m(t)+t^\sigma>2t
\]
for sufficiently large $t$. Consequently,
\[
 q_c\ge0,\qquad
 -\frac{k}{\sqrt t}\le p_{\rm rad}<0,\qquad
 \sigma\eta_t+p_{\rm rad}>0.
\]
The singular term $-(\sigma\eta_t+p_{\rm rad})X^{-1}$ is nonpositive. The remaining positive linear term in $X$ is bounded by $C\eta_tX$ and can be absorbed into the negative quadratic term, leaving an upper bound $C\eta_t$ for the residual in \eqref{eq:gaussian-residual}.

The assumed envelope bound also gives
\[
 tQ(t,x)\le Ct^{1+\sigma}\e^{-t^\sigma}
 \qquad\bigl(|x|\ge m(t)+t^\sigma\bigr).
\]
Both error bounds are integrable in logarithmic time. Define
\[
 \chi(\tau)=\chi(\log T)
 \exp\left[
 -\int_{\log T}^{\tau}
 \left(
 C\e^{(\sigma-1/2)s}
 +C\e^{(1+\sigma)s}\e^{-\e^{\sigma s}}
 \right)\dd s
 \right].
\]
Then $\chi(\infty)>0$ whenever $\chi(\log T)>0$. The residual estimate and $q_c\ge0$ show that
\[
 \underline V(t,x)
 =\sqrt t\,\e^{-\rho\sqrt t}\chi(\log t)F(\log t,\rho)
\]
is a radial subsolution of $\partial_t-\Delta-(1-Q)$ in $|x|>m(t)+t^\sigma$. This comparison is valid even when $Q$ is not radial, since its bound is pointwise. The subsolution vanishes on the moving inner boundary.

Choose $T$ sufficiently large that
\[
 \frac14+\eta_T>\frac{T}{4(T-t_0)}.
\]
At time $T$, the subsolution therefore has faster Gaussian decay than the heat lower bound. A sufficiently small $\chi(\log T)>0$ makes $\underline V(T,\cdot)\le V(T,\cdot)$ on the entire exterior region. Exterior comparison gives $\underline V\le V$ for all later times.

Finally, $\eta_t\to0$, $c_t\to1/4$, and $\chi(\log t)\to\chi(\infty)>0$. Hence
\[
 \chi(\log t)F(\log t,\rho)
 \longrightarrow
 \chi(\infty)\rho\e^{-\rho^2/4}
\]
uniformly on each compact subset of $(0,\infty)$. This proves \eqref{eq:persistence}.
\end{proof}

\subsection{Strict resistance ordering under concavity}
\label{sec:order}
Assume \eqref{ass:C} throughout this subsection. Write
\[
 F(S,D)=\frac12[f(S+D)+f(S-D)],\qquad
 H(S,D)=\frac12[f(S+D)-f(S-D)].
\]
On the convex invariant set $0\le S-D\le S+D\le1$,
\begin{equation}\label{eq:derivatives}
 F_S=H_D=\frac{f'(S+D)+f'(S-D)}2,\qquad
 F_D=H_S=\frac{f'(S+D)-f'(S-D)}2\le0.
\end{equation}

Fix $0\le\beta_1<\beta_2<\infty$, and use subscripts $1,2$ for the corresponding folded solutions. Set
\[
 R=S_1-S_2,\qquad Z=D_2-D_1,
\]
and interpolate along
\[
 (S_s,D_s)=(1-s)(S_2,D_2)+s(S_1,D_1),
 \qquad 0\le s\le1.
\]
Define
\[
 A=\int_0^1F_S(S_s,D_s)\dd s,\qquad
 B=-\int_0^1F_D(S_s,D_s)\dd s\ge0.
\]
Using \eqref{eq:derivatives}, the difference equations become
\begin{equation}\label{eq:coop}
 R_t=\Delta R+AR+BZ,\qquad
 Z_t=\Delta Z+BR+AZ.
\end{equation}

If $\beta_1>0$, their boundary conditions are
\[
 \partial_nR=0,\qquad
 \partial_nZ+\kappa_2Z
 =(\kappa_1-\kappa_2)D_1>0,
 \qquad \kappa_i=\frac2{\beta_i}.
\]
If $\beta_1=0$, then $D_0=0$ on the interface and
\[
 \partial_nZ+\kappa_2Z=-\partial_nD_0>0.
\]
The last inequality follows from positivity of $D_0$ in the interior and the boundary point lemma.

Initially $R=Z=0$. Negative-part comparison for the cooperative system gives $R,Z\ge0$, and the positive boundary input makes $Z$ strictly positive at positive times. Since $q_i=S_i-D_i$, it follows that
\[
 q_1-q_2=R+Z>0,
\]
as in the one-dimensional argument of \cite[Proposition 5.1]{Wang}.

To obtain a strict amplitude gap, including at the equator, we also need a positive average-mode seed. The next lemma shows that concavity suffices even when $f''$ vanishes on an interval.

\begin{lemma}[Creation of an average-mode seed]\label{lem:Rseed}
Under \eqref{ass:K} and \eqref{ass:C}, the function $R$ in \eqref{eq:coop} is nonzero at some positive time and is strictly positive throughout $\overline H$ at every sufficiently large time.
\end{lemma}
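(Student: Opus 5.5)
The plan is a contradiction argument for nonvanishing, followed by the strong maximum principle for the cooperative system to upgrade to strict positivity. Suppose $R\equiv0$ on $(0,T_1]\times H$ for some $T_1>0$. Then the $R$-equation in \eqref{eq:coop} reduces to $BZ\equiv0$ there. By the discussion preceding the lemma, $Z>0$ throughout $\overline H$ at positive times, so $B\equiv0$ on $(0,T_1]\times H$. Since $F_D\le0$ pointwise, $B=0$ forces $F_D(S_s,D_s)=0$ for almost every $s$. By \eqref{eq:derivatives}, this means
\[
 f'(S_s+D_s)=f'(S_s-D_s).
\]
Because $f'$ is nonincreasing under \eqref{ass:C}, this equality says that $f''$ vanishes on the whole interval $[S_s-D_s,S_s+D_s]=[p_s... ,]$, that is, $f'$ is constant on the interval between $q_s$ and $p_s$.

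The next step takes $s=1$, or uses continuity in $s$ at an endpoint, with the reflected donor and receiving densities $p_1\ge q_1$ of solution~1. We let $x$ range over $H$ at a fixed small time $t\in(0,T_1]$. Far from the support, $q_1$ is small and positive. The idea is to show that the union of the intervals $[q_1(t,x),p_1(t,x)]$ covers some interval $(0,\epsilon)$, or at least a set accumulating at $0$. It then contains an interval attached near $0$, on which $f'$ would be constant and hence equal to $f'(0)=1$, so $f(s)=s$ there. This contradicts $f(s)<s$ only if we know $g>0$; \eqref{ass:K} gives only $0<f\le s$, so this route needs care. A better target is the top: near the support of $p_0$ at small times, $p_1$ is close to $p_0$, whose values sweep $[0,\max p_0]$, while $q_1$ is close to $0$ near $t=0$.

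By Lemma~\ref{lem:finite-boundary-continuity}-type short-time continuity, at small $t$ the intervals $[q_1,p_1]$ therefore cover $[\epsilon',\max p_0-\epsilon']$. Letting $t\downarrow0$, $f'$ would be constant on $(0,\max p_0)$, hence $f(s)=s$ there, so $g\equiv0$ on $[0,\max u_0]$. This does not immediately contradict \eqref{ass:K}. It does contradict the situation once the solution can reach values near $1$, where $f'(1)<0\ne1$. So I would instead argue at all times $t\le T_1$ and then extend. Real analyticity is unavailable, so the extension uses a unique continuation-free route. If $R\equiv0$ for all times, then $B\equiv0$ for all times, and the intervals $[q_1,p_1]$ at large times cover $(0,1)$, because the donor saturates to $1$ behind the front while $q_1$ is small ahead. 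This forces $f'$ to be constant on $(0,1)$, contradicting $f'(0)=1>0>f'(1)$. Thus $R\not\equiv0$ at some positive time $t_1$. This global-in-time contradiction is the cleanest form, and I expect the covering claim to be the main obstacle. It needs $p_1$ to take values near $1$ and $q_1$ near $0$ at comparable points, joined continuously along paths, which Lemma~\ref{lem:localization} and positivity of $D$ provide: $p_1$ and $q_1$ differ at each point, and connectedness of $H$ makes the union of intervals connected.

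For the strict positivity, once $R(t_1,\cdot)\ge0$ is nonzero, I use the first equation of \eqref{eq:coop}: $R_t-\Delta R-AR=BZ\ge0$ with Neumann data $\partial_nR=0$. The strong maximum principle and the boundary point lemma then give $R>0$ in $H$ for $t>t_1$. At boundary points, $R=0$ together with $\partial_nR=0$ would contradict Hopf, so $R>0$ on $\overline H$ for every $t>t_1$, which covers all sufficiently large times.
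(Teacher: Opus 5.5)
Your final argument is correct and is essentially the paper's proof run in contrapositive form. The paper fixes a level $s_*$ near which $f''<0$ and uses the intermediate value theorem at a large time to find $x_*$ with $p_1(t_*,x_*)=s_*>q_1(t_*,x_*)$. This gives $B>0$, hence $BZ>0$, near $(t_*,x_*)$, and positivity of the Neumann heat kernel then gives $R>0$ on $\overline H$. Your connectedness of $\bigcup_x[q_1,p_1]$, combined with saturation of $p_1$, plays the role of that intermediate value step, and your strong maximum principle/Hopf step is the same positivity argument. The exploratory detours (small times, behavior near $0$, unique continuation) are unnecessary, since the lemma only requires $R\not\equiv0$ at one positive time.
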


\begin{proof}
The function $f'$ is nonincreasing and cannot be constant: otherwise $f'(0)=1$ would contradict $f(1)=f(0)=0$. Since $f\in C^2([0,1])$, there is an $s_*\in(0,1)$ and a neighborhood of $s_*$ on which $f''<0$.

By local convergence to $1$, for some sufficiently large $t_*$ the donor density $p_1(t_*,\cdot)$ exceeds $s_*$ at an interior point. At spatial infinity it tends to zero by the finite-time Gaussian bound. Continuity along a path in $H$ therefore gives an interior point $x_*$ such that
\[
 p_1(t_*,x_*)=s_*.
\]
Since $D_1>0$ in $H$, we have $q_1(t_*,x_*)<s_*$. Concavity and strict decrease of $f'$ immediately below $s_*$ imply
\[
 f'(p_1(t_*,x_*))<f'(q_1(t_*,x_*)).
\]
Thus the integrand defining $B$ is strictly positive at the interpolation endpoint $s=1$. By continuity, $B>0$ on a space-time neighborhood of $(t_*,x_*)$.

Since $Z>0$, the source $BZ$ in the first equation of \eqref{eq:coop} is positive on that neighborhood. Choose $C_0$ such that $A+C_0\ge0$. Then
\[
 R_t-\Delta R+C_0R\ge BZ,
 \qquad \partial_nR=0.
\]
Positivity of the Neumann heat kernel makes $R$ strictly positive throughout $\overline H$ at every time after this positive source has acted. The same argument applies when $\beta_1=0$, since $D_0>0$ in the interior.
\end{proof}

Concavity gives $A\le1$, while \eqref{eq:basic-g} implies
\begin{equation}\label{eq:A-potential}
 0\le1-A\le C_f(p_1+q_1+p_2+q_2).
\end{equation}
Extend $R$ evenly across $\Sigma$. Its homogeneous Neumann condition, together with \eqref{eq:coop}, makes the extension a whole-space supersolution of
\[
 R_t\ge\Delta R+(1-Q)R,\qquad Q=1-A.
\]
By \eqref{eq:A-potential}, $Q$ is bounded and satisfies the required exterior envelope estimate. Lemma~\ref{lem:Rseed} provides a positive compact seed. Lemma~\ref{lem:persistence} therefore gives a constant $c>0$ such that the normalized leading edge of $R$ dominates $c\rho\e^{-\rho^2/4}$ in every direction.

Since $q_1-q_2\ge R$, passage to the receiving leading-edge limits yields
\[
 a_{\beta_1}(\theta)-a_{\beta_2}(\theta)\ge c
 \qquad(\theta\in\Sph_+).
\]
Continuity in direction extends this inequality to the closed receiving hemisphere. The constant $c$ may depend on the fixed pair $(\beta_1,\beta_2)$, but not on direction.

By \eqref{eq:arrival},
\[
 \tau_{\beta_2}(\theta)-\tau_{\beta_1}(\theta)
 =\frac12\log\frac{a_{\beta_1}(\theta)}
                         {a_{\beta_2}(\theta)}.
\]
The amplitudes are positive and continuous on the compact closed hemisphere. The uniform amplitude gap therefore gives a uniform positive arrival-time gap, proving both inequalities in \eqref{eq:strict-gap}.

\subsection{Continuity and bounded-parameter uniformity}
Fix $0<B<\infty$. Equation~\eqref{eq:amplitude-moment}, Proposition~\ref{prop:moment}, and finite-time continuity give joint continuity of $a_\beta(\theta)$ on $[0,B]\times\overline{\Sph_+}$. More explicitly, the finite-time moments are jointly continuous, and their uniform convergence allows this continuity to pass to the limiting moment and hence to the amplitude.

The common radial lower seed gives a uniform positive lower bound for the amplitudes on this parameter set. Thus taking logarithms preserves joint continuity:
\[
 \xi_\beta(\theta)=\log a_\beta(\theta),\qquad
 \tau_\beta(\theta)=\frac12
       \bigl(\log a_0(\theta)-\log a_\beta(\theta)\bigr)
\]
are jointly continuous as well.

The Cartesian compactness argument, relative
mesoscopic estimates, and displacement comparison
use bounds uniform for $0\le\beta\le B$.
They therefore yield uniform wave convergence
on each fixed receiving cone $\Gamma_\delta$,
$\delta>0$, and separately on the equator,
without using resistance ordering.
Together with the joint continuity proved above
and the strict ordering under \eqref{ass:C},
this proves Theorem~\ref{thm:parameters}.

\subsection{First arrival times}
Fix a finite resistance bound and let the threshold $\upsilon$ range over a compact subset of $(0,1)$. Write
\[
 z_\upsilon=U_*^{-1}(\upsilon).
\]
These values remain in a bounded interval. Directions are taken in a fixed receiving cone or, separately, on the equator, where the receiving trace is used.

Lemma~\ref{lem:localization} supplies a constant $A>0$ and a sufficiently large time $t_*$ such that, uniformly over these parameters,
\[
 u_+(t,x)<\upsilon
 \quad\text{if }|x|\ge m(t)+A,
\]
and
\[
 u_+(t,x)>\upsilon
 \quad\text{if }|x|\le m(t)-A,
 \qquad t\ge t_*.
\]
Increase $t_*$ if necessary so that $m$ is strictly increasing thereafter. For large $R$, let $t_-(R)$ and $t_+(R)$ satisfy
\[
 m(t_-(R))=R-A,\qquad
 m(t_+(R))=R+A.
\]
The exterior bound excludes a crossing during $[t_*,t_-(R)]$, while the interior bound ensures a crossing by $t_+(R)$. On the remaining bounded interval $[0,t_*]$, the common compact-data Gaussian bound excludes a crossing when $R$ is sufficiently large. Consequently,
\[
 t_-(R)\le T_\upsilon^\beta(R,\theta)\le t_+(R).
\]
In particular,
\[
 T_\upsilon^\beta(R,\theta)<\infty,\qquad
 T_\upsilon^\beta(R,\theta)\longrightarrow\infty,\qquad
 R-m(T_\upsilon^\beta(R,\theta))=O(1),
\]
uniformly in the stated parameter and direction sets. This argument uses spatial localization and does not require monotonicity of the solution in time.

Continuity in time implies that the solution equals $\upsilon$ at its first crossing. We may therefore apply the uniform wave limit along these arrival times. Since the moving coordinate remains bounded and $U_*$ is strictly decreasing, inversion gives
\begin{equation}\label{eq:arrival-inversion}
 R=m(T_\upsilon^\beta)
   +\xi_\beta(\theta)+z_\upsilon+o(1).
\end{equation}
The error is uniform because the thresholds lie in a compact subset of $(0,1)$ and the phases are uniformly bounded.

Subtracting the corresponding formula for $\beta=0$ yields
\[
 m(T_\upsilon^\beta)-m(T_\upsilon^0)
 =\xi_0(\theta)-\xi_\beta(\theta)+o(1).
\]
Both arrival times tend to infinity uniformly. Since $m'(t)=2-k/t\to2$, the mean value theorem gives
\[
 T_\upsilon^\beta-T_\upsilon^0
 =\frac12\bigl(\xi_0(\theta)-\xi_\beta(\theta)\bigr)+o(1),
\]
which is \eqref{eq:arrival}. The threshold-dependent term $z_\upsilon$ cancels, so the limiting difference is independent of the threshold. All errors are uniform in the stated ranges. This completes the proof of Theorem~\ref{thm:phase}.

\section{Strong resistance: input and absorption balances}
\label{sec:input}
Throughout Sections~\ref{sec:input}--\ref{sec:sharp},
fix $0<\delta\le1$ and restrict the direction to
$\theta\in\Gamma_\delta$. We study the limit
$\beta\to\infty$ using
\[
 \varepsilon=\beta^{-1},\qquad
 L=\log(\varepsilon^{-1}).
\]
Write
\[
 s_\varepsilon=p_\beta-q_\beta\ge0,\qquad
 Q_\varepsilon(t,x',x_N)=q_\beta(t,x',|x_N|).
\]
Thus $Q_\varepsilon$ is the even extension of the receiving
density. When $s_\varepsilon$ is evaluated on $\Sigma$,
we write $s_\varepsilon(t,x')$ for its trace.

Since
$\partial_yq_\beta(t,x',0)=-\varepsilon s_\varepsilon(t,x')$,
the jump of $\partial_NQ_\varepsilon$ across $\Sigma$ is
$-2\varepsilon s_\varepsilon$. Consequently, the even
extension satisfies, in distributions,
\begin{equation}\label{eq:source}
 (Q_\varepsilon)_t
 =\Delta Q_\varepsilon+Q_\varepsilon-g(Q_\varepsilon)
   +2\varepsilon s_\varepsilon(t,x')\delta_\Sigma,
 \qquad Q_\varepsilon(0)=0.
\end{equation}
Here $\delta_\Sigma$ denotes surface measure on the
interface, regarded as a distribution. The factor $2$
accounts for the two reflected half-spaces.

\subsection{Finite-time input linearization}
Let $p_\infty$ be the Neumann donor solution.
Reflection comparison and donor comparison give
\[
 0\le s_\varepsilon\le p_\beta\le p_\infty.
\]
Define the linear receiving response $v$ by
\begin{equation}\label{eq:linear-input}
 v_t=\Delta v+v\quad\text{in }H,\qquad
 \partial_nv=p_\infty|_\Sigma,\qquad
 v(0)=0.
\end{equation}

\begin{proposition}\label{prop:finite}
For every finite $T>0$, there is a constant $C_T$ such
that, uniformly for $0\le t\le T$ as
$\varepsilon\downarrow0$,
\begin{equation}\label{eq:finite-linear}
 \|p_\infty-p_\beta\|_\infty+\|q_\beta\|_\infty
 \le C_T\varepsilon,\qquad
 \|q_\beta/\varepsilon-v\|_\infty\le C_T\varepsilon.
\end{equation}
The norms are taken over $H$, with the boundary traces
included at positive times.

Moreover, on each fixed receiving cone, the exponentially
weighted input moments of $s_\varepsilon$ converge to
those of $p_\infty$. This convergence remains valid after
multiplication by any fixed polynomial weight in time
and tangential position.
\end{proposition}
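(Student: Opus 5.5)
The plan is a Duhamel/kernel argument on the finite interval $[0,T]$, using the Robin structure with small coefficient $\varepsilon$ together with the a priori bounds $0\le s_\varepsilon\le p_\beta\le p_\infty\le1$.

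\emph{Step 1: the receiving density is $O(\varepsilon)$.} The reflected receiving density $q_\beta$ satisfies in $H$
\[
 (q_\beta)_t=\Delta q_\beta+f(q_\beta)\le\Delta q_\beta+q_\beta,\qquad
 \partial_nq_\beta=\varepsilon s_\varepsilon\le\varepsilon p_\infty,\qquad
 q_\beta(0)=0.
\]
The boundary sign follows from $\partial_yq_\beta=-\varepsilon s_\varepsilon$ and $n=-e_y$. Comparison with $\varepsilon v$ for the linear problem \eqref{eq:linear-input} then gives $0\le q_\beta\le\varepsilon v$. I will bound $v$ on $[0,T]$ through the Neumann boundary-source representation
\[
 v(t)=\int_0^t\e^{t-s}\int_{\R^{N-1}}G^{N-1}_{t-s}(x'-z')\,r_{\mathrm N}(t-s,y,0)\,p_\infty(s,z',0)\dd z'\dd s .
\]
Since $r_{\mathrm N}(t,y,0)\le Ct^{-1/2}$ is integrable in time, this yields $\|v\|_\infty\le C_T$, including traces. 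This proves $\|q_\beta\|_\infty\le C_T\varepsilon$.

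\emph{Step 2: the donor defect.} Set $w=p_\infty-p_\beta\ge0$. It satisfies
\[
 w_t=\Delta w+c\,w,\qquad \partial_nw=\varepsilon s_\varepsilon\le\varepsilon,\qquad w(0)=0,
\]
where the coefficient $c$ is a bounded difference quotient of $f$. The same boundary-source representation with Neumann kernel and factor $\e^{\|f'\|_\infty t}$ gives $\|w\|_\infty\le C_T\varepsilon$.

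\emph{Step 3: linearization.} Set $e=q_\beta/\varepsilon-v$. Then
\[
 e_t=\Delta e+e-g(q_\beta)/\varepsilon,\qquad \partial_ne=s_\varepsilon-p_\infty,\qquad e(0)=0.
\]
The interior source is bounded by $C_f q_\beta^2/\varepsilon\le C_T\varepsilon$ by Step 1. The boundary source is bounded by
\[
 |s_\varepsilon-p_\infty|\le|p_\beta-p_\infty|+q_\beta\le C_T\varepsilon
\]
by Steps 1 and 2. Duhamel with the Neumann kernel, using its $L^\infty$ contractivity for the volume source and the integrable $t^{-1/2}$ boundary factor for the trace source, gives $\|e\|_\infty\le C_T\varepsilon$.

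\emph{Step 4: input moments.} The weighted input moments are of the form
\[
 \int_0^{T'}\e^{-2t}\int(1+t+|x'|)^j\e^{\theta'\cdot x'}s_\varepsilon(t,x')\dd x'\dd t .
\]
Here $|\theta'|\le\sqrt{1-\delta^2}<1$, so the weight is dominated by $\e^{|x'|}$ times a polynomial. On each finite time interval, $s_\varepsilon\to p_\infty$ uniformly on $\Sigma$ by Steps 1 and 2. Both functions are dominated by the common Gaussian bound $p_\infty\le C_t\e^{-|x'|^2/(8t)}$ for $t$ bounded away from $0$. Near $t=0$, $p_\infty(t,\cdot,0)$ is exponentially small because $\supp u_0$ lies at positive distance from $\Sigma$. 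Dominated convergence therefore gives convergence on finite windows. For the tail $t\ge T'$ I use $s_\varepsilon\le p_\infty$ together with the envelope of Lemma~\ref{lem:envelope}, restricted to $x_N=0$: the interface trace satisfies $\theta'\cdot x'-2t\le-(1-|\theta'|)|x'|-c\,t$ unless $|x'|\ge m(t)$. This makes the tail integral, including polynomial weights, small uniformly in $\varepsilon$ as $T'\to\infty$. Letting $\varepsilon\to0$ and then $T'\to\infty$ proves the claim.

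\emph{Main obstacle.} I expect the hardest step to be making the $L^\infty$ bounds hold up to the boundary trace with only boundary data. This requires the integrable singularity $r_{\mathrm N}(t,0,0)=(\pi t)^{-1/2}$, and justifying the boundary-source Duhamel formula for the classical solutions at positive times. The moment tail also needs care, since the envelope is radial and must be turned into exponential decay of $\e^{\theta'\cdot x'-2t}$ on the cone.
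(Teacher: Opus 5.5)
Your proof is correct and follows essentially the paper's route. You use Neumann boundary-response Duhamel bounds with the integrable $(t-s)^{-1/2}$ factor for $p_\infty-p_\beta$, $q_\beta$ and $q_\beta/\varepsilon-v$, and then dominated convergence for the weighted input moments. There are two variations. You get $q_\beta\le\varepsilon v$ by comparison rather than through the paper's coupled Gronwall estimate. You also treat the moment tail with the radial envelope instead of the paper's single majorant $t^{-1/2}(1+t)^j\e^{-\delta^2t}\e^{-d_0^2/(4t)}$ obtained from the linear Neumann bound.

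One small slip needs fixing. On $|x'|<m(t)$, your inequality $\theta'\cdot x'-2t\le-(1-|\theta'|)|x'|-ct$ fails for fixed $c>0$ when $|x'|$ is close to $m(t)$. However, $|x'|\le2t$ there gives $\theta'\cdot x'-2t\le-\tfrac12(1-|\theta'|)(|x'|+2t)$, which yields the same exponential decay.
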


\begin{proof}
The boundary flux
\[
 j=\varepsilon s_\varepsilon
\]
satisfies $0\le j\le\varepsilon$. The Neumann heat
boundary-response operator has $L^\infty$ norm bounded
by $C(t-s)^{-1/2}$: the tangential heat kernel has mass
one, and the normal boundary kernel contributes this
factor.

Set $d=p_\infty-p_\beta$. The mild equations for $d$
and $q_\beta$, together with the Lipschitz continuity of
$f$ on $[0,1]$, yield
\[
 \|d(t)\|_\infty+\|q_\beta(t)\|_\infty
 \le C\varepsilon\sqrt t
 +C\int_0^t
   \bigl(\|d(s)\|_\infty+\|q_\beta(s)\|_\infty\bigr)\dd s.
\]
Gronwall's inequality proves the first estimate in
\eqref{eq:finite-linear}.

For the second estimate, put
$w=q_\beta/\varepsilon-v$. Then
\[
 w_t=\Delta w+w-\frac{g(q_\beta)}{\varepsilon},
 \qquad
 \partial_nw=s_\varepsilon-p_\infty,
 \qquad w(0)=0.
\]
The first estimate gives
\[
 \|s_\varepsilon-p_\infty\|_\infty=O_T(\varepsilon).
\]
Since $g(q)\le C_fq^2$, it also gives
\[
 \left\|\frac{g(q_\beta)}{\varepsilon}\right\|_\infty
 =O_T(\varepsilon).
\]
The same heat-kernel estimates therefore imply
$\|w\|_\infty\le C_T\varepsilon$.

For the input moments, use
$0\le s_\varepsilon\le p_\infty$ and the compact-data
linear Neumann upper bound for $p_\infty$.
Tangential exponential tilting produces the factor
\[
 \e^{-2t}\e^t\e^{|\theta'|^2t}
 =\e^{-\theta_N^2t}.
\]
Thus, for each fixed integer $j\ge0$, the weighted
boundary integral is bounded by a constant times
\[
 t^{-1/2}(1+t)^j
 \e^{-\delta^2t}\e^{-d_0^2/(4t)},
\]
where $d_0>0$ is the distance of the initial support
from the interface. This bound is integrable at both
zero and infinity.

Finite-time convergence, followed by dominated convergence,
proves convergence of the weighted input moments.
The domination is uniform for $\theta\in\Gamma_\delta$,
so the convergence is uniform on that cone. This argument
does not assert an $O(\varepsilon)$ rate after integration
over the entire time interval.
\end{proof}

Replacing $p_\infty$ in \eqref{eq:linear-input} by its
linear Neumann upper bound and evaluating the resulting
boundary convolution gives
\begin{equation}\label{eq:v-bound}
 v(t,x',y)\le
 2\e^t\int_HG_t^{N-1}(x'-a')
 \left(\int_0^\infty G_t^1(y+z+\ell)\dd\ell\right)
 p_0(a',z)\dd a'\dd z.
\end{equation}
At $y=0$, the inner integral is at most $1/2$.
Completing the square in the tangential variables therefore
gives, for each fixed integer $j\ge0$,
\[
 \e^{-2t}\int_\Sigma
 (1+t+|x'|)^j\e^{\theta'\cdot x'}v(t,x',0)\dd x'
 \le C_j(1+t)^j\e^{-\theta_N^2t}.
\]
The corresponding Gaussian bounds for $p_\infty$ prove
finiteness and continuity of $I_N$ on each fixed cone.
Its positivity follows from the positive boundary trace
of the nontrivial Neumann donor solution at positive times.

Define
\begin{align*}
 j_\varepsilon(t,\theta)
 &=\e^{-2t}\int_\Sigma
      \e^{\theta'\cdot x'}s_\varepsilon(t,x')\dd x',\\
 j_{1,\varepsilon}(t,\theta)
 &=\e^{-2t}\int_\Sigma
      (\theta'\cdot x'-2t)\e^{\theta'\cdot x'}
      s_\varepsilon(t,x')\dd x',\\
 J_\varepsilon(\theta)
 &=\int_0^\infty j_\varepsilon(t,\theta)\dd t,\qquad
 K_\varepsilon(\theta)
 =\int_0^\infty j_{1,\varepsilon}(t,\theta)\dd t.
\end{align*}
The weighted input convergence gives
\begin{equation}\label{eq:JK}
 J_\varepsilon=I_N+o_\delta(1),\qquad
 K_\varepsilon=I_{1,N}+o_\delta(1),
\end{equation}
uniformly on $\Gamma_\delta$, where
\[
 I_{1,N}(\theta)=
 \int_0^\infty\e^{-2t}\int_\Sigma
 (\theta'\cdot x'-2t)\e^{\theta'\cdot x'}
 p_\infty(t,x',0)\dd x'\dd t.
\]
Both integrals are absolutely convergent.

\subsection{Exact critical balances}
For the even receiving extension, define
\begin{align*}
 M_\varepsilon(t,\theta)
 &=\e^{-2t}\int_{\R^N}
      \e^{\theta\cdot x}Q_\varepsilon(t,x)\dd x,\\
 B_\varepsilon(t,\theta)
 &=\e^{-2t}\int_{\R^N}
      \e^{\theta\cdot x}g(Q_\varepsilon(t,x))\dd x,\\
 \mathcal E_\varepsilon(t,\theta)
 &=\e^{-2t}\int_{\R^N}
      (\theta\cdot x-2t)\e^{\theta\cdot x}
      Q_\varepsilon(t,x)\dd x.
\end{align*}
Since $|\theta|=1$, the two weights
\[
 \e^{-2t}\e^{\theta\cdot x},
 \qquad
 \e^{-2t}(\theta\cdot x-2t)\e^{\theta\cdot x}
\]
solve the backward adjoint equation
$(\partial_t+\Delta+1)\psi=0$.
Testing \eqref{eq:source} against these weights gives
\begin{equation}\label{eq:MB}
 M_\varepsilon'=-B_\varepsilon
                  +2\varepsilon j_\varepsilon,
\end{equation}
and
\begin{equation}\label{eq:source-moment}
 \mathcal E_\varepsilon'
 =\e^{-2t}\int_{\R^N}
      (2t-\theta\cdot x)\e^{\theta\cdot x}
      g(Q_\varepsilon)\dd x
   +2\varepsilon j_{1,\varepsilon}.
\end{equation}
These calculations can first be performed with spatial
cutoffs and then passed to the limit using the Gaussian
bounds. The zero initial receiving density gives
$M_\varepsilon(0,\theta)=\mathcal E_\varepsilon(0,\theta)=0$.

The common radial envelope implies
$M_\varepsilon(t,\theta)\to0$ as $t\to\infty$.
For $\theta_N\ge\delta$, the contribution of the negative
half-space to the critical derivative moment decays
exponentially. Indeed, exponential tilting centers the
linear Gaussian upper bound at $2t\theta$, whose distance
from that half-space is at least $2\delta t$.
The same observation applies to the donor contribution
in the critical derivative moment of the original solution.

On the receiving half-space, $Q_\varepsilon=q_\beta$.
The amplitude identification for the original solution
therefore yields
\[
 \lim_{t\to\infty}\mathcal E_\varepsilon(t,\theta)
 =(4\pi)^{N/2}a_\beta(\theta)
 =\kap^{-1}a_\beta(\theta).
\]
Integrating \eqref{eq:MB} and
\eqref{eq:source-moment} now gives
\begin{align}
 \int_0^\infty B_\varepsilon(t,\theta)\dd t
 &=2\varepsilon J_\varepsilon(\theta),
 \label{eq:total-absorption}\\
 \kap^{-1}a_\beta(\theta)
 &=
 \int_0^\infty\e^{-2t}\int_{\R^N}
 (2t-\theta\cdot x)\e^{\theta\cdot x}
 g(Q_\varepsilon)\dd x\dd t
 +2\varepsilon K_\varepsilon(\theta).
 \label{eq:amplitude-absorption}
\end{align}

The centered absorption integral in
\eqref{eq:amplitude-absorption} is absolutely convergent.
Indeed, the estimate in \eqref{eq:absorption-tail} applies
to $Q_\varepsilon$ through the same radial envelope,
while the finite-time Gaussian bounds control the initial
time interval. We retain the centered factor
$2t-\theta\cdot x$ throughout: the argument controls this
combined expression and does not require separate
absolute-integrability estimates for its two terms.

\section{The sharp amplitude coefficient}\label{sec:sharp}
We prove matching lower and upper bounds for the amplitude on each fixed receiving cone. The lower bound follows from a sharp small-seed estimate applied to the accumulated finite-time input. For the upper bound, we first show that almost all critical mass is absorbed before a small power of $\varepsilon^{-1}$, and then estimate the centered absorption moment.

\subsection{A sharp small-seed lower bound}
\begin{lemma}[Small-seed lower estimate]\label{lem:logistic-seed}
Consider the auxiliary reaction $f_{\log}(s)=s-s^2$. Let $0\le\varphi\in C_c^\infty(\R^N)$ be nonzero, and let $a_{\varepsilon,\varphi}$ denote the homogeneous directional amplitude for initial density $\varepsilon\varphi$. Then
\begin{equation}\label{eq:small-seed}
 \liminf_{\varepsilon\downarrow0}
 \inf_{\theta\in\Sph^{N-1}}
 \left[
 \frac{a_{\varepsilon,\varphi}(\theta)}
      {\varepsilon\log(1/\varepsilon)}
 -\kap\int_{\R^N}\e^{\theta\cdot y}\varphi(y)\dd y
 \right]\ge0.
\end{equation}
\end{lemma}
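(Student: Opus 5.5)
Our plan is to run the linear dynamics until the solution first becomes of order one, and to count the critical moment lost to absorption. Let $u^\varepsilon$ solve the logistic equation with datum $\varepsilon\varphi$. Set $t_\varepsilon=(1-\nu)\log(1/\varepsilon)$ for a small fixed $\nu>0$, and let $w=\varepsilon\e^{t}G_t^N*\varphi$ be the linear solution. For $t\le t_\varepsilon$ we have $w\le\varepsilon\e^{t}\|\varphi\|_\infty\le C\varepsilon^{\nu}$, so the solution stays uniformly small. The logistic scaling then lets us compare $u^\varepsilon$ from below with $w/(1+\int_0^t\|w(s)\|_\infty\dd s)$, or with $w-\int_0^t \e^{(t-s)(\Delta+1)}w(s)^2\dd s$. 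This gives
\[
 u^\varepsilon(t_\varepsilon)\ge(1-C\varepsilon^{\nu})\,w(t_\varepsilon).
\]

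The critical derivative moment is the tool that turns this into an amplitude. By \eqref{eq:amplitude-moment} and the balance \eqref{eq:moment-balance} with no interface term, we have
\[
 a_{\varepsilon,\varphi}(\theta)=\kap\Bigl[E(t_\varepsilon,\theta)
 -\int_{t_\varepsilon}^\infty\e^{-2t}\int(\theta\cdot x-2t)\e^{\theta\cdot x}g(u)\dd x\dd t\Bigr].
\]
We evaluate the first term on the linear solution. Since $\e^{-2t}(\theta\cdot x-2t)\e^{\theta\cdot x}$ solves the adjoint equation, $E$ computed for $w$ equals $\varepsilon\int(\theta\cdot y)\e^{\theta\cdot y}\varphi$ for all times. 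This is $O(\varepsilon)$ and carries no $\log(1/\varepsilon)$ factor. The logarithmic factor must therefore come from the absorption integral over $t\ge t_\varepsilon$, where the weight $2t-\theta\cdot x$ is large and positive. Equivalently, we will use the time-shift interpretation instead. Fix a time $s_0$ beyond $t_\varepsilon$, say $\log(1/\varepsilon)+O(1)$, at which the tilted mass $\e^{-2t}\int \e^{\theta\cdot x}u\approx \varepsilon\int\e^{\theta\cdot y}\varphi$ still holds up to a factor $1-o(1)$. At that time the solution is approximately $\varepsilon\e^{s_0}G_{s_0}*\varphi$, whose leading edge sits at distance about $2s_0$ while the reference centre $m$ is still at $2s_0-k\log s_0$. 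A mass $\mu=\varepsilon\int \e^{\theta\cdot y}\varphi$ released at time $0$ and converted to a front by time $s_0$ behaves like a delayed Bramson front. Its amplitude is then $\kap\,\mu\, s_0\,(1+o(1))$, because the weight $2t-\theta\cdot x$ evaluated on the absorbed mass averages to $s_0$. Taking $s_0\sim\log(1/\varepsilon)$ gives the claim.

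To make this rigorous as a lower bound, we will choose the most transparent route: a subsolution built from the heat semigroup. Let $\underline u=w-\widetilde w$, where $\widetilde w$ solves the linear equation with source $w^2$. Lower-bounding the amplitude requires only an upper bound on the absorption moment with weight $(2t-\theta\cdot x)$, and only on the region where that weight is positive. We will apply the centered identity \eqref{eq:amplitude-absorption} in its homogeneous form,
\[
 \kap^{-1}a=\int_0^\infty\!\!\e^{-2t}\int(2t-\theta\cdot x)\e^{\theta\cdot x}g(u)\dd x\dd t+\int(\theta\cdot y)\e^{\theta\cdot y}\varepsilon\varphi,
\]
and then bound $\int\!\!\int(2t-\theta\cdot x)_+\ldots\ge(1-o(1))\log(1/\varepsilon)\int\!\!\int_{t\ge t_\varepsilon}\ldots$ from below, while controlling the negative part $(\theta\cdot x-2t)_+$. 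This bound follows by \eqref{eq:MB} together with mass conservation: the total absorption equals $\mu$, and by the early-time smallness almost all of it occurs after $t_\varepsilon$. It occurs at positions with $\theta\cdot x\le 2t-\log(1/\varepsilon)\cdot(1-o(1))+t$-independent terms, by the linear upper bound $u\le w$ and the constraint $u\le1$.

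We expect the main obstacle to be this last localization: showing that the absorbed tilted mass is concentrated where $2t-\theta\cdot x\ge(1-\nu')\log(1/\varepsilon)$, uniformly in $\theta$. The plan is to use $g(u)\le u^2\le u\min(1,w)$ together with the explicit Gaussian form of $w$. Absorption at time $t$ and position $x$ requires $w=\Theta(1)$, that is, $\theta\cdot x-2t\lesssim \log\varepsilon+t-\ldots$. Points with a small centered weight have $w\ll1$ and contribute $O(\varepsilon^{1+c})$. After this, letting $\nu,\nu'\to0$ finishes the proof.
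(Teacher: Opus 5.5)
Your argument, in the form it takes in your last two paragraphs, is correct, but it does not follow the paper's route. You combine three facts:
\[
 \kap^{-1}a_{\varepsilon,\varphi}(\theta)=\varepsilon\int(\theta\cdot y)\e^{\theta\cdot y}\varphi\,\dd y+\int_0^\infty\!\!\int(2t-\theta\cdot x)\e^{\theta\cdot x-2t}g(u)\,\dd x\,\dd t ,
\]
the total absorption $\int_0^\infty B\,\dd t=\mu=\varepsilon\int\e^{\theta\cdot y}\varphi$, and the fact that only $o(\varepsilon)$ of this absorption occurs where $2t-\theta\cdot x<A:=(1-\nu')\log(1/\varepsilon)$. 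Dropping the nonnegative middle range then gives $\kap^{-1}a\ge A\mu-o(\varepsilon L)$. This is the mirror image of the paper's own proof of Proposition~\ref{prop:upper}.

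The paper uses the same half-space $\{\theta\cdot x>2t-A\}$, where $u\le C\varepsilon^{\zeta}$, in a different way. It takes it as the domain of a linear Dirichlet subsolution. The image kernel at $T=L^4$ produces the factor $h_0h_T/T\approx A\rho/\sqrt T$, so $\log(1/\varepsilon)$ appears as the dipole moment of the seed relative to the moving wall. A second Gaussian subsolution, with $o(1)$ loss in logarithmic time, then carries this bound to $t=\infty$.

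Your route buys the following. It avoids both subsolution constructions and the comparisons on unbounded moving domains. Since it uses only $u\le w$ and $0\le g(u)\le C_fu^2$, it also proves Lemma~\ref{lem:small-seed} directly for general KPP reactions, without the scaling step. The paper's route buys something else. It gives an explicit pointwise lower bound on the normalized leading edge for every $t\ge T$. It also identifies the amplitude through the profile convergence \eqref{eq:cone-profile}, rather than through the moment identity together with $M(t)\to0$.

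To make your sketch rigorous, a few points need care.
\begin{itemize}
\item \textbf{Localization.} You must use $g(u)\le u^2\le w^2$ and the explicit product of two Gaussians. The bound $g(u)\le C\varepsilon^{\nu'}u$ together with $M(t)\le\mu$ is not enough, because the tilted mass decays only like $t^{-1/2}$, so $\int_0^\infty M\,\dd t=\infty$. Write $\e^{\theta\cdot x-2t}w$ as a Gaussian of mass about $\mu$ centred at $2t\theta$, and $w$ as $\varepsilon\e^t$ times a Gaussian centred at the origin. For $t\ge1$ this gives $\int_{\{\theta\cdot x>2t-A\}}\e^{\theta\cdot x-2t}w^2\,\dd x\le C\varepsilon^{1+\nu'}t^{-N/2}$. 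The Mills ratio improves this to $C\varepsilon^{1+\nu'}t^{-(N+1)/2}$ for $t\ge2(A+R_0)$. With the trivial bound $\|w\|_\infty\mu=O(\varepsilon^2)$ on $[0,1]$, the total is $O(\varepsilon^{1+\nu'}\log L)=o(\varepsilon)$, uniformly in $\theta$. The same computation, using $E[(Z-a)_+]\le\phi(a)/a^2$, bounds the negative-weight part by $O(\varepsilon^2)$.
\item \textbf{Total absorption.} The identity $\int_0^\infty B=\mu$ requires $M(t)\to0$. This follows from $M(t)=t^{-1/2}\int\widetilde W\,\dd X$ and the Cartesian tightness bound.
\item \textbf{A reversed sentence.} You wrote that the lower bound needs only an \emph{upper} bound on the positive-weight region. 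This is backwards: you need a lower bound there and an upper bound on the negative-weight region. Your subsequent sentences do the right thing.
\item \textbf{Unused material.} The subsolution $w/(1+\int_0^t\|w\|_\infty)$ and the delayed-front heuristic play no role in the final argument and can be dropped.
\end{itemize}
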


\begin{proof}
Let $U_\varepsilon$ be the corresponding solution. Fix $0<\zeta<1$, put $L=\log(1/\varepsilon)$, and choose
\[
 A=(1-\zeta)L,\qquad T=L^4.
\]
Suppose $\supp\varphi\subset B_{R_0}$ and take $\varepsilon$
sufficiently small that $\varepsilon\|\varphi\|_\infty\le1$.
For each $\theta\in\Sph^{N-1}$, introduce the moving half-space
\[
 D_t=\{x:\theta\cdot x>2t-A\},\qquad 0\le t\le T.
\]
We construct a linear Dirichlet lower solution in $D_t$ and
compute its kernel explicitly. All constants below may depend
on $\varphi,N,\zeta$, but are independent of $\theta$ and
sufficiently small $\varepsilon$.

Linear comparison gives $U_\varepsilon(t,x)\le C\varepsilon\e^t$.
For $t\ge A/2\ge1$ and $x\in D_t$, the compact-data heat bound gives
\[
 U_\varepsilon(t,x)
 \le C\varepsilon t^{-N/2}
 \exp\left(t-\frac{[(2t-A-R_0)_+]^2}{4t}\right)
 \le C\varepsilon\e^{A+R_0}
 \le C\varepsilon^\zeta.
\]
Indeed, with $B=A+R_0$, the exponent is at most $B$:
if $2t\le B$ it equals $t\le B/2$, and otherwise it equals
$B-B^2/(4t)$. Choose a common constant $C$ so that
\[
 q_\varepsilon(t)=
 \begin{cases}
 C\varepsilon\e^t,&0\le t\le A/2,\\
 C\varepsilon^\zeta,&A/2<t\le T
 \end{cases}
\]
bounds $U_\varepsilon$ throughout $D_t$, and set
$\Lambda_\varepsilon(t)=\int_0^t q_\varepsilon(s)\dd s$.
Then
\begin{equation}\label{eq:seed-absorption-loss}
 0\le \Lambda_\varepsilon(T)
 \le C\left[\varepsilon^{(1+\zeta)/2}
                 +T\varepsilon^\zeta\right]=o(1).
\end{equation}

Let $v$ solve the linear problem
\[
 \begin{cases}
 v_t=\Delta v+(1-q_\varepsilon(t))v,&x\in D_t,\\
 v=0,&x\in\partial D_t,\\
 v(0,x)=\varepsilon\varphi(x),&x\in D_0.
 \end{cases}
\]
For large $L$, the initial support lies strictly inside $D_0$.
Moreover,
\[
 (\partial_t-\Delta-1+q_\varepsilon)U_\varepsilon
 =(q_\varepsilon-U_\varepsilon)U_\varepsilon\ge0
 \quad\hbox{in }D_t.
\]
Thus $U_\varepsilon\ge v$ by the parabolic comparison principle.
One can apply comparison separately on the two time intervals
on which $q_\varepsilon$ is continuous. On each finite time
interval the solutions are bounded. To justify comparison on
the unbounded domain, add
$\delta\e^{(2N+2)t}(1+|x|^2)$ to $U_\varepsilon$, apply
comparison on $D_t\cap B_R$ for sufficiently large $R$, and
then let $R\to\infty$ and $\delta\downarrow0$. The added
function has nonnegative residual under
$\partial_t-\Delta-1+q_\varepsilon$ and dominates the bounded
solutions on the artificial outer boundary.

To compute $v$, write
\[
 x=x_\perp+(h+2t-A)\theta,\qquad
 x_\perp\in\theta^\perp,\quad h>0,
\]
and let $\widetilde v(t,x_\perp,h)=v(t,x)$. Then
\[
 \widetilde v_t=\Delta_{x_\perp,h}\widetilde v
       +2\partial_h\widetilde v+(1-q_\varepsilon)\widetilde v.
\]
The substitution
$\widetilde v=\e^{-h-\Lambda_\varepsilon(t)}Z$
reduces this equation to $Z_t=\Delta Z$ on $h>0$, with zero
Dirichlet boundary value and initial datum
\[
 Z(0,y_\perp,h_0)
 =\varepsilon\e^{h_0}\varphi(y_\perp+(h_0-A)\theta).
\]
The method of images gives the Dirichlet kernel
\[
 G_t^{N-1}(x_\perp-y_\perp)
 \bigl[G_t^1(h-h_0)-G_t^1(h+h_0)\bigr],
 \qquad
 G_t^d(a)=(4\pi t)^{-d/2}\e^{-|a|^2/(4t)}.
\]
Using
\[
 \frac{G_t^1(h+h_0)}{G_t^1(h-h_0)}=\e^{-hh_0/t},
 \qquad
 \e^{h_0-h}G_t^1(h-h_0)
 =\e^tG_t^1(h-h_0+2t),
\]
we obtain the exact formula
\begin{equation}\label{eq:seed-dirichlet-kernel}
 v(t,x)=\varepsilon\e^{t-\Lambda_\varepsilon(t)}
 \int_{\R^N}G_t^N(x-y)
 \left(1-\e^{-(A+\theta\cdot y)(A+\theta\cdot x-2t)/t}\right)
 \varphi(y)\dd y,
 \qquad x\in D_t.
\end{equation}
The integral may be taken over $\R^N$ because the support of
$\varphi$ lies in $D_0$. This identity follows entirely from
the fixed-half-space heat equation and its explicit kernel.
It also constructs the bounded solution used in the comparison:
the factor in parentheses belongs to $[0,1]$ on the support,
so $0\le v(t,x)\le\varepsilon\e^t(G_t^N*\varphi)(x)$.
The compact initial support is separated from $\partial D_0$,
which ensures compatibility at the initial boundary; $v$ is
continuous across $t=A/2$ and classical on either side of it.

At $x=(m(T)+\rho\sqrt T)\theta$, $\rho\ge0$, the two heights are
\[
 h_0=A+\theta\cdot y\ge\alpha_LL,\qquad
 h_T=A-k\log T+\rho\sqrt T\ge\rho\sqrt T,
 \qquad
 \alpha_L=1-\zeta-\frac{R_0}{L}>0,
\]
for sufficiently large $L$. Hence the kernel factor satisfies
\[
 1-\e^{-h_0h_T/T}
 \ge1-\e^{-\alpha_LL\rho/\sqrt T}
 \ge\frac{\alpha_LL\rho}{\sqrt T}
          \e^{-\alpha_LL\rho/\sqrt T}.
\]
Here we used the valid inequality
$1-\e^{-v}=\int_0^v\e^{-s}\dd s\ge v\e^{-v}$ for $v\ge0$.

Define the normalized field and the directional coefficient by
\[
 W(\log t,\rho,\theta)
 =t^{-1/2}\e^{\rho\sqrt t}
 U_\varepsilon(t,(m(t)+\rho\sqrt t)\theta),\qquad
 c_\varphi(\theta)=\kap\int_{\R^N}
              \e^{\theta\cdot y}\varphi(y)\dd y.
\]
For $a=\theta\cdot y$ and $y_\perp=y-a\theta$, completing
squares gives the exact identity
\[
 T^{-1/2}\e^{\rho\sqrt T}\e^TG_T^N(x-y)
 =\kap\sqrt T\,\e^a
 \exp\left[-\frac{(\rho\sqrt T-k\log T-a)^2
                         +|y_\perp|^2}{4T}\right].
\]
Since $|y|\le R_0$ on the support and $\rho\ge0$, it follows that
\[
 T^{-1/2}\e^{\rho\sqrt T}\varepsilon\e^T
 \int G_T^N(x-y)\varphi(y)\dd y
 \ge\varepsilon\sqrt T\,c_\varphi(\theta)
 \exp\left[-d_L-\frac{\rho^2}{4}
                     -\frac{R_0\rho}{2\sqrt T}\right],
\]
where
\[
 d_L=\frac{(k\log T+R_0)^2+R_0^2}{4T}=o(1).
\]
Combining comparison, \eqref{eq:seed-dirichlet-kernel}, and
\eqref{eq:seed-absorption-loss} therefore yields
\begin{equation}\label{eq:seed-at-T}
 W(\log T,\rho,\theta)
 \ge\e^{-\Lambda_\varepsilon(T)-d_L}
       \varepsilon L\alpha_Lc_\varphi(\theta)
       \rho\e^{-\rho^2/4-c_L\rho},
 \qquad
 c_L=\frac{\alpha_LL+R_0/2}{\sqrt T}=O(L^{-1}),
\end{equation}
uniformly for $\rho\ge0$ and $\theta\in\Sph^{N-1}$.
The prefactor $\e^{-\Lambda_\varepsilon(T)-d_L}=1-o(1)$ is
independent of $\rho$ and $\theta$.

We next propagate \eqref{eq:seed-at-T} with a multiplicative loss tending to zero. Fix
\[
 0<\sigma<1/4,\qquad 0<\nu<1-2\sigma,
\]
and, for $t\ge T$, set
\[
 \eta_t=t^{\sigma-1/2},\qquad X=\rho-\eta_t>0,
 \qquad b_G(t)=L^{-1}(t/T)^{-\nu},
\]
\[
 F(t,\rho)=X\e^{-(1/4+b_G(t))X^2}.
\]
We record the residual calculation because the
vanishing multiplicative loss is needed for the
sharp coefficient. Put
\[
 c(t)=\frac14+b_G(t),\qquad
 r=m(t)+\rho\sqrt t,
\]
and use logarithmic-time derivatives. Then
\[
 \partial_\tau b_G=-\nu b_G,\qquad
 \partial_\tau\eta_t=(\sigma-\tfrac12)\eta_t.
\]
For
\[
 F=X\e^{-c(t)X^2},\qquad X=\rho-\eta_t,
\]
direct differentiation gives
\[
 \frac{F_\rho}{F}=X^{-1}-2cX,\qquad
 \frac{F_{\rho\rho}}{F}=-6c+4c^2X^2,
\]
and hence
\begin{equation}\label{eq:seed-residual-exact}
 \frac{(\partial_\tau-L_0-p_{\rm rad}\partial_\rho)F}{F}
 =6b_G-\bigl[(1-\nu)b_G+4b_G^2\bigr]X^2
  -(\sigma\eta_t+p_{\rm rad})(X^{-1}-2cX).
\end{equation}
Here $p_{\rm rad}$ and $L_0$ are as in
\eqref{eq:coefficients}.

For sufficiently large $L$, throughout $t\ge T=L^4$
and $X>0$,
\[
 r>m(t)+t^\sigma>2t,\qquad
 -\frac{k}{\sqrt t}\le p_{\rm rad}<0,\qquad
 0<\sigma\eta_t+p_{\rm rad}\le\sigma\eta_t.
\]
The singular term
$-(\sigma\eta_t+p_{\rm rad})X^{-1}$ is therefore nonpositive.
Also $b_G\le L^{-1}$, so $c(t)$ is uniformly bounded.
Since $\nu<1$, Young's inequality gives
\[
 2c(\sigma\eta_t+p_{\rm rad})X
 \le\frac{1-\nu}{2}b_GX^2
       +C_\nu\frac{\eta_t^2}{b_G}.
\]
It follows from \eqref{eq:seed-residual-exact} that
\[
 \frac{(\partial_\tau-L_0-p_{\rm rad}\partial_\rho)F}{F}
 \le6b_G+C_\nu\frac{\eta_t^2}{b_G}.
\]

In radial and angular coordinates, the normalized
logistic equation is
\[
 \partial_\tau W
 =\left(L_0+p_{\rm rad}\partial_\rho+q_c
          +\frac{t}{r^2}\Delta_\theta\right)W
   -tU_\varepsilon W.
\]
Here $W$ is the normalized field defined above.
The positive angular function $c_\varphi$ is smooth, and
\[
 \kap\e^{-R_0}\int\varphi
 \le c_\varphi(\theta)\le\kap\e^{R_0}\int\varphi.
\]
Since
$\Delta_\theta\e^{\theta\cdot y}
=(|y|^2-(\theta\cdot y)^2-(N-1)\theta\cdot y)
\e^{\theta\cdot y}$, we also have
\[
 |\Delta_\theta c_\varphi|
 \le [R_0^2+(N-1)R_0]c_\varphi.
\]
Thus the angular error has a bound independent of both
$\theta$ and $\varepsilon$.
Moreover, $q_c\ge0$ and $t/r^2\le C/t$ in the
exterior region under consideration. A common
radial upper datum for the small-seed solutions,
together with \eqref{eq:envelope}, gives
\[
 tU_\varepsilon(t,r\theta)
 \le Ct^{1+\sigma}\e^{-t^\sigma},
 \qquad r\ge m(t)+t^\sigma,
\]
uniformly in $\varepsilon$.

Consequently, define the trial profile
\[
 \Psi_L(t,\rho,\theta)=c_\varphi(\theta)F(t,\rho).
\]
Its full residual satisfies
\[
 \frac{1}{\Psi_L}
 \left[
 \partial_\tau-L_0-p_{\rm rad}\partial_\rho-q_c
 -\frac{t}{r^2}\Delta_\theta+tU_\varepsilon
 \right]\Psi_L
 \le\mathcal R_L(t),
\]
where
\[
 \mathcal R_L(t)
 =6b_G+C_\nu\frac{\eta_t^2}{b_G}
       +\frac Ct+Ct^{1+\sigma}\e^{-t^\sigma}.
\]
All constants are independent of $\varepsilon$ and
$t\ge T$, with $\sigma,\nu,\varphi$ fixed.

The loss is integrable in logarithmic time.
Indeed,
\[
 \int_T^\infty b_G(t)\frac{\dd t}{t}
 =\frac1{\nu L},
\]
and, since $1-2\sigma-\nu>0$,
\[
 \int_T^\infty
 \frac{\eta_t^2}{b_G(t)}\frac{\dd t}{t}
 =\frac{LT^{2\sigma-1}}{1-2\sigma-\nu}
 =\frac{L^{8\sigma-3}}{1-2\sigma-\nu}.
\]
Therefore
\begin{equation}\label{eq:seed-loss}
 \int_T^\infty\mathcal R_L(t)\frac{\dd t}{t}
 \le
 \frac6{\nu L}
 +\frac{C_\nu L^{8\sigma-3}}{1-2\sigma-\nu}
 +\frac CT
 +C\int_T^\infty t^\sigma\e^{-t^\sigma}\dd t
 =o(1).
\end{equation}
Since $\sigma<1/4$, we have $8\sigma-3<0$.
Thus the correction factor
\[
 \exp\left[
 -\int_T^t\mathcal R_L(s)\frac{\dd s}{s}
 \right]
 =1-o(1)
\]
uniformly for $t\ge T$ as $\varepsilon\downarrow0$.
Multiplying $\Psi_L$ by this factor makes its
full residual nonpositive.

It remains to fit the initial data.
At $t=T$, write $\rho=X+\eta_T$.
The ratio of
$\rho\e^{-\rho^2/4-c_L\rho}$ to $F(T,\rho)$
is bounded below, uniformly for $X>0$, by
\[
 \exp\left[
 -\frac{(\eta_T/2+c_L)^2}{4b_G(T)}
 -\frac{\eta_T^2}{4}-c_L\eta_T
 \right]=1-o(1).
\]
Indeed, $b_G(T)=L^{-1}$, $c_L=O(L^{-1})$, and
$\eta_T=L^{4\sigma-2}$ imply
\[
 \frac{(\eta_T/2+c_L)^2}{b_G(T)}
 =O(L^{8\sigma-3}+L^{-1})=o(1),
 \qquad \eta_T^2+c_L\eta_T=o(1).
\]
Thus the full exponential loss in the Dirichlet-kernel lower
bound is compatible with the same initial fitting and does
not change the limiting coefficient.
To track all losses explicitly, put
\[
 D_L=\frac{(\eta_T/2+c_L)^2}{4b_G(T)}
             +\frac{\eta_T^2}{4}+c_L\eta_T,
 \qquad
 E_L=\Lambda_\varepsilon(T)+d_L+D_L
                 +\int_T^\infty\mathcal R_L(s)\frac{\dd s}{s}.
\]
For fixed $\zeta,\sigma,\nu,\varphi$, every term is nonnegative
and $E_L=o(1)$, independently of the direction. Define
\[
 \underline W(t,\rho,\theta)
 =\varepsilon L\alpha_L
 \exp\left[-\Lambda_\varepsilon(T)-d_L-D_L
           -\int_T^t\mathcal R_L(s)\frac{\dd s}{s}\right]
 c_\varphi(\theta)F(t,\rho).
\]
The initial fitting above gives
$\underline W(T,\rho,\theta)\le W(\log T,\rho,\theta)$
for every $\rho>\eta_T$ and every direction. The residual is
nonpositive, and $\underline W=0$ at $\rho=\eta_t$.

For completeness, comparison on the unbounded moving exterior
can be justified in physical coordinates. Set
\[
 \underline u(t,r\theta)=\sqrt t\,\e^{-\rho\sqrt t}
                  \underline W(t,\rho,\theta),\qquad
 \rho=\frac{r-m(t)}{\sqrt t},\quad r>m(t)+t^\sigma.
\]
The normalized equation implies
\[
 (\partial_t-\Delta-1+U_\varepsilon)\underline u\le0,
 \qquad
 (\partial_t-\Delta-1+U_\varepsilon)U_\varepsilon=0.
\]
Here $U_\varepsilon$ is a fixed bounded coefficient; this is
linear comparison, not a claim that $\underline u$ solves
the nonlinear logistic equation. On each finite interval
$[T,T_1]$, $\underline u$ tends to zero as $r\to\infty$
uniformly in time and direction, by its Gaussian formula.
For $\delta>0$, the function
$U_\varepsilon+\delta\e^{t-T}$ is a supersolution of the same
linear equation, since its added term has residual
$\delta\e^{t-T}U_\varepsilon\ge0$. For a sufficiently large
outer radius it dominates $\underline u$ there; it also
dominates the initial and moving inner-boundary data.
Comparison on the truncated moving annulus, followed by
letting the outer radius tend to infinity and
$\delta\downarrow0$, gives $\underline u\le U_\varepsilon$.
As $T_1$ is arbitrary, this comparison holds for all $t\ge T$.

Now fix $\varepsilon$ and any $\rho>0$, and let $t\to\infty$.
Then $\eta_t,b_G(t)\to0$ and
$F(t,\rho)\to\rho\e^{-\rho^2/4}$. The homogeneous version of
\eqref{eq:cone-profile}, applied to the fixed datum
$\varepsilon\varphi$, identifies the limit of $W$ as
$a_{\varepsilon,\varphi}(\theta)\rho\e^{-\rho^2/4}$.
Dividing the comparison by this positive Gaussian profile gives
\begin{equation}\label{eq:seed-coefficient-loss}
 a_{\varepsilon,\varphi}(\theta)
 \ge\varepsilon L\alpha_L\e^{-E_L}c_\varphi(\theta)
 \qquad(\theta\in\Sph^{N-1}).
\end{equation}
The right-hand error is independent of $\theta$; no convergence
rate uniform in $\varepsilon$ is needed, because the large-time
limit has already been taken for each fixed $\varepsilon$.
Next let $\varepsilon\downarrow0$ with $\zeta$ fixed. Since
$\alpha_L\to1-\zeta$ and $E_L\to0$, the liminf in
\eqref{eq:small-seed} is at least
$-\zeta\sup_\theta c_\varphi(\theta)$. Finally let
$\zeta\downarrow0$. This proves \eqref{eq:small-seed} with the
exact coefficient $c_\varphi$, without requiring any of the
error estimates to be uniform as $\zeta\downarrow0$.
\end{proof}

\begin{lemma}[Small seeds for a general KPP reaction]
\label{lem:small-seed}
The estimate \eqref{eq:small-seed} also holds for the homogeneous equation with reaction $f$ satisfying \eqref{ass:K}.
\end{lemma}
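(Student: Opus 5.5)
The plan is to reduce the general reaction to the logistic case by comparison after a rescaling of the density. By \eqref{eq:basic-g}, $f(s)\ge s-C_fs^2$ on $[0,1]$. Set $\lambda=\max\{C_f,1\}$. If $U$ solves the homogeneous equation with reaction $f$ and datum $\varepsilon\varphi$, then $\widetilde U=\lambda U$ satisfies
\[
 \widetilde U_t=\Delta\widetilde U+\lambda f(\widetilde U/\lambda)
 \ge\Delta\widetilde U+\widetilde U-(C_f/\lambda)\widetilde U^2
 \ge\Delta\widetilde U+f_{\log}(\widetilde U)
\]
wherever $0\le\widetilde U\le1$. Where $\widetilde U>1$ we have $f_{\log}(\widetilde U)<0\le\lambda f(U)$, so the inequality holds everywhere. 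Hence $\widetilde U$ is a supersolution of the logistic problem, whose solution stays in $[0,1]$ after the logistic nonlinearity is extended as in Section~\ref{sec:prelim}. The datum $\lambda\varepsilon\varphi$ is at most one for small $\varepsilon$, so comparison gives $\lambda U\ge U^{\log}_{\lambda\varepsilon}$. Here $U^{\log}_{\lambda\varepsilon}$ is the logistic solution with seed $\lambda\varepsilon\varphi$.

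The key step is to pass this pointwise inequality to the amplitudes. Both solutions have leading-edge limits of the form \eqref{eq:cone-profile}, in the homogeneous version used at the end of the proof of Lemma~\ref{lem:logistic-seed}. That identification used only $g\ge0$ and the radial envelope, so it is valid for $f$ under \eqref{ass:K}. The normalized fields are linear in the density, so
\[
 \lambda\,a^f_{\varepsilon,\varphi}(\theta)\ge a^{\log}_{\lambda\varepsilon,\varphi}(\theta)
 \qquad\text{for every }\theta.
\]
Lemma~\ref{lem:logistic-seed} with $\lambda\varepsilon$ in place of $\varepsilon$ gives
\[
 a^{\log}_{\lambda\varepsilon,\varphi}\ge\lambda\varepsilon\log(1/(\lambda\varepsilon))\,(c_\varphi-o(1)),
\]
uniformly in $\theta$. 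Dividing by $\lambda$ and using $\log(1/(\lambda\varepsilon))/\log(1/\varepsilon)\to1$ yields \eqref{eq:small-seed} for $f$. The factor $\lambda$ cancels exactly; only the logarithm changes, and that change is negligible.

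The main obstacle is the justification rather than the algebra. The supersolution inequality must hold on the whole range of $\widetilde U$, which may exceed one; the sign argument above handles this. One must also confirm that the amplitude, defined through the large-time leading-edge limit, is monotone under pointwise ordering of solutions. This monotonicity holds because the limit \eqref{eq:cone-profile} is taken pointwise in $\rho>0$ for each fixed $\varepsilon$, so no uniformity in $\varepsilon$ is needed.

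If the rescaling argument is awkward near saturation, a fallback is to rerun the proof of Lemma~\ref{lem:logistic-seed} with $g$ in place of $s^2$. The Dirichlet lower solution used only the bound $g(U)\le C_fU^2\le C_fq_\varepsilon U$. This changes $\Lambda_\varepsilon$ by the factor $C_f$ and leaves \eqref{eq:seed-absorption-loss} at $o(1)$. The propagation step used only the envelope bound on $tU_\varepsilon$, which again holds with a factor $C_f$.
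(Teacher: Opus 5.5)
Your argument is correct and is essentially the paper's proof. The paper compares $U$ with the solution $w_\varepsilon$ of $w_t=\Delta w+w-C_fw^2$ and uses that $C_fw_\varepsilon$ is the logistic solution with seed $C_f\varepsilon\varphi$; this is the same cancellation of the factor $\lambda$ that you obtain by comparing $\lambda U$ with that logistic solution directly, and the transfer to amplitudes and the replacement of $\log(1/(\lambda\varepsilon))$ by $\log(1/\varepsilon)$ proceed exactly as in the paper, so your fallback is not needed.
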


\begin{proof}
The scaling cancellation used in \cite[Section 7.2]{Wang} applies in every dimension. Choose $C_f\ge1$ such that $f(u)\ge u-C_fu^2$ on $[0,1]$, and let
\[
 (w_\varepsilon)_t
 =\Delta w_\varepsilon+w_\varepsilon-C_fw_\varepsilon^2,
 \qquad
 w_\varepsilon(0)=\varepsilon\varphi,
 \qquad
 \varepsilon\|\varphi\|_\infty\le1/C_f.
\]
If $U_{\varepsilon,f}$ is the homogeneous $f$-solution with the same initial density, comparison gives $w_\varepsilon\le U_{\varepsilon,f}$.

The function $C_fw_\varepsilon$ solves the standard logistic equation with initial density $C_f\varepsilon\varphi$. Lemma~\ref{lem:logistic-seed} therefore yields
\begin{align}\label{eq:cancel}
 a[w_\varepsilon](\theta)
 &=C_f^{-1}a_{\log,C_f\varepsilon\varphi}(\theta)
 \notag\\
 &\ge(1-o(1))\varepsilon\log(1/\varepsilon)c_N
       \int_{\R^N}\e^{\theta\cdot x}\varphi(x)\dd x.
\end{align}
Here $a[w_\varepsilon]$ denotes the diffusive leading-edge coefficient. The factor $C_f$ cancels, and replacing $\log(1/(C_f\varepsilon))$ by $\log(1/\varepsilon)$ changes the bound only by a relative $o(1)$.

The homogeneous $f$-amplitude exists by the Cartesian argument of Section~\ref{sec:Cartesian}. Comparison in the diffusive leading edge transfers \eqref{eq:cancel} to that amplitude, uniformly in direction.
\end{proof}

\begin{proposition}[Lower coefficient]\label{prop:lower}
On each fixed receiving cone,
\[
 \liminf_{\varepsilon\downarrow0}
 \inf_{\theta\in\Gamma_\delta}
 \left[
 \frac{a_{1/\varepsilon}(\theta)}{\varepsilon L}
 -2\kap I_N(\theta)
 \right]\ge0.
\]
\end{proposition}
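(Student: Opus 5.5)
Fix $0<\zeta<1$ and a cutoff time $T_0$; at the end we let $T_0\to\infty$ and $\zeta\downarrow0$. The idea is to freeze the receiving density at time $T_0$, use it as a small seed, and apply Lemma~\ref{lem:small-seed} to the homogeneous evolution started from it.

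\textbf{Step 1 (a whole-space subsolution from $T_0$ on).} On $H$ the receiving density $q_\beta$ satisfies $\partial_n q_\beta=-\varepsilon s_\varepsilon\le0$. Hence its even extension $Q_\varepsilon$ is a supersolution of the homogeneous $f$-equation on $\R^N$: by \eqref{eq:source}, the interface source $2\varepsilon s_\varepsilon\delta_\Sigma$ is nonnegative. Comparison after time $T_0$ gives $Q_\varepsilon(t)\ge U(t-T_0)$, where $U$ is the homogeneous solution with datum $Q_\varepsilon(T_0,\cdot)$. On the receiving side $Q_\varepsilon=q_\beta$, so amplitudes compare:
\[
 a_\beta(\theta)\ge \e^{-2T_0}\,a[U](\theta).
\]
The factor $\e^{-2T_0}$ comes from the time shift, since $m(t)-m(t-T_0)\to2T_0$, exactly as in the insertion-time computation of Section~\ref{sec:Cartesian}.

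\textbf{Step 2 (linearize the seed).} By Proposition~\ref{prop:finite}, $Q_\varepsilon(T_0)=\varepsilon\bigl(V_{T_0}+O_{T_0}(\varepsilon)\bigr)$, where $V_{T_0}$ is the even extension of $v(T_0)$. Since $v(T_0)>0$ is not compactly supported, choose a nonzero $\varphi\in C_c^\infty$ with
\[
 \varphi\le (1-\zeta)V_{T_0}\ \text{ on a large ball } B_R,\qquad
 \int \e^{\theta\cdot y}\varphi\ge(1-2\zeta)\int_{\R^N}\e^{\theta\cdot y}V_{T_0}(y)\dd y
\]
uniformly on $\Gamma_\delta$. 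This is possible by the Gaussian bound \eqref{eq:v-bound}. Also take $\varphi$ small enough that the $O(\varepsilon)$ error is absorbed on $B_R$, where $V_{T_0}$ is bounded below. Then $Q_\varepsilon(T_0)\ge\varepsilon\varphi$. Lemma~\ref{lem:small-seed} yields
\[
 a[U](\theta)\ge(1-o(1))\,\varepsilon L\,\kap\int\e^{\theta\cdot y}\varphi(y)\dd y
\]
uniformly in direction. Note that $\log(1/\varepsilon)$ and $L$ agree here.

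\textbf{Step 3 (identify the weighted seed mass).} We need
\[
 \e^{-2T_0}\int_{\R^N}\e^{\theta\cdot y}V_{T_0}(y)\dd y\longrightarrow 2I_N(\theta)
\]
as $T_0\to\infty$, uniformly on $\Gamma_\delta$. Test \eqref{eq:linear-input}, extended evenly as $V_t=\Delta V+V+2p_\infty|_\Sigma\delta_\Sigma$, against the adjoint weight $\e^{-2t}\e^{\theta\cdot x}$, which solves $(\partial_t+\Delta+1)\psi=0$. This gives the exact identity
\[
 \e^{-2T_0}\int\e^{\theta\cdot y}V_{T_0}
 =2\int_0^{T_0}\e^{-2t}\int_\Sigma\e^{\theta'\cdot x'}p_\infty(t,x',0)\dd x'\dd t .
\]
The right side increases to $2I_N(\theta)$. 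The tail is bounded by $C\e^{-\delta^2T_0}$ via the bound used in Proposition~\ref{prop:finite}, which gives uniformity.

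Combining the three steps,
\[
 \liminf_{\varepsilon\downarrow0}\inf_{\theta\in\Gamma_\delta}\left[\frac{a_\beta}{\varepsilon L}-(1-2\zeta)\,2\kap\Bigl(I_N-C\e^{-\delta^2T_0}\Bigr)\right]\ge0 .
\]
Since $I_N$ is bounded on $\Gamma_\delta$, letting $T_0\to\infty$ and then $\zeta\downarrow0$ gives the claim.

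\textbf{Main obstacle.} The delicate step is Step 2: the seed $Q_\varepsilon(T_0)$ is not compactly supported, and Lemma~\ref{lem:small-seed} controls only the $\varepsilon$-dependence for a fixed $\varphi$. The truncation has to be done with $T_0,\zeta$ fixed before $\varepsilon\to0$. The $O_{T_0}(\varepsilon)$ error of Proposition~\ref{prop:finite} must be absorbed multiplicatively on the compact support, which uses strict positivity of $v(T_0)$ on $B_R$, including at $y=0$. Because the amplitude is monotone in the datum, discarding mass outside $B_R$ only lowers the bound, so no tail estimate on $U$ itself is needed.
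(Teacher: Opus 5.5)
Your argument is correct and is essentially the paper's proof. Both freeze at a fixed time ($T_0$ for you, $S$ in the paper) and use Proposition~\ref{prop:finite} with a multiplicative margin $\zeta$ on a compact support to obtain a seed $\varepsilon\varphi$. Both then pay the insertion factor $\e^{-2T_0}$, apply Lemma~\ref{lem:small-seed}, and identify $\e^{-2T_0}\int\e^{\theta\cdot y}V_{T_0}$ through the critical mass identity for the evenly extended linear response.

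The only real variation is the comparison step. You use the even receiving extension $Q_\varepsilon$ as a whole-space supersolution via \eqref{eq:source}, so $\varphi$ need not be even. The paper instead takes $\varphi$ even and uses that the resulting homogeneous solution satisfies the transmission conditions.

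One sign slip, which does not affect your conclusion: with the outward normal $n=-e_N$, the correct relation is $\partial_yq_\beta=-\varepsilon s_\varepsilon$, that is, $\partial_nq_\beta=+\varepsilon s_\varepsilon\ge0$.
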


\begin{proof}
Fix $S>0$ and set
\[
 v_S(x)=v(S,x',|x_N|).
\]
For $0<\zeta<1$, choose a nonnegative smooth compactly supported function $\varphi$, even in $x_N$, such that $\varphi\le(1-\zeta)v_S$. The strict margin on its compact support and \eqref{eq:finite-linear} imply that, for sufficiently small $\varepsilon$, both folded components at time $S$ dominate $\varepsilon\varphi$.

The homogeneous solution from this even initial density has equal interface traces and zero normal derivative. It therefore satisfies every interface condition. Comparison gives
\[
 a_{1/\varepsilon}(\theta)
 \ge\e^{-2S}a_{\varepsilon,\varphi}(\theta).
\]
The factor $\e^{-2S}$ follows from $m(t)-m(t-S)\to2S$ and the normalized leading-edge asymptotics.

The even extension of \eqref{eq:linear-input} has source $2p_\infty\delta_\Sigma$. Its critical mass identity gives
\[
 \e^{-2S}\int_{\R^N}\e^{\theta\cdot x}v_S(x)\dd x
 =2\int_0^S\e^{-2t}\int_\Sigma
       \e^{\theta'\cdot x'}p_\infty(t,x',0)\dd x'\dd t.
\]
Apply Lemma~\ref{lem:small-seed}, approximate $(1-\zeta)v_S$ from below by such functions $\varphi$, and then let $\zeta\downarrow0$ and $S\to\infty$. The fixed-time Gaussian tails and the exponential input tail on $\Gamma_\delta$ make these passages uniform in direction.
\end{proof}

\subsection{Exhausting the critical mass}
We now show that almost all critical mass is absorbed before a small power of $\varepsilon^{-1}$.

\begin{lemma}[Early absorption]\label{lem:early}
Fix $\gamma_0>0$ and $0<\eta_0<\gamma_0/[2(N+2)]$. If $T=\varepsilon^{-\eta_0}$, then, uniformly on $\Gamma_\delta$,
\begin{equation}\label{eq:early}
 M_\varepsilon(T)=o(\varepsilon),\qquad
 \int_T^\infty B_\varepsilon(t)\dd t=o(\varepsilon).
\end{equation}
\end{lemma}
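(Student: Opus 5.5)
The plan is to reduce both statements in \eqref{eq:early} to the single estimate $M_\varepsilon(T)=o(\varepsilon)$. Integrating the mass balance \eqref{eq:MB} over $[T,\infty)$ and using $M_\varepsilon(t)\to0$ gives
\[
 \int_T^\infty B_\varepsilon\dd t=M_\varepsilon(T)+2\varepsilon\int_T^\infty j_\varepsilon\dd t .
\]
Since $0\le s_\varepsilon\le p_\infty$, the input tail on $\Gamma_\delta$ is bounded, as in the proof of Proposition~\ref{prop:finite}, by $C\int_T^\infty t^{-1/2}\e^{-\delta^2t}\dd t$. This is superpolynomially small in $\varepsilon$, so the second assertion follows from the first.

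To bound $M_\varepsilon(T)$ uniformly on $\Gamma_\delta$, I would use a moving Dirichlet half-space comparison, as in the proof of Lemma~\ref{lem:logistic-seed}, but now from above.
\begin{itemize}
\item Fix $\theta\in\Gamma_\delta$ and set $A'=(1+\gamma_0)L$ with $L=\log(1/\varepsilon)$.
\item Introduce $D_t=\{\theta\cdot x>2t-A'\}$.
\item On $D_t$, use $g\ge0$ and the source form \eqref{eq:source} to dominate $Q_\varepsilon$ by the solution $\bar v$ of the linear problem
\[
 \bar v_t=\Delta\bar v+\bar v+2\varepsilon p_\infty(t,x')\delta_\Sigma\quad\text{in }D_t,
 \qquad \bar v=1\text{ on }\partial D_t,\qquad \bar v(0)=0 .
\]
\end{itemize}
In the frame $h=\theta\cdot x-2t+A'$, the substitution $\bar v=\e^{-h}Z$ reduces this to a Dirichlet heat problem on $h>0$. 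The tilted mass $\e^{-2t}\int_{D_t}\e^{\theta\cdot x}\bar v$ then splits into a source part and a boundary-data part. The method-of-images factor $1-\e^{-hh_0/t}\le hh_0/t$ gives the source part the bound
\[
 C\varepsilon\,t^{-1/2}\,(A'+\log t)\int_0^t\!\!\int_\Sigma(1+|x'|)\e^{\theta'\cdot x'}\e^{-2s}p_\infty\dd x'\dd s .
\]
On $\Gamma_\delta$ the last integral is finite by the exponential input bound, so at $t=T$ this is $O(\varepsilon L\,T^{-1/2}\log T)=o(\varepsilon)$, because $T=\varepsilon^{-\eta_0}$ is a power of $\varepsilon^{-1}$. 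The boundary data equal $1$ on $h=0$, so in the tilted frame this part is at most $\e^{-A'}$ times a polynomial in $t$. This gives $O(\varepsilon^{1+\gamma_0}T^{C})$.

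The complementary region $\theta\cdot x\le2T-A'$ is handled with $Q_\varepsilon\le1$ together with the common radial envelope \eqref{eq:envelope}. The envelope restricts the support essentially to $|x|\le m(T)+O(T^\sigma)$, and integrating $\e^{\theta\cdot x-2T}$ over this region gives at most $\e^{-A'}$ times a polynomial in $T$ of degree at most $N+2$. The same applies to the boundary-data term above after tracking its polynomial factor. With $T^{N+2}=\varepsilon^{-\eta_0(N+2)}\le\varepsilon^{-\gamma_0/2}$, both contributions are $O(\varepsilon^{1+\gamma_0/2})=o(\varepsilon)$. This is where the hypothesis $\eta_0<\gamma_0/[2(N+2)]$ is used.

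The main obstacle is making the polynomial bookkeeping rigorous: one has to check that the degree of the polynomial factors, from the radial measure, the transverse Gaussian integration and the centering by $m(t)$, is at most $N+2$. One also has to justify the comparison at the interface, since $\partial D_t$ cuts across $\Sigma$. For the latter I would work with the even extension $Q_\varepsilon$ on $\R^N$, where \eqref{eq:source} holds distributionally with a nonnegative measure source, and justify the comparison on the unbounded moving domain with the $\delta\e^{Ct}(1+|x|^2)$ device used in the proof of Lemma~\ref{lem:logistic-seed}. All constants depend only on $\delta$, through the exponential input tail, and are therefore uniform on $\Gamma_\delta$.
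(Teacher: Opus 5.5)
Your route matches the paper's in all essentials. You place a Dirichlet wall a distance $(1+\gamma_0)L$ behind the critical hyperplane $\theta\cdot x=2t$ and use linear comparison after discarding $g\ge0$. The source term is controlled by the half-line kernel mass, and the wall and behind-wall terms are of size $\varepsilon^{1+\gamma_0}$ times a polynomial. The second assertion then follows from \eqref{eq:MB}, as in the paper. The paper simply integrates over $\theta^\perp$ first, which reduces everything to the one-dimensional problem \eqref{eq:projected-heat} on $z>-A$.

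\textbf{The wall datum fails as written.} The step that breaks is $\bar v=1$ on $\partial D_t$. In your tilted variables $\e^{-2t}\e^{\theta\cdot x}\bar v=\e^{-A'}Z$. The boundary-data part of $Z$ is $\mathrm{erfc}(h/(2\sqrt t))$, independent of $x_\perp\in\R^{N-1}$. Its tilted mass is therefore $+\infty$, not $\e^{-A'}$ times a polynomial. Invoking the radial envelope afterwards does not help, because $\bar v$ is a fixed linear solution that never sees it.

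The fix is to prescribe on $\partial D_t$ the actual trace of $Q_\varepsilon$, or any majorant of it built from $Q_\varepsilon\le1$ and \eqref{eq:tilted-gaussian}; the comparison is unaffected. Bound the transverse integral of this trace by $Q_\varepsilon\le1$ on $|x-2t\theta|\le C_{P_0}(t+L)$ and by \eqref{eq:tilted-gaussian} outside. This gives exactly the paper's wall bound \eqref{eq:wall}. Its Poisson contribution at time $T$ is $\int_0^T\overline b_A(s)(\pi(T-s))^{-1/2}\dd s\le C[\varepsilon^{1+\gamma_0}\sqrt T(T+L)^{N-1}+\e^{-P_0L}\sqrt T]$.

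\textbf{A smaller correction in the source term.} A single factor $t^{-1/2}$ is not valid for source times $s$ near $t$. The Dirichlet kernel mass from a source at distance $d>0$ from the wall is $\mathrm{erf}(d/(2\sqrt{t-s}))\le\min\{1,d/\sqrt{\pi(t-s)}\}$. Split the source times at $T/2$:
\begin{itemize}
\item Early sources contribute $C\varepsilon(A'+1)/\sqrt T$, through the weight $(A'+\theta'\cdot x'-2s)_+$. This weight is controlled by $J_\varepsilon$ and the absolute input moment, and no $\log t$ factor arises.
\item Late sources are bounded by the input $2\varepsilon\int_{T/2}^T j_\varepsilon\dd s$, which is exponentially small on $\Gamma_\delta$.
\end{itemize}
This is exactly \eqref{eq:projected-source-bounds}. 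With these two repairs you recover \eqref{eq:M-T}, and the restriction on $\eta_0$ closes the argument as you describe.
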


\begin{proof}
A common compactly supported radial datum $\psi$ and linear heat comparison give
\begin{equation}\label{eq:tilted-gaussian}
 V_\varepsilon(t,x):=
 \e^{-2t+\theta\cdot x}Q_\varepsilon(t,x)
 \le\int_{\R^N}\e^{\theta\cdot a}
       G_t^N(x-a-2t\theta)\psi(a)\dd a.
\end{equation}
For every $P_0>0$, choose $C_{P_0}$ sufficiently large. Outside
\[
 |x-2t\theta|\le C_{P_0}(t+L),
\]
the integral of $(1+|2t-\theta\cdot x|)V_\varepsilon$ is bounded by $C_{P_0}\e^{-P_0L}\e^{-ct}$. Completing the square on a hyperplane gives the corresponding trace estimate.

Fix $\theta\in\Gamma_\delta$ and project onto its
longitudinal coordinate. Write
$Z_\perp\in\theta^\perp$ for the physical transverse
integration variable, with $\dd Z_\perp$ denoting
Lebesgue measure on this subspace. Define
\begin{align*}
 \mathfrak f(t,z)
 &=\e^z\int_{\theta^\perp}
 Q_\varepsilon\bigl(t,(2t+z)\theta+Z_\perp\bigr)
 \dd Z_\perp,\\
 \mathfrak b(t,z)
 &=\e^z\int_{\theta^\perp}
 g\!\left(
 Q_\varepsilon\bigl(t,(2t+z)\theta+Z_\perp\bigr)
 \right)\dd Z_\perp\ge0.
\end{align*}
Then
\[
 \int_\R\mathfrak f(t,z)\dd z=M_\varepsilon(t,\theta),
 \qquad
 \int_\R\mathfrak b(t,z)\dd z=B_\varepsilon(t,\theta).
\]
Define the positive source measure by
\begin{equation}\label{eq:projected-source-measure}
 \int_\R\chi(z)\,
       \mathcal J_{\varepsilon,\theta}(t,\dd z)
 =2\varepsilon\e^{-2t}\int_\Sigma
 \e^{\theta'\cdot x'}s_\varepsilon(t,x')
 \chi(\theta'\cdot x'-2t)\dd x'
\end{equation}
for every bounded Borel function $\chi$. Projecting \eqref{eq:source} yields
\begin{equation}\label{eq:projected-heat}
 (\partial_t-\partial_{zz})\mathfrak f
 =-\mathfrak b+\mathcal J_{\varepsilon,\theta},
 \qquad \mathfrak f(0,\cdot)=0,
\end{equation}
in space--time distributions, with source $\dd t\,\mathcal J_{\varepsilon,\theta}(t,\dd z)$. The critical weight and the shift $\theta\cdot x=2t+z$ cancel linear growth and longitudinal drift; integration over $\theta^\perp$ removes transverse diffusion. Gaussian truncation justifies these operations.

No density of the source in $z$ is required. In particular, for the normal direction,
\begin{equation}\label{eq:normal-projected-source}
 \mathcal J_{\varepsilon,e_N}(t,\dd z)
 =2\varepsilon j_\varepsilon(t,e_N)\,
       \delta_{-2t}(\dd z).
\end{equation}

We also need the absolute input moment
\[
 j_{\mathrm{abs},\varepsilon}(t,\theta)
 =\e^{-2t}\int_\Sigma
 |\theta'\cdot x'-2t|\e^{\theta'\cdot x'}
 s_\varepsilon(t,x')\dd x'.
\]
Let $d_0>0$ be the distance of the initial support from $\Sigma$. The linear Neumann bound for $p_\infty$ and $s_\varepsilon\le p_\infty$ imply
\begin{equation}\label{eq:absolute-input-bound}
 j_\varepsilon(t,\theta)
 +j_{\mathrm{abs},\varepsilon}(t,\theta)
 \le C(1+t)t^{-1/2}
       \exp[-\delta^2t-d_0^2/(4t)],
 \qquad t>0.
\end{equation}
Indeed, the tilted tangential Gaussian has mean $a'+2t\theta'$ for $a$ in the initial support. The expected absolute centered factor is therefore at most $C(1+t+\sqrt t)\le C'(1+t)$.

It follows that, for every $A\ge0$,
\begin{align}\label{eq:projected-source-bounds}
 \mathcal J_{\varepsilon,\theta}(t,\R)
 &=2\varepsilon j_\varepsilon(t,\theta),\notag\\
 \int_0^\infty\int_\R(z+A)_+\,
       \mathcal J_{\varepsilon,\theta}(t,\dd z)\dd t
 &\le2\varepsilon\left[
 A J_\varepsilon(\theta)
 +\int_0^\infty
       j_{\mathrm{abs},\varepsilon}(t,\theta)\dd t
 \right]\notag\\
 &\le C_\delta\varepsilon(A+1),\notag\\
 \int_S^\infty
       \mathcal J_{\varepsilon,\theta}(t,\R)\dd t
 &\le C_\delta\varepsilon\e^{-c_\delta S},
 \qquad S\ge1.
\end{align}
The total space--time source mass is $2\varepsilon J_\varepsilon=O_\delta(\varepsilon)$. These estimates involve no division by $|\theta'|$ and are therefore uniform up to the normal direction.

Set $A=(1+\gamma_0)L$ and work on $z>-A$. Let $b_A(t)=\mathfrak f(t,-A)$ be the actual wall trace. Using $Q_\varepsilon\le1$ inside the large ball and \eqref{eq:tilted-gaussian} outside it gives
\begin{equation}\label{eq:wall}
 b_A(t)\le\overline b_A(t):=
 C\varepsilon^{1+\gamma_0}(t+L)^{N-1}
 +C\e^{-P_0L}\e^{-ct}.
\end{equation}
The Dirichlet heat kernel and boundary Poisson kernel on this half-line are
\begin{align*}
 D_A(r;z,\zeta)
 &=G_r^1(z-\zeta)-G_r^1(z+\zeta+2A)\ge0,\\
 P_A(r,z)&=\frac{z+A}{r}G_r^1(z+A)\ge0.
\end{align*}
Discarding the nonpositive absorption term in \eqref{eq:projected-heat} gives
\begin{align}\label{eq:measure-halfline-comparison}
 \mathfrak f(t,z)\le{}&
 \int_0^t\int_{(-A,\infty)}
 D_A(t-s;z,\zeta)
 \mathcal J_{\varepsilon,\theta}(s,\dd\zeta)\dd s
 \notag\\
 &+\int_0^tP_A(t-s,z)\overline b_A(s)\dd s,
 \qquad z>-A.
\end{align}

For completeness, this comparison follows by testing against nonnegative backward Dirichlet heat solutions. The boundary term is the Poisson contribution of the actual trace $b_A$, which is bounded by \eqref{eq:wall}. One may first use positive initial times and finite spatial cutoffs. The Gaussian bound removes the outer cutoff, while \eqref{eq:MB} gives
\[
 0\le M_\varepsilon(s,\theta)
 \le2\varepsilon\int_0^s j_\varepsilon(r,\theta)\dd r
 \longrightarrow0
 \qquad(s\downarrow0).
\]
Thus the initial term vanishes. Approximation of nonnegative terminal test functions then gives \eqref{eq:measure-halfline-comparison}, with the kernel integrated directly against the source measure.

Only the source restricted to $(-A,\infty)$ appears in the interior integral. Source at or to the left of the wall affects the half-line through its actual trace, already controlled by \eqref{eq:wall}. For example, the atom in \eqref{eq:normal-projected-source} lies inside the half-line for $s<A/2$ and outside it for $s>A/2$. Its crossing time carries no atom in the time measure.

For $d=\zeta+A>0$, direct integration gives
\begin{align}\label{eq:halfline-kernel-masses}
 \int_{-A}^\infty D_A(r;z,\zeta)\dd z
 &=\frac2{\sqrt\pi}
   \int_0^{d/(2\sqrt r)}\e^{-v^2}\dd v
 \le\min\left\{1,\frac d{\sqrt{\pi r}}\right\},
 \notag\\
 \int_{-A}^\infty P_A(r,z)\dd z
 &=\frac1{\sqrt{\pi r}}.
\end{align}
Tonelli's theorem applies to these nonnegative integrals, including atomic sources.

Split the source times at $T/2$. By \eqref{eq:projected-source-bounds} and \eqref{eq:halfline-kernel-masses}, the source contribution to the mass at time $T$ is at most
\begin{align*}
 &\frac C{\sqrt T}
 \int_0^{T/2}\int_\R(\zeta+A)_+\,
       \mathcal J_{\varepsilon,\theta}(s,\dd\zeta)\dd s
 +\int_{T/2}^T
       \mathcal J_{\varepsilon,\theta}(s,\R)\dd s\\
 &\qquad\le C_\delta\left[
 \frac{\varepsilon(A+1)}{\sqrt T}
 +\varepsilon\e^{-c_\delta T}\right].
\end{align*}
The wall contribution is bounded by
\[
 \int_0^T
 \frac{\overline b_A(s)}{\sqrt{\pi(T-s)}}\dd s
 \le C\left[
 \varepsilon^{1+\gamma_0}\sqrt T(T+L)^{N-1}
 +\e^{-P_0L}\sqrt T\right].
\]

For the mass with $z\le-A$, use $\e^zQ_\varepsilon\le\e^{-A}$ inside the large ball and \eqref{eq:tilted-gaussian} outside it. This gives
\[
 C\varepsilon^{1+\gamma_0}(T+L)^N+C\e^{-P_0L}.
\]
Adding the three contributions and using $A+1=O(L)$, we obtain
\begin{align}\label{eq:M-T}
 M_\varepsilon(T)\le C\bigl[
 &\varepsilon L/\sqrt T+\varepsilon\e^{-cT}
 +\varepsilon^{1+\gamma_0}(T+L)^N\notag\\
 &+\varepsilon^{1+\gamma_0}\sqrt T(T+L)^{N-1}
 +\e^{-P_0L}(1+\sqrt T)\bigr].
\end{align}
With $T=\varepsilon^{-\eta_0}$, the restriction on $\eta_0$ makes the two volume terms $o(\varepsilon)$. The first two terms also have this property, and choosing $P_0>1+\eta_0/2$ controls the last term.

Finally, integrating \eqref{eq:MB} from $T$ to infinity and using $M_\varepsilon(\infty)=0$ gives
\[
 \int_T^\infty B_\varepsilon(t)\dd t
 =M_\varepsilon(T)
  +2\varepsilon\int_T^\infty j_\varepsilon(t)\dd t
 =o(\varepsilon).
\]
This proves both assertions in \eqref{eq:early}.
\end{proof}

\subsection{The centered absorption upper bound}
\begin{proposition}[Upper coefficient]\label{prop:upper}
On each fixed receiving cone,
\[
 \limsup_{\varepsilon\downarrow0}
 \sup_{\theta\in\Gamma_\delta}
 \left[
 \frac{a_{1/\varepsilon}(\theta)}{\varepsilon L}
 -2\kap I_N(\theta)
 \right]\le0.
\]
\end{proposition}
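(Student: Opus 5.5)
We start from the exact balance \eqref{eq:amplitude-absorption}:
\[
 \kap^{-1}a_\beta(\theta)=\int_0^\infty\e^{-2t}\int_{\R^N}(2t-\theta\cdot x)\e^{\theta\cdot x}g(Q_\varepsilon)\dd x\dd t+2\varepsilon K_\varepsilon(\theta).
\]
By \eqref{eq:JK}, $2\varepsilon K_\varepsilon=O_\delta(\varepsilon)=o(\varepsilon L)$. It therefore suffices to show that the centered absorption integral is at most $(2+o(1))\varepsilon L\,I_N(\theta)$, uniformly on $\Gamma_\delta$. In the projected variables of Lemma~\ref{lem:early}, $\theta\cdot x=2t+z$, so this integral equals $\int_0^\infty\int_\R(-z)\,\mathfrak b(t,z)\dd z\dd t$. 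The factor $-z$ is the centered weight. We split time at $T=\varepsilon^{-\eta_0}$ and space at $z=-A$, with $A=(1+\gamma)L$ for a small $\gamma>0$, and treat three pieces.

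\emph{Late absorption.} For $t\ge T$ we must control $\int_T^\infty\int(-z)\mathfrak b$. Lemma~\ref{lem:early} gives total late absorption $o(\varepsilon)$, but the weight $|z|$ can be as large as $O(t)$. Our plan is to bound $\mathfrak b$ by $C_f Q_\varepsilon\,\mathfrak f$-type quantities and use the radial envelope. After time $T$, $Q_\varepsilon\le C\varepsilon^{\zeta}$ in the region where the tilted mass lives, except in a front layer. There the same computation as \eqref{eq:absorption-tail}, weighted by the remaining critical mass, gives a bound $C(1+\log t)t^{-1/2+2\sigma}\,M$-like factors. An alternative, which we would try first, is to reuse the identity \eqref{eq:source-moment} from $T$ onward: the late centered absorption equals $\lim\mathcal E_\varepsilon-\mathcal E_\varepsilon(T)-2\varepsilon\int_T^\infty j_{1,\varepsilon}$. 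We would then bound $\lim\mathcal E_\varepsilon$ by comparing the solution after time $T$ with the linear flow started from $Q_\varepsilon(T)$, whose critical moment is conserved. This reduces the task to showing $\mathcal E_\varepsilon(T)$ and the linear moment of $Q_\varepsilon(T)$ are $o(\varepsilon L)$, which follows from $M_\varepsilon(T)=o(\varepsilon)$ together with the tightness in $z$ on the scale $O(L+\sqrt T)$. This needs care, since $\sqrt T\,o(\varepsilon)$ must still be $o(\varepsilon L)$. We would fix this by choosing $\eta_0$ small and using the sharper quantitative bound \eqref{eq:M-T}, which gives $M_\varepsilon(T)\lesssim\varepsilon L/\sqrt T$.

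\emph{Early absorption with $z\le-A$.} On this piece the tilted density satisfies $\e^zQ_\varepsilon\le\e^{-A}$ inside the large ball, and the estimate of Lemma~\ref{lem:early} applies with the extra polynomial weight $|z|\le C(t+L)$. The result is $C\varepsilon^{1+\gamma}(T+L)^{N+1}+C\e^{-P_0L}=o(\varepsilon)$ once $\eta_0$ is small.

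\emph{Early absorption with $-A<z$.} This is the main piece. Here we use $-z\le A$ on $z\ge-A$; the part with $z>0$ contributes negatively and may be dropped because $\mathfrak b\ge0$. This gives the bound $A\int_0^T\int_{z>-A}\mathfrak b\le(1+\gamma)L\int_0^\infty B_\varepsilon=(1+\gamma)L\cdot 2\varepsilon J_\varepsilon$ by \eqref{eq:total-absorption}, and $J_\varepsilon=I_N+o(1)$. Letting $\gamma\downarrow0$ yields the sharp coefficient $2\kap I_N$ after multiplying by $\kap$.

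The main obstacle is the late piece: the centered weight is unbounded and the total late absorption is only $o(\varepsilon)$, not $o(\varepsilon/T)$. We expect to resolve it through the moment identity restarted at $T$ combined with the quantitative mass bound \eqref{eq:M-T}, with $\eta_0$ adjusted accordingly. Uniformity on $\Gamma_\delta$ follows because all the bounds used (\eqref{eq:absolute-input-bound}, \eqref{eq:tilted-gaussian}, \eqref{eq:JK}) are uniform there.
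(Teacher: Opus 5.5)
Your treatment of $0\le t\le T$ is correct and is the paper's argument. Capping the centered weight by $A$ on $z>-A$ (the part $z>0$ is nonpositive) gives the main term $2(1+\gamma)\varepsilon LJ_\varepsilon$, and on $z\le-A$ the tilted factor makes the rest $o(\varepsilon)$.

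The gap is the late piece $t\ge T$, and neither of your routes closes it. The restarted-identity route restates the goal, since $\lim_t\mathcal E_\varepsilon(t)=\kap^{-1}a_{1/\varepsilon}(\theta)$ is the proposition itself. The comparison you propose cannot bound it. A pointwise bound $Q_\varepsilon\le v$ by a linear flow says nothing about the critical moment, because its weight $\theta\cdot x-2t$ changes sign. In fact, absorption behind the critical line is exactly what raises the nonlinear moment from the linear value $2\varepsilon K_\varepsilon=O(\varepsilon)$ to order $\varepsilon L$. Moreover, the linear flow's normalized leading edge at $m(t)$ grows like $\sqrt t$, so linear comparison gives no amplitude bound at all.

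Your stated reduction is also inconsistent. If $\lim_t\mathcal E_\varepsilon(t)$ were bounded by $\mathcal E_\varepsilon(T)+o(\varepsilon L)$ and also $\mathcal E_\varepsilon(T)=o(\varepsilon L)$, then $a_{1/\varepsilon}=o(\varepsilon L)$, contradicting Proposition~\ref{prop:lower}; $\mathcal E_\varepsilon(T)$ must carry the whole main term. The tightness patch fails numerically too, since $\sqrt T\cdot\varepsilon L/\sqrt T=\varepsilon L$ is not $o(\varepsilon L)$. Your first route rests on an unproved bound $Q_\varepsilon\le C\varepsilon^\zeta$, which would need leading-edge control uniform as $\beta\to\infty$; the paper does not provide this.

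The missing observation is that you never need $o(\varepsilon/T)$. Only an upper bound is required, so you can cap the centered weight at $C_0L$ rather than $O(t)$, exactly as you did with $A$ on $[0,T]$.

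On $[T,T_2]$ with $T_2=\varepsilon^{-4}$, write $D_x=2t-\theta\cdot x\le C_0L+(D_x-C_0L)_+$.
\begin{itemize}
\item The capped part is at most $C_0L\int_T^\infty B_\varepsilon\dd t=o(\varepsilon L)$, by Lemma~\ref{lem:early}.
\item For the excess, $(D_x-C_0L)_+\e^{-D_x}\le\varepsilon^{C_0}$. Inside the ball $|x-2t\theta|\le C_{P_0}(t+L)$ it therefore contributes $O(\varepsilon^{C_0}(T_2+L)^{N+2})=o(\varepsilon L)$ once $C_0>4(N+2)+2$.
\item Outside the ball, \eqref{eq:tilted-gaussian} makes the contribution $O(\varepsilon^{P_0})$.
\end{itemize}

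For $t\ge T_2$ the polynomial volume factors would beat $\varepsilon^{C_0}$. There you switch to the resistance-uniform tail \eqref{eq:absorption-tail} with $\sigma=1/16$, giving $\int_{T_2}^\infty t^{-11/8}(1+\log t)\dd t=O(\varepsilon^{3/2}L)$.

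With this late-time step in place of yours, the rest of your argument goes through.
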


\begin{proof}
Write $D_x=2t-\theta\cdot x$ and fix $\gamma_0>0$. Choose
\[
 0<\eta_0<
 \min\{1,\gamma_0/[2(N+2)]\},
 \qquad
 T=\varepsilon^{-\eta_0},\qquad
 T_2=\varepsilon^{-4}.
\]
Then $T<T_2$, and Lemma~\ref{lem:early} applies at time $T$.

On $0\le t\le T$, use
\[
 D_x\le A+(D_x-A)_+,\qquad A=(1+\gamma_0)L.
\]
The constant part contributes at most
\[
 A\int_0^TB_\varepsilon(t)\dd t
 \le2(1+\gamma_0)\varepsilon L J_\varepsilon.
\]
For the excess, inside $|x-2t\theta|\le C_{P_0}(t+L)$ use
\[
 \e^{-D_x}g(Q_\varepsilon)\le C_f\e^{-D_x},
 \qquad
 (D_x-A)_+\e^{-D_x}\le C\e^{-A}.
\]
Spatial and time integration gives
\begin{equation}\label{eq:early-excess}
 C\varepsilon^{1+\gamma_0}(T+L)^{N+2}
 =o(\varepsilon L).
\end{equation}
Outside the ball, $g(Q_\varepsilon)\le C_fQ_\varepsilon$ and \eqref{eq:tilted-gaussian} give an arbitrarily large exponential power of $\varepsilon$.

On the intermediate interval $T\le t\le T_2$, split at $D_x=C_0L$. The constant part is bounded by
\[
 C_0L\int_T^\infty B_\varepsilon(t)\dd t
 =o(\varepsilon L).
\]
The excess is controlled by the same volume argument:
\[
 C\varepsilon^{C_0}(T_2+L)^{N+2}
 +O(\varepsilon^{P_0}).
\]
Choose $C_0>4(N+2)+2$ and then $P_0$ sufficiently large. This contribution is also $o(\varepsilon L)$.

For $t\ge T_2$, choose $\sigma=1/16$ in \eqref{eq:absorption-tail}. The absolute centered absorption is bounded by
\[
 C\int_{T_2}^\infty t^{-11/8}(1+\log t)\dd t
 =O(\varepsilon^{3/2}L)
 =o(\varepsilon L).
\]
Finally, \eqref{eq:JK} makes the source first moment in \eqref{eq:amplitude-absorption} $O_\delta(\varepsilon)$. Combining the three time intervals yields
\[
 \kap^{-1}a_{1/\varepsilon}(\theta)
 \le2(1+\gamma_0)\varepsilon L I_N(\theta)
       +o_\delta(\varepsilon L).
\]
For each fixed $\gamma_0$, all error estimates are uniform on $\Gamma_\delta$. Let $\varepsilon\downarrow0$ and then $\gamma_0\downarrow0$ to conclude.
\end{proof}

Propositions~\ref{prop:lower} and \ref{prop:upper} prove \eqref{eq:strong-a}. Since $I_N$ is positive and continuous, it has a positive minimum on each fixed receiving cone. Taking logarithms therefore gives \eqref{eq:strong-tau} with the stated uniformity, completing the proof of Theorem~\ref{thm:strong}.

\section{Grazing directions and equatorial saturation}
\label{sec:grazing}
We first determine the singular behavior of the interface input as a receiving direction approaches the equator. We then treat exactly equatorial directions through the critical moment, whose interface contribution vanishes there. These two arguments give different strong-resistance limits and explain their failure to commute.

\subsection{The tangential input tail}
Write
\[
 \theta_s=(b\eta,s),\qquad
 b=\sqrt{1-s^2},\qquad \lambda=1-b,
 \qquad \eta\in\Sph^{N-2}.
\]
Set
\[
 F_b(t,\eta)=\e^{-2t}\int_{\R^{N-1}}
       \e^{b\eta\cdot x'}P(t,x',0)\dd x',
 \qquad \alpha=\frac{N-2}{2}.
\]
Since $P=p_\infty$ on the interface,
\[
 I_N(\theta_s)=\int_0^\infty F_b(t,\eta)\dd t.
\]
The logarithmic singularity of this integral comes from the large-time behavior of $F_b$ when $\lambda t$ remains bounded.

\begin{lemma}[Tangential Laplace asymptotics]
\label{lem:tangential}
For each fixed $M<\infty$, as $t\to\infty$,
\begin{equation}\label{eq:tangent-tail}
 t\e^{2\lambda t}F_b(t,\eta)\longrightarrow
 A_N(\eta):=2(4\pi)^{(N-2)/2}a_\Neu(\eta,0)
\end{equation}
uniformly in $\eta$ and $0\le\lambda t\le M$.
Moreover, for $b$ sufficiently close to $1$ and $t\ge2$,
\begin{equation}\label{eq:tangent-bound}
 F_b(t,\eta)\le
 C\e^{-2\lambda t}t^{-1+k\lambda},
\end{equation}
with $C$ independent of $b,t,\eta$.
\end{lemma}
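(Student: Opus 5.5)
The plan is to reduce $F_b$ to the equatorial Cartesian leading edge of the even Neumann extension $P$, in the coordinates \eqref{eq:cartesian}. We take the direction $\eta$ and set $x'=(m(t)+\rho\sqrt t)\eta+\sqrt tE_\eta v$, $Y=0$. The normalized trace is $W_P(\tau,\rho,v)=t^{-1/2}\e^{\rho\sqrt t}P(t,x',0)$. Since $\eta\cdot x'=m(t)+\rho\sqrt t$, exact substitution gives
\[
 F_b(t,\eta)=t^{(N-1)/2}t^{1/2}\e^{-2t}\e^{bm(t)}\int\e^{(b-1)\rho\sqrt t}W_P\dd\rho\dd v .
\]
Because $\e^{-2t+bm(t)}=\e^{-2\lambda t}t^{-bk}$ and $bk=k-\lambda k$, the prefactor is $\e^{-2\lambda t}t^{N/2-k+k\lambda}=\e^{-2\lambda t}t^{-1+k\lambda}$, since $N/2-k=-1$. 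Hence
\[
 t\e^{2\lambda t}F_b=t^{k\lambda}\int\e^{-\lambda\rho\sqrt t}W_P\dd\rho\dd v .
\]
On $\lambda t\le M$ we have $t^{k\lambda}\to1$ and $\lambda\rho\sqrt t\le M\rho/\sqrt t\to0$ locally uniformly in $\rho$, so both factors are harmless in the limit.

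For the limit \eqref{eq:tangent-tail}, we apply the equatorial mixed-boundary identification of Section~\ref{sec:Cartesian} to $P$. This is the case $\beta=\infty$ of the average, since $P$ is even and $D\equiv0$. Alternatively, one can view $P$ as a homogeneous whole-space solution in the tangential direction $(\eta,0)$. Either way, the trace $W_P(\tau,\rho,v)\to a_\Neu(\eta,0)\rho\e^{-(\rho^2+|v|^2)/4}\mathbf 1_{\rho>0}$ locally uniformly, uniformly in $\eta$. Integrating gives
\[
 a_\Neu\int_0^\infty\rho\e^{-\rho^2/4}\dd\rho\int_{\R^{N-2}}\e^{-|v|^2/4}\dd v=2a_\Neu(4\pi)^{(N-2)/2}=A_N(\eta).
\]
To pass from local to integrated convergence, we use tightness of the trace integrand. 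The radial envelope (Lemma~\ref{lem:envelope} for $P$) gives the required bounds. For $\rho\ge0$, the argument of the proof of Lemma~\ref{lem:energytrace} with $d=r-l$ gives $W_P\le C(\rho+|v|^2/(1+|v|)+t^{\sigma-1/2})\e^{-c\rho^2}\e^{-c|v|^2/(1+\rho+|v|)}$; this bound is integrable, and the strip $0<\rho<h$ is $O(h)$. The region $\rho<0$ needs separate care. There $\e^{-\lambda\rho\sqrt t}\le\e^{\lambda|\rho|\sqrt t}$, which is bounded on $|\rho|\lesssim t^{1/2}$ when $\lambda t\le M$. We bound the contribution via $P\le1$ near the front and the Gaussian upper bound for $|x'|$ below $m(t)$. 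This essentially repeats the complementary-region estimate in the proof of Proposition~\ref{prop:moment}, or Appendix~\ref{app:tightness} restricted to the hyperplane. The negative-$\rho$ contribution should be $O(t^{-1/2}\text{polylog})$ after the factor $t^{-1/2}$ in $W_P$ is accounted for, because $\int_{\rho<0}\e^{\rho\sqrt t}\dd\rho\sim t^{-1/2}$. We expect this to be the main obstacle: showing that the back of the front contributes negligibly on the hyperplane, uniformly for $\lambda t\le M$, requires the localization of Lemma~\ref{lem:localization} together with exponential decay in $|x'|$ behind $m(t)$ from $P\le1$ times $\e^{b\eta\cdot x'-2t}$.

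For the bound \eqref{eq:tangent-bound}, the same identity reduces the claim to $\int\e^{-\lambda\rho\sqrt t}W_P\le C$ uniformly for $t\ge2$ and $\lambda$ small, without restricting $\lambda t$. For $\rho\ge0$ we drop the factor $\e^{-\lambda\rho\sqrt t}\le1$ and use the envelope bound above. For $\rho<0$ we use $P\le1$. The tilted weight is then $\e^{b(\eta\cdot x'-m)}$, and $\e^{b\rho\sqrt t}$ integrated against the $t^{(N-1)/2}$ transverse factors gives a bounded contribution once $b\ge1/2$. The far region $|x'|\gg t$ is handled by the linear Gaussian bound.
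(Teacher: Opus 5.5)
Your route differs from the paper's, and it is sound in the main region.

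\textbf{Comparison with the paper.} The paper works in polar coordinates on $\Sigma$. It applies the angular Laplace formula $\int_{\Sph^{N-2}}\e^{br\eta\cdot\omega'}g(\omega')\dd\omega'\sim\e^{br}(2\pi/(br))^{(N-2)/2}g(\eta)$ to the radial profile $w_P(\log t,\rho,\omega)$. The factor $(4\pi)^{(N-2)/2}$ comes from that formula, and what remains is the ray $(\eta,0)$ and $\int_0^\infty\rho\e^{-\rho^2/4}\dd\rho=2$. This requires uniform-in-direction convergence of $w_P$ and continuity of $a_\Neu$. In exchange, the region behind the front becomes trivial: $P\le1$ on the ball $|x'|\le m(t)$ plus the angular bound. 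You instead use the exact Cartesian identity $t\e^{2\lambda t}F_b=t^{k\lambda}\int\e^{-\lambda\rho\sqrt t}W_P\dd\rho\dd v$, which is correct. You then read off $(4\pi)^{(N-2)/2}$ from the transverse Gaussian of the Cartesian limit, and your majorant on $\rho\ge0$ is right. One of your two justifications for the limit should be dropped. At $\beta=\infty$ the folded pair is $S=D=p_\infty/2$, not $D\equiv0$, and the equatorial analysis covers only $\beta\le B$. Use instead that $P$ is a whole-space homogeneous solution, for which the Cartesian identification holds in every direction, including $(\eta,0)$.

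\textbf{The gap: the region $\rho<0$.} In your coordinates its transverse cross-section is all of $\R^{N-2}$, so $P\le1$ cannot be used there wholesale. For \eqref{eq:tangent-bound} you use $P\le1$ on all of $\rho<0$ and cut only where $|x'|\gg t$. This gives $\int_{\rho<0,\ |v|\lesssim\sqrt t}t^{-1/2}\e^{b\rho\sqrt t}\dd\rho\dd v\asymp t^{(N-4)/2}$. Hence you get only $F_b\lesssim\e^{-2\lambda t}t^{-1+k\lambda}\,t^{(N-4)/2}$, which is not the claimed bound for $N\ge5$. In the limit part you leave this region open as the main obstacle, and the same crude bound does not even vanish for $N\ge4$.

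\textbf{The repair.} It needs no localization lemma. Write $s=|\rho|\sqrt t$, $l=\eta\cdot x'=m(t)-s$, and $d=|x'|-l$. On all of $\rho<0$ you have $\e^{-\lambda\rho\sqrt t}W_P=t^{-1/2}\e^{-bs}P$.

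First, use $P\le1$ only inside the ball $|x'|<m(t)$. Its slice at depth $s$ satisfies $t|v|^2<m(t)^2-l^2\le2m(t)s\le4ts$, so $|v|<2\sqrt s$. This part is therefore at most $Ct^{-1}\int_0^\infty\e^{-bs}s^{(N-2)/2}\dd s=O(t^{-1})$, which is exactly the paper's $O(\e^{-2\lambda t}t^{-2+k\lambda})$.

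Second, on $\{|x'|\ge m(t),\ \rho<0\}$, apply the radial envelope with $z=|x'|-m(t)=d-s\ge0$. The integrand is then at most $Ct^{-1/2}\e^{\lambda s-d}(d+t^\sigma)\le Ct^{-1/2}\e^{-bd}(d+t^\sigma)$. The bound $d\ge\max\{s,\,c|v|^2/(1+|v|)\}$ then gives $O(t^{\sigma-1})$.

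Neither estimate uses $\lambda t\le M$; both need only $b\ge1/2$. Together they close the $\rho<0$ part of both \eqref{eq:tangent-tail} and \eqref{eq:tangent-bound}.
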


\begin{proof}
Use polar coordinates on the $(N-1)$-dimensional interface. With $d=N-1$, the angular Laplace formula is
\[
 \int_{\Sph^{d-1}}\e^{br\eta\cdot\omega'}
       g(\omega')\dd\omega'
 \sim
 \e^{br}\left(\frac{2\pi}{br}\right)^{(d-1)/2}g(\eta).
\]
For $d=1$, the angular integral is a two-point sum: the forward term gives the displayed expression, while the backward term is exponentially smaller.

For the profiles considered here, the formula can be used uniformly in $\eta$. On compact positive longitudinal intervals, homogeneous leading-edge convergence for $P$ is uniform in direction, and its limiting angular amplitude is continuous. The radial Gaussian envelope controls the remaining longitudinal ranges and supplies an integrable majorant.

First consider $r\le m(t)$. Using $P\le1$ and the angular upper bound gives a contribution
\[
 O\bigl(\e^{-2\lambda t}t^{-2+k\lambda}\bigr).
\]
For $r>m(t)$, write
\[
 r=m(t)+\rho\sqrt t,\qquad
 P(t,r\omega)=\sqrt t\,\e^{-\rho\sqrt t}
             w_P(\log t,\rho,\omega).
\]
Combining the angular factor, the radial Jacobian, and $\dd r=\sqrt t\,\dd\rho$ yields, when $\lambda t\le M$,
\begin{equation}\label{eq:trace-main}
 \begin{aligned}
 F_b(t,\eta)
 ={}&\left(\frac{4\pi}{b}\right)^\alpha
       \e^{-2\lambda t}t^{-1+k\lambda}
       \int_0^\infty\e^{-\lambda\sqrt t\rho}
       w_P(\log t,\rho,(\eta,0))\dd\rho\\
 &+o\bigl(\e^{-2\lambda t}t^{-1}\bigr).
 \end{aligned}
\end{equation}
The error is uniform in $\eta$ and $\lambda t\le M$. To obtain this expression, first restrict $\rho$ to a compact subset of $(0,\infty)$, where $r/(2t)\to1$ uniformly, and then remove the restriction using the Gaussian envelope. The contribution near $\rho=0$ is controlled by the same envelope.

In this regime,
\[
 \lambda\sqrt t\to0,\qquad
 t^{k\lambda}\to1,\qquad b\to1,
\]
uniformly, while
\[
 w_P(\log t,\rho,(\eta,0))
 \longrightarrow
 a_\Neu(\eta,0)\rho\e^{-\rho^2/4}.
\]
Since
\[
 \int_0^\infty\rho\e^{-\rho^2/4}\dd\rho=2,
\]
dominated convergence in \eqref{eq:trace-main} proves \eqref{eq:tangent-tail}.

For the upper bound, use the angular Gaussian estimate and the radial envelope directly, without imposing $\lambda t\le M$. The factor $\e^{-\lambda\sqrt t\rho}$ is at most one, and the remaining longitudinal integral is uniformly bounded. Together with the interior estimate, this gives \eqref{eq:tangent-bound}.
\end{proof}

\subsection{Proof of Theorem~\ref{thm:grazing}}
Fix a sufficiently large $R$. For $\lambda>0$ small enough that $R<1/\lambda$, \eqref{eq:tangent-tail} gives
\[
 F_b(t,\eta)
 =\frac{\e^{-2\lambda t}}t
       \bigl[A_N(\eta)+o_R(1)\bigr],
 \qquad R\le t\le1/\lambda,
\]
where the error tends to zero uniformly in $\eta$ and the indicated range as $R\to\infty$. Moreover,
\[
 \int_R^{1/\lambda}\frac{\e^{-2\lambda t}}t\dd t
 =\log(1/\lambda)-\log R+O(1).
\]
Hence
\[
 \int_R^{1/\lambda}F_b(t,\eta)\dd t
 =
 \bigl[A_N(\eta)+o_R(1)\bigr]
 \bigl[\log(1/\lambda)-\log R+O(1)\bigr].
\]

For fixed $R$, the integral over $[0,R]$ is uniformly bounded by the finite-time Gaussian estimates. For the remaining tail, \eqref{eq:tangent-bound} gives
\[
 \int_{1/\lambda}^\infty F_b(t,\eta)\dd t
 \le C\lambda^{-k\lambda}
       \int_1^\infty\e^{-2v}v^{-1+k\lambda}\dd v
 \le C.
\]
Here $\lambda^{-k\lambda}$ remains bounded as $\lambda\downarrow0$.

Divide by $\log(1/\lambda)$, first let $\lambda\downarrow0$ with $R$ fixed, and then let $R\to\infty$. We obtain
\[
 \frac{I_N(\theta_s)}{\log(1/\lambda)}
 \longrightarrow A_N(\eta)
\]
uniformly in $\eta$. Since
\[
 \lambda=\frac{s^2}{2}+O(s^4),\qquad
 \log(1/\lambda)=2\log(1/s)+O(1),
\]
this proves \eqref{eq:grazing-I}. In particular,
\begin{equation}\label{eq:grazing-coefficient}
 2\kap I_N(\theta_s)
 =\frac2\pi a_\Neu(\eta,0)\log(1/s)(1+o(1)).
\end{equation}
The amplitude $a_\Neu$ has a positive minimum on the equator, so the multiplicative error is uniform. Continuity and positivity of $a_0$, together with the definition of $C_N$, now give \eqref{eq:grazing-C}.

\begin{remark}[Scope of the expansion]\label{rem:scope}
For each fixed $\beta>1$, continuity of $a_\beta$ on the closed receiving hemisphere makes $\beta a_\beta(\theta)/\log\beta$ bounded. By contrast, the coefficient in \eqref{eq:grazing-coefficient} diverges as the direction approaches the equator. Thus the strong-resistance amplitude expansion cannot hold uniformly on the entire open receiving hemisphere.

The delay expansion is likewise nonuniform there. For fixed $\beta>1$, its proposed right-hand side tends to $-\infty$ as $s\downarrow0$, whereas $\tau_\beta$ has a finite continuous equatorial limit. Theorem~\ref{thm:grazing} describes the grazing behavior of the coefficient obtained after the strong-resistance limit. It does not provide a joint angle--resistance transition law.
\end{remark}

\subsection{The equatorial strong-resistance limit}
Let $\vartheta=(\eta,0)$. At this direction, the interface contribution in \eqref{eq:moment-balance} vanishes. Use the resistance-independent equatorial tail in Proposition~\ref{prop:moment}, with $\sigma=1/16$ and $\mu=3/8$.

Here $E_\infty(t,\vartheta)$ denotes the critical moment of the closed problem $\beta=\infty$, and $E_\infty^\infty(\vartheta)$ its large-time limit. For every sufficiently large fixed $T$,
\begin{align*}
 |E_\beta^\infty(\vartheta)
       -E_\infty^\infty(\vartheta)|
 &\le
 |E_\beta(T,\vartheta)-E_\infty(T,\vartheta)|\\
 &\quad+2CT^{-\mu}(1+\log T).
\end{align*}
Lemma~\ref{lem:finite-boundary-continuity}, together with the common fixed-time Gaussian bound, makes the first term tend to zero as $\beta\to\infty$, uniformly in $\eta$. Taking $\beta\to\infty$ first and then $T\to\infty$ therefore proves uniform convergence of the limiting moments.

In the closed problem, the receiving density is zero, while the donor density is the lower-half restriction of $P$. The equatorial weight is independent of $x_N$, and $P$ is even in that variable. Thus, for every $t$,
\begin{equation}\label{eq:half-moment}
 E_\infty(t,(\eta,0))
 =\frac12E_P(t,(\eta,0)).
\end{equation}
Combining this exact identity with the homogeneous amplitude formula gives
\[
 a_\beta(\eta,0)\longrightarrow
 \frac12a_\Neu(\eta,0)
\]
uniformly on the equator, proving the first limit in \eqref{eq:saturation}. Positivity and continuity of $a_\Neu$ allow passage through the logarithm, giving the delay limit in the same equation. Under concavity, Theorem~\ref{thm:parameters} also gives strict increase of the delay with resistance.

Assume now \eqref{ass:C}. To prove the strict range of the limiting delay, compare $P$ with the transparent solution $U=u^0$. Their initial densities differ by the nonzero reflected seed, so $P\ge U$. Their difference satisfies
\[
 (P-U)_t
 =\Delta(P-U)+[1-c_g(P,U)](P-U),
\]
where
\[
 c_g(P,U)=\int_0^1g'\bigl(U+r(P-U)\bigr)\dd r,
 \qquad
 0\le c_g(P,U)\le C_f(P+U).
\]
The potential is bounded and has the required exterior envelope. The reflected initial seed gives a positive compact seed at a positive time. Lemma~\ref{lem:persistence} therefore yields a strictly positive gap between the directional amplitudes:
\[
 a_\Neu(\eta,0)>a_0(\eta,0).
\]

For the opposite bound, fix any $\beta_1>0$. Strict resistance ordering and the limit already proved give
\[
 \frac12a_\Neu(\eta,0)
 \le a_{\beta_1}(\eta,0)<a_0(\eta,0).
\]
Consequently,
\[
 a_0(\eta,0)<a_\Neu(\eta,0)<2a_0(\eta,0),
\]
which is equivalent to \eqref{eq:saturation-range}.

Finally, for each fixed $s>0$, Theorem~\ref{thm:strong} gives
\[
 a_\beta(\theta_s)\to0,\qquad
 \tau_\beta(\theta_s)\to+\infty
 \qquad(\beta\to\infty).
\]
For each fixed finite $\beta$, continuity in direction instead gives
\[
 a_\beta(\theta_s)\to a_\beta(\eta,0),\qquad
 \tau_\beta(\theta_s)\to\tau_\beta(\eta,0)
 \qquad(s\downarrow0).
\]
Combining these statements with \eqref{eq:saturation} proves the asserted noncommuting iterated limits. This completes the proof of Theorem~\ref{thm:saturation}.

\section{General linear growth rate}\label{sec:scaling}
The normalization $f'(0)=1$ can be removed by rescaling
time and space. Assume that $f$ satisfies the corresponding
smooth KPP hypotheses with
\[
 f'(0)=r>0,\qquad 0<f(u)\le ru\quad(0<u<1).
\]
Set
\[
 \widehat t=rt,\qquad
 \widehat x=\sqrt r\,x,\qquad
 \widehat f(u)=\frac{f(u)}r,\qquad
 \widehat\beta=\sqrt r\,\beta,
\]
and define
\[
 \widehat u(\widehat t,\widehat x)
 =u\left(\frac{\widehat t}{r},
          \frac{\widehat x}{\sqrt r}\right).
\]
Then $\widehat u$ solves the normalized problem with
reaction $\widehat f$, resistance $\widehat\beta$, and
initial density
\[
 \widehat u_0(\widehat x)
 =u_0(\widehat x/\sqrt r).
\]
In particular, $\widehat f'(0)=1$. The interface condition
retains its form because
\[
 u_+-u_-=\beta\,\partial_Nu_\pm
 =\widehat\beta\,\partial_{\widehat x_N}\widehat u_\pm.
\]
Concavity is also preserved by this rescaling.

Let $\widehat a_{\widehat\beta}(\theta)$ denote the
directional amplitude of this normalized problem.
The physical front center is
\[
 m_r(t)=\frac{m(rt)}{\sqrt r}
 =2\sqrt r\,t-\frac{N+2}{2\sqrt r}\log(rt),
\]
and its directional displacement is
\[
 \xi_\beta^{(r)}(\theta)
 =\frac1{\sqrt r}\log\widehat a_{\widehat\beta}(\theta).
\]
If $\widehat U_*$ is the normalized traveling wave for
$\widehat f$, the corresponding physical profile is
\[
 u_+\bigl(t,(m_r(t)+z)\theta\bigr)
 \longrightarrow
 \widehat U_*\bigl(\sqrt r\,z
                  -\log\widehat a_{\widehat\beta}(\theta)\bigr),
\]
with the same receiving-cone and equatorial interpretations
as in the normalized results.

Arrival times scale by $r^{-1}$. Thus the physical
arrival-time difference is
\[
 \tau_\beta(\theta)
 =\frac1{2r}\log
   \frac{\widehat a_0(\theta)}
        {\widehat a_{\widehat\beta}(\theta)}.
\]
For fixed $r>0$, the strong-resistance expansion becomes
\[
 \tau_\beta(\theta)
 =\frac1{2r}\log\widehat\beta
  -\frac1{2r}\log\log\widehat\beta
  +\frac1r\widehat C_N(\theta)+o(1)
 \qquad(\beta\to\infty),
\]
uniformly on each fixed receiving cone. Here
$\widehat C_N$ is computed from the normalized reaction
and the rescaled initial density. In particular, both
the constant term and the normalized input moment refer
to this rescaled problem.

\section{Further questions}\label{sec:discussion}
The results suggest several directions for further study.
The first is the joint limit in which $\beta\to\infty$
and $\theta_N\downarrow0$. The divergence of the delay
at each fixed receiving angle, together with its finite
equatorial limit, indicates a transition between these
regimes. The iterated limits established here determine
neither the angular scale of this transition nor its
limiting profile. A joint analysis would need to control
the Robin difference mode over the time range that
produces the logarithmic input tail.

A second question concerns differentiability at zero
resistance. Under concavity, joint continuity and strict
ordering determine how the phase and arrival-time
difference vary with resistance, but they do not yield
a first-order expansion of $\xi_\beta$ or $\tau_\beta$
as $\beta\downarrow0$. Such an expansion would require
quantitative control of the transmission perturbation
throughout the critical-moment time integral, including
a tail estimate strong enough to justify differentiation
with respect to resistance.

The angular regularity of the amplitude also remains
to be clarified beyond the H\"older estimate
\eqref{eq:holder-main} under \eqref{ass:K}.
The finite-time moment estimate
$|\partial_\zeta E_\beta(t,\theta)|\le C\sqrt t$
is uniform in direction and resistance, but grows
with time. It therefore does not by itself provide
a uniformly bounded angular derivative as
$t\to\infty$, or justify differentiation of the
limiting moment.
Improving the regularity, particularly near the
equator, would require sharper control of the
angular dependence of the interface contribution.

For nonconcave KPP reactions, the present argument does
not establish resistance ordering. The off-diagonal
coefficient in \eqref{eq:derivatives} may change sign,
so the cooperative comparison used here is unavailable.
Whether ordering persists under weaker assumptions,
or can fail within the smooth KPP class, is a separate
question.

Finally, curved interfaces and unequal diffusion or
reaction rates would extend the model beyond the
symmetric planar setting. Curvature changes the geometry
of the interface interaction, while unequal coefficients
remove the simple average--difference decomposition.
Both extensions would require new estimates for the
coupling between propagation and transmission.

\appendix
\section{A positive linear leading-edge supersolution}
\label{app:linear}
This appendix constructs a radial supersolution for the linear leading-edge equation. The construction applies both to a moving inner boundary and to the fixed physical boundary used in the sharp upper estimate.

Put $r=m(t)+\rho\sqrt t$ and define
\begin{equation}\label{eq:coefficients}
 L_0=\partial_{\rho\rho}+\tfrac\rho2\partial_\rho+1,
 \qquad
 p_{\rm rad}=\frac{(N-1)\sqrt t}{r}-\frac{k}{\sqrt t},
 \quad
 q_c=\frac{N-1}{2}-\frac{(N-1)t}{r},
 \quad b=\frac{t}{r^2}.
\end{equation}
The radial linear operator is $\mathcal P=L_0+p_{\rm rad}\partial_\rho+q_c$, with logarithmic time $\tau=\log t$. For a radial solution $U(t,r)$, the normalized field
\[
 W(\tau,\rho)
 =t^{-1/2}\e^{\rho\sqrt t}
       U(t,m(t)+\rho\sqrt t)
\]
satisfies
\[
 W_\tau
 =\mathcal PW-t^{1/2}\e^{\rho\sqrt t}g(U)
 \le\mathcal PW.
\]
We adapt the three-term construction of \cite[Section 3.1.1, equations (28)--(35) of the preprint version]{RRR}, where the nonlinear absorption is likewise discarded. The argument below verifies the Gaussian exponents, the coefficient errors in \eqref{eq:coefficients}, and the fixed-inner-boundary case.

\begin{lemma}[Moving and fixed inner boundaries]
\label{lem:linear-envelope}
Fix $0\le\sigma<1/2$ and $0<\mu<1/8$, and set
\[
 \eta_t=t^{\sigma-1/2},\qquad
 X=\rho+\eta_t,\qquad a_\mu=1/8+\mu.
\]
There is $T=T(N,\sigma,\mu)\ge2$ and a positive supersolution $\Phi_\tau\ge\mathcal P\Phi$ on $t\ge T$, $\rho\ge-\eta_t$, such that
\begin{equation}\label{eq:Phi-bounds}
 c\eta_t\e^{-a_\mu X^2}
 \le\Phi(\tau,\rho)
 \le C(X+\eta_t)\e^{-a_\mu X^2}.
\end{equation}
For every radial solution with compactly supported initial density in $[0,1]$ and reaction satisfying \eqref{ass:K}, a constant multiple of $\Phi$ dominates $W$ throughout this domain. The comparison constants may depend on the radial datum and on $N,\sigma,\mu$, but not on the interface resistance.

In physical coordinates, the inner boundary is $z=r-m(t)=-t^\sigma$. In particular, $\sigma=0$ gives $z=-1$. The choices $\mu=1/32$ and $\mu=3/32$ give the Gaussian exponents $5/32$ and $7/32$, respectively.
\end{lemma}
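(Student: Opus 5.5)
We will build $\Phi$ as a sum of three terms, following the RRR construction: a Gaussian bulk profile, a boundary-layer correction near the moving inner boundary, and a small positive constant-type floor. All three are evaluated in the shifted variable $X=\rho+\eta_t$, so the inner boundary $\rho=-\eta_t$ corresponds to $X=0$.

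\textbf{The bulk term.} The main piece is
\[
 \Phi_1=(X+\eta_t)\e^{-a_\mu X^2},\qquad a_\mu=1/8+\mu<1/4 .
\]
For $L_0$, the Gaussian $\e^{-\rho^2/4}$ paired with the factor $\rho$ is an exact steady state. Replacing the exponent $1/4$ by $a_\mu<1/4$ makes $(\partial_\tau-L_0)\Phi_1$ strictly positive at large $X$. Indeed, a direct computation gives
\[
 \frac{(\partial_\tau-L_0)\e^{-aX^2}}{\e^{-aX^2}}
 =2a(1-4a)X^2+(6a-1)+O(\eta_t X),
\]
and the coefficient of $X^2$ is positive for $a<1/4$. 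We then check the coefficient errors in the region $\rho\ge-\eta_t$:
\[
 |p_{\rm rad}|\le C t^{-1/2},\qquad |q_c|\le C(1+|\rho|)t^{-1/2},\qquad \partial_\tau\eta_t=(\sigma-\tfrac12)\eta_t .
\]
Here we use $r\ge m(t)-t^\sigma\ge t$ for large $t$, so that $t/r\approx1/2+O(\rho t^{-1/2}+t^{-1}\log t)$. These errors are of lower order relative to $X^2\Phi_1$ for large $X$.

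\textbf{The bounded-$X$ region.} For bounded $X$, the residual of $\Phi_1$ alone may be negative, of size $O(1)\Phi_1$. To correct this, we add
\[
 \Phi_2=K\eta_t\e^{-a_\mu X^2}\psi(X),
\]
where $\psi$ is a positive bounded function. We choose $\psi$ solving a first-order ODE, or simply $\psi=\e^{\lambda X}$ truncated, so that $(\partial_\tau-L_0)\Phi_2\ge c\,\Phi_2$ on $X\le X_0$. We may also multiply the whole function by a time factor $\exp(C\int t^{-\kappa}\,d\tau)$, which is integrable in $\tau$ for a small $\kappa>0$, to absorb the $O(\eta_t)$ and $O(t^{-1/2})$ errors. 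This leaves the bounds \eqref{eq:Phi-bounds} unchanged.

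\textbf{Why the lower bound is $\eta_t$.} The lower bound in \eqref{eq:Phi-bounds} involves $\eta_t$ because $\Phi$ must remain positive at $X=0$. There it must dominate the interior data, which come from $U\le1$: we have $W\le t^{-1/2}\e^{-t^\sigma}\le C\eta_t$ at $\rho=-\eta_t$, since $t^{-1/2}\e^{\rho\sqrt t}\le t^{-1/2}$ there. So $\Phi\ge c\eta_t$ at the boundary suffices after multiplying by a constant. The upper bound $C(X+\eta_t)\e^{-a_\mu X^2}$ is immediate from the construction.

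\textbf{Comparison.} For a radial solution with compact data in $[0,1]$, we proceed in three steps.
1. Since $g\ge0$, the normalized field $W$ satisfies $W_\tau\le\mathcal PW$.
2. At time $T$, the linear heat bound $U\le C\e^{T}G_T*\psi$ gives $W(T,\rho)\le C_T\e^{-\rho^2/4+C\rho}$. This is dominated by $C'\Phi(T,\cdot)$ because $a_\mu<1/4$, together with the positivity $c\eta_T\e^{-a_\mu X^2}$ at bounded $X$.
3. On the inner boundary, the domination was shown in the previous paragraph.
Maximum-principle comparison on the moving domain, truncated at large $\rho$ using the Gaussian decay of both functions, then gives $W\le C\Phi$. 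No constant depends on $\beta$, since only the radial homogeneous comparison solution is involved.

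\textbf{Main obstacle.} We expect the hardest step to be the precise residual bookkeeping at bounded $X$ when $\sigma=0$. In that case $\eta_t=t^{-1/2}$ is of the same order as the drift error $p_{\rm rad}$. The boundary-layer term must therefore absorb $O(t^{-1/2})\partial_\rho\Phi_1$ with a margin that is integrable in $\tau$. Our first attempt will take $\psi(X)=1+X$ and choose $K$ large; if that fails, we will use the RRR three-term form with an explicit $X^{-1}$-type corrector.
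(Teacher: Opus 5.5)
\textbf{There is a genuine gap in your construction of $\Phi$.} The order-one term $\Phi_1=(X+\eta_t)\e^{-a_\mu X^2}$ is not an approximate supersolution, and the corrections you propose are too small to repair it. On compact subsets of $X>0$ the coefficient errors are $O(\eta_t)$. The leading part of $\Phi_1$ is $X\e^{-aX^2}$ with $a=a_\mu$, and
\[
 \frac{(\partial_\tau-L_0)(X\e^{-aX^2})}{X\e^{-aX^2}}=a(1-4a)X^2+6a-\tfrac32 .
\]
Your displayed constant $6a-1$ is incorrect: for $\e^{-aX^2}$ alone the constant is $2a-1$, and with $6a-1$ one would wrongly conclude positivity for $a>1/6$. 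Since $6a-\frac32<0$ whenever $a<1/4$, the residual of $\Phi_1$ is negative, with relative size of order one, on the whole interval $0<X<\sqrt{3/(2a_\mu)}$, not just in a boundary layer.

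Neither repair works. Your corrector $K\eta_t\e^{-a_\mu X^2}\psi$ has amplitude $K\eta_t\to0$, so no fixed $K$ or $\psi$ (in particular not $\psi=1+X$) compensates. The factor $\exp(C\int t^{-\kappa}\dd\tau)$ only adds $Ct^{-\kappa}\Phi$ to the residual. An order-one relative deficit would need growth like $t^{c}$, which is incompatible with \eqref{eq:Phi-bounds}.

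The case $\sigma=0$ is not the real difficulty. Since $t^{-1/2}\le\eta_t$ for every $\sigma\ge0$, all coefficient errors are $O(\eta_t(1+X))$ uniformly, as in \eqref{eq:linear-errors}.

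\textbf{The obstruction is structural.} Write $\Phi=\e^{-X^2/8}F$ and set $\ell_1=p_{\rm rad}-\sigma\eta_t$, $\ell_2=q_c-\frac X4\ell_1$. Then
\[
 \e^{X^2/8}(\partial_\tau-\mathcal P)\Phi=F_\tau+\mathcal MF-\ell_1F_X-\ell_2F,
 \qquad
 \mathcal M=-\partial_{XX}+\frac{X^2}{16}-\frac34 .
\]
On $X>0$ with Dirichlet data, $\mathcal M$ has ground state $\varphi_0=X\e^{-X^2/8}$ (the steady state $X\e^{-X^2/4}$) with eigenvalue exactly $0$. For decaying $F$, Green's identity gives $\int_0^\infty\varphi_0\,\mathcal MF\,\dd X=-F(0)$. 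Two consequences follow:
\begin{itemize}
\item a stationary order-one profile vanishing at $X=0$ has nonnegative residual only if it is a multiple of $\varphi_0$;
\item no time-independent $F$ with $F(0)>0$ is a supersolution at all.
\end{itemize}

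So the bulk must be $\zeta(\tau)X\e^{-X^2/4}$, with $\zeta$ increasing, and integrably so in $\tau$ (possible because $\int\eta_{\e^s}\dd s<\infty$). The slower Gaussian may enter only with small amplitude, in the far field. There $\mathcal M\e^{-\mu X^2}=[(1/16-4\mu^2)X^2+2\mu-3/4]\e^{-\mu X^2}$ is large and positive and absorbs the errors $\ell_1\varphi_0'+\ell_2\varphi_0=O(\eta_t(1+X^2)\e^{-X^2/8})$ of the mode; this is where $\mu<1/8$ is used. A near-boundary mode such as $\cos(\pi X)$, of amplitude $\asymp\eta_t$, supplies $\Phi\ge c\eta_t$ at $X=0$.

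The paper takes $F=\frac Q2\cos(\pi X)\gamma_1+\frac Q2\gamma_2\e^{-\mu X^2}+\zeta_{\rm lin}\varphi_0$, with cutoffs $\gamma_1,\gamma_2$. It couples the amplitudes by $Q'+Q=K\zeta_{\rm lin}\eta_t$ and $\zeta_{\rm lin}'=K(Q+\zeta_{\rm lin}\eta_t)$. This yields $Q\asymp\eta_t$ and $\zeta_{\rm lin}$ bounded and increasing. The upper bound then holds simply because $X\e^{-X^2/4}\le CX\e^{-a_\mu X^2}$.

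Your fallback to the RRR three-term form points in this direction but is not carried out, and the ingredients above are what it needs. Your data fitting at time $T$, the inner-boundary bound, and the comparison step are fine once a valid $\Phi$ is in hand.
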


\begin{proof}
All coefficients below are evaluated at $\rho=X-\eta_t$. After increasing $T$, we have
\[
 r\ge2t-k\log t-t^\sigma\ge t.
\]
Define
\[
 \ell_1=p_{\rm rad}-\sigma\eta_t,\qquad
 \ell_2=q_c-\tfrac X4\ell_1,\qquad
 \mathcal M=-\partial_{XX}+\tfrac{X^2}{16}-\tfrac34.
\]
For $\Phi=\e^{-X^2/8}F(\tau,X)$, differentiation at fixed $\rho$, using $\partial_\tau\eta_t=(\sigma-1/2)\eta_t$, gives
\begin{equation}\label{eq:linear-conjugation}
 \e^{X^2/8}(\partial_\tau-\mathcal P)\Phi
 =\mathcal RF
 :=F_\tau+\mathcal MF-\ell_1F_X-\ell_2F.
\end{equation}
Here $F_\tau$ on the right is taken at fixed $X$.

The coefficient bounds follow from
\begin{align*}
 q_c&=\frac{(N-1)(r-2t)}{2r},
 &|p_{\rm rad}|&\le C_Nt^{-1/2},\\
 |q_c|&\le C_N\left(
 \frac{\log t}{t}+\frac{X+\eta_t}{\sqrt t}\right),
 &|q_c|&\le C_N.
\end{align*}
Consequently, for all $X\ge0$,
\begin{equation}\label{eq:linear-errors}
 |\ell_1|\le C\eta_t,\qquad
 |\ell_2|\le C\eta_t(1+X),\qquad
 |\ell_2|\le C(1+X).
\end{equation}
These estimates include $\sigma=0$, for which $\ell_1=p_{\rm rad}$ and $\log t/t=O(t^{-1/2})$. The coefficients become small on every fixed $X$-interval. We do not require $\ell_2$ to be uniformly small on the entire half-line.

We now choose the three spatial profiles. Let $\chi\in C^\infty(\R;[0,1])$ be nonincreasing, equal to one on $(-\infty,0]$ and zero on $[1,\infty)$. For example, take
\[
 E(s)=
 \begin{cases}
 \e^{-1/s},&s>0,\\
 0,&s\le0,
 \end{cases}
 \qquad
 \chi(s)=\frac{E(1-s)}{E(s)+E(1-s)}.
\]
Set
\begin{gather*}
 \phi_1(X)=\cos(\pi X),\qquad
 \gamma_1(X)=\chi(12X-3),\qquad
 \gamma_2(X)=1-\chi(X-1),\\
 \varphi_0(X)=X\e^{-X^2/8},\qquad
 G_\mu(X)=\e^{-\mu X^2}.
\end{gather*}
Thus $\gamma_1=1$ on $[0,1/4]$ and vanishes on $[1/3,\infty)$, while $\gamma_2=0$ on $[0,1]$ and equals one on $[2,\infty)$. Moreover,
\[
 \phi_1\gamma_1\ge0,\qquad
 -\phi_1''=\pi^2\phi_1,\qquad
 \mathcal M\varphi_0=0.
\]
The positive mode $\varphi_0$ bridges the two cutoff regions.

Define
\begin{equation}\label{eq:linear-Phi}
 \Phi=\e^{-X^2/8}\left[
 \frac{Q(\tau)}2\phi_1(X)\gamma_1(X)
 +\frac{Q(\tau)}2\gamma_2(X)G_\mu(X)
 +\zeta_{\rm lin}(\tau)X\e^{-X^2/8}
 \right].
\end{equation}
We first estimate its residual under the assumptions
\[
 Q,\zeta_{\rm lin}>0,\qquad Q'\ge-Q,\qquad \zeta_{\rm lin}'\ge0,
\]
where primes denote logarithmic-time derivatives. The coefficient equations chosen below will ensure all three conditions.

For $0\le X\le1/4$, we have $\phi_1\ge2^{-1/2}$ and $|\phi_1'|\le\pi\phi_1$. By \eqref{eq:linear-errors}, increasing $T$ gives
\[
 \pi^2+X^2/16-3/4-|\ell_1|\pi-|\ell_2|\ge1.
\]
Since $\varphi_0$ and $\varphi_0'$ are bounded on this interval, and $\zeta_{\rm lin}'\varphi_0\ge0$,
\begin{equation}\label{eq:linear-near}
 \mathcal RF\ge
 \left[\tfrac12(Q'+Q)-C_1\zeta_{\rm lin}\eta_t\right]\phi_1.
\end{equation}

For the far region, put $d_\mu=1/16-4\mu^2>0$. Direct calculation gives
\[
 (\mathcal M-\ell_1\partial_X-\ell_2)G_\mu
 =
 \left[d_\mu X^2+2\mu-3/4
       +2\mu\ell_1X-\ell_2\right]G_\mu.
\]
The bracket is bounded below by $d_\mu X^2-C(1+X)$, uniformly for $t\ge T$. Choose $R\ge2$ so that this lower bound is at least one for $X\ge R$. Also, \eqref{eq:linear-errors} and $\mu<1/8$ give
\[
 |\ell_1\varphi_0'+\ell_2\varphi_0|
 \le C\eta_t(1+X^2)\e^{-X^2/8}
 \le C_2\eta_tG_\mu.
\]
Since $\gamma_1=0$ and $\gamma_2=1$ in this region,
\begin{equation}\label{eq:linear-far}
 \mathcal RF\ge
 \left[\tfrac12(Q'+Q)-C_2\zeta_{\rm lin}\eta_t\right]G_\mu,
 \qquad X\ge R.
\end{equation}

On the remaining compact interval $1/4\le X\le R$, all cutoff profiles and their first two derivatives are bounded, and $\min\varphi_0>0$. Using $Q'\ge-Q$ and \eqref{eq:linear-errors}, we obtain
\begin{equation}\label{eq:linear-middle}
 \mathcal RF\ge
 \left[\zeta_{\rm lin}'-C_3(Q+\zeta_{\rm lin}\eta_t)\right]\varphi_0.
\end{equation}
Choose $K\ge\max\{2C_1,2C_2,C_3,1\}$ and solve
\begin{equation}\label{eq:linear-odes}
 Q'+Q=K\zeta_{\rm lin}\eta_t,\qquad
 \zeta_{\rm lin}'=K(Q+\zeta_{\rm lin}\eta_t)
\end{equation}
from positive initial values at $\tau_0=\log T$. Positivity is preserved, $Q'\ge-Q$, and $\zeta_{\rm lin}'\ge0$. Thus \eqref{eq:linear-near}--\eqref{eq:linear-middle}, together with \eqref{eq:linear-conjugation}, prove the supersolution inequality. The cutoffs and constants chosen up to this point depend only on $N,\sigma,\mu$.

We next control the time-dependent coefficients. Set $\mathcal V=Q+\zeta_{\rm lin}/K$. Then
\[
 \mathcal V'=(K+1)\zeta_{\rm lin}\eta_t
 \le K(K+1)\eta_t\mathcal V,
 \qquad
 \int_{\tau_0}^\infty\eta_{\e^s}\dd s<\infty.
\]
Gronwall's inequality and monotonicity of $\zeta_{\rm lin}$ imply
\[
 0<\zeta_{\rm lin}(\tau_0)\le\zeta_{\rm lin}(\tau)\le C.
\]
With $\alpha=1/2-\sigma\in(0,1/2]$, variation of constants gives
\[
 Q(\tau)=\e^{-(\tau-\tau_0)}Q(\tau_0)
 +K\int_{\tau_0}^{\tau}
       \e^{-(\tau-s)}\zeta_{\rm lin}(s)\e^{-\alpha s}\dd s.
\]
Dividing by $\eta_t=\e^{-\alpha\tau}$ yields
\[
 c\eta_t\le Q(\tau)\le C\eta_t
 \qquad(\tau\ge\tau_0).
\]
These constants may also depend on the positive initial coefficients.

The near-origin term, the positive mode on compact subintervals of $X>0$, and the far Gaussian term now give \eqref{eq:Phi-bounds}. In particular,
\[
 \Phi(\tau,-\eta_t)=Q(\tau)/2\ge c\eta_t.
\]
The upper bound uses $a_\mu<1/4$, so that the mode $X\e^{-X^2/4}$ decays faster than the far Gaussian.

It remains to fit the comparison data. Take a compact radial upper datum $U_0\in[0,1]$ as in Section~\ref{sec:prelim}, supported in $\{r\le R_0\}$. At the fixed starting time $T$, linear heat comparison gives
\[
 U(T,r)\le C_T
 \exp\left(-\frac{(r-R_0)_+^2}{4T}\right).
\]
Fix $a_\mu<\nu<1/4$, substitute
\[
 r=m(T)+\sqrt T(X-\eta_T),
\]
and multiply by $T^{-1/2}\e^{\sqrt T(X-\eta_T)}$. Absorbing the resulting linear term in $X$ into the Gaussian yields
\begin{equation}\label{eq:linear-initial}
 W(\tau_0,X-\eta_T)
 \le C_{T,\nu}\e^{-\nu X^2},
 \qquad X\ge0.
\end{equation}
Because $\nu>a_\mu$, a constant multiple of the lower bound in \eqref{eq:Phi-bounds} dominates this initial field on the entire half-line.

At the moving inner boundary, $U\le1$ gives
\begin{equation}\label{eq:linear-wall}
 W(\tau,-\eta_t)
 \le t^{-1/2}\e^{-t^\sigma}
 =\eta_t t^{-\sigma}\e^{-t^\sigma}
 \le\eta_t.
\end{equation}
For $\sigma=0$, the bound is exactly $\e^{-1}\eta_t$ before the last inequality. Since \eqref{eq:linear-odes} is linear and homogeneous in $(Q,\zeta_{\rm lin})$, multiplying both initial coefficients by one sufficiently large constant fits \eqref{eq:linear-initial} and \eqref{eq:linear-wall}. We continue to denote the rescaled supersolution by $\Phi$.

We finally justify comparison at spatial infinity. On each finite logarithmic-time interval $[\tau_0,\tau_1]$, the heat bound implies that $W\to0$ uniformly as $X\to\infty$. Add
\[
 \varepsilon\e^{A(\tau-\tau_0)}
\]
to $\Phi$, where $A\ge\sup(1+q_c)$ on the domain. This added function is a supersolution in the original $\rho$ coordinates. For fixed $\varepsilon>0$ and sufficiently large $L$, it dominates $W$ on the outer boundary $X=L$.

Apply comparison on the moving strip $0\le X\le L$, then let $L\to\infty$ and $\varepsilon\downarrow0$. This proves $W\le\Phi$ on $[\tau_0,\tau_1]$. Since $\tau_1$ is arbitrary, the comparison holds for all later times. The interface solutions are first bounded by the common radial solution $U$, so the resulting constants are uniform for $\beta\in[0,\infty]$, including both interface traces.

For $z\ge0$, multiply the upper bound in \eqref{eq:Phi-bounds} by $\sqrt t\,\e^{-z}$ to obtain
\[
 U(t,m(t)+z)
 \le C(z+2t^\sigma)\e^{-z}
 \exp\left[
 -a_\mu\left(\frac z{\sqrt t}+\eta_t\right)^2
 \right].
\]
This gives \eqref{eq:envelope}; the choice $\sigma=0$ also gives \eqref{eq:sharp-upper}. The bounded interval $2\le t\le T$ is covered by the compact-data heat bound after increasing the constant. The construction uses neither reaction-specific wave convergence nor concavity.
\end{proof}

\section{Tightness of the Cartesian critical moment}
\label{app:tightness}
We justify the passage from local Cartesian convergence
to the limiting critical moment in \eqref{eq:cart-moment}.
Use the coordinates \eqref{eq:cartesian}, write
$X=(\rho,v,Y)$, and define the positive measure
\[
 \dd\nu_t(X)=W_S(\log t,X)\dd X
 \qquad\text{on }\R\times\R^{N-2}\times(0,\infty).
\]
We suppress its dependence on the resistance and the
equatorial direction. The estimates below are uniform
in both parameters, since they use only the common
radial upper bound.

For sufficiently large $T$, we claim that
\begin{align}
 \sup_{t\ge T}\int(1+|\rho|)\dd\nu_t
 &<\infty,\label{eq:tight-one}\\
 \lim_{M\to\infty}\limsup_{t\to\infty}
 \int_{|\rho|+|v|+Y>M}(1+|\rho|)\dd\nu_t
 &=0,\label{eq:tight-two}\\
 \lim_{t\to\infty}
 \int_{\rho<0}|\rho|\dd\nu_t
 &=0.\label{eq:tight-three}
\end{align}
All unqualified integrals in this appendix are over
the folded Cartesian domain $Y>0$.

For $\vartheta=(\eta,0)$, the coordinate change gives
\begin{equation}\label{eq:nu-change}
 \int F(X)\dd\nu_t
 =t^{1/2}\e^{-2t}\int_H
 F(X_t(x))\e^{\vartheta\cdot x}S_\beta(t,x)\dd x,
\end{equation}
where $X_t$ is the inverse Cartesian coordinate map.
Indeed,
$\rho\sqrt t=\vartheta\cdot x-m(t)$ and
$\dd x=t^{N/2}\dd X$; the identity follows from
$m(t)=2t-k\log t$ and $k=(N+2)/2$.

Fix $0<\sigma<1/4$. First consider
$m(t)\le r=|x|\le4t$, where $r\asymp t$.
Set
\[
 q=\frac{r-m(t)}{\sqrt t}\ge0,
 \qquad b=\sqrt r\,\alpha,
\]
with $\alpha$ the angle between $x$ and $\vartheta$.
The elementary angular bounds give
\[
 \e^{-r(1-\cos\alpha)}\le\e^{-cb^2},
 \qquad
 \dd\omega\le
 Cr^{-(N-1)/2}b^{N-2}\dd b\,\dd\upsilon,
\]
where $\upsilon$ denotes the remaining angular variables,
whose total measure is bounded. In dimension $N=2$,
these variables form a finite set, and the same estimate
holds.

Insert the radial envelope into \eqref{eq:nu-change}.
After integration in $\upsilon$, the resulting measure
is bounded by
\begin{equation}\label{eq:tight-dominant}
 C(q+t^{\sigma-1/2})
 \e^{-cq^2}\e^{-cb^2}b^{N-2}\dd q\,\dd b.
\end{equation}
There is no residual power of time. The relevant factors
are
\[
 t^{1/2}t^{-k}t^{(N-1)/2}\sqrt t\sqrt t
 =t^{(N+2)/2-k}=1.
\]
Here the last two factors come from $\dd r=\sqrt t\,\dd q$
and the prefactor $r-m(t)+t^\sigma$ in the radial envelope.

The Cartesian coordinates satisfy
\[
 \rho=q-\frac{r(1-\cos\alpha)}{\sqrt t},
 \qquad
 |\rho|\le q+\frac{b^2}{2\sqrt t},
\]
and
\[
 |(v,Y)|=\frac{r\sin\alpha}{\sqrt t}\le2b.
\]
Since $t^{\sigma-1/2}\le1$ for large $t$, multiplying
\eqref{eq:tight-dominant} by $1+|\rho|$ gives a bound
by the fixed integrable function
\[
 C(1+q)(1+q+b^2)
 \e^{-cq^2-cb^2}b^{N-2}.
\]
Moreover,
\[
 |\rho|+|v|+Y
 \le q+\frac{b^2}{2\sqrt t}+2\sqrt2\,b.
\]
Thus bounded sets of $(q,b)$ map into uniformly bounded
Cartesian sets. The Gaussian majorant controls both
the total weighted mass and all large-$X$ tails in
this region.

We next consider the interior region $r<m(t)$.
For $r\le t$, the bound $S_\beta\le1$ and
$\vartheta\cdot x\le r$ make the contribution to
\eqref{eq:nu-change} exponentially small, even after
multiplication by $1+|\rho|$.

On $t<r<m(t)$, set $z=r-m(t)<0$ and again use the
angular Gaussian estimate. The unweighted measure is
bounded, after angular integration, by
\[
 Ct^{-1}\e^z\e^{-cb^2}b^{N-2}\dd z\,\dd b.
\]
In this region,
\[
 |\rho|
 =\frac{|z|+r(1-\cos\alpha)}{\sqrt t}
 \le\frac{|z|+b^2/2}{\sqrt t}.
\]
Integrability of $\e^z(1+|z|)$ on $(-\infty,0)$ therefore
gives mass $O(t^{-1})$ and first absolute
$\rho$-moment $O(t^{-3/2})$. In particular, the entire
interior contribution to the weighted mass tends to zero.

For $r>4t$, use the exterior radial envelope in
\eqref{eq:nu-change}. Since $m(t)<2t$ for large $t$,
we have $r-m(t)>r/2$, and hence
\[
 \exp[-c(r-m(t))^2/t]\le\exp[-cr^2/(4t)].
\]
All remaining radial, angular, and moment factors are
polynomial and can be absorbed into this Gaussian.
Thus the mass and first absolute $\rho$-moment in
$r>4t$ are exponentially small.

These estimates prove \eqref{eq:tight-one} and
\eqref{eq:tight-two}. To prove \eqref{eq:tight-three},
it remains only to examine the main region
$m(t)\le r\le4t$. There, $\rho<0$ implies
\[
 0\le q<
 \frac{r(1-\cos\alpha)}{\sqrt t}
 \le\frac{b^2}{2\sqrt t}.
\]
On bounded $(q,b)$ sets, this also gives
$|\rho|\le b^2/(2\sqrt t)$, so the negative first-moment
contribution tends to zero. The complement is uniformly
controlled by the integrable Gaussian majorant.
Together with the interior and far-region estimates,
this proves \eqref{eq:tight-three}.

Finally, rewrite \eqref{eq:cart-moment} as
\[
 E_\beta(t,(\eta,0))
 =2\int\rho\,\dd\nu_t
 -\frac{2k\log t}{\sqrt t}\int\dd\nu_t.
\]
The second term tends to zero by \eqref{eq:tight-one},
and the negative part of the first term tends to zero
by \eqref{eq:tight-three}.

For the positive part, local convergence applies on
bounded Cartesian sets separated from $\rho=0$.
The omitted strip satisfies
\[
 \int_{0<\rho<h}\rho\,\dd\nu_t
 \le h\int\dd\nu_t\le Ch,
\]
uniformly in $t$. Large Cartesian tails are controlled
by \eqref{eq:tight-two}. We may therefore first pass
to the limit on bounded sets with $\rho\ge h$, then
let the spatial cutoff tend to infinity and $h\downarrow0$.
This justifies passage to the limiting integral in
\eqref{eq:cart-moment}, including along the parameter
and direction sequences used in the compactness argument.

\section{Scalar expanding-interval matching}\label{app:energy}
We adapt the energy argument of \cite[Lemma 4.1]{Wang}
to the variable radial drift used in
Lemma~\ref{lem:coarse}. The drift is treated directly
in the energy identity, using only its size and without
differentiating it.

\begin{lemma}[Expanding-interval matching]\label{lem:energy}
Assume \eqref{ass:K}. Fix $\alpha>0$ and
$0<\gamma<1/3$, and put $R(t)=t^\gamma$.
Let $q$ solve
\[
 q_t=q_{zz}+(2+d(t,z))q_z+f(q),
 \qquad -R(t)<z<R(t),
\]
where $d$ is smooth and $|d(t,z)|\le C/t$.
Suppose $0\le q\le1$ and
\[
 q(t,-R(t))=h_b(t),\qquad
 q(t,R(t))=\alpha R(t)\e^{-R(t)},
\]
with $h_b\in C^1$, $0\le h_b\le1$, and
$\sup|h_b'|<\infty$.

For any bounded initial profile in $[0,1]$, posed at a
sufficiently large initial time so that the right boundary
value lies in $[0,1]$, the solution satisfies
\[
 q(t,z)\longrightarrow U_*(z-\log\alpha)
 \qquad(t\to\infty)
\]
locally uniformly in $z$.
\end{lemma}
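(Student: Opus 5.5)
We will work with the exponentially weighted variable $V=\e^{z}q$, which removes the critical drift. Substituting $q=\e^{-z}V$ gives
\[
 V_t=V_{zz}+d\,(V_z-V)-g(q)\e^{z}.
\]
The right boundary value becomes $V(t,R)=\alpha R$, and the left value is $V(t,-R)=\e^{-R}h_b(t)$, which is exponentially small. The target profile $\e^zU_*(z-\log\alpha)=\alpha(z-\log\alpha+B_+)+\dots$ for $z\to+\infty$ is the stationary solution of $V''=g(U)\e^z$ whose linear growth matches $\alpha R$ at the right end. The strategy, following \cite[Lemma 4.1]{Wang}, is a Lyapunov functional on the expanding interval. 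We use
\[
 \mathcal F(t)=\int_{-R}^{R}\Bigl(\tfrac12q_z^2-F_0(q)+q^2/2\Bigr)\e^{2z}\dd z,
\]
with $F_0'=f$, which is the natural energy for the traveling-frame equation in the weight $\e^{2z}$. Equivalently, we can write it in terms of $V$ with the linear part subtracted. Multiplying the equation by $q_t\e^{2z}$ gives
\[
 \mathcal F'=-\int q_t^2\e^{2z}+\text{boundary terms}+\int d\,q_zq_t\e^{2z}.
\]
At the right end, $q_t$ is the derivative of the prescribed boundary value, $O(R'R\e^{-R})$, and the weight there is $\e^{2R}$. Multiplied by $q_z\e^{2R}=O(R\e^{R})$, this boundary contribution is $O(R^2R')\asymp t^{3\gamma-1}$. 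At the left end everything carries $\e^{-2R}$ and is negligible. The drift term is handled by Young's inequality, $|d q_zq_t|\le\tfrac12 q_t^2+Ct^{-2}q_z^2$, without differentiating $d$. The weighted $q_z^2$ integral is bounded by $C(\mathcal F+C)$ plus a term of size at most $R^2\e^{2R}$ times the right boundary data squared, i.e.\ $O(R^4)$, hence $t^{2\gamma-2}(\mathcal F+C)+Ct^{3\gamma-2}$ after the moving endpoints are accounted for. These are the integrable errors quoted in the proof of Lemma~\ref{lem:coarse}.

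The steps in order are as follows.
\begin{enumerate}
\item Show $\mathcal F$ is bounded below, using that $q$ is pinned at the right end and using a Hardy-type inequality in the weight $\e^{2z}$ for the quadratic part.
\item Show that a Gronwall bound with integrable coefficients gives $\mathcal F$ bounded above, and hence $\int^\infty\!\!\int q_t^2\e^{2z}\dd z\dd t<\infty$.
\item Derive local uniform parabolic estimates for $q$ on compact $z$-sets, using interior regularity and $0\le q\le1$. Together with step 2, every limit along time translates $q(t_n+s,\cdot)$ is then a time-independent bounded solution of $Q''+2Q'+f(Q)=0$ on $\R$.
\item Classify these limits and fix the translate.
\end{enumerate}

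Step 4 is the main obstacle: proving the limit is $U_*(z-\log\alpha)$ rather than $0$, $1$, or another translate. For this I would use the relative information transported from the right boundary. Comparison with explicit sub- and supersolutions near $z=R$ of the form $(\alpha\pm\epsilon)(z+c)\e^{-z}$, corrected by $\epsilon$-small Dirichlet modes on $[R-\ell,R]$, gives
\[
 V(t,z)/z\to\alpha \quad\text{for } z\in[R/2,R].
\]
The only stationary profiles with bounded $\e^zQ/z$ ratio and this asymptotic are translates of $U_*$, and the normalization $U_*(s)\sim s\e^{-s}$ forces the shift $\log\alpha$. To rule out $Q\equiv0$ I would use a lower barrier built from $(\alpha-\epsilon)$ times the linear profile. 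To rule out $Q\equiv1$ I would use the upper barrier together with the $\e^{2z}$-weighted energy bound, which excludes a front escaping to $-\infty$.

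Uniqueness of the limit then gives convergence of the full family, not only of subsequences. If the direct energy bookkeeping at the moving endpoints proves delicate, I would first pass to the fixed interval variable $z/R(t)$ and track the $R'/R$ terms as additional $O(t^{-1})$ drift.
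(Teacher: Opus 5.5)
Your proposal has two genuine gaps. The energy bookkeeping in Steps 1--2 does not close, and Step 4 lacks the uniform estimate that actually identifies the limit.

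\textbf{Steps 1--2.} Since $\frac12q^2-F_0(q)=G(q)$, testing with $\e^{2z}q_t$ controls only $\int\e^{2z}(\frac12q_z^2-F_0(q))\dd z$. Your extra term $\frac12\int\e^{2z}q^2\dd z=\frac12\int V^2\dd z$ produces the uncontrolled quantity $\int VV_t\dd z$, so your $\mathcal F$ is not a Lyapunov functional. Removing that term is not enough, because
\[
 \int_{-R}^R\e^{2z}\Bigl(\tfrac12q_z^2-F_0(q)\Bigr)\dd z
 =\int_{-R}^R\Bigl(\tfrac12V_z^2+P(z,V)\Bigr)\dd z
 -\tfrac12\alpha^2R^2+O(\e^{-2R})
\]
is unbounded below as $R\to\infty$. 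Separately, your own estimate $O(R^2R')\asymp t^{3\gamma-1}$ for the right-endpoint term is not integrable for any $\gamma>0$. Two further slips: on the moving boundary it is $q_t+R'q_z$, not $q_t$, that equals the derivative of the boundary datum; and an $O(R^4)$ drift bound would give $t^{4\gamma-2}$, not $t^{3\gamma-2}$.

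The paper's device, which your remark about subtracting the linear part only hints at, is the renormalization $\mathcal F=\int_{-R}^R(\frac12V_z^2+P)\dd z-\frac12\alpha^2R$. It comes with the exact identity $\mathcal F+\alpha=\frac12\int_{-R}^0V_z^2+\frac12\int_0^R(V_z-\alpha)^2+\int P+\alpha(1-V(t,0))$. The lower bound comes from $V(t,0)=q(t,0)\le1$, not from the pinning at $z=R$. With $V_t=R'(\alpha-V_z)$ at $z=R$, the flux term, the moving-endpoint term and $-\frac12\alpha^2R'$ combine exactly into $-\frac{R'}2(V_z-\alpha)^2+R'P(R,\alpha R)$. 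This is nonpositive up to $O(R^3\e^{-R}R')$. Only the drift then leaves power-law errors, controlled by $\int(V_z-V)^2\le CR^2(\mathcal F+\alpha+1)+CR^3$.

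\textbf{Step 4.} This is the more serious gap. Barriers on $[R-\ell,R]$ need data at $z=R-\ell$, which you do not have. Even granted $V(t,z)/z\to\alpha$ for $z\in[R/2,R]$, this is information at distance $\asymp t^\gamma$. Without uniform control over the intermediate range, it says nothing about a subsequential limit $V_\infty$, which is obtained only on compact $z$-sets.

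The renormalized identity supplies exactly the missing uniformity. It gives $\sup_t\int_0^{R(t)}(V_z-\alpha)^2\dd z<\infty$, so lower semicontinuity on each $[0,K]$ yields $\int_0^\infty(V_\infty'-\alpha)^2\dd z<\infty$. Since $V_\infty''=\e^zg(\e^{-z}V_\infty)\ge0$, the nondecreasing function $V_\infty'$ must tend to $\alpha$. This single estimate excludes both $Q\equiv0$ and $Q\equiv1$ (for which $V_\infty=\e^z$), and it fixes the shift $\log\alpha$. The mechanical energy $\frac12Q'^2+\int_0^Qf$ then gives $Q(-\infty)=1$ and strict monotonicity, and uniqueness of the minimal-speed wave concludes. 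No barrier is needed.
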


\begin{proof}
Set
\[
 V=\e^zq,\qquad
 G(u)=\int_0^u g(s)\dd s,\qquad
 P(z,V)=\e^{2z}G(\e^{-z}V).
\]
The transformed equation is
\[
 V_t-V_{zz}-d(t,z)(V_z-V)+P_V(z,V)=0,
 \qquad
 P_V(z,V)=\e^zg(\e^{-z}V)\ge0.
\]
The boundary values become
\[
 V(t,-R(t))=b(t):=\e^{-R(t)}h_b(t),
 \qquad V(t,R(t))=\alpha R(t).
\]

Define the renormalized energy
\[
 \mathcal F(t)=
 \int_{-R(t)}^{R(t)}
 \left(\frac12V_z^2+P(z,V)\right)\dd z
 -\frac12\alpha^2R(t).
\]
Completing the square on the positive half-interval gives
the exact identity
\begin{equation}\label{eq:energy-positive}
 \begin{aligned}
 \mathcal F+\alpha
 ={}&\frac12\int_{-R}^0V_z^2\dd z
 +\frac12\int_0^R(V_z-\alpha)^2\dd z\\
 &+\int_{-R}^RP(z,V)\dd z
 +\alpha(1-V(t,0))
 \ge0.
 \end{aligned}
\end{equation}
Here $V(t,0)=q(t,0)\le1$ and $P\ge0$.

This identity also controls the drift error. On $[0,R]$,
write
\[
 V(z)=V(0)+\alpha z+\int_0^z(V_z-\alpha)\dd s.
\]
On $[-R,0]$, use the corresponding expression in terms
of $V_z$. Cauchy--Schwarz and
\eqref{eq:energy-positive} then yield, for $R\ge1$,
\[
 \int_{-R}^R(V_z-V)^2\dd z
 \le CR^2(\mathcal F+\alpha+1)+CR^3.
\]

We next differentiate the energy, including the moving
endpoints. At the right boundary,
\[
 V_t(t,R(t))=R'(t)(\alpha-V_z(t,R(t))).
\]
After including the derivative of
$-\alpha^2R/2$, the right endpoint contributes
\[
 -\frac{R'}2(V_z-\alpha)^2
 +R'P(R,\alpha R).
\]
At the left boundary,
\[
 V_t(t,-R(t))=b'(t)+R'(t)V_z(t,-R(t)),
\]
so its contribution is
\[
 -V_zb'-\frac{R'}2V_z^2+R'P(-R,b).
\]
Since $R'>0$, this is bounded above by
\[
 \frac{|b'|^2}{2R'}+R'\e^{-2R}G(h_b).
\]

All positive boundary remainders are integrable in time.
Indeed, $G(u)\le Cu^3$ on $[0,1]$ gives
\[
 P(R,\alpha R)\le C R^3\e^{-R},
\]
and
\[
 b'=\e^{-R}(h_b'-R'h_b).
\]
The boundedness of $h_b'$ and the exponential factors
therefore control the left boundary terms as well.

The interior contribution is
\[
 -\int_{-R}^RV_t^2\dd z
 +\int_{-R}^RV_t\,d(t,z)(V_z-V)\dd z.
\]
Young's inequality and $|d|\le C/t$ give
\[
 \left|\int_{-R}^RV_t\,d(V_z-V)\dd z\right|
 \le\frac12\int_{-R}^RV_t^2\dd z
 +Ct^{-2}\int_{-R}^R(V_z-V)^2\dd z.
\]
Combining the preceding estimates yields
\begin{equation}\label{eq:energy-dissipation}
 \mathcal F'
 \le-\frac12\int_{-R}^RV_t^2\dd z
 +Ct^{2\gamma-2}(\mathcal F+\alpha+1)
 +Ct^{3\gamma-2}
 +C(1+R^3)\e^{-R}.
\end{equation}
No derivative of $d$ occurs.

The two power errors are integrable because $\gamma<1/3$.
Applying Gronwall's inequality to
$\mathcal F+\alpha+1$, which is positive by
\eqref{eq:energy-positive}, gives a uniform energy bound.
Integrating \eqref{eq:energy-dissipation} and using the
lower bound $\mathcal F\ge-\alpha$ then gives
\[
 \int_{t_1}^\infty\int_{-R(t)}^{R(t)}
 V_t^2\dd z\dd t<\infty
\]
for any fixed starting time $t_1$ after parabolic
regularization.

Consider a sequence $t_j\to\infty$. On each fixed
space--time cylinder, the translated functions
$V(t_j+s,z)$ are uniformly bounded because
$0\le V\le\e^z$. Local parabolic compactness gives a
subsequential limit. The total dissipation on every
such translated cylinder tends to zero, so the limit
is independent of $s$. Since $d(t_j+s,z)\to0$ uniformly
on compact sets, the limiting equation is
\[
 V_\infty''=\e^zg(\e^{-z}V_\infty)\ge0.
\]
The passage to this equation uses local energy bounds
and does not require uniform derivative bounds for $d$.

Lower semicontinuity in \eqref{eq:energy-positive} gives
\[
 \int_0^\infty(V_\infty'-\alpha)^2\dd z<\infty.
\]
Convexity makes $V_\infty'$ nondecreasing. Its square
integrability relative to $\alpha$ therefore forces
\[
 V_\infty'(z)\longrightarrow\alpha,\qquad
 V_\infty(z)=\alpha z+o(z)
 \quad(z\to+\infty).
\]
In particular, the limiting profile is nonzero.

Set $Q=\e^{-z}V_\infty$. Then
\[
 Q''+2Q'+f(Q)=0,\qquad
 0\le Q\le1,\qquad Q(+\infty)=0.
\]
Its mechanical energy
\[
 \mathcal H(z)=
 \frac12Q'(z)^2+\int_0^{Q(z)}f(s)\dd s
\]
satisfies $\mathcal H'=-2Q'^2$.
Boundedness of the entire solution and the ODE give
bounded derivatives. The limiting sets at either end
therefore consist of equilibria, as follows from the
energy identity; since the only zeros of $f$ in $[0,1]$
are $0$ and $1$, the end limits exist and belong to
$\{0,1\}$.

The left limit cannot be zero. Otherwise the mechanical
energy would have the same value zero at both ends,
forcing $Q'\equiv0$, contrary to the nonzero
right-hand tail. Hence $Q(-\infty)=1$.
Uniqueness for the ODE gives $0<Q<1$ at finite $z$.
At any critical point,
\[
 Q''=-f(Q)<0.
\]
Thus every critical point would be a strict local
maximum. The end limits $1$ and $0$, together with
$0<Q<1$, exclude such a point, so $Q$ is strictly
decreasing.

Wave uniqueness now identifies $Q$ as a translate of
$U_*$. Its leading-edge coefficient is $\alpha$, because
$\e^zQ(z)=V_\infty(z)=\alpha z+o(z)$. The normalization
of $U_*$ therefore gives
\[
 Q(z)=U_*(z-\log\alpha).
\]
Every subsequential limit is the same, proving local
uniform convergence of the full solution.

The energy calculations may be started at a positive
time after the initial boundary corners. Thus no
compatibility between the bounded initial profile
and the boundary data is needed.
\end{proof}

\section{An auxiliary logistic radial estimate}\label{app:logistic}
This appendix supplies the scalar lower estimate used
in \eqref{eq:sharp-lower}. The auxiliary logistic reaction
is used only for comparison and does not impose an
additional assumption on the original reaction.

Let $w$ solve
\[
 w_t=\Delta w+w-C_fw^2
\]
from a nonzero compactly supported radial seed with
values in $[0,1/C_f]$. In this appendix only, $u$ denotes
the auxiliary rescaled solution $u=C_fw$, rather than the
interface solution. Then $u$ solves
the standard logistic equation and satisfies $0\le u\le1$.
Its scalar leading-edge amplitude $a>0$ follows from
the radial envelope, the scalar moment identity, and
Lemma~\ref{lem:persistence}. The relative barriers of
Section~\ref{sec:meso} give the corresponding mesoscopic
boundary asymptotics.

Fix $0<\gamma<1/3$ and a constant $\kappa\ge1$, and put
\[
 \ell(t)=\kappa t^\gamma,\qquad \xi=\log a.
\]
In the radial moving frame, define
\[
 v(t,z)=\e^z u(t,m(t)+z).
\]
For sufficiently large $t$ and $|z|\le\ell(t)$,
\[
 v_t=v_{zz}+d(t,z)(v_z-v)-\e^{-z}v^2,
 \qquad
 d(t,z)=-\frac{k}{t}+\frac{N-1}{m(t)+z},
 \qquad |d(t,z)|\le C/t.
\]
The relative mesoscopic estimate applies at
$z=\ell(t)$ because $\ell(t)/t^\gamma=\kappa$ is fixed.
Thus
\[
 \frac{v(t,\ell(t))}{\ell(t)}\longrightarrow a.
\]

Let $U_{\log}$ be the normalized logistic wave.
For a fixed $\epsilon>0$, introduce the stationary
weighted profiles
\[
 V_-(z)=\e^zU_{\log}(z-\xi+\epsilon),\qquad
 V_+(z)=\e^zU_{\log}(z-\xi-\epsilon).
\]
They satisfy
\[
 V_\pm''-\e^{-z}V_\pm^2=0,
 \qquad
 \frac{V_\pm(z)}z\longrightarrow a\e^{\pm\epsilon}
 \quad(z\to+\infty).
\]
Consequently, for sufficiently large $t$,
\[
 V_-(\ell(t))\le v(t,\ell(t))\le V_+(\ell(t)).
\]
The fixed shifts provide the strict amplitude margin
needed at the right boundary.

For either profile, define its residual in the
time-dependent radial equation by
\[
 \mathcal R_\pm
 =-V_\pm''-d(t,z)(V_\pm'-V_\pm)+\e^{-z}V_\pm^2
 =-d(t,z)(V_\pm'-V_\pm).
\]
The wave tails give
\[
 |V_\pm'(z)-V_\pm(z)|\le C(1+z_+),
\]
and hence
\[
 |\mathcal R_\pm|\le Ct^{\gamma-1}
 \qquad(|z|\le\ell(t)).
\]

We estimate the two positive errors
\[
 H_+=(v-V_+)_+,\qquad H_-=(V_--v)_+.
\]
The quadratic divided difference is
$\e^{-z}(v+V_\pm)\ge0$. The positive-part inequality
therefore yields, in the weak sense,
\[
 \mathcal L H_\pm\le Ct^{\gamma-1},
 \qquad
 \mathcal L=\partial_t-\partial_{zz}
             -d(t,z)\partial_z+d(t,z).
\]
Here the nonnegative quadratic term has been discarded;
the zeroth-order term $d(t,z)$ is retained.

We now construct a common error barrier. Set
\[
 \phi(t,z)=2-\frac{z^2}{\ell(t)^2},
 \qquad 1\le\phi\le2,
\]
and, for a sufficiently large starting time $T$, let
\[
 B(t,z)=
 \left[
 C_1t^{3\gamma-1}
 +C_2\exp\{-c(t^{1-2\gamma}-T^{1-2\gamma})\}
 \right]\phi(t,z).
\]
The constants may depend on the seed, $\kappa$, and
$\epsilon$.

To verify the barrier inequality, note that
\[
 -\phi_{zz}=\frac2{\ell(t)^2},\qquad
 \phi_t=\frac{2\gamma z^2}{t\ell(t)^2}\ge0,
 \qquad |\phi_z|\le\frac2{\ell(t)}.
\]
Since $|d|\le C/t$ and $\ell(t)^{-2}\asymp t^{-2\gamma}$,
the positive second-derivative contribution dominates
the drift and zeroth-order errors for large $T$.
The derivative of $t^{3\gamma-1}$ costs only a multiple
of $t^{-1}$ relative to that factor. The derivative of
the exponential factor costs
$c(1-2\gamma)t^{-2\gamma}$.
Choosing $c>0$ sufficiently small and then $T$
sufficiently large gives
\[
 \mathcal L B
 \ge c_0\ell(t)^{-2}
 \left[
 C_1t^{3\gamma-1}
 +C_2\exp\{-c(t^{1-2\gamma}-T^{1-2\gamma})\}
 \right].
\]
Increasing $C_1$ therefore ensures
$\mathcal L B\ge Ct^{\gamma-1}$.

At $z=\ell(t)$, both positive errors vanish.
At $z=-\ell(t)$, the bounds $0\le u,U_{\log}\le1$
give
\[
 H_\pm(t,-\ell(t))\le\e^{-\ell(t)},
\]
which is dominated by the polynomial part of $B$ for
large $T$. Finally, choose $C_2$ large enough to dominate
both errors at time $T$. Comparison on each finite
moving cylinder, followed by extension in time, yields
\[
 V_-(z)-B(t,z)\le v(t,z)\le V_+(z)+B(t,z),
 \qquad |z|\le\ell(t).
\]

Because $\gamma<1/3$, both terms in the coefficient
of $B$ tend to zero. Thus the errors relative to the
two shifted profiles vanish uniformly on the expanding
interval. In particular, the polynomial error is
$O(t^{3\gamma-1})$.

For fixed $\epsilon>0$, positivity and the right-hand
tail of $U_{\log}$ imply
\[
 c_\epsilon:=
 \inf_{z\ge0}\frac{V_-(z)}{1+z}>0.
\]
For sufficiently large $t$, the error bound therefore
gives
\[
 v(t,z)\ge\frac{c_\epsilon}{2}(1+z),
 \qquad 0\le z\le\ell(t).
\]
Returning to $w=u/C_f$, we obtain
\[
 w(t,m(t)+z)\ge c(1+z)\e^{-z},
 \qquad 0\le z\le\kappa t^\gamma.
\]
Taking $\kappa=2$ supplies the range required in
\eqref{eq:sharp-lower}. This argument uses only scalar
logistic leading-edge estimates and does not invoke
inner matching for a general reaction.

\end{document}